\title{\large{\bf Weighted Orlicz-Sobolev and variable exponent Morrey regularity for fully nonlinear parabolic PDEs with oblique boundary conditions and applications}}
\author{ \it by \smallskip \\ Junior da S. Bessa \footnote{\noindent Universidade Estadual de Campinas - UNICAMP.\\ Instituto de Matem\'{a}tica, Estat\'{i}stica e Computa\c{c}\~{a}o Cient\'{i}fica - IMECC.\\ Departamento  de Matemática. Bar\~{a}o Geraldo, Campinas - SP, Brazil. \noindent \texttt{E-mail address: \url{jbessa@unicamp.br}}},\,\,
Jo\~{a}o Vitor  da Silva
\footnote{\noindent Universidade Estadual de Campinas - UNICAMP.\\ Instituto de Matem\'{a}tica, Estat\'{i}stica e Computa\c{c}\~{a}o Cient\'{i}fica - IMECC.\\ Departamento  de Matemática. Bar\~{a}o Geraldo, Campinas - SP, Brazil. \noindent \texttt{E-mail address: \url{jdasilva@unicamp.br}}}, \,\, Maria N.B. Frederico \footnote{\noindent Universidade Federal do Cear\'{a}- UFC. Campus de Russas. Russas - CE, Brazil. \noindent \texttt{E-mail address: \url{nildebarreto@ufc.br}}}\\ $\&$\\ Gleydson C. Ricarte \footnote{\noindent Universidade Federal do Cear\'{a} - UFC. Departamento  de Matem\'{a}tica. Fortaleza - CE, Brazil. \noindent \texttt{E-mail address: \url{ricarte@mat.ufc.br}}}}
\newlength{\hchng}
\newlength{\vchng}
\def \div {\mathrm{div}}
\newcommand{\defeq}{\mathrel{\mathop:}=}
\newtheorem{theorem}{Theorem}[section]
\newtheorem{lemma}[theorem]{Lemma}
\newtheorem{proposition}[theorem]{Proposition}
\newtheorem{corollary}[theorem]{Corollary}
\theoremstyle{definition}
\newtheorem{definition}[theorem]{Definition}
\newtheorem{example}[theorem]{Example}
\theoremstyle{remark}
\newtheorem{remark}[theorem]{Remark}
\numberwithin{equation}{section}
\newcommand{\intav}[1]{\mathchoice {\mathop{\vrule width 6pt height 3 pt depth  -2.5pt
\kern -8pt \intop}\nolimits_{\kern -6pt#1}} {\mathop{\vrule width
5pt height 3  pt depth -2.6pt \kern -6pt \intop}\nolimits_{#1}}
{\mathop{\vrule width 5pt height 3 pt depth -2.6pt \kern -6pt
\intop}\nolimits_{#1}} {\mathop{\vrule width 5pt height 3 pt depth
-2.6pt \kern -6pt \intop}\nolimits_{#1}}}
\begin{document}
\maketitle

\begin{abstract}
In this manuscript, we establish global weighted Orlicz-Sobolev and variable exponent Morrey–Sobolev estimates for viscosity solutions to fully nonlinear parabolic equations subject to oblique boundary conditions on a portion of the boundary, within the following framework:
\[
\left\{
\begin{array}{rclcl}
F(D^2u,Du,u,x,t) - u_{t} &=& f(x,t) & \text{in} & \Omega_{\mathrm{T}}, \\
\beta \cdot Du + \gamma u &=& g(x,t) & \text{on} & \mathrm{S}_{\mathrm{T}}, \\
u(x,0) &=& 0 & \text{on} & \Omega_{0},
\end{array}
\right.
\]
where \(\Omega_{\mathrm{T}} = \Omega \times (0,\mathrm{T})\) denotes the parabolic cylinder with spatial base \(\Omega\) (a bounded domain in \(\mathbb{R}^{n}\), \(n \geq 2\)) and temporal height \(\mathrm{T} > 0\), \(\mathrm{S}_{\mathrm{T}} = \partial \Omega \times (0,\mathrm{T})\), and \(\Omega_{0} = \Omega \times \{0\}\). Additionally, \(f\) represents the source term of the parabolic equation, while the boundary data are given by \(\beta\), \(\gamma\), and \(g\). Our first main result is a global weighted Orlicz–Sobolev estimate for the solution, obtained under asymptotic structural conditions on the differential operator and appropriate assumptions on the boundary data, assuming that the source term belongs to the corresponding weighted Orlicz space. Leveraging these estimates, we demonstrate several applications, including a density result within the fundamental class of parabolic equations, regularity results for the related obstacle problem, and weighted Orlicz–BMO estimates for both the Hessian and the time derivative of the solution. Lastly, we derive variable exponent Morrey–Sobolev estimates for the problem via an extrapolation technique, which are of
independent mathematical interest.

\medskip
\noindent \textbf{Keywords:} Fully nonlinear parabolic equations, oblique boundary conditions, Weighted Orlicz-Sobolev spaces, Variable exponent Morrey spaces.
\vspace{0.2cm}

\noindent \textbf{AMS Subject Classification:} 35B65
, 35K10, 35K55, 46E30.
\end{abstract}


\section{Introduction}

\hspace{0.4cm} This paper investigates Hessian and time derivative estimates for viscosity solutions to the following fully nonlinear parabolic mixed boundary value problem:
\begin{equation}\label{1.1}
\left\{
\begin{array}{rclcl}
F(D^2u,Du,u,x,t) - u_{t} &=& f(x,t) & \text{in} & \Omega_{\mathrm{T}}, \\
\beta \cdot Du(x, t) + \gamma u(x, t) &=& g(x,t) & \text{on} & \mathrm{S}_{\mathrm{T}}, \\
u(x, 0) &=& 0 & \text{on} & \Omega_{0},
\end{array}
\right.
\end{equation}
where \(\Omega \subset \mathbb{R}^n\) (\(n \geq 2\)) is a bounded domain with smooth boundary, \(\mathrm{T} > 0\), and the data \(f\), \(\gamma\), \(g\), and \(\beta\) satisfy appropriate regularity assumptions. The nonlinear operator \(F: \mathrm{Sym}(n) \times \mathbb{R}^n \times \mathbb{R} \times \Omega \times \mathbb{R} \to \mathbb{R}\), where \(\mathrm{Sym}(n)\) denotes the space of real symmetric \(n \times n\) matrices, is a uniformly parabolic second-order operator, meaning that there exist constants \(0 < \lambda \leq \Lambda < \infty\), referred to as the \textit{parabolicity constants}, such that
\begin{equation}\label{Unif.Ellip.}
\lambda\|\mathrm{N}\| \leq F(\mathrm{M} + \mathrm{N}, \varsigma, s, x, t) - F(\mathrm{M}, \varsigma, s, x, t) \leq \Lambda \|\mathrm{N}\|
\end{equation}
for all \(\mathrm{M}, \mathrm{N} \in \mathrm{Sym}(n)\) with \(\mathrm{N} \geq 0\) (in the sense of symmetric matrices), and for all \((\varsigma, s, x, t) \in \mathbb{R}^n \times \mathbb{R} \times \Omega \times \mathbb{R}\). The vector field \(\beta : \mathrm{S}_{\mathrm{T}} \to \mathbb{R}^n\) is assumed to be of unit length, and \(\gamma, g : \mathrm{S}_{\mathrm{T}} \to \mathbb{R}\) are given real-valued functions.

Hence, under suitable regularity assumptions on the boundary data \(\beta\), \(\gamma\), and \(g\), we establish global weighted Orlicz and variable exponent Morrey estimates for the Hessian and the temporal derivative of viscosity solutions to problem \eqref{1.1}, assuming that the source term \(f\) belongs to the corresponding function space, and under conditions weaker than the convexity of the second-order operator \(F\). 

Specifically and under suitable assumptions, we obtain the global weighted Orlicz regularity estimates (see Theorem \ref{T1} for further details)  
$$
\|u\|_{W^{2,\Upsilon}_{\omega}(\Omega_{\mathrm{T}})}\leq\mathrm{C}(\text{universal})\left(\|u\|_{L^{\infty}(\Omega_{\mathrm{T}})}^{n+1}+\|f\|_{L^{\Upsilon}_{\omega}(\Omega_{\mathrm{T}})}+\|g\|_{C^{1,\alpha}(\mathrm{S}_\mathrm{T})}\right),
$$
as well as global variable exponent Morrey regularity estimates (see Theorem \ref{T2} for further details)
$$
\|u\|_{W^{2,\varsigma(\cdot),\varrho(\cdot)}(\Omega_{\mathrm{T}})}\leq\mathrm{C}(\text{universal})\|f\|_{L^{\varsigma(\cdot),\varrho(\cdot)}(\Omega_{\mathrm{T}})},
$$

The condition imposed on the operator \( F \) is rooted in tangential analysis, specifically in the concept of the \textit{recession of an operator}. We adopt the terminology \textit{Recession operator}, following the framework introduced by Giga and Sato in the context of Hamilton-Jacobi equations \cite{GS01}:

\begin{definition}[{\bf Recession operator}]\label{DefAC}
	We say that \( F: \mathrm{Sym}(n) \times \mathbb{R}^n \times \mathbb{R} \times \Omega \times \mathbb{R} \to \mathbb{R} \) is an \emph{asymptotically fully nonlinear parabolic operator} if there exists a uniformly parabolic operator \( F^{\sharp}: \mathrm{Sym}(n) \times \mathbb{R}^n \times \mathbb{R} \times \Omega \times \mathbb{R} \to \mathbb{R} \), referred to as the \textit{Recession operator}, such that
	\begin{equation}\label{Reces}
		F^{\sharp}(\mathrm{X}, \varsigma, s, x, t) \defeq \lim_{\tau \to 0^{+}} \tau \cdot F\left(\frac{1}{\tau}\mathrm{X}, \varsigma, s, x, t\right),\tag{Rec}
	\end{equation}
	for all \( \mathrm{X} \in \mathrm{Sym}(n) \), \( \varsigma \in \mathbb{R}^n \), \( s, t \in \mathbb{R} \), and \( x \in \Omega \). For convenience, we introduce the shorthand notation \( F_{\tau}(\mathrm{X}, \varsigma, s, x, t) = \tau \cdot F\left(\frac{1}{\tau}\mathrm{X}, \varsigma, s, x, t\right) \).
\end{definition}

\bigskip

By way of illustration, a limiting profile such as \eqref{Reces} naturally emerges in singularly perturbed free boundary problems governed by fully nonlinear equations, in which the Hessian of solutions blows up along the phase transition interface, i.e., $\partial\{u^{\varepsilon} > \varepsilon\}$, where $u^{\varepsilon}$ satisfies in the viscosity sense:
\[
F(D^2 u^{\varepsilon}, x) = \mathcal{Q}_0(x)\frac{1}{\varepsilon} \zeta \left(\frac{u^{\varepsilon}}{\varepsilon}\right).
\]
In these approximations, we assume $0 < \mathcal{Q}_0 \in C^0(\overline{\Omega})$ and $0 \leq \zeta \in C^{\infty}(\mathbb{R})$ with $\operatorname{supp} \zeta = [0,1]$. Consequently, in this model, the limiting free boundary condition is governed by the operator $F^{\sharp}$ rather than $F$, i.e.,
\[
F^{\sharp}(D u(z_0) \otimes D u(z_0), z_0) = 2\mathrm{T}_0, \quad z_0 \in \partial\{u_0 > 0\},
\]
in an appropriate viscosity framework, for a certain total mass $\mathrm{T}_0 > 0$ (see \cite[Section 6]{RT} for illustrative examples and further details).

\medskip

Moreover, limit profiles such as \eqref{Reces} also arise in the context of higher-order convergence rates in the periodic homogenization of fully nonlinear uniformly parabolic Cauchy problems with rapidly oscillating initial data, as demonstrated below:
\[
\left\{
\begin{array}{rclcl}
  \frac{d}{dt}u^{\varepsilon}(x, t) & = & \frac{1}{\varepsilon^2}F(\varepsilon^2D^2 u^{\varepsilon}, x, t, \frac{x}{\varepsilon}, \frac{t}{\varepsilon}) & \text{in} & \mathbb{R}^n \times (0, T), \\
  u^{\varepsilon}(x, 0) & = & g\left(x, \frac{x}{\varepsilon}\right) & \text{on} & \mathbb{R}^n.
\end{array}
\right.
\]
In this setting, we have the asymptotic behavior:
\[
\displaystyle \lim_{\varepsilon \to 0^+} \frac{1}{\varepsilon^2}F(\varepsilon^2 \mathrm{X}, x, t, y, s) = F^{\sharp}( \mathrm{X}, x, t, y, s) ,
\]
uniformly for all $( \mathrm{X}, x, t, y, s) \in (\operatorname{Sym}(n) \setminus \{\mathcal{O}_{n\times n}\}) \times \mathbb{R}^n \times [0, T] \times \mathbb{T}^n \times \mathbb{T}$ (see \cite{KL20}). Consequently, there exists a unique function $v: \mathbb{R}^n \times [0, T] \times \mathbb{T}^n \times [0, \infty) \to \mathbb{R}$ such that $v(x, t, \cdot, \cdot)$ is a viscosity solution of
\[
\left\{
\begin{array}{rclcl}
  \frac{d}{ds}v(y, s) & = & F^{\sharp}(D_y^2 v, x, t, y, s) & \text{in} & \mathbb{T}^n \times (0, \infty), \\
  v(x, t, y,  0) & = & g(y, x) & \text{on} & \mathbb{T}^n.
\end{array}
\right.
\]


\bigskip

The cornerstone of our approach to establishing the results to be presented lies in the assumption that the recession operator \( F^{\sharp} \) satisfies certain structural properties (e.g., convexity/concavity or appropriate \textit{a priori} estimates). Through tangential analysis techniques, we can derive regularity results for the solutions corresponding to the initial data of problem \eqref{1.1}.

\medskip

Additionally, the vector field \(\beta\) and the function \(\gamma\) define the boundary operator:
\begin{equation*}
\mathcal{B}(\vec{v}, s, x, t) = \beta(x, t) \cdot \vec{v} + \gamma(x, t)s, \quad (v, s, x, t) \in \mathbb{R}^n \times \mathbb{R} \times \mathrm{S}_{\mathrm{T}}.
\end{equation*}

Throughout this paper, we assume the existence of a positive constant \(\mu_0 > 0\) such that \(\beta \cdot \vec{\mathbf{n}} \geq \mu_0\) on \(\mathrm{S}_{\mathrm{T}}\), where \(\vec{\mathbf{n}}\) denotes the unit outward normal vector to \(\Omega\). Geometrically, this means that \(\beta\) is not tangential to the lateral boundary \(\mathrm{S}_{\mathrm{T}}\) of the parabolic cylinder \(\Omega_{\mathrm{T}}\). This obliqueness condition ensures that problem \eqref{1.1} is well-posed because of the Shapiro–Lopatinskii compatibility condition (cf. \cite{MaPaVi}).

The analysis of models of the type \eqref{1.1} is motivated by their wide range of applications, including the study of Brownian motion, reflected shock waves in transonic flow, and the generalization of problems with Neumann and Robin boundary conditions (see, e.g., \cite{BessaOrlicz}, \cite{Ch}, \cite{Sato}, and \cite{Leiberman} for further references).

As an application of our results, we establish a density result for viscosity solutions of the problem
\begin{eqnarray}\label{problemaflat'}
\left\{
\begin{array}{rclcl}
F(D^{2}u,x,t)-u_{t} & = & f(x,t) & \text{in} & \mathrm{Q}^{+}_{1}, \\
\beta \cdot Du + \gamma u & = & g(x,t) & \text{on} & \mathrm{Q}^{*}_{1},
\end{array}
\right.    
\end{eqnarray}
in weighted Orlicz–Sobolev spaces, within the fundamental class of solutions \(\mathcal{S}\) to parabolic equations (see Definition \ref{calssefundamental} and Theorem \ref{Thm5.1-Density} for more details).

Subsequently, we address the existence/uniqueness and Calder\'{o}n–Zygmund type estimates for viscosity solutions of obstacle problems involving oblique tangential derivatives of the form
\begin{equation*}
\left\{
\begin{array}{rclcl}
F(D^2 u,Du,x,t)-\frac{\partial u}{\partial t} &\le& f(x,t)& \text{in} & \Omega_{\mathrm{T}}, \\
(F(D^2 u, Du,x,t)-\frac{\partial u}{\partial t}- f)(u-\phi) &=& 0 &\text{in}& \Omega_{\mathrm{T}},\\
u(x, t) &\ge& \phi(x, t) &\text{in}& \Omega_{\mathrm{T}},\\
\beta \cdot Du(x, t)+\gamma u(x, t)&=&g(x,t) &\text{on}& \mathrm{S}_{T},\\
u(x,0)&=&0 &\text{in}& \overline{\Omega},
\end{array}
\right.
\end{equation*}
for appropriate data \(f\), \(\beta\), \(\gamma\), and \(g\), and an obstacle \(\phi\) (see Theorem \ref{T3} and Corollary \ref{Uniqueness-result} for such results). Such free boundary problems have attracted significant interest over recent decades due to their connections with extensions of the classical theory for the heat operator and their non-variational counterparts (see, e.g., \cite{Geo.Mil} for related results).

A noteworthy byproduct of our analysis is the investigation of problem \eqref{problemaflat'} when the source term \(f\) belongs to \textit{weighted Orlicz–BMO spaces} \(L^{\Upsilon}_{\omega}\text{-}\mathrm{BMO}\) (see Definition \ref{deforlicbmospace}). In this setting, we demonstrate that, under suitable assumptions, both the Hessian \(D^{2}u\) and the time derivative \(u_{t}\) possess \(L^{\Upsilon}_{\omega}\)-BMO regularity (see Theorem \ref{BMO} for such a result).

Finally, to connect the variable exponent Morrey regularity for problem \eqref{1.1} (see Theorem \ref{T2}) with variable exponent H\"{o}lder spaces, we establish a Campanato-type theorem for these spaces—a result which, to the best of our knowledge, has not yet been available in the parabolic context. More precisely, under appropriate conditions on the data, we show that
\[
\mathfrak{L}^{\varsigma(\cdot),\varrho(\cdot)}(\Omega_{\mathrm{T}}) \cong C^{0,\alpha(\cdot)}(\overline{\Omega_{\mathrm{T}}}),
\]
for some function \(\alpha=\alpha(\cdot)\) (see Theorem \ref{campanato} for further details).

\subsection{Structural assumptions and further information}

\hspace{0.4cm} We begin this section by introducing some notations and definitions that will be used throughout the manuscript:

\begin{itemize}
    \item[\checkmark] For any point \( x = (x_{1}, \ldots, x_{n-1}, x_{n}) \in \mathbb{R}^{n} \), we write \( x = (x', x_{n}) \), where \( x' = (x_{1}, \ldots, x_{n-1}) \);
    
    \item[\checkmark] Given a set \( U \subset \mathbb{R}^{n} \times \mathbb{R} \) and \( r > 0 \), we denote \( rU = \{ (rx, r^{2}t) \in \mathbb{R}^{n} \times \mathbb{R} \; ; \; (x,t) \in U \} \);

    \item[\checkmark] \( \mathrm{B}_{r}(x) \) denotes the open ball of radius \( r > 0 \) centered at \( x \in \mathbb{R}^{n} \). In particular, \( \mathrm{B}_{r} := \mathrm{B}_{r}(0) \);
    
    \item[\checkmark] \( \mathrm{B}_{r}^{+} := \mathrm{B}_{r} \cap \mathbb{R}^{n}_{+} \). Additionally, we define \( \mathrm{T}_{r} := \{ (x', 0) \in \mathbb{R}^{n-1} \; ; \; |x'| < r \} \), and \( \mathrm{T}_{r}(x_{0}) := \mathrm{T}_{r} + x_{0}' \), where \( x_{0} = (x_{0}', (x_{0})_{n}) \);
    
    \item[\checkmark] \( \mathrm{B}_{r}^{+}(x) := \mathrm{B}_{r}^{+} + x \) denotes the upper half-ball of radius \( r \) centered at \( x \);
    
    \item[\checkmark] The parabolic cylinder centered at \( (x, t) \in \mathbb{R}^{n} \times \mathbb{R} \) with radius \( r > 0 \) is defined as \( \mathrm{Q}_{r}(x, t) := \mathrm{B}_{r}(x) \times (t - r^{2}, t) \). In particular, \( \mathrm{Q}_{r} := \mathrm{Q}_{r}(0,0) \);
    
    \item[\checkmark] We define \( \mathrm{Q}^{+}_{r}(x,t) := \mathrm{B}^{+}_{r}(x) \times (t - r^{2}, t) \), \( \mathrm{Q}^{+}_{r} := \mathrm{Q}^{+}_{r}(0,0) \), \( \mathrm{Q}_{r}^{*}(x,t) := \mathrm{T}_{r}(x) \times (t - r^{2}, t) \), and \( \mathrm{Q}_{r}^{*} := \mathrm{Q}_{r}^{*}(0,0) \);
   \item[\checkmark] For \(|\nu|\leq r\), we define \(\mathrm{Q}_{r}^{\nu}=\mathrm{Q}_{r}\cap\{x_{n}>-\nu\}\) and  \(\mathrm{Q}_{r}^{\nu}(x_{0},t_{0})=\mathrm{Q}_{r}^{\nu}+(x_{0},t_{0})\);
    \item[\checkmark] The parabolic distance between two points \(X=(x,t)\) and \(Y=(y,s)\) in \(\mathbb{R}^{n+1}\) is denoted by \(d_{p}(X,Y)=\max\{|x-y|,|t-s|^{1/2}\}\); 
    \item[\checkmark] For \( n \geq 2 \) and \( r > 0 \), the open cube of side length \( r \) in \( \mathbb{R}^{n} \) is denoted by
    \[
        K^{n}_{r} := \underbrace{\left(-\frac{r}{2}, \frac{r}{2}\right) \times \cdots \times \left(-\frac{r}{2}, \frac{r}{2}\right)}_{n\text{ factors}};
    \]
    
    \item[\checkmark] For a point \( (x_{0}, t_{0}) \in \Omega_{\mathrm{T}} \) and \( r > 0 \), we define the parabolic neighborhood \( \Omega_{\mathrm{T}}(x_{0}, t_{0}; r) := \Omega_{\mathrm{T}} \cap \mathrm{Q}_{r}(x_{0}, t_{0}) \);
    
    \item[\checkmark] For a function \( u = u(x,t) \), we denote its time derivative by \( u_{t} \) (or \( \frac{\partial u}{\partial t} \)), its spatial gradient by \( Du = (u_{x_{1}}, \ldots, u_{x_{n}}) \), and its Hessian matrix by \( D^{2}u = (u_{x_{i}x_{j}})_{n \times n} \).
\end{itemize}

\medskip

On the other hand, recall that a function \( \Phi: [0, +\infty) \to [0, +\infty) \) is called an \(\mathrm{N}\)-function if it is convex, increasing, continuous, satisfying \( \Phi(0) = 0 \), and \( \Phi(s) > 0 \) for all \( s > 0 \), and
\[
\lim_{s \to 0^{+}} \frac{\Phi(s)}{s} = 0 \quad \text{and} \quad \lim_{s \to +\infty} \frac{\Phi(s)}{s} = +\infty.
\]

We say that an \(\mathrm{N}\)-function \( \Phi \) satisfies the \( \Delta_{2} \) condition (respectively, the \( \nabla_{2} \) condition) if there exists a constant \( \mathrm{k}_{1} > 1 \) (respectively, \( \mathrm{k}_{2} > 1 \)) such that
\[
\Phi(2s) \leq \mathrm{k}_{1} \Phi(s) \quad \left( \text{respectively, } \Phi(s) \leq \frac{1}{2\mathrm{k}_{2}} \Phi(\mathrm{k}_{2}s) \right), \quad \forall s > 0.
\]
Moreover, we write \( \Phi \in \Delta_{2} \) (respectively, \( \Phi \in \nabla_{2} \)) to indicate that \( \Phi \) satisfies the \( \Delta_{2} \) condition (respectively, the \( \nabla_{2} \) condition). When both conditions hold, we write \( \Phi \in \Delta_{2} \cap \nabla_{2} \).

For any function \( \Phi \in \Delta_{2} \cap \nabla_{2} \), its lower index is defined by
\[
i(\Phi) := \lim_{s \to 0^{+}} \frac{\log(h_{\Phi}(s))}{\log s} = \sup_{0 < s < 1} \frac{\log(h_{\Phi}(s))}{\log s},
\]
where
\[
h_{\Phi}(s) := \sup_{t > 0} \frac{\Phi(ts)}{\Phi(t)}, \quad s > 0.
\]

\begin{remark}
The functions \(\Phi(s) = s^p\) and \(\overline{\Phi}(s) = s^p \log(s + 1)\), for \(p > 1\), are examples of \(\mathrm{N}\)-functions that satisfy the condition \(\Delta_2 \cap \nabla_2\), with \(i(\Phi) = i(\overline{\Phi}) = p\). Moreover, if \(\Phi \in \Delta_2\), then \(i(\Phi) > 1\) (cf. \cite{fioreza} for more details).
\end{remark}

We now recall the notion of \textit{weights}. A function \(\omega \in L^1_{\text{loc}}(\mathbb{R}^{n+1})\) is called a \textit{weight} if it takes values in the interval \((0, +\infty)\) almost everywhere. In this case, we identify \(\omega\) with the measure
\begin{eqnarray*}
\omega(U) = \int_{U} \omega(x,t)\,dx\,dt,
\end{eqnarray*}
for every Lebesgue measurable set \(U \subset \mathbb{R}^{n+1}\). We say that a weight \(\omega\) belongs to the \textit{Muckenhoupt class} \(\mathfrak{A}_q\), for some \(q \in (1, \infty)\), and write \(\omega \in \mathfrak{A}_q\), if
\begin{eqnarray*}
[\omega]_{q, \mathrm{Q}} \defeq \sup_{\mathrm{Q} \subset \mathbb{R}^{n+1}} \left(\intav{\mathrm{Q}} \omega(x,t)\,dx\,dt\right) \left(\intav{\mathrm{Q}} \omega(x,t)^{\frac{-1}{q-1}}\,dx\,dt\right)^{q-1} < \infty,
\end{eqnarray*}
where the supremum is taken over all parabolic cubes \(\mathrm{Q} \subset \mathbb{R}^{n+1}\).

We are now in a position to define one of the principal functional spaces that will be of interest in this work.

\begin{definition}\label{weightedOrliczspacesdef}
Let \(\Phi \in \Delta_2 \cap \nabla_2\) be an \(\mathrm{N}\)-function, \(\omega\) a weight, and \(U \subset \mathbb{R}^{n+1} = \mathbb{R}^n \times \mathbb{R}\) a measurable set. The \textit{weighted Orlicz space} \(L^\Phi_{\omega}(U)\) is defined as the space of all measurable functions \(f: U \to \mathbb{R}\) such that
\begin{eqnarray*}
\rho_{\Phi,\omega}(f) \defeq \int_{U} \Phi(|f(x,t)|)\omega(x,t)\,dx\,dt < \infty,
\end{eqnarray*}
where \(\rho_{\Phi,\omega}(f)\) is referred to as the modular. Owing to the condition \(\Phi \in \Delta_2 \cap \nabla_2\), the Luxemburg norm
\begin{eqnarray*}
\|f\|_{L^{\Phi}_{\omega}(U)} = \inf\left\{s > 0 : \rho_{\Phi,\omega}\left(\frac{f}{s}\right) \leq 1\right\}
\end{eqnarray*}
renders \(L^\Phi_{\omega}(U)\) a reflexive Banach space. Furthermore, the \textit{weighted Orlicz–Sobolev space} \(W^{k, \Phi}_{\omega}(U)\) consists of all measurable functions \(f: U \to \mathbb{R}\) such that all distributional derivatives \(D^{r}_{t}D_{x}^{s}f\), with \(0 \leq 2r + s \leq k\), belong to \(L^\Phi_{\omega}(U)\). This space is equipped with the norm
\begin{eqnarray*}
\|f\|_{W^{k,\Phi}_{\omega}(U)} = \sum_{j=0}^{k} \sum_{\genfrac{}{}{0pt}{}{r,s \geq 0}{2r + s = j}} \|D_{t}^{r}D_{x}^{s}f\|_{L^{\Phi}_{\omega}(U)}.
\end{eqnarray*}
\end{definition}

\begin{remark}
Regarding Definition~\ref{weightedOrliczspacesdef}:
\begin{itemize}
\item[\checkmark] It is worth noting that if \(\Phi(s) = s^p\) for \(p > 1\), then \(L^\Phi_{\omega}(U)\) coincides with the classical weighted Lebesgue space \(L^p_{\omega}(U)\), and \(W^{k, \Phi}_{\omega}(U)\) coincides with the weighted Sobolev space \(W^{k, p}_{\omega}(U)\).
\item[\checkmark] The spaces \(L^\Phi_{\omega}(U)\) can be seen as intermediate between Lebesgue spaces. More precisely, there exist constants \(1 < p_1 \leq p_2 < \infty\) such that
\begin{eqnarray*}
L^{\infty}(U) \subset L^{p_2}_{\omega}(U) \subset L^{\Phi}_{\omega}(U) \subset L^{p_1}_{\omega}(U) \subset L^{1}(U),
\end{eqnarray*}
see \cite{BessaOrlicz} and \cite{KokiMiro} for further details.
\item[\checkmark] The modular in the definition of \(L^{\Phi}_{\omega}(U)\) can be estimated by
\begin{eqnarray}\label{modularestimate}
\rho_{\Phi,\omega}(g) \leq \mathrm{C}\left(\|g\|_{L^{\Phi}_{\omega}(U)}^{p_2} + 1\right),
\end{eqnarray}
where \(\mathrm{C} > 0\) is a constant independent of \(g\) (cf. \cite{BLOK}).
\end{itemize}
\end{remark}

To state one of our main results, we require the following lemma, established by Byun et al. in \cite[Lemma 5]{BLOK}, which ensures, under suitable conditions, that \(L^\Phi_{\omega}(U)\) can be continuously embedded into a Lebesgue space.

\begin{lemma}\label{mergulhoorliczlebesgue}
Let \(\Phi\) be an \(\mathrm{N}\)-function satisfying \(\Phi \in \Delta_2 \cap \nabla_2\), let \(\omega \in \mathfrak{A}_{i(\Phi)}\), and suppose \(\Omega \subset \mathbb{R}^n\) is bounded. Then, there exists a constant \(p_0 \in (1, i(\Phi))\), depending only on \(i(\Phi)\) and \(\omega\), such that \(L^\Phi_{\omega}(U)\) is continuously embedded in \(L^{p_0}(U)\). Moreover, the following estimate holds:
\begin{eqnarray*}
\|g\|_{L^{p_0}(U)} \leq \mathrm{C}' \|g\|_{L^{\Phi}_{\omega}(U)}, \quad \forall g \in L^{\Phi}_{\omega}(U),
\end{eqnarray*}
where \(\mathrm{C}' = \mathrm{C}'(n, i(\Phi), \omega) > 0\) is a constant independent of \(g\).
\end{lemma}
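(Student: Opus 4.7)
The strategy rests on three ingredients: the openness (self-improvement) of the Muckenhoupt class, Hölder's inequality with a carefully chosen splitting of a fractional power of the weight, and the pointwise comparison $t^{p_{1}} \lesssim \Phi(t)+1$ that is encoded in the definition of $i(\Phi)$. Since $\Phi \in \Delta_{2}$ one has $i(\Phi) > 1$, so that invoking the classical reverse-Hölder theorem for $\mathfrak{A}_{q}$-weights on parabolic cubes yields some $q_{0} \in (1, i(\Phi))$ with $\omega \in \mathfrak{A}_{q_{0}}$. I would then fix $p_{0} \in (1, q_{0})$ close enough to $1$ that $p_{0} q_{0} < i(\Phi)$, and pick an auxiliary exponent $p_{1} \in (p_{0} q_{0}, i(\Phi))$.

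By homogeneity I would first reduce to the case $\|f\|_{L^{\Phi}_{\omega}(U)} = 1$, so that the modular satisfies $\rho_{\Phi,\omega}(f) \leq 1$. Writing $|f|^{p_{0}} = \bigl(|f|^{p_{0}} \omega^{p_{0}/p_{1}}\bigr) \cdot \omega^{-p_{0}/p_{1}}$ and applying Hölder's inequality with conjugate exponents $p_{1}/p_{0}$ and $p_{1}/(p_{1} - p_{0})$, I would obtain
$$
\int_{U} |f|^{p_{0}}\, dx\, dt \;\leq\; \left(\int_{U} |f|^{p_{1}}\, \omega\, dx\, dt\right)^{p_{0}/p_{1}} \left(\int_{U} \omega^{-\frac{p_{0}}{p_{1} - p_{0}}}\, dx\, dt\right)^{\frac{p_{1} - p_{0}}{p_{1}}}.
$$
The inequality $p_{1} > p_{0} q_{0}$ is precisely what guarantees $p_{0}/(p_{1} - p_{0}) \leq 1/(q_{0} - 1)$; since $\omega \in \mathfrak{A}_{q_{0}}$ forces $\omega^{-1/(q_{0}-1)} \in L^{1}_{\mathrm{loc}}$, the boundedness of $U$ makes the second factor finite. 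For the first factor, $p_{1} < i(\Phi)$ yields a pointwise estimate $t^{p_{1}} \leq C\,\Phi(t) + C$ (the standard consequence of $\Phi(t)/t^{p_{1}} \to \infty$ as $t \to \infty$, obtained from the very definition of the lower index), so that
$$
\int_{U} |f|^{p_{1}}\, \omega\, dx\, dt \;\leq\; C\,\rho_{\Phi,\omega}(f) + C\,\omega(U) \;\leq\; C\bigl(1 + \omega(U)\bigr).
$$
Undoing the normalization then yields $\|f\|_{L^{p_{0}}(U)} \leq \mathrm{C}'\, \|f\|_{L^{\Phi}_{\omega}(U)}$ with $\mathrm{C}'$ depending only on $n$, $i(\Phi)$, and $\omega$ (through $[\omega]_{q_{0},\mathrm{Q}}$ and $\omega(U)$).

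I expect the most delicate point to be the quantitative self-improvement of $\mathfrak{A}_{i(\Phi)}$ in the parabolic setting: since the Muckenhoupt class here is defined via parabolic cubes rather than Euclidean ones, one must check that the Coifman--Fefferman reverse-Hölder argument transfers cleanly to this anisotropic geometry while tracking how the gain $i(\Phi) - q_{0}$ depends on $[\omega]_{i(\Phi),\mathrm{Q}}$. Once that is secured, the ordered choice of $q_{0}, p_{0}, p_{1}$ together with the Hölder bookkeeping above is essentially routine, and the modular-to-norm passage is handled by the estimate \eqref{modularestimate}.
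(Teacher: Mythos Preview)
Your argument is correct. The paper does not actually prove this lemma: it is quoted verbatim from Byun--Lee--Ok \cite[Lemma~5]{BLOK} with no proof supplied. Your route---self-improvement of $\mathfrak{A}_{i(\Phi)}$ to some $\mathfrak{A}_{q_0}$ with $q_0<i(\Phi)$, the H\"older splitting $|f|^{p_0}=(|f|^{p_0}\omega^{p_0/p_1})\cdot\omega^{-p_0/p_1}$, and the pointwise bound $t^{p_1}\le C\Phi(t)+C$ coming from $p_1<i(\Phi)$---is exactly the standard argument, and it matches what is done in \cite{BLOK}.

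Two minor remarks. First, the worry you flag about reverse H\"older on parabolic cubes is not a genuine obstacle: the Coifman--Fefferman openness property holds in any space of homogeneous type, and $(\mathbb{R}^{n+1},d_p,\mathcal{L}^{n+1})$ with parabolic cubes is one. Second, the constraint $p_0<q_0$ is not actually used anywhere; you only need $1<p_0<i(\Phi)/q_0$ so that some $p_1\in(p_0q_0,i(\Phi))$ exists, and this automatically places $p_0$ in $(1,i(\Phi))$. The appeal to \eqref{modularestimate} at the end is also unnecessary: once $\|f\|_{L^\Phi_\omega}=1$, the unit-ball property of the Luxemburg norm (valid since $\Phi\in\Delta_2$) gives $\rho_{\Phi,\omega}(f)\le1$ directly.
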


Throughout this manuscript, we shall adopt the following assumptions:

\begin{enumerate}
\item[(H1)] \textbf{(Structural conditions)} We assume that the operator \(F: \text{Sym}(n)\times \mathbb{R}^n \times\mathbb{R} \times \Omega \times \mathbb{R} \to \mathbb{R}\) is continuous in each of its variables. Moreover, there exist constants \(0 < \lambda \le \Lambda\), \(\sigma \geq 0\), and \(\xi \geq 0\) such that
\begin{equation*}
\begin{aligned}
\mathcal{M}^{-}_{\lambda,\Lambda}(\mathrm{X}-\mathrm{Y}) - \sigma |\zeta-\eta| -\xi|r-s| &\le F(\mathrm{X}, \zeta, r, x, t)-F(\mathrm{Y}, \eta, s, x, t) \\
&\le \mathcal{M}^{+}_{\lambda, \Lambda}(\mathrm{X}-\mathrm{Y}) + \sigma |\zeta-\eta| + \xi|r-s|
\end{aligned}
\end{equation*}
for all \(\mathrm{X}, \mathrm{Y} \in \text{Sym}(n)\), \(\zeta,\eta \in \mathbb{R}^n\), \(r, s \in \mathbb{R}\), and \((x,t) \in \Omega \times \mathbb{R}\), where
\[
\mathcal{M}^{+}_{\lambda,\Lambda}(\mathrm{X}) \defeq \Lambda \sum_{e_i > 0} e_i + \lambda \sum_{e_i < 0} e_i, \quad \mathcal{M}^{-}_{\lambda,\Lambda}(\mathrm{X}) \defeq \Lambda \sum_{e_i < 0} e_i + \lambda \sum_{e_i > 0} e_i
\]
are the \emph{Pucci extremal operators}, and \(e_i = e_i(\mathrm{X})\) (\(1 \leq i \leq n\)) denote the eigenvalues of \(\mathrm{X}\).

\begin{remark}
In this context, we refer to \(F\) as a \((\lambda, \Lambda, \sigma, \xi)\)-parabolic operator. For normalization purposes, we assume \(F(0, 0, 0, x, t) = 0\) for all \((x, t) \in \Omega \times \mathbb{R}\). This assumption entails no loss of generality, since given any \(F\), the modified operator \(G(\mathrm{X}, p, r, x, t) = F(\mathrm{X}, p, r, x, t) - F(0, 0, 0, x, t)\) still satisfies the same structural condition and remains a \((\lambda, \Lambda, \sigma, \xi)\)-parabolic operator.
\end{remark}

\item[(H2)] \textbf{(Regularity of the data)} The source term satisfies \(|f|^{n+1} \in L^{\Phi}_{\omega}(\Omega_{\mathrm{T}})\) for some \(\Phi \in \Delta_{2} \cap \nabla_{2}\) and \(\omega \in \mathfrak{A}_{i(\Phi)}\). The boundary data \(\gamma, g \in C^{1,\alpha}(\partial \Omega \times (0, \mathrm{T}))\) with \(\gamma \leq 0\), and the vector field \(\beta \in C^{1,\alpha}(\partial \Omega \times (0, \mathrm{T}))\), for some \(\alpha \in (0,1)\).

\item[(H3)] \textbf{(Continuity of the coefficients)} For each fixed point \((x_0, t_0) \in \Omega_{\mathrm{T}}\), we define the oscillation function
\[
\psi_{F}((x,t), (x_0,t_0)) \defeq \sup_{\mathrm{X} \in \mathrm{Sym}(n)} \frac{|F(\mathrm{X}, 0, 0, x, t) - F(\mathrm{X}, 0, 0, x_0, t_0)|}{\|\mathrm{X}\| + 1},
\]
which quantifies the local variation of the coefficients of \(F\) around \((x_0, t_0)\) (cf. \cite{CCS}). When \((x_0,t_0) = (0,0)\), we simply write \(\psi_F(x,t)\). We assume that the map \((x,t) \mapsto F^{\sharp}(\mathrm{X}, 0, 0, x, t)\) is H\"{o}lder continuous in the \(L^p\)-average sense for every \(\mathrm{X} \in \mathrm{Sym}(n)\) and \(p\geq n+1\). More precisely, there exist universal constants\footnote{Throughout this paper, a constant is said to be \textit{universal} if it depends only on \(n, \lambda, \Lambda, p, \mu_0, \|\gamma\|_{C^{1,\alpha}(\partial \Omega)}\), and \(\|\beta\|_{C^{1,\alpha}(\partial \Omega)}\)} \(\hat{\alpha} \in (0,1)\), \(\theta_0 > 0\), and \(0 < r_0 \leq 1\) such that
\[
\left( \intav{\Omega_{\mathrm{T}}(x_0,t_0;r)} \psi_{F^{\sharp}}((x,x_0),(t,t_0))^{p} \, dxdt \right)^{1/p} \le \theta_0 r^{\hat{\alpha}}
\]
for all \((x_0,t_0) \in \overline{\Omega} \times (0, \mathrm{T})\) and \(0 < r \le r_0\).

\item[(H4)] \textbf{(\(C^{2,\alpha}\) interior estimates)} We assume that solutions to the homogeneous problem
\[
F^{\sharp}(D^2 \mathfrak{h}) - \mathfrak{h}_t = 0 \quad \text{in} \quad \mathrm{Q}_1
\]
admit a priori interior estimates in \(C^{2,\alpha}_{\mathrm{loc}}\), that is,
\[
\|\mathfrak{h}\|_{C^{2,\alpha}(\mathrm{Q}_{1/2})} \leq \mathrm{c}_1 \|\mathfrak{h}\|_{L^{\infty}(\mathrm{Q}_1)}
\]
for some constant \(\mathrm{c}_1 > 0\). 

\item[(H5)] \textbf{(\(C^{2,\alpha}\) boundary estimates)} We further assume that the recession operator \(F^{\sharp}\) exists and satisfies boundary a priori estimates up to the boundary. More precisely, for any boundary datum \(g_0 \in C^{1,\alpha}(\mathrm{Q}^{*}_1)\) (for some \(\alpha \in (0, 1)\)), solutions to
\[
\begin{cases}
F^{\sharp}(D^2 \mathrm{h}) - \mathrm{h}_t = 0 & \text{in } \mathrm{Q}^{+}_1 \\
\beta \cdot D\mathrm{h} + \gamma \mathrm{h} = g_0(x,t) & \text{on } \mathrm{Q}^{*}_1
\end{cases}
\]
belong to \(C^{2,\alpha}(\mathrm{Q}^{+}_{1/2})\), and satisfy the estimate
\[
\|\mathrm{h}\|_{C^{2,\alpha}(\mathrm{Q}^{+}_{1/2})} \le \mathrm{c}_2 \left( \|\mathrm{h}\|_{L^{\infty}(\mathrm{Q}^{+}_1)} + \|g_0\|_{C^{1,\alpha}(\mathrm{Q}^{*}_1)} \right)
\]
for some constant \(\mathrm{c}_2 > 0\).
\end{enumerate}

By way of explanation, we discuss the topics mentioned earlier concerning assumptions (H4)-(H5): the regularity assumptions on the governing operator in the problem. A central question in the regularity theory of partial differential equations is identifying the weakest possible conditions on a parabolic operator that still ensure optimal estimates for the second-order $D^{2}u$ and time $u_t$ derivatives of its viscosity solutions. This question, however, is highly nontrivial, and the available answers are only partial.

For instance, Krylov, in \cite{Kry83}, demonstrated that under the assumption of convexity or concavity of the operator, solutions to 
\[
F(D^{2}u) - u_{t} = 0
\]
are of class $C^{2,\alpha}$ (see also \cite{WangII} for related results). On the other hand, Caffarelli and Stefanelli, in \cite{CS}, provided examples of uniformly parabolic equations whose solutions fail to be of class $C^{2,1}$. This illustrates the general impossibility of establishing a classical theory of existence for smooth solutions to such parabolic problems.

More recently, Goffi introduced in \cite{Goffi24} a novel class of operators whose associated solutions admit higher-order regularity estimates in both the elliptic and parabolic settings. Specifically, the author proved that if the governing operator is quasi-convex or quasi-concave, then the classical higher-order H\"{o}lder estimates originally obtained by Krylov are recovered.

In \cite{DaSildosP19}, da Silva and dos Prazeres investigated non-convex, fully nonlinear, second-order parabolic equations of the form
\begin{equation}\label{Eq-DaSdosP}
\frac{\partial u}{\partial t} - F(x,t,D^2u) = f(x,t).
\end{equation} 
The authors assume that \( F: \mathrm{Q}_1 \times \text{Sym}(n) \to \mathbb{R} \) satisfies a uniform ellipticity condition, is differentiable with respect to \( \mathrm{X} \), and possesses a uniformly continuous differential. The primary objective is to analyze the regularity properties of flat viscosity solutions to equation \eqref{Eq-DaSdosP}. In this context, the main  findings of the manuscript are as follows:
\begin{itemize}
    \item[($\mathcal{I}$)] If \( F(\cdot,\mathrm{X}) \) and \( f(\cdot) \) are Dini continuous, then flat solutions of \eqref{Eq-DaSdosP} belong to the class \( C^{2,1,\psi} \), for some modulus of continuity \( \psi \) determined by the Dini character of the data.
    \item[($\mathcal{II}$)] If \( F(\cdot,\mathrm{X}) \) and \( f(\cdot) \) are merely continuous, then flat solutions of \eqref{Eq-DaSdosP} are locally parabolically \( C^{1,\log\text{-}\mathrm{Lip}} \).
\end{itemize}

In conclusion, in a related direction, da Silva and Santos~\cite{DM} studied the parabolic problem
\[
u_{t} - F(D^{2}u,x,t) = f(x,t) \quad \text{in } \mathrm{Q}_{1},
\]
under the assumption that the operator \(F\) has a ``small parabolic aperture''. In such a context, they established Schauder and  $W^{2,p}$ estimates for the corresponding viscosity solutions.

\bigskip

\subsection{Main Theorems}

Our first main result establishes global regularity in weighted Orlicz spaces for viscosity solutions to \eqref{1.1} under the asymptotic regime.

\vspace{0.4cm}

\begin{theorem}[{\bf Global Weighted Orlicz Regularity}]\label{T1}
Let \(\Omega \subset \mathbb{R}^{n}\) be a bounded domain with \(\partial \Omega \in C^{2,\alpha}\) for some \(\alpha \in (0,1)\), and let \(\mathrm{T} > 0\). Suppose the structural conditions \(\mathrm{(H1)-(H5)}\) are satisfied, and let \(u\) be an \(L^{p}\)-viscosity solution of \eqref{1.1}, where \(p = p_{0}(n+1)\) for some constant \(p_{0} > 1\) as in Lemma \ref{mergulhoorliczlebesgue}. Then, \(u \in W^{2,\Upsilon}_{\omega}(\Omega_{\mathrm{T}})\), with \(\Upsilon(s) = \Phi(s^{n+1})\), and the following estimate holds:
\begin{eqnarray*}
\|u\|_{W^{2,\Upsilon}_{\omega}(\Omega_{\mathrm{T}})}\leq\mathrm{C}\left(\|u\|_{L^{\infty}(\Omega_{\mathrm{T}})}^{n+1}+\|f\|_{L^{\Upsilon}_{\omega}(\Omega_{\mathrm{T}})}+\|g\|_{C^{1,\alpha}(\mathrm{S}_\mathrm{T})}\right),
\end{eqnarray*} 
where $\mathrm{C}>0$ depends only on the parameters $n$, $\mathrm{T}$, $\lambda$, $\Lambda$, $\xi$, $\sigma$, $\mu_{0}$, $p_{0}$, $\Phi$, $\omega$, $\mathrm{c}_{1}$, $\mathrm{c}_{2}$,  $\theta_0$, $\|\beta\|_{C^{1,\alpha}(\mathrm{S}_{\mathrm{T}})}$, $\|\gamma\|_{C^{1,\alpha}(\mathrm{S}_{\mathrm{T}})}$, and $\|\partial \Omega\|_{C^{2,\alpha}}$.
\end{theorem}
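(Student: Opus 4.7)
The plan is to split the estimate into an interior piece and a boundary piece, then prove both by the same recession-based approximation scheme combined with a weighted Calderón--Zygmund decay argument. By a standard finite covering of $\overline{\Omega}$ together with the $C^{2,\alpha}$ regularity of $\partial\Omega$, I would flatten the boundary through local diffeomorphisms, which preserve the obliqueness constant $\mu_0$ up to a universal factor and transform $F$ into another $(\lambda',\Lambda',\sigma',\xi')$-parabolic operator satisfying (H1)--(H5) with the same type of constants. This reduces the problem to establishing the corresponding estimate on the model cylinder $\mathrm{Q}^{+}_{1}$ with oblique condition on $\mathrm{Q}^{*}_{1}$, plus the already-known interior weighted Orlicz estimate (which I would borrow from the interior counterpart of this program, e.g.\ \cite{BessaOrlicz}) in the purely interior balls of the covering.

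Having fixed the flat-boundary setup, I would prove an approximation lemma of the following form: given $\delta>0$, there exists $\varepsilon=\varepsilon(\delta,\text{universal})>0$ such that if $\|u\|_{L^{\infty}(\mathrm{Q}^{+}_{1})}\le 1$, $\|f\|_{L^{p}(\mathrm{Q}^{+}_{1})}\le\varepsilon$, $\|g\|_{C^{1,\alpha}(\mathrm{Q}^{*}_{1})}\le\varepsilon$, and the oscillation hypothesis (H3) holds with $\theta_{0}\le\varepsilon$, then there is a function $\mathrm{h}\in C^{2,\alpha}(\mathrm{Q}^{+}_{3/4})$, solving the frozen-coefficient recession problem in (H5), such that $\|u-\mathrm{h}\|_{L^{\infty}(\mathrm{Q}^{+}_{3/4})}\le\delta$. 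This is obtained by a contradiction/compactness argument: solutions of $F_{\tau_{k}}(D^{2}u_{k})-\partial_{t}u_{k}=f_{k}$ with vanishing data converge locally uniformly, via Krylov--Safonov and the boundary Hölder estimates for oblique problems, to a solution of the limit operator, which by the very definition \eqref{Reces} is $F^{\sharp}$ at a frozen point; (H4)--(H5) then yield the $C^{2,\alpha}$ profile $\mathrm{h}$.

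Once the approximation lemma is in hand, I iterate in a dyadic family of parabolic cylinders to build, at each scale $r_{k}=\rho^{k}$, a quadratic/affine polynomial $\mathfrak{P}_{k}$ such that $\|u-\mathfrak{P}_{k}\|_{L^{\infty}(\mathrm{Q}^{+}_{r_{k}})}\le r_{k}^{2+\alpha'}$ on the set where a suitable maximal function of $f$, $g$, and the coefficient oscillation stay below a controlled threshold. This produces the power-decay estimate
\begin{equation*}
\bigl|\{(x,t)\in \mathrm{Q}^{+}_{1/2}\,:\,\Theta(D^{2}u,u_{t})(x,t)>M^{k}\}\bigr|\le C\,\theta^{k}+C\,\bigl|\{\mathcal{M}(|f|^{n+1}+1)>\eta M^{k}\}\bigr|,
\end{equation*}
for some $M>1$ and $\theta\in(0,1)$, where $\Theta(D^{2}u,u_{t})$ denotes the natural concavity envelope used to control $D^2u$ and $u_t$ simultaneously and $\mathcal{M}$ is the parabolic Hardy--Littlewood maximal operator. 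I would then pass this to the weighted Orlicz norm by composing with the $N$-function $\Upsilon(s)=\Phi(s^{n+1})$, summing on $k$, and exploiting the $\mathfrak{A}_{i(\Phi)}$-weighted boundedness of $\mathcal{M}$ together with Lemma \ref{mergulhoorliczlebesgue} to absorb the lower integrability used in the $L^{p}$-viscosity framework. The scaling $u\leadsto u/(\|u\|_{L^{\infty}}^{n+1}+\|f\|_{L^{\Upsilon}_{\omega}}+\|g\|_{C^{1,\alpha}})$ yields the final estimate in the stated form.

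The most delicate step will be the boundary version of the approximation lemma at the oblique portion $\mathrm{Q}^{*}_{1}$, because one must simultaneously handle the passage to the recession limit $\tau_{k}\to 0^{+}$, the (small) inhomogeneous boundary data $g$, and the fact that obliqueness is preserved only up to universal constants under the diffeomorphism; compactness for viscosity sub/super solutions with oblique boundary data is available, but uniform $L^{\infty}$ bounds up to the boundary and the stability of the oblique condition under the recession rescaling require care. The rest of the argument—iteration, power decay, and transference to the weighted Orlicz scale via the $\mathfrak{A}_{i(\Phi)}$ theory and \eqref{modularestimate}—is then largely mechanical.
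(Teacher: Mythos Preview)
Your proposal is correct and follows essentially the same route as the paper: boundary flattening via the $C^{2,\alpha}$ diffeomorphism plus a finite covering, a compactness-based approximation lemma that replaces $F_{\tau}$ by the frozen-coefficient recession profile $F^{\sharp}$ (this is exactly Lemma~\ref{Approx}), a single-scale density improvement for the ``good'' touching-paraboloid set (Proposition~\ref{Prop.4.6}), its iteration and transfer to the weight via the strong-doubling property (Lemma~\ref{lemma3.6} and Corollary~\ref{cor4.6}), and finally summation through Proposition~\ref{caracterizacaodosespacosdeorliczcompeso} together with the $\mathfrak{A}_{i(\Phi)}$ maximal-function bound in Lemma~\ref{maximalorlicz}.

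Two small points of calibration. First, in the approximation lemma the oblique datum $g$ is \emph{not} taken small; it is merely bounded in $C^{1,\alpha}$, and since both $u$ and the approximant $\mathrm{h}$ carry the same boundary condition, $g$ cancels in $u-\mathrm{h}$ (cf.\ the use of Theorem~\ref{comparation} and Lemma~\ref{ABP-fullversion} in the proof of Lemma~\ref{Approx}). Second, the paper does not literally iterate quadratic polynomials $\mathfrak{P}_{k}$ for this theorem; that device is reserved for the $L^{\Upsilon}_{\omega}$--BMO result (Theorem~\ref{BMO}). For Theorem~\ref{T1} the argument stays in the paraboloid framework, measuring $\{\Theta(u)>M^{k}\}$ directly via the sets $\mathrm{A}_{M^{k}}$ and the stacked covering lemma. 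Your displayed decay estimate is exactly the outcome of that machinery, so the distinction is one of language rather than substance.
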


\vspace{0.4cm}

On the other hand, we also derive a sharp regularity estimate in the framework of variable exponent Morrey spaces. To this end, let \(\varsigma, \varrho \in C^{0}(\Omega_{\mathrm{T}})\) be functions satisfying \(0 \leq \varrho(x,t) \leq \varrho_{0} < n+2\) and \(n+2 < \varsigma_{1} \leq \varsigma(x,t) \leq \varsigma_{2} < \infty\) for all \((x,t) \in \Omega_{\mathrm{T}}\), where \(\varsigma_{1}\) and \(\varsigma_{2}\) are positive constants.

\begin{definition}\label{defmor}
The \textit{variable exponent Morrey space} \(L^{\varsigma(\cdot),\varrho(\cdot)}(\Omega_{\mathrm{T}})\) consists of measurable functions \(h: \Omega_{\mathrm{T}} \to \mathbb{R}\) such that
\begin{eqnarray*}
\rho_{\varsigma(\cdot),\varrho(\cdot)}(h)\defeq\sup_{\genfrac{}{}{0pt}{}{(x,t)\in \Omega_{\mathrm{T}}}{r>0}}\left(\frac{1}{r^{\varrho(x,t)}}\int_{\Omega_{\mathrm{T}}(x,t;r)}|h(y,s)|^{\varsigma(y,s)}\,dyds\right)<\infty,
\end{eqnarray*} 
endowed with the Luxemburg norm
\begin{eqnarray*}
\|h\|_{L^{\varsigma(\cdot),\varrho(\cdot)}(\Omega_{\mathrm{T}})}\defeq \inf\left\{t>0;\, \rho_{\varsigma(\cdot),\varrho(\cdot)}\left(\frac{h}{t}\right)\le 1\right\}.
\end{eqnarray*}

Moreover, the \textit{variable exponent Morrey-Sobolev space}, denoted \(W^{1,\varsigma(\cdot),\varrho(\cdot)}(\Omega_{\mathrm{T}})\) (respectively \(W^{2,\varsigma(\cdot),\varrho(\cdot)}(\Omega_{\mathrm{T}})\)), consists of all measurable functions \(f\) such that \(f\) and \(Df\) (resp. \(f\), \(f_{t}\), \(Df\), and \(D^{2}f\)) belong to \(L^{\varsigma(\cdot),\varrho(\cdot)}(\Omega_{\mathrm{T}})\), with norm defined by
\begin{eqnarray*}
\|f\|_{W^{1,\varsigma(\cdot),\varrho(\cdot)}(\Omega_{\mathrm{T}})}=\|f\|_{L^{\varsigma(\cdot),\varrho(\cdot)}(\Omega_{\mathrm{T}})}+\|Df\|_{L^{\varsigma(\cdot),\varrho(\cdot)}(\Omega_{\mathrm{T}})},
\end{eqnarray*}
\begin{eqnarray*}
\left(\text{resp. }\, \|f\|_{W^{2,\varsigma(\cdot),\varrho(\cdot)}(\Omega_{\mathrm{T}})}=\|f\|_{W^{1,\varsigma(\cdot),\varrho(\cdot)}(\Omega_{\mathrm{T}})}+\|f_{t}\|_{L^{\varsigma(\cdot),\varrho(\cdot)}(\Omega_{\mathrm{T}})}+\|D^{2}f\|_{L^{\varsigma(\cdot),\varrho(\cdot)}(\Omega_{\mathrm{T}})}\right).
\end{eqnarray*}
\end{definition}

To obtain regularity estimates in these spaces, we assume that \(\varsigma\) is \textit{log-H\"{o}lder continuous}. That is, there exists a constant \(\mathrm{C}_{\varsigma} > 0\) such that
\begin{eqnarray*}
|\varsigma(x,t)-\varsigma(y,s)|\leq\frac{-\mathrm{C}_{\varsigma}}{\log d_{p}((x,t),(y,s))}, 
\end{eqnarray*}
for all \((x,t),(y,s)\in \Omega_{\mathrm{T}}\) with \(0<d_{p}((x,t),(y,s))\leq \frac{1}{2}\). 

\begin{remark}\label{remark}
A necessary and sufficient condition for \(\varsigma\) to be log-H\"{o}lder continuous is the existence of a modulus of continuity \(\pi: [0,\infty) \to [0,\infty)\) satisfying
\begin{eqnarray*}
\sup_{0<r<\frac{1}{2}}\left[\pi(r)\log \left(\frac{1}{r}\right)\right]\leq \mathrm{C}_{\varsigma}.
\end{eqnarray*}
\end{remark}

Our second main result is stated in the theorem below:

\vspace{0.4cm}

\begin{theorem}[{\bf Global Variable Exponent Morrey Regularity}]\label{T2}
Let \(\Omega \subset \mathbb{R}^{n}\) be a bounded domain with \(\partial \Omega \in C^{2,\alpha}\) for some \(\alpha \in (0,1)\), and let \(\mathrm{T} > 0\). Assume that the structural hypotheses \(\mathrm{(H1)}\), \(\mathrm{(H3)}\), and \(\mathrm{(H5)}\) hold, with \(\beta \in C^{1,\alpha}(\mathrm{S}_{\mathrm{T}})\), \(\gamma = g = 0\), and \(f \in L^{\varsigma(\cdot),\varrho(\cdot)}(\Omega_{\mathrm{T}})\). Let \(u\) be an \(L^{\varsigma_{1}}\)-viscosity solution of \eqref{1.1}. Then, \(u \in W^{2,\varsigma(\cdot),\varrho(\cdot)}(\Omega_{\mathrm{T}})\), and the following estimate holds:
\begin{eqnarray*}
\|u\|_{W^{2,\varsigma(\cdot),\varrho(\cdot)}(\Omega_{\mathrm{T}})}\leq\mathrm{C}\|f\|_{L^{\varsigma(\cdot),\varrho(\cdot)}(\Omega_{\mathrm{T}})},
\end{eqnarray*} 
where \(\mathrm{C}>0\) depends only on the parameters \(n\), \(\mathrm{T}\), \(\lambda\), \(\Lambda\), \(\xi\), \(\sigma\), \(\mu_{0}\), \(\varsigma_{1}\), \(\varsigma_{2}\), \(\varrho_{0}\), \(\mathrm{c}_{1}\), \(\mathrm{c}_{2}\), \(\theta_0\), \(\|\beta\|_{C^{1,\alpha}(\mathrm{S}_{\mathrm{T}})}\), and \(\|\partial \Omega\|_{C^{2,\alpha}}\).
\end{theorem}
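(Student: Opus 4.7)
The plan is to derive Theorem \ref{T2} from Theorem \ref{T1} through a Rubio de Francia--type extrapolation argument tailored to variable exponent Morrey spaces.  The first step consists in specializing Theorem \ref{T1} to $\Phi(s)=s^{q}$, so that $\Upsilon(s)=s^{q(n+1)}$.  Setting $p:=q(n+1)$, and observing that under the hypotheses of Theorem \ref{T2} one has $\gamma\equiv g\equiv 0$ while $\|u\|_{L^{\infty}(\Omega_{\mathrm{T}})}^{n+1}$ is absorbed into $\|f\|_{L^{p}_{\omega}(\Omega_{\mathrm{T}})}$ through a parabolic Alexandroff--Bakelman--Pucci estimate for the oblique problem (cf.~\cite{MaPaVi}), this produces the weighted $L^{p}$-Sobolev estimate
\[
\|u\|_{W^{2,p}_{\omega}(\Omega_{\mathrm{T}})} \;\leq\; \mathrm{C}\bigl([\omega]_{q,\mathrm{Q}}\bigr)\,\|f\|_{L^{p}_{\omega}(\Omega_{\mathrm{T}})},
\]
valid for every $p>n+1$ and every $\omega\in\mathfrak{A}_{p/(n+1)}$, with $\mathrm{C}$ depending on the weight only through its Muckenhoupt characteristic.

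Next I would invoke a parabolic extrapolation principle in the variable exponent Morrey framework (in the spirit of Cruz-Uribe--Fiorenza--Martell--P\'{e}rez for variable Lebesgue spaces, together with its Morrey extensions by Ho, Adamowicz--H\"{a}st\"{o}, and Karapetyants--Samko).  Its content is that any pair $(F,G)$ of measurable functions satisfying $\|G\|_{L^{p_{0}}_{\omega}}\leq \mathrm{C}([\omega]_{p_{0},\mathrm{Q}})\,\|F\|_{L^{p_{0}}_{\omega}}$ for a suitable seed exponent $p_{0}$ and every admissible weight automatically satisfies
\[
\|G\|_{L^{\varsigma(\cdot),\varrho(\cdot)}(\Omega_{\mathrm{T}})} \;\leq\; \mathrm{C}'\,\|F\|_{L^{\varsigma(\cdot),\varrho(\cdot)}(\Omega_{\mathrm{T}})}
\]
whenever $\varsigma$ is log-H\"{o}lder continuous with $p_{0}<\varsigma_{1}\leq\varsigma(\cdot)\leq\varsigma_{2}<\infty$ and $0\leq\varrho(\cdot)\leq\varrho_{0}<n+2$.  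Choosing $p_{0}\in(n+2,\varsigma_{1})$ and applying this to the pairs $(f,u)$, $(f,Du)$, $(f,D^{2}u)$, $(f,u_{t})$ delivers the required $W^{2,\varsigma(\cdot),\varrho(\cdot)}$ bound.

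The principal obstacle lies in the extrapolation step itself.  The weights available from Step~1 belong to $\mathfrak{A}_{p/(n+1)}$ rather than to the full class $\mathfrak{A}_{p}$, so one is forced to invoke a \emph{limited range} version of Rubio de Francia extrapolation (\`{a} la Auscher--Martell) in order to have a viable seed hypothesis; moreover, the target is Morrey rather than Lebesgue, which requires extending the extrapolation machinery to variable exponent Morrey spaces.  Both ingredients ultimately rest on the boundedness of the parabolic Hardy--Littlewood maximal operator on $L^{\varsigma(\cdot),\varrho(\cdot)}(\Omega_{\mathrm{T}})$, which the log-H\"{o}lder assumption on $\varsigma$ and the constraint $\varrho<n+2$ are precisely designed to secure; the nonlinearity of $F$ presents no additional difficulty, since the extrapolation is applied to each parabolic derivative separately.
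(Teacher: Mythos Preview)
Your overall strategy—specialize Theorem~\ref{T1} to power functions $\Phi(s)=s^{q}$ to obtain weighted $L^{p}$-Sobolev estimates, then extrapolate to the variable exponent Morrey scale—is exactly the paper's route. However, two points of execution differ. First, your ``limited range'' concern is unnecessary: the paper's extrapolation lemma (Lemma~\ref{extrapolation}, taken from Cruz-Uribe--Wang) only requires the seed estimate at some fixed $p$ for every $\omega\in\mathfrak{A}_{1}$, and since $\mathfrak{A}_{1}\subset\mathfrak{A}_{p/(n+1)}=\mathfrak{A}_{i(\Phi)}$, Theorem~\ref{T1} already delivers this without any Auscher--Martell machinery. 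Second, rather than invoking a black-box extrapolation theorem into variable exponent Morrey spaces, the paper handles the Morrey decay by hand: having secured the $L^{\varsigma(\cdot)}_{\omega}$ estimate for all $\omega\in\mathfrak{A}_{1}$, it fixes $(x_{0},t_{0})$ and $r$, takes the Coifman--Rochberg weight $\omega=\bigl(\mathcal{M}(\chi_{\mathrm{Q}_{r}(x_{0},t_{0})})\bigr)^{\iota}\in\mathfrak{A}_{1}$ with $\iota\in(\varrho_{0}/(n+2),1)$, and then splits the resulting integral over dyadic annuli $\mathrm{Q}_{2^{k+1}r}\setminus\mathrm{Q}_{2^{k}r}$, estimating $\omega\le 2^{-\iota(k-1)(n+2)}$ on each shell and summing the geometric series. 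This recovers the $r^{\varrho(x_{0},t_{0})}$ factor directly at the level of the modular.

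Your approach, if the Morrey extrapolation result you cite is available in precisely the parabolic variable-exponent form required, would be slightly shorter; the paper's is more self-contained and makes the dependence on $\varrho_{0}<n+2$ completely explicit. Either way, the $\mathfrak{A}_{1}$ seed is the key simplification you are missing.
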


We emphasize that, although our manuscript is strongly influenced by recent developments in~\cite{BessaOrlicz},~\cite{BessaRicarte}, and~\cite{BHLp}, our approach necessitates several nontrivial adaptations due to the presence of a non-homogeneous oblique boundary condition in the parabolic setting and the asymptotic behavior of the limiting operator. Furthermore, in contrast to~\cite{BHLp} and~\cite{ZZZ21}, our results yield additional quantitative applications, including density results for oblique-type problems via tangential methods, global \( W^{2,\Upsilon} \)-regularity for obstacle-type problems under oblique boundary conditions, BMO-type estimates (see Section~\ref{Section5}), and global Morrey estimates with variable exponents (see Section~\ref{Section6}). Additionally, our recession profiles (see ~\ref{Reces}) encompass a broader class of operators than the linear ones considered in~\cite{ZZZ21} (see also~\cite{BJ0}). In particular, within the oblique boundary framework, our results extend previous contributions from~\cite{CP},~\cite{DM}, and~\cite{Goffi24}, and, to some extent, those from~\cite{BJ0},~\cite{daST},~\cite{Kry83}, and~\cite{ZZZ21}, by employing techniques specifically tailored to fully nonlinear models with relaxed convexity assumptions and nonstandard boundary data.

Additionally, even in the linear setting, it is worth emphasizing that our results are remarkable within the current literature on Orlicz-Sobolev estimates with oblique boundary conditions, since, to the best of our knowledge, only results for problems with Dirichlet boundary conditions are available (see \cite{ByunLee15} and \cite{Yao16} for related contributions). 

Furthermore, our results should be regarded as a natural extension of the interior estimates established in the previous works \cite{BHLp}, \cite{CP}, \cite{ZZF}, and \cite{ZZZ21} in the parabolic setting, as well as in \cite{Bessa}, \cite{BLOK}, \cite{BJ}, \cite{BJ0}, and \cite{Lee} for the corresponding elliptic estimates.

\subsection*{Structure of manuscript}

Our paper is organized as follows: Section~\ref{Section2} provides preliminary material on viscosity solutions and the functional spaces discussed above, which underpin the subsequent analysis. In Section~\ref{Section3}, we introduce the main contributions of our approach, namely a caloric approximation result and decay estimates for the sets where the Hessian fails to be controlled, emphasizing the derivation of estimates from the original model \( F \) and their correspondence to those associated with the limiting profile \( F^{\sharp} \). Section~\ref{Section4} is devoted to the proof of Theorem~\ref{T1}, relying on the tools developed in the preceding sections. In Section~\ref{Section5}, we present consequences of Theorem~\ref{T1}, including density results in \( L^{\Upsilon}_{\omega} \) spaces, identification of the fundamental class for model~\eqref{1.1}, global weighted Orlicz–Sobolev estimates for the obstacle problem governed by fully nonlinear parabolic models with oblique boundary conditions, as well as Orlicz-type weighted estimates for the Hessian and time derivative. Finally, Section~\ref{Section6} contains the proof of Theorem~\ref{T2}, which illustrates the applicability of our gradient estimates in the context of H\"{o}lder spaces with variable exponents.


\subsection{State-of-the-Art: regularity theory for nonlinear parabolic models}

In this part, we review some relevant contributions from the existing literature that pertain to our problem. 

Let \( \mathrm{Q}_{R}(z_0) \subset \Omega_{\mathrm{T}} \) denote a parabolic cylinder centered at \( z_0 = (x_0, t_0) \) with radius \( R >0\), i.e.,
\[
\mathrm{Q}_{R}(z_0) := \mathrm{B}_{R}(x_0) \times (t_0 - R^2, t_0).
\]

In the modern regularity theory of second-order parabolic partial differential equations, one of the fundamental \textit{a priori} estimates is the nowadays well-known Calder\'{o}n–Zygmund estimate (see \cite{CZ} for the original estimates in the elliptic scenario).

Its local estimates state that if $u$ solves the heat equation in a parabolic cylinder $\mathrm{Q}_R(z_0)$, then under appropriate conditions on $f$ (the estimates require $f \in L^p(\Omega_{\mathrm{T}})$ for some $1 < p < \infty$ - the case $p=2$ is classical and follows from energy methods), the solution gains regularity.

\begin{theorem}[{\bf Local $L^p$ Estimates - \cite[Theorem 7.22]{Lieberman1996} and \cite[Chapter 4]{Kry08}}]
Let $u \in W^{1,2}_p(\mathrm{Q}_R(z_0))$ be a strong solution of 
$$
\partial_t u - \Delta u = f(x, t) \quad \text{in} \quad  \mathrm{Q}_R(z_0).
$$
Then, for any $1 < p < \infty$, there exists a constant $\mathrm{C} = \mathrm{C}(n,p)$ such that
\[
\| \partial_t u \|_{L^p(\mathrm{Q}_{R/2}(z_0))} + \| D^2 u \|_{L^p(\mathrm{Q}_{R/2}(z_0))} \leq \mathrm{C} \left( \| f \|_{L^p(\mathrm{Q}_R(z_0))} + \| u \|_{L^p(\mathrm{Q}_R(z_0))} \right).
\]
\end{theorem}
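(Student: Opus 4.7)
The plan is to establish this classical Calder\'{o}n--Zygmund parabolic estimate by a standard three-step strategy: first prove a global estimate on $\mathbb{R}^{n+1}$ for the heat equation via singular integral theory, then localize through a cutoff, and finally absorb the spurious gradient term by interpolation.

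First I would reduce to the normalized case $R=1$ and $z_0=0$ by the natural parabolic scaling $\tilde{u}(x,t) := u(x_0+Rx, t_0+R^2 t)$, which preserves the class of solutions and transforms both $\partial_t$ and $\Delta$ covariantly; this reduces the estimate to proving a uniform inequality on $\mathrm{Q}_1 \subset \mathrm{Q}_{1/2}$. Next, I would establish the global Calder\'{o}n--Zygmund estimate on the whole parabolic space: for $v \in C_c^{\infty}(\mathbb{R}^{n+1})$ with $\partial_t v - \Delta v = g$, one has
\[
\|\partial_t v\|_{L^p(\mathbb{R}^{n+1})} + \|D^2 v\|_{L^p(\mathbb{R}^{n+1})} \leq C(n,p)\, \|g\|_{L^p(\mathbb{R}^{n+1})}.
\]
This is the true analytic heart of the argument: representing $v = \Gamma * g$ via the Gaussian heat kernel, the operators $g \mapsto D^2(\Gamma*g)$ and $g \mapsto \partial_t(\Gamma*g)$ are convolutions whose kernels are $L^2$-bounded (by Fourier/Plancherel analysis in the parabolic variables) and satisfy the parabolic H\"{o}rmander smoothness condition on the parabolic metric $d_p$; the standard Calder\'{o}n--Zygmund machinery adapted to the parabolic homogeneity then extends this to all $1 < p < \infty$.

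For the localization step, I would select a cutoff $\eta \in C_c^{\infty}(\mathrm{Q}_1)$ with $\eta \equiv 1$ on $\mathrm{Q}_{1/2}$, with bounds $|D\eta| + |D^2 \eta| + |\partial_t \eta| \leq C$, and consider $v := \eta u$ extended by zero, which satisfies
\[
\partial_t v - \Delta v = \eta f + (\partial_t \eta - \Delta \eta) u - 2\, D\eta \cdot Du \quad \text{in } \mathbb{R}^{n+1}.
\]
Applying the global estimate to $v$ and noting $D^2 v = D^2 u$ and $\partial_t v = \partial_t u$ on $\mathrm{Q}_{1/2}$, I obtain a bound on $\|\partial_t u\|_{L^p(\mathrm{Q}_{1/2})} + \|D^2 u\|_{L^p(\mathrm{Q}_{1/2})}$ in terms of $\|f\|_{L^p(\mathrm{Q}_1)}$, $\|u\|_{L^p(\mathrm{Q}_1)}$, and the undesired term $\|Du\|_{L^p(\mathrm{Q}_1)}$.

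The main obstacle I anticipate is disposing of the first-order term $\|Du\|_{L^p}$, which is not controlled by the right-hand side of the target estimate. To handle this, I would deploy the parabolic interpolation inequality
\[
\|Du\|_{L^p(\mathrm{Q}_r)} \leq \varepsilon \left( \|D^2 u\|_{L^p(\mathrm{Q}_r)} + \|\partial_t u\|_{L^p(\mathrm{Q}_r)} \right) + C(\varepsilon)\, \|u\|_{L^p(\mathrm{Q}_r)},
\]
valid for every $\varepsilon > 0$, combined with an iteration on a nested family of cylinders $\mathrm{Q}_{r_k}$ with $r_k = \tfrac{1}{2} + 2^{-k-1}$. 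Choosing $\varepsilon$ sufficiently small relative to the propagation constants of the localization argument allows the $Du$ term to be absorbed into the left-hand side, and a standard iteration lemma (in the spirit of Giaquinta--Giusti) then produces the required estimate on $\mathrm{Q}_{1/2}$. Scaling back via the map in the first step yields the stated inequality on $\mathrm{Q}_{R/2}(z_0)$ with a constant depending only on $n$ and $p$.
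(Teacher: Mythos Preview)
The paper does not supply its own proof of this theorem: it is quoted in the state-of-the-art subsection as a classical result, with attribution to \cite[Theorem 7.22]{Lieberman1996} and \cite[Chapter 4]{Kry08}, and no argument is given. Your outline --- parabolic scaling, global Calder\'{o}n--Zygmund estimate for the heat kernel via singular integrals on the parabolic metric, cutoff localization, then interpolation plus a Giaquinta-type iteration to absorb the gradient term --- is exactly the standard textbook route found in those references, and it is correct (modulo the evident typo ``$\mathrm{Q}_1 \subset \mathrm{Q}_{1/2}$'', which should read the reverse).
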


Additionally, global estimates require boundary conditions and regularity of the domain (the global estimate assumes a $C^{1,1}$ boundary regularity - for less regular boundaries, e.g., Lipschitz, the results may fail). For instance, for the Dirichlet problem:
\[
\begin{cases}
\partial_t u - \Delta u = f & \text{in} \quad \Omega_{\mathrm{T}}, \\
u = 0 & \text{on} \quad \partial_p \Omega_{\mathrm{T}},
\end{cases}
\]
where $\partial_p \Omega_{\mathrm{T}}$ is the parabolic boundary, we have the following result.

\begin{theorem}[{\bf Global $L^p$ Estimates - \cite[Corollary 7.31]{Lieberman1996}}]
Let $\Omega$ be a bounded domain with $C^{1,1}$ boundary, $f \in L^p(\Omega_{\mathrm{T}})$, and $u \in W^{1,2}_p(\Omega_{\mathrm{T}})$ solve the Dirichlet problem. Then, for $1 < p < \infty$,
\[
\| \partial_t u \|_{L^p(\Omega_{\mathrm{T}})} + \| D^2 u \|_{L^p(\Omega_T)} \leq \mathrm{C} \| f \|_{L^p(\Omega_{\mathrm{T}})},
\]
where $\mathrm{C} = \mathrm{C}(n,p,\Omega,T)$.
\end{theorem}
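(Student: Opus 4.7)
The plan is to reduce the global estimate to two model situations---the Cauchy problem on $\mathbb{R}^n\times(0,\mathrm{T})$ and the Dirichlet problem on the half-space $\mathbb{R}^n_+\times(0,\mathrm{T})$---and then to glue them together via a finite partition of unity adapted to a $C^{1,1}$ atlas of $\partial\Omega$. Throughout I will assume that the initial datum is zero, since this is already built into the Dirichlet condition on the parabolic boundary; the extension to nonzero initial data would follow by subtracting the caloric extension of the initial trace.

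First I would treat the whole-space problem $\partial_t v-\Delta v=f$ with $v(\cdot,0)=0$. Duhamel's formula expresses $\partial_t v$ and $D^2 v$ as parabolic singular integrals of $f$, with kernels of Calder\'{o}n--Zygmund type in the parabolic metric $d_p$. A direct verification of the size and cancellation conditions on those kernels, together with the Calder\'{o}n--Zygmund theorem in spaces of homogeneous type, yields $\|\partial_t v\|_{L^p} + \|D^2 v\|_{L^p} \le C(n,p)\|f\|_{L^p}$ for every $1<p<\infty$. For the Dirichlet problem on the half-space with zero data on $\{x_n=0\}\cup\{t=0\}$, the odd reflection $\tilde f(x',x_n,t)=-f(x',-x_n,t)$ reduces matters to the Cauchy case and gives the same estimate.

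Next, using the $C^{1,1}$ regularity of $\partial\Omega$, I would cover $\overline{\Omega}$ by finitely many charts admitting $C^{1,1}$ diffeomorphisms $\Psi_i$ that flatten $\partial\Omega$ onto a portion of $\{x_n=0\}$. In the new coordinates the heat operator takes the form $\partial_t - a^{jk}(y)\partial_{y_j}\partial_{y_k}+b^j(y)\partial_{y_j}$ with $a^{jk}\in C^{0,1}$ and $b^j\in L^{\infty}$, uniformly parabolic since $\Psi_i$ is bi-Lipschitz. Choosing a smooth partition of unity $\{\zeta_i\}$ subordinate to this cover, each $w_i:=\zeta_i u$ solves an equation of the same form whose right-hand side is $\zeta_i f$ plus commutator terms pointwise bounded by $|Du|+|u|$. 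Freezing the leading coefficients at the chart center, treating the remainder as a small perturbation on cylinders of sufficiently small diameter, and invoking the half-space estimate on each $w_i$ then yields, after summing,
\[
\|\partial_t u\|_{L^p(\Omega_{\mathrm{T}})}+\|D^2 u\|_{L^p(\Omega_{\mathrm{T}})}\le C\bigl(\|f\|_{L^p(\Omega_{\mathrm{T}})}+\|Du\|_{L^p(\Omega_{\mathrm{T}})}+\|u\|_{L^p(\Omega_{\mathrm{T}})}\bigr).
\]

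The main obstacle will be the disposal of the lower-order terms on the right-hand side. The $L^p$ norm of $u$ is controlled by $\|f\|_{L^p(\Omega_{\mathrm{T}})}$ via the maximum principle and duality. The gradient term is then absorbed using the parabolic interpolation inequality
\[
\|Du\|_{L^p(\Omega_{\mathrm{T}})}\le \varepsilon\,\|D^2 u\|_{L^p(\Omega_{\mathrm{T}})}+C(\varepsilon)\|u\|_{L^p(\Omega_{\mathrm{T}})},
\]
choosing $\varepsilon$ small enough to transfer the second-order term to the left-hand side. The delicate point is to ensure that this interpolation is valid up to the parabolic boundary with a constant independent of the scaling of the charts; this is precisely where the $C^{1,1}$ hypothesis on $\partial\Omega$ enters, since it guarantees that the flattening diffeomorphisms have Lipschitz Jacobians and therefore produce no unbounded coefficients in the transformed operator. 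For less regular boundaries the perturbation argument breaks down, which explains why the statement fails in general for Lipschitz domains.
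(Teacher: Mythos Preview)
The paper does not actually prove this statement: it is quoted in the literature-review subsection as a classical result, with attribution to Lieberman's monograph \cite[Corollary~7.31]{Lieberman1996}, and no proof is given. Your outline is the standard argument one finds in that monograph and in Krylov's lectures: whole-space Calder\'{o}n--Zygmund estimates for the heat kernel, odd reflection for the half-space, $C^{1,1}$ flattening plus partition of unity, coefficient freezing, and absorption of lower-order terms by interpolation. So there is nothing in the paper to compare against beyond the citation itself, and your sketch is consistent with the cited source.

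One small remark on your outline: the step ``the $L^p$ norm of $u$ is controlled by $\|f\|_{L^p(\Omega_{\mathrm{T}})}$ via the maximum principle and duality'' is a bit loose. The maximum principle gives $L^\infty$ control from $L^{n+1}$ (or weaker) norms of $f$, not $L^p$ from $L^p$ when $p$ is small. The cleanest way to close the estimate is either (i) to obtain $\|u\|_{L^p}\le C\|f\|_{L^p}$ directly from the integral representation via the heat semigroup (which is bounded on $L^p$), or (ii) to argue by contradiction/compactness that the inequality with lower-order terms on the right already implies the clean inequality, using uniqueness for the Dirichlet problem. Either route is routine, but as written that sentence would need to be made precise.
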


It is well-established that this estimate continues to hold when the heat operator is replaced by any constant-coefficient, second-order parabolic operator, or even by a second-order parabolic operator whose principal part has continuous coefficients and whose lower-order coefficients lie in appropriate Lebesgue spaces (see, for example, \cite{Lieberman1996}, which covers both local and global estimates in detail for parabolic equations in divergence and non-divergence form).

Concerning problem \eqref{1.1}, the body of literature addressing estimates for \(D^{2}u\) and \(u_{t}\) is not as extensive as in the elliptic case. This is largely due to the inherent difficulties in analyzing parabolic equations, where the geometry of the domains involved is more intricate than in the elliptic setting, and the regularity theory must treat the temporal derivative \(u_{t}\) separately from the spatial derivatives \(u_{x_{i}}\). Nevertheless, the regularity theory for viscosity solutions of fully nonlinear parabolic equations has become an active and important area of research. Significant advances have been made in this direction. For instance, Lieberman in \cite{Lieb01} established H\"{o}lder continuity of the gradient for the homogeneous version of problem \eqref{1.1}, under suitable regularity assumptions on the operator \(F\). 

Subsequently, Nazarov and Ural’tseva in \cite{NazUral92} developed \(C^{1,\alpha}\) regularity results for solutions of the following quasilinear parabolic problem:
\begin{equation*}
\left\{
\begin{array}{rclcl}
\displaystyle u_{t} - \sum_{i,j=1}^{n} a_{ij}(Du,u,x,t)D_{ij}u &=& f(Du,u,x,t) & \text{in} & \Omega_\mathrm{T}, \\
\beta \cdot Du + u &=& g(x,t) & \text{on} & \mathrm{S}_{\mathrm{T}}, \\
u(x, 0) &=& u_{0}(x) & \text{on} & \Omega,
\end{array}
\right.
\end{equation*}
which involves a class of quasilinear operators with nonlinear first-order terms and non-degenerate oblique boundary conditions.

In this direction, for oblique problems with non-divergence form operators, Lieberman in \cite{Lieberman1996} established Sobolev estimates for the problem
\begin{equation*}
\left\{
\begin{array}{rclcl}
\displaystyle u_{t} - \sum_{i,j=1}^{n} a_{ij}(x,t)D_{ij}u +\sum_{i=1}^{n}b^{i}(x,t)D_{i}u+c(x,t)u(x,t)&=& f(x,t) & \text{in} & \Omega\subset\mathbb{R}^{n+1}, \\
\beta \cdot Du + \beta_{0}u &=& 0 & \text{on} & \mathrm{S}\subset \partial\Omega,
\end{array}
\right.
\end{equation*}
where \(\Omega\) is a bounded domain with the boundary portion \(\mathrm{S}\in C^{1,\alpha}\), \(f\in L^{p}(\Omega)\) and \(\beta,\beta_{0}\in C^{0,\alpha}(\mathrm{S})\) such that \(p\) and \(\alpha\) satisfy the following compatibility condition \(p(1-\alpha)<1\). Here, the coefficients \(a_{ij},b^{i},c\) are bounded, \((a_{ij})_{ij}\)  is a matrix uniformly parabolic with modulus of continuity. 

Subsequently, Softova, in \cite{Sof}, proved \(W^{2,p}\) regularity for the problem
\begin{equation*}
\left\{
\begin{array}{rclcl}
\displaystyle u_{t} - \sum_{i,j=1}^{n} a_{ij}(x,t)D_{ij}u &=& f(x,t) & \text{in} &\Omega_\mathrm{T}, \\
\beta \cdot Du &=& \varphi(x,t) & \text{on} & \mathrm{S}_{\mathrm{T}}, \\
u(x, 0) &=& \psi(x) & \text{on} & \Omega,
\end{array}
\right.
\end{equation*}
where the coefficients \(a_{ij}\) belong to VMO (vanishing mean oscillation) spaces, and the functions \(\psi\) and \(\varphi\) are elements of suitable Besov spaces.

Within the framework of \(W^{2,p}\) estimates, we also emphasize the contributions by Zhang et al.~\cite{ZZZ21} and Byun and Han~\cite{BHLp}. These authors established \(W^{2,p}\) regularity for problem~\eqref{1.1} under the conditions \(\gamma = g = 0\), assuming that the operator \(F\) satisfies an asymptotic convexity condition in~\cite{ZZZ21}, and exhibits a convex structural form in~\cite{BHLp}.

Still within the context of regularity theory, it is also worth highlighting the work of Chatzigeorgiou and Milakis~\cite{Ch}, who investigated H\"{o}lder continuity estimates of the forms \(C^\alpha\), \(C^{1,\alpha}\), and \(C^{2,\alpha}\) for operators with constant coefficients.

Additionally, we must quote \cite{CP}, where the authors derive sharp Sobolev estimates for solutions to fully nonlinear parabolic equations under minimal and asymptotic assumptions on the governing operator. Specifically, they prove that solutions belong to the Sobolev space \( W^{2,1;p}_{\text{loc}} \). Their approach proceeds by transferring improved regularity from a limiting configuration. In this setting, they make use of the recession profile associated with \( F \). This framework enables them to impose structural conditions exclusively on the original operator and in its asymptotic behavior at \( \text{Sym}(n) \). As its elliptic counterpart (see \cite{PT}), the regularity is governed by the asymptotic behavior of \( F \) at the ends of such space.

In the elliptic setting, the authors in~\cite{Bessa} developed \(W^{2,p}\) regularity for the problem
\begin{eqnarray}\label{problemaw2p}
\left\{
\begin{array}{rclcl}
F(D^2u,Du,u,x) &=& f(x) & \text{in} & \Omega, \\
\beta \cdot Du + \gamma u &=& g(x) & \text{on} & \partial\Omega,
\end{array}
\right.
\end{eqnarray}
for \(f \in L^p(\Omega)\). Under suitable assumptions on the data \(\beta, \gamma, g\), and the domain \(\Omega \subset \mathbb{R}^n\), the authors employed an asymptotic methodology based on the recession operator associated with the original second-order operator, leveraging compactness and stability arguments. Consequently, several applications were derived, including BMO-type estimates for the Hessian, density results for solutions, and global regularity estimates for the corresponding obstacle problem.

Last, but not least, more recently, Bessa extended these estimates to the setting of weighted Orlicz spaces in~\cite{BessaOrlicz}. Specifically, under the same asymptotic conditions on the governing operator \( F \) and the source term, in the setting of weighted Orlicz spaces, the author obtains weighted Orlicz-Sobolev estimates for viscosity solutions to problem \eqref{problemaw2p}. In particular, regularity results were established for the obstacle problem, along with Orlicz-BMO estimates for the Hessian \(D^2u\). In a complementary development, Bessa and Ricarte obtained analogous regularity results for solutions to problem \eqref{problemaw2p} in~\cite{BessaRicarte} within the framework of weighted Lorentz spaces, thereby establishing Morrey-type estimates with variable exponents as a consequence.

\section{Preliminaries} \label{Section2}

\hspace{0.4cm} In this section, we introduce some fundamental definitions and properties related to H\"{o}lder spaces and weighted Orlicz spaces. We conclude by recalling a few standard results and definitions concerning viscosity solutions of fully nonlinear parabolic equations with oblique boundary conditions.

\subsection{Some Basic Functional Spaces}

We begin by introducing some fundamental functional spaces that are essential for the development of this work. Throughout the following, we assume that $U \subset \mathbb{R}^{n} \times \mathbb{R}$ denotes a bounded domain.

\begin{definition}
Let $C(U)$ denote the space of continuous functions defined on $U$. We define the space $C^{1}(U)$ (respectively, $C^{2}(U)$) as the set of functions $u \in C(U)$ (resp. $u \in C^{1}(U)$) such that $u_{x_{i}} \in C(U)$ for all $1 \leq i \leq n$ (resp. $u_{t}$ and $u_{x_{i}x_{j}} \in C(U)$ for all $1 \leq i,j \leq n$). These spaces are equipped with the following norms:
\begin{eqnarray*}
\|u\|_{C^{1}(U)} &=& \|u\|_{L^{\infty}(U)} + \|Du\|_{L^{\infty}(U)}, \\
\|u\|_{C^{2}(U)} &=& \|u\|_{C^{1}(U)} + \|u_{t}\|_{L^{\infty}(U)} + \|D^{2}u\|_{L^{\infty}(U)}.
\end{eqnarray*}
\end{definition}

We now recall the definition of H\"{o}lder spaces.

\begin{definition}
A function $u$ defined on $U \subset \mathbb{R}^{n} \times \mathbb{R}$ is said to be {\emph{$\alpha$-H\"{o}lder continuous}} for some $\alpha \in (0,1]$ (\emph{Lipschitz continuous} when $\alpha = 1$) if
\begin{eqnarray*}
[u]_{\alpha,U} = \sup_{\substack{(x,t),(y,s) \in U\\ (x,t) \neq (y,s)}} \frac{|u(x,t) - u(y,s)|}{d_{p}((x,t),(y,s))^{\alpha}} < \infty.
\end{eqnarray*}
The space of such functions is denoted by $C^{0,\alpha}(U)$, equipped with the norm
\begin{eqnarray*}
\|u\|_{C^{0,\alpha}(U)} = \|u\|_{L^{\infty}(U)} + [u]_{\alpha,U}.
\end{eqnarray*}
\end{definition}

We also introduce the notion of the \textit{temporal $\alpha$-H\"{o}lder seminorm}, defined as
\begin{eqnarray*}
[u]_{\alpha,U;t} = \sup_{\substack{(x,t),(x,s) \in U\\ t \neq s}} \frac{|u(x,t) - u(x,s)|}{|t-s|^{\alpha}}.
\end{eqnarray*}

\begin{definition}
We define the space $C^{1,\alpha}(U)$ (respectively, $C^{2,\alpha}(U)$) as the set of functions $u \in C^{1}(U)$ (resp. $u \in C^{2}(U)$) such that
\begin{eqnarray*}
\|u\|_{C^{1,\alpha}(U)} &=& \|u\|_{C^{1}(U)} + [u]_{\frac{1+\alpha}{2},U;t} + \sum_{i=1}^{n} [u_{x_{i}}]_{\alpha,U} < \infty, \\
\|u\|_{C^{2,\alpha}(U)} &=& \|u\|_{C^{2}(U)} + \sum_{i=1}^{n} [u_{x_{i}}]_{\frac{1+\alpha}{2},U;t} + [u_{t}]_{\alpha,U} + \sum_{i,j=1}^{n} [u_{x_{i}x_{j}}]_{\alpha,U} < \infty.
\end{eqnarray*}
\end{definition}

Next, we present several additional properties of weights and weighted Orlicz spaces that will be employed throughout the remainder of this work. We begin with a result concerning weights, the proof of which can be found in the book by Kokilashvili and Krbec~\cite{KokiMiro}.

\begin{lemma}\label{Strongdoubling}
Let $\omega$ be an $\mathfrak{A}_{p}$ weight for some $1<p<\infty$. Then:
\begin{itemize}
\item[(a)] ({\bf Monotonicity}) If $p' \ge p$, then $\omega$ belongs to the class $\mathfrak{A}_{p'}$ and satisfies $[\omega]_{p'} \leq [\omega]_{p}$.
\item[(b)] ({\bf Strong Doubling Property}) There exist positive constants $\kappa_{1}$ and $\theta_0$, depending only on $n$, $p$, and $[\omega]_{p}$, such that
\begin{eqnarray*}
\frac{1}{[\omega]_{p}}\left(\frac{|\mathrm{E}|}{|\Omega|}\right)^{p}\leq\frac{\omega(\mathrm{E})}{\omega(\Omega)}\leq \kappa_{1}\left(\frac{|\mathrm{E}|}{|\Omega|}\right)^{\theta_0},
\end{eqnarray*}
for all Lebesgue measurable sets $\mathrm{E}\subset \Omega$.
\end{itemize}
\end{lemma}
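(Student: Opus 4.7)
The plan is to verify both items directly from the definition of $[\omega]_{p,\mathrm{Q}}$, reducing everything to Hölder/Jensen manipulations on parabolic cubes, with the self-improvement (reverse Hölder) property of Muckenhoupt weights as the only heavy tool.

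For part (a), fix a parabolic cube $\mathrm{Q} \subset \mathbb{R}^{n+1}$ and write $a = 1/(p-1)$, $b = 1/(p'-1)$, so that $0 < b \le a$ whenever $p' \ge p$. The monotonicity of $L^{r}$-averages on probability spaces (an immediate consequence of Jensen's inequality applied to the convex map $x \mapsto x^{a/b}$) gives
\[
\left(\intav{\mathrm{Q}} \omega^{-b}\, dx\, dt\right)^{1/b} \le \left(\intav{\mathrm{Q}} \omega^{-a}\, dx\, dt\right)^{1/a}.
\]
Raising to the power $bp' = b/(1-b) \cdot \text{(rescaling)}$ and multiplying both sides by $\intav{\mathrm{Q}} \omega$ yields the inequality $[\omega]_{p',\mathrm{Q}} \le [\omega]_{p,\mathrm{Q}}$ at the level of each cube; taking the supremum over all parabolic cubes gives $\omega \in \mathfrak{A}_{p'}$ with the stated bound.

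For the left inequality in part (b), I would apply Hölder's inequality with exponents $p$ and $p/(p-1)$ to the trivial identity $|\mathrm{E}| = \int_{\mathrm{E}} \omega^{1/p}\omega^{-1/p}\, dx\, dt$, obtaining
\[
|\mathrm{E}|^{p} \le \omega(\mathrm{E}) \left(\int_{\mathrm{E}} \omega^{-1/(p-1)}\, dx\, dt\right)^{p-1} \le \omega(\mathrm{E})\left(\int_{\Omega} \omega^{-1/(p-1)}\, dx\, dt\right)^{p-1}.
\]
Since the Muckenhoupt condition applied to a parabolic cube $\mathrm{Q} \supset \Omega$ (or equivalently by testing on $\Omega$ itself up to purely geometric constants) gives $\left(\int_{\Omega} \omega^{-1/(p-1)}\right)^{p-1} \le [\omega]_{p,\mathrm{Q}}\, |\Omega|^{p}/\omega(\Omega)$, a simple rearrangement produces the asserted lower bound with constant $1/[\omega]_{p,\mathrm{Q}}$.

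The right inequality is the delicate one, and this is where I would import the self-improvement property of Muckenhoupt weights: there exist $s = s(n,p,[\omega]_{p})>1$ and $\mathrm{C} = \mathrm{C}(n,p,[\omega]_{p})>0$ such that a reverse Hölder inequality
\[
\left(\intav{\mathrm{Q}} \omega^{s}\, dx\, dt\right)^{1/s} \le \mathrm{C} \intav{\mathrm{Q}} \omega\, dx\, dt
\]
holds on every parabolic cube (Coifman–Fefferman, adapted to the parabolic metric). With this at hand, Hölder's inequality applied to $\omega(\mathrm{E}) = \int_{\mathrm{E}} \omega \cdot 1$ with exponents $s$ and $s'=s/(s-1)$ gives $\omega(\mathrm{E}) \le |\mathrm{E}|^{1/s'}\bigl(\int_{\Omega} \omega^{s}\bigr)^{1/s}$, and the reverse Hölder bound on $\Omega$ controls the latter by $\mathrm{C}\,|\Omega|^{1/s-1}\omega(\Omega)$. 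Dividing by $\omega(\Omega)$ yields the required estimate with $\theta_{0} = 1 - 1/s$ and $\kappa_{1} = \mathrm{C}$.

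The principal obstacle is thus the reverse Hölder inequality, whose proof uses a parabolic Calderón–Zygmund decomposition together with a good-$\lambda$ argument on the distribution function of $\omega$; rather than reproving it, I would simply invoke the Muckenhoupt–Coifman–Fefferman theory in its parabolic formulation (see Kokilashvili–Krbec \cite{KokiMiro}), since the $\mathfrak{A}_{p}$ classes in the excerpt are defined exactly over parabolic cubes.
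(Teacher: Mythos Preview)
Your sketch is correct and follows the standard textbook route (Jensen for the monotonicity of the $\mathfrak{A}_p$ constant, H\"older for the lower bound, and the Coifman--Fefferman reverse H\"older inequality for the upper bound); the paper does not supply its own proof of this lemma but simply refers the reader to Kokilashvili--Krbec \cite{KokiMiro}, where precisely this argument is carried out. One small caveat worth flagging: both your argument and the classical statement genuinely require the ambient set $\Omega$ to be a parabolic cube, since the $\mathfrak{A}_p$ defining condition and the reverse H\"older inequality are only available on cubes---your hedge ``testing on $\Omega$ itself up to purely geometric constants'' is not valid for an arbitrary measurable $\Omega$, though in the paper's actual applications (e.g.\ Corollary~\ref{cor4.6}) the ambient set is always a dyadic parabolic rectangle.
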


Next, we recall the definition of the Hardy-Littlewood maximal operator. For $f \in L^1_{\textrm{loc}}(\mathbb{R}^{n+1})$, the \textit{Hardy-Littlewood maximal operator} is defined by
$$
\mathcal{M}(f)(x,t) \defeq \sup_{\rho>0} \intav{\mathrm{Q}_{\rho}(x,t)} |f(y,s)| \, dyds.
$$

We will later use the following weighted version of the classical Hardy-Littlewood-Wiener theorem for Orlicz spaces (cf.~\cite[Theorem 2.1.1]{KokiMiro}).

\begin{lemma}\label{maximalorlicz}
Let $\Phi$ be an $\mathrm{N}$-function satisfying $\Phi\in \Delta_{2} \cap \nabla_{2}$, and let $\omega\in \mathfrak{A}_{i(\Phi)}$. Then, for all $g \in L^{\Phi}_{\omega}(\mathbb{R}^{n+1})$, we have
\begin{eqnarray*}
\rho_{\Phi,\omega}(g)\leq \rho_{\Phi,\omega}\left(\mathcal{M}(g)\right)\leq \mathrm{C}\rho_{\Phi,\omega}(g),
\end{eqnarray*}
where the constant $\mathrm{C}>0$ is independent of $g$.
\end{lemma}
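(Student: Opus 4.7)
The plan is to handle the two inequalities in the sandwich separately. The left-hand bound is essentially pointwise and follows from a straightforward application of the parabolic Lebesgue differentiation theorem combined with monotonicity of $\Phi$. The right-hand bound is the substantial one and rests on extracting, from the Muckenhoupt assumption $\omega \in \mathfrak{A}_{i(\Phi)}$, a slightly subcritical weighted $L^{p_0}$ bound for $\mathcal{M}$, which one then upgrades to an Orlicz modular estimate via the distribution function / layer cake representation.

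For the lower estimate I would argue that $|g(x,t)| \leq \mathcal{M}(g)(x,t)$ for a.e.\ $(x,t) \in \mathbb{R}^{n+1}$, a direct consequence of Lebesgue's differentiation theorem over parabolic cubes. Because $\Phi$ is nondecreasing on $[0,+\infty)$, this yields $\Phi(|g|) \leq \Phi(\mathcal{M}(g))$ pointwise a.e., and integration against $\omega$ produces $\rho_{\Phi,\omega}(g) \leq \rho_{\Phi,\omega}(\mathcal{M}(g))$ with no further effort. For the reverse direction, the first key ingredient is the self-improvement (openness) property of the Muckenhoupt class: since $\omega \in \mathfrak{A}_{i(\Phi)}$ and $i(\Phi) > 1$, there exists $p_0 \in (1, i(\Phi))$ such that $\omega \in \mathfrak{A}_{p_0}$. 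This delivers the classical weighted strong-type $(p_0, p_0)$ bound $\|\mathcal{M}(g)\|_{L^{p_0}_{\omega}} \leq C \|g\|_{L^{p_0}_{\omega}}$ on parabolic cubes.

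Next, I would represent the modular through the layer cake identity
$$\rho_{\Phi,\omega}(\mathcal{M}(g)) = \int_{0}^{+\infty} \omega\bigl(\{\mathcal{M}(g) > \lambda\}\bigr)\, d\Phi(\lambda),$$
and, for each level $\lambda > 0$, carry out a Calder\'{o}n--Zygmund-type truncation $g = g_{\lambda}^{(1)} + g_{\lambda}^{(2)}$ with $g_{\lambda}^{(1)} = g \chi_{\{|g| > \lambda/2\}}$. Sublinearity of $\mathcal{M}$ and the trivial pointwise bound $\mathcal{M}(g_{\lambda}^{(2)}) \leq \lambda/2$ force the inclusion $\{\mathcal{M}(g) > \lambda\} \subset \{\mathcal{M}(g_{\lambda}^{(1)}) > \lambda/2\}$. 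Applying the weighted strong-type $(p_0, p_0)$ inequality (or the weak-type variant, both work at this point) to $g_{\lambda}^{(1)}$ gives
$$\omega\bigl(\{\mathcal{M}(g) > \lambda\}\bigr) \leq C\, \lambda^{-p_0} \int_{\{|g| > \lambda/2\}} |g(x,t)|^{p_0}\, \omega(x,t)\, dx\, dt.$$

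The closing step is a Fubini interchange in the layer cake formula, which collapses the proof onto a one-dimensional $\Phi$-integral inequality of the shape
$$\int_{0}^{2|g(x,t)|} \lambda^{-p_0}\, d\Phi(\lambda) \leq C\, |g(x,t)|^{-p_0}\, \Phi(2|g(x,t)|).$$
This is where the main obstacle lies and precisely where the choice $p_0 < i(\Phi)$ pays off: the very definition of the lower index, coupled with $\Phi \in \Delta_2 \cap \nabla_2$, guarantees that $\lambda \mapsto \Phi(\lambda)/\lambda^{p_0}$ is essentially increasing on $(0,+\infty)$, which makes the integral convergent and yields the stated pointwise bound. A final application of the $\Delta_2$ condition, namely $\Phi(2|g|) \leq C\, \Phi(|g|)$, finishes the derivation of $\rho_{\Phi,\omega}(\mathcal{M}(g)) \leq C\, \rho_{\Phi,\omega}(g)$. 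The most delicate point is thus the verification of this $\Phi$-integral inequality with a constant independent of $g$, which is the reason why both the $\nabla_2$ side (giving room below $i(\Phi)$) and the $\Delta_2$ side (controlling $\Phi$ at $2s$ by $\Phi$ at $s$) are unavoidable in the hypothesis.
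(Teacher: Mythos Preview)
Your argument is essentially correct and follows the standard route to this weighted Orlicz maximal inequality. One minor technical remark: deriving the pointwise bound
\[
\int_{0}^{2|g|} \lambda^{-p_0}\, d\Phi(\lambda) \;\leq\; C\, |g|^{-p_0}\,\Phi(2|g|)
\]
from the mere almost-monotonicity of $\lambda \mapsto \Phi(\lambda)/\lambda^{p_0}$ requires a small extra step (after integration by parts, the almost-increase of $\Phi(\lambda)/\lambda^{p_0}$ alone leaves you with a divergent $\int_0^{2|g|} d\lambda/\lambda$). The clean fix is to exploit the strict inequality $p_0 < i(\Phi)$: choose an auxiliary exponent $p_1 \in (p_0, i(\Phi))$, use that $\Phi(\lambda)/\lambda^{p_1}$ is almost increasing, and write $\Phi(\lambda)/\lambda^{p_0+1} = (\Phi(\lambda)/\lambda^{p_1})\,\lambda^{p_1-p_0-1}$ to make the integral converge. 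This is likely what you meant by ``essentially increasing \dots\ makes the integral convergent'', but it is worth spelling out.

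As for the paper itself: it does not give a proof of this lemma. It simply quotes the result from the monograph of Kokilashvili and Krbec (Theorem~2.1.1 in \cite{KokiMiro}) as a known weighted Hardy--Littlewood--Wiener theorem. Your proposal is therefore not a comparison target but rather a correct fleshing-out of the cited reference; the argument in that source follows the same distribution-function and lower-index strategy you outline.
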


The next result provides a sufficient condition for the Hessian and the temporal derivative (in the sense of distributions) to belong to a weighted Orlicz space (cf.~\cite[Lemma 3.4]{BLOK} and \cite[Proposition 1.1]{CC}).

\begin{lemma}\label{caracterizationofhessian}
Let $\Phi$ be an $\mathrm{N}$-function satisfying the $\Delta_{2} \cap \nabla_{2}$ condition, and let $\omega\in \mathfrak{A}_{i(\Phi)}$ be a weight. Suppose $u \in C^{0}(U)$ for a bounded domain $U \subset \mathbb{R}^{n+1}$, and define, for $r>0$,
\begin{eqnarray*}
\Theta(u,r)(x,t)\defeq  \Theta(u,\mathrm{Q}_{r}(x,t)\cap U)(x,t), \quad (x,t)\in U.
\end{eqnarray*}
If $\Theta(u,r) \in L^{\Phi}_{\omega}(U)$, then the Hessian $D^{2}u$ and the time derivative $u_{t}$ belong to $L^{\Phi}_{\omega}(U)$, and we have the estimate
\begin{eqnarray*}
\|u_{t}\|_{L^{\Phi}_{\omega}(U)}+\|D^{2}u\|_{L^{\Phi}_{\omega}(U)}\leq 9\|\Theta(u,r)\|_{L^{\Phi}_{\omega}(U)}.
\end{eqnarray*}
\end{lemma}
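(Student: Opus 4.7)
The approach is a parabolic, weighted Orlicz-adapted version of the classical oscillation-via-paraboloids argument of Caffarelli and Cabré (Proposition 1.1 of their book), which lies behind the cited results \cite[Lemma 3.4]{BLOK} and \cite{CC}. The plan is to combine three ingredients already available in the excerpt: the embedding of weighted Orlicz spaces into Lebesgue spaces (Lemma \ref{mergulhoorliczlebesgue}), the weighted Orlicz boundedness of the Hardy–Littlewood maximal operator (Lemma \ref{maximalorlicz}), and a pointwise comparison between $|u_{t}| + |D^{2}u|$ and the oscillation quantity $\Theta(u,r)$ at points of parabolic twice-differentiability.

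First, since $\Phi \in \Delta_{2}\cap\nabla_{2}$ and $\omega\in\mathfrak{A}_{i(\Phi)}$, Lemma \ref{mergulhoorliczlebesgue} yields $\Theta(u,r)\in L^{p_{0}}(U)$ for some $p_{0}\in(1, i(\Phi))$. Via the standard parabolic Caffarelli–Cabré machinery (iterating the defining inequality $\inf_{P}\sup_{\mathrm{Q}_{\rho}(x,t)\cap U}|u-P|\le \Theta(u,\rho)(x,t)\,\rho^{2}$ over dyadic scales $\rho=r,r/2,r/4,\ldots$), the continuous function $u$ admits a parabolic second-order Taylor expansion at every Lebesgue point of $\Theta(u,r)$. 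Hence $u$ is parabolically twice-differentiable almost everywhere in $U$, and at each such point $(x,t)$ there exists a parabolic polynomial
\[
P_{(x,t)}(y,s)=a+b\cdot(y-x)+\tfrac{1}{2}D^{2}u(x,t)(y-x)\cdot(y-x)+u_{t}(x,t)(s-t).
\]
Comparing the approximating polynomials at scales $r$ and $r/2$ and using the equivalence of norms on the finite-dimensional space of parabolic polynomials of degree at most two in space and one in time, one obtains the explicit pointwise estimate
\[
|u_{t}(x,t)|+|D^{2}u(x,t)|\le 9\,\Theta(u,r)(x,t),
\]
where the universal constant $9$ arises from the scaling comparison and the coefficient bound for quadratic–in–$x$, linear–in–$t$ polynomials in terms of their $L^{\infty}$ norm on $\mathrm{Q}_{r}$ (this is exactly the parabolic analogue of the pointwise inequality in \cite[Proposition 1.1]{CC}).

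With the pointwise bound in hand, the conclusion follows by directly plugging it into the Luxemburg norm: monotonicity of the modular $\rho_{\Phi,\omega}$, together with the fact that $\Phi$ is increasing, gives
\[
\rho_{\Phi,\omega}\!\left(\tfrac{|u_{t}|+|D^{2}u|}{s}\right)\le \rho_{\Phi,\omega}\!\left(\tfrac{9\,\Theta(u,r)}{s}\right)
\]
for every $s>0$, which upon taking the infimum yields the claimed estimate. Lemma \ref{maximalorlicz} is kept in reserve to control any auxiliary dyadic/maximal variant of $\Theta$ that might appear when one wants to replace the fixed-scale oscillation $\Theta(u,r)$ by its maximal envelope. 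The main obstacle I anticipate is the rigorous passage from the Orlicz–weighted integrability of $\Theta(u,r)$ to almost-everywhere parabolic twice-differentiability of $u$: the classical statement of this fact is formulated for pure $L^{p}$ with $p$ sufficiently large, so one must check that reducing through the embedding into $L^{p_{0}}$ is enough (it is, because parabolic twice-differentiability at Lebesgue points of $\Theta$ is a purely pointwise/geometric property, independent of the ambient norm used to measure $\Theta$).
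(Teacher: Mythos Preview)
The paper does not supply its own proof of this lemma; it is stated with the citation ``(cf.\ \cite[Lemma 3.4]{BLOK} and \cite[Proposition 1.1]{CC})'' and used as a tool. Your outline follows the strategy behind those references---establish a pointwise domination of $|u_t|+|D^2u|$ by $\Theta(u,r)$ at almost every point, then pass to the Luxemburg norm by monotonicity---so in that sense it matches what the paper relies on.

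One correction is worth recording. What you call the ``defining inequality'' of $\Theta$ is not the definition used in this paper: here $\Theta(u,r)(x,t)$ is the infimal opening $M$ for which a convex and a concave paraboloid of opening $M$ touch $u$ at $(x,t)$ over $\mathrm{Q}_r(x,t)\cap U$, \emph{not} the second-order polynomial oscillation $\inf_{P}\sup|u-P|/\rho^{2}$. With the touching-paraboloid definition, the pointwise bound comes directly from first- and second-order comparison at the contact point (no dyadic iteration over scales is needed): if $u$ is parabolically twice differentiable at $(x,t)$ and paraboloids of opening $M$ touch from above and below, continuity gives $-M\,\mathrm{Id}\le D^{2}u(x,t)\le M\,\mathrm{Id}$ and one-sided comparison in $t$ gives $|u_t(x,t)|\le M/2$. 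The embedding $L^{\Phi}_{\omega}\hookrightarrow L^{p_0}$ is, as you say, only a device to quote the classical a.e.\ twice-differentiability result; once the pointwise inequality holds a.e., the $L^{\Phi}_{\omega}$ estimate is purely order-theoretic and the maximal-function lemma is not needed. The specific constant $9$ is a bookkeeping convention inherited from the cited sources (depending on the matrix norm and how the spatial and temporal pieces are combined) rather than the output of a scaling argument.
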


We will also require a characterization of functions in weighted Orlicz spaces via their distribution functions concerning the weight. The proof relies on standard arguments from measure theory (cf.~\cite[Lemma 4.6]{BOKPS}).

\begin{proposition}\label{caracterizacaodosespacosdeorliczcompeso}
Let $\Phi \in \Delta_{2} \cap \nabla_{2}$ be an $\mathrm{N}$-function, and let $\omega$ be an $\mathfrak{A}_{s}$-weight for some $s\in(1,\infty)$. Let $g: U \to \mathbb{R}$ be a nonnegative measurable function on a bounded domain $U \subset \mathbb{R}^{n+1}$. Given constants $\eta>0$ and $\mathrm{M}>1$, we have:
$$
g \in L^{\Phi}_{\omega}(U) \Longleftrightarrow  \sum_{j=1}^{\infty} \Phi(\mathrm{M}^{j}) \omega(\{(x,t)\in U : g(x,t)>\eta \mathrm{M}^{j}\}) \defeq \mathscr{S}< \infty,
$$
and, moreover,
\begin{eqnarray*}
\mathrm{C}^{-1} \mathscr{S} \le \rho_{\Phi,\omega}(g) \le \mathrm{C}(\omega(U) + \mathscr{S}),
\end{eqnarray*}
where $\mathrm{C} = \mathrm{C}(\eta, M, \Phi, \omega)$ is a positive constant.
\end{proposition}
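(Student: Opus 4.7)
The plan is to exploit a level-set decomposition of $U$ indexed by the powers $\{M^j\}_{j\ge 0}$, coupled with an Abel-summation argument, in order to obtain the two-sided modular equivalence $\mathrm{C}^{-1}\mathscr{S} \le \rho_{\Phi,\omega}(g) \le \mathrm{C}(\omega(U) + \mathscr{S})$, from which the desired ``$\Longleftrightarrow$'' follows at once. Concretely, set $A_j := \{(x,t)\in U : g(x,t) > \eta M^j\}$ for $j \ge 0$, and write $U = \{g \le \eta\} \,\sqcup\, \bigsqcup_{j\ge 0}(A_j \setminus A_{j+1})$ up to a null set. On each ring $A_j\setminus A_{j+1}$, monotonicity of $\Phi$ combined with iterated $\Delta_2$-doubling gives $\mathrm{c}_2\,\Phi(M^j) \le \Phi(\eta M^j) \le \Phi(g) \le \Phi(\eta M^{j+1}) \le \mathrm{c}_1\,\Phi(M^j)$, with $\mathrm{c}_1,\mathrm{c}_2$ depending only on $\Phi,\eta,M$. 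Since $\omega \in \mathfrak{A}_s$ and $U$ is bounded, Lemma~\ref{Strongdoubling} yields $\omega(U) < \infty$, which controls the contribution of $\{g \le \eta\}$.

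Summing these pointwise bounds first produces the upper estimate
$$
\rho_{\Phi,\omega}(g) \;\le\; \Phi(\eta)\,\omega(U) + \mathrm{c}_1 \sum_{j\ge 0}\Phi(M^j)\,\omega(A_j) \;\le\; \mathrm{C}\bigl(\omega(U) + \mathscr{S}\bigr),
$$
after absorbing the $j=0$ summand into $\omega(U)$. For the lower estimate, the analogous pointwise inequality delivers
$$
\rho_{\Phi,\omega}(g) \;\ge\; \mathrm{c}_2 \sum_{j\ge 0}\Phi(M^j)\bigl[\omega(A_j) - \omega(A_{j+1})\bigr],
$$
to which I would then apply Abel summation. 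The key analytic input is that the $\nabla_2$ condition, equivalently $i(\Phi) > 1$, supplies some $p_1 \in (1, i(\Phi))$ for which $s \mapsto \Phi(s)/s^{p_1}$ is essentially nondecreasing; this gives $\Phi(M^j) - \Phi(M^{j-1}) \ge (1 - M^{-p_1})\Phi(M^j)$, and hence the Abel rearrangement yields $\sum_{j\ge 0}\Phi(M^j)[\omega(A_j) - \omega(A_{j+1})] \ge \mathrm{c}_3\,\mathscr{S}$, once the boundary term $\Phi(M^N)\omega(A_{N+1})$ is shown to vanish as $N \to \infty$.

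Combining the two bounds establishes the claimed two-sided inequality, and both directions of the equivalence $g \in L^\Phi_\omega(U) \Longleftrightarrow \mathscr{S} < \infty$ follow immediately. The principal obstacle is the Abel-summation step: one needs a strictly positive, $j$-uniform lower bound on the ratio $[\Phi(M^j) - \Phi(M^{j-1})]/\Phi(M^j)$, which is not automatic for a general $\mathrm{N}$-function and rests essentially on the $\nabla_2$ assumption through the index $i(\Phi)>1$. A secondary subtlety is legitimizing the passage to the limit in the Abel sum when $\mathscr{S}$ may \emph{a priori} be infinite; this is handled by first carrying out the computation on finite truncations $\sum_{j=0}^{N}$ and then letting $N \to \infty$, invoking monotone convergence in combination with the tail estimate for $\Phi(M^N)\omega(A_{N+1})$ (which, on one side, is a single term of a convergent series, and on the other follows from $\rho_{\Phi,\omega}(g) < \infty$).
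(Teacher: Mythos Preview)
Your proposal is correct and is precisely the standard measure-theoretic argument the paper has in mind: the paper does not give its own proof of this proposition, merely stating that ``the proof relies on standard arguments from measure theory (cf.~\cite[Lemma 4.6]{BOKPS}).'' Your level-set decomposition $U = \{g\le\eta\}\cup\bigcup_{j\ge 0}(A_j\setminus A_{j+1})$, the use of $\Delta_2$-doubling for the upper bound, and the Abel-summation step powered by the $\nabla_2$-growth estimate $\Phi(M^{j-1})\le M^{-p_1}\Phi(M^j)$ for the lower bound constitute exactly that standard argument, and your treatment of the boundary term via finite truncation and monotone convergence is the right way to close it.
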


\subsection{Key tools in viscosity solutions theory} 
Now, we introduce the appropriate notion of viscosity solutions to equation~\eqref{1.1}. For simplicity, we adopt the following notation: $\Omega_{\mathrm{T}} = \Omega \times (0,\mathrm{T})$ and $\Gamma_{\mathrm{I}} = \Gamma \times \mathrm{I}$, where $\Gamma \subset \partial \Omega$ is a relatively open subset and $\mathrm{I}$ is a fixed interval in $(0,\mathrm{T})$.

\begin{definition}[{\bf $C^{0}$-viscosity solutions}]\label{VSC_0}
Let $F$ be a $(\lambda, \Lambda, \sigma, \xi)$-parabolic operator and $\Gamma \subset \partial \Omega$ a relatively open set. A function $u \in C^{0}(\Omega_{\mathrm{T}} \cup \Gamma_{\mathrm{I}})$ is called a \emph{$C^{0}$-viscosity solution} if the following conditions are satisfied:
\begin{enumerate}
    \item[a)] For every $\varphi \in C^{2}(\Omega_{\mathrm{T}} \cup \Gamma_{\mathrm{I}})$ that touches $u$ from above at a point $(x_0, t_0) \in \Omega_{\mathrm{T}} \cup \Gamma_{\mathrm{I}}$,
    $$
    \left\{
    \begin{array}{rcl}
    F\left(D^2 \varphi(x_0,t_0), D \varphi(x_0,t_0), \varphi(x_0,t_0), x_0, t_0\right) - \varphi_{t}(x_0,t_0) &\ge& f(x_0,t_0), \quad \text{if } (x_0, t_0) \in \Omega_{\mathrm{T}}, \\
    \beta(x_0,t_0) \cdot D\varphi(x_0,t_0)+\gamma(x_{0},t_{0})\varphi(x_{0},t_{0}) &\ge& g(x_0,t_0), \quad \text{if } (x_0, t_0) \in \Gamma_{\mathrm{I}}.
    \end{array}
    \right.
    $$

    \item[b)] For every $\varphi \in C^{2}(\Omega_{\mathrm{T}} \cup \Gamma_{\mathrm{I}})$ that touches $u$ from below at a point $(x_0, t_0) \in \Omega_{\mathrm{T}} \cup \Gamma_{\mathrm{I}}$,
    $$
    \left\{
    \begin{array}{rcl}
    F\left(D^2 \varphi(x_0,t_0), D \varphi(x_0,t_0), \varphi(x_0,t_0), x_0, t_0\right) - \varphi_{t}(x_0,t_0) &\le& f(x_0,t_0), \quad \text{if } (x_0, t_0) \in \Omega_{\mathrm{T}}, \\
    \beta(x_0,t_0) \cdot D\varphi(x_0,t_0) +\gamma(x_{0},t_{0})\varphi(x_{0},t_{0})&\le& g(x_0,t_0), \quad \text{if } (x_0, t_0) \in \Gamma_{\mathrm{I}}.
    \end{array}
    \right.
    $$
\end{enumerate}
\end{definition}

\begin{remark}
If we replace the test functions in Definition~\ref{VSC_0} by functions in the Sobolev space $W^{2,1,p}$ and assume $f \in L^{p}(\Omega_{\mathrm{T}})$ for some $p > \frac{n+1}{2}$, the corresponding solution is referred to as an \emph{$L^{p}$-viscosity solution}.
\end{remark}

For notational convenience, we define
$$
\mathcal{L}^{\pm}(u) \defeq\mathcal{M}^{\pm}_{\lambda,\Lambda}(D^2 u) \pm \sigma |Du| - \partial_t u.
$$

\begin{definition}\label{calssefundamental}
We define the parabolic fundamental classes $\overline{\mathcal{S}}_{p}(\lambda, \Lambda, \sigma, f)$ and $\underline{\mathcal{S}}_{p}(\lambda, \Lambda, \sigma, f)$ as the sets of continuous functions $u$ satisfying $\mathcal{L}^{+}(u) \ge f$ and $\mathcal{L}^{-}(u) \le f$, respectively, in the viscosity sense (see Definition~\ref{VSC_0}). 

We further define:
\[
\mathcal{S}_{p}(\lambda, \Lambda, \sigma, f) \defeq \overline{\mathcal{S}}_{p}(\lambda, \Lambda, \sigma, f) \cap \underline{\mathcal{S}}_{p}(\lambda, \Lambda, \sigma, f),
\]
\[
\mathcal{S}_{p}^{\star}(\lambda, \Lambda, \sigma, f) \defeq \overline{\mathcal{S}}_{p}(\lambda, \Lambda, \sigma, |f|) \cap \underline{\mathcal{S}}_{p}(\lambda, \Lambda, \sigma, -|f|).
\]

Moreover, when $\sigma = 0$, we denote $\mathcal{S}_{p}^{\star}(\lambda, \Lambda, 0, f)$ simply by $\mathcal{S}_{p}^{\star}(\lambda, \Lambda, f)$ (respectively, $\underline{\mathcal{S}}_{p}, \overline{\mathcal{S}}_{p}, \mathcal{S}_{p}$).
\end{definition}

Below, we introduce the concept of concave/convex paraboloids, which will be of significant utility in the subsequent analysis of the Hessian of functions.

\begin{definition}
We say that $\mathrm{P}_{\mathrm{M}}$ is a paraboloid with \textit{opening} $\mathrm{M}>0$ if 
$$
\mathrm{P}_{\mathrm{M}}(x,t)= \pm \frac{\mathrm{M}}{2} (|x|^2 - t) + p_1 \cdot x + p_0,
$$
where $p_1 \in \mathbb{R}^n$ and $p_0 \in \mathbb{R}$. The paraboloid is said to be convex in the case of the ``+'' sign and concave otherwise.
\end{definition}

Now, for $u \in C^{0}(U)$, $U^{\prime} \subset \overline{U}$, and $\mathrm{M} > 0$, we define
$$
\underline{\mathrm{G}}_{\mathrm{M}}(u,U^{\prime}) \defeq \left\{(x_0,t_{0}) \in U^{\prime} \;:\; \exists \, \mathrm{P}_{\mathrm{M}} \text{ such that } \mathrm{P}_{\mathrm{M}}(x_0,t_{0}) = u(x_0,t_{0}) \text{ and } \mathrm{P}_{\mathrm{M}}(x,t) \le u(x,t)\,\, \forall (x,t) \in U^{\prime} \right\}
$$
and
$$
\underline{\mathrm{A}}_{\mathrm{M}}(u,U^{\prime}) \defeq U^{\prime} \setminus \underline{\mathrm{G}}_{\mathrm{M}}(u,U^{\prime}).
$$

Analogously, using convex paraboloids, we define $\overline{\mathrm{G}}_{\mathrm{M}}(u,U^{\prime})$ and $\overline{\mathrm{A}}_{\mathrm{M}}(u,U^{\prime})$, and set
$$
\mathrm{G}_{\mathrm{M}}(u,U^{\prime}) \defeq  \underline{\mathrm{G}}_{\mathrm{M}}(u,U^{\prime}) \cap \overline{\mathrm{G}}_{\mathrm{M}}(u,U^{\prime}) \quad \text{and} \quad \mathrm{A}_{\mathrm{M}}(u,U^{\prime}) \defeq \underline{\mathrm{A}}_{\mathrm{M}}(u,U^{\prime}) \cap \overline{\mathrm{A}}_{\mathrm{M}}(u,U^{\prime}).
$$

Associated with the sets $\overline{\mathrm{G}}_{\mathrm{M}}$, we define the following function:
$$
\overline{\Theta}(u,U^{\prime}, x,t) \defeq \inf\left\{\mathrm{M} > 0 \;:\; (x,t) \in \overline{\mathrm{G}}_{\mathrm{M}}(u,U^{\prime})\right\}.
$$
Similarly, one defines $\underline{\Theta}(u,U^{\prime}, x,t)$. Finally, we define
$$
\Theta(u,U^{\prime}, x,t) \defeq \sup\left\{\underline{\Theta}(u,U^{\prime}, x,t), \overline{\Theta}(u,U^{\prime}, x,t)\right\}.
$$

\begin{remark}
For further properties concerning the fundamental classes of viscosity solutions and paraboloids, we refer the reader to \cite{Imbert;Silvestre} and \cite{CC}.
\end{remark}

Additionally, we require a Maximum Principle for parabolic models with oblique tangential derivatives, stated as follows:

\begin{theorem}[{\bf A.B.P.T. Maximum Principle}]\label{ABP-fullversion}
Let $u \in C^{0}(\overline{\mathrm{Q}^{+}_{1}})$ satisfy
\begin{equation*}
\left\{
\begin{array}{rclcl}
u \in \mathcal{S}_{p}(\lambda,\Lambda,f) &\text{in}& \mathrm{Q}^{+}_{1}, \\
\beta \cdot Du + \gamma u = g(x,t) &\text{on}& \mathrm{Q}^{*}_{1}.
\end{array}
\right.
\end{equation*}
Assume that $\gamma \leq 0$ on $\mathrm{Q}^{*}_{1}$, and that there exists $\varsigma \in \mathrm{Q}^{*}_{1}$ such that $\beta \cdot \varsigma \geq \mu_0$ in $\mathrm{Q}^{*}_{1}$. Then,
\begin{eqnarray*}
\|u\|_{L^{\infty}(\mathrm{Q}^{+}_{1})} \leq \|u\|_{L^{\infty}(\partial_{p} \mathrm{Q}^{+}_{1} \setminus \mathrm{Q}^{*}_{1})} + C\left( \| f\|_{L^{n+1}(\mathrm{Q}^{+}_{1})} + \| g\|_{L^{\infty}(\mathrm{Q}^{*}_{1})} \right),
\end{eqnarray*}
where $C > 0$ depends only on $n$, $\lambda$, $\Lambda$, and $\mu_0$.
\end{theorem}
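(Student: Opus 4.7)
The plan is to reduce the oblique maximum principle to the classical interior parabolic Aleksandrov--Bakelman--Pucci--Tso estimate. This is carried out in two stages: first, absorb the inhomogeneous boundary datum $g$ by subtracting a suitable linear function, thereby reducing to the case $g\equiv 0$; and second, extend the resulting function across $\mathrm{Q}^{*}_{1}$ to a viscosity subsolution on the full cylinder $\mathrm{Q}_{1}$, so that the classical ABP--Tso theorem applies.

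For the first stage, observe that along $\mathrm{Q}^{*}_{1}$ the outward unit normal to $\mathrm{Q}^{+}_{1}$ is $\vec{\mathbf{n}}=-e_{n}$, so the obliqueness hypothesis $\beta\cdot\vec{\mathbf{n}}\geq\mu_{0}$ forces $\beta_{n}\leq-\mu_{0}$ on $\mathrm{Q}^{*}_{1}$. Setting $M:=\|g\|_{L^{\infty}(\mathrm{Q}^{*}_{1})}$ and $\Psi(x,t):=-x_{n}/\mu_{0}$, the function $\Psi$ is linear, so $\mathcal{L}^{\pm}(M\Psi)\equiv 0$ in $\mathrm{Q}^{+}_{1}$; on $\mathrm{Q}^{*}_{1}$ (where $\Psi=0$) one has
$$
\beta\cdot D(M\Psi)+\gamma(M\Psi)\;=\;-\,M\beta_{n}/\mu_{0}\;\geq\;M\;\geq\;\|g\|_{L^{\infty}(\mathrm{Q}^{*}_{1})}.
$$
Consequently $v:=u-M\Psi$ still lies in $\mathcal{S}_{p}(\lambda,\Lambda,f)$, satisfies the homogeneous oblique condition $\beta\cdot Dv+\gamma v\leq 0$ on $\mathrm{Q}^{*}_{1}$, and verifies $\|u\|_{L^{\infty}(\mathrm{Q}^{+}_{1})}\leq\|v\|_{L^{\infty}(\mathrm{Q}^{+}_{1})}+M/\mu_{0}$, which accounts for the $\|g\|_{L^{\infty}}$ contribution on the right-hand side. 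A symmetric argument applied to $-u$, which lies in $\mathcal{S}_{p}(\lambda,\Lambda,-f)$ and satisfies the oblique condition with datum $-g$, controls $\|u^{-}\|_{L^{\infty}}$.

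For the second stage, my plan is to extend $v$ to the full cylinder $\mathrm{Q}_{1}$ by a reflection across $\mathrm{Q}^{*}_{1}$ adapted to the oblique direction $\beta$, obtaining a function $\widetilde{v}\in C^{0}(\overline{\mathrm{Q}_{1}})$. The choice of reflection, together with $\gamma\leq 0$ and $\beta\cdot Dv+\gamma v\leq 0$ on $\mathrm{Q}^{*}_{1}$, is what guarantees that $\widetilde{v}$ remains a viscosity subsolution of $\mathcal{L}^{-}(\widetilde{v})\leq\widetilde{f}$ across the interface, with $\|\widetilde{f}\|_{L^{n+1}(\mathrm{Q}_{1})}\leq C(n,\mu_{0})\|f\|_{L^{n+1}(\mathrm{Q}^{+}_{1})}$. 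Applying the classical interior parabolic ABP--Tso estimate (see \cite{Imbert;Silvestre}) to $\widetilde{v}$ on $\mathrm{Q}_{1}$ then yields
$$
\sup_{\mathrm{Q}^{+}_{1}}v\;\leq\;\|v\|_{L^{\infty}(\partial_{p}\mathrm{Q}^{+}_{1}\setminus\mathrm{Q}^{*}_{1})}+C\|f\|_{L^{n+1}(\mathrm{Q}^{+}_{1})},
$$
which, combined with the first stage, gives the announced estimate.

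The main obstacle I expect is precisely this second step: verifying that the oblique reflection genuinely produces an $L^{p}$-viscosity subsolution across $\mathrm{Q}^{*}_{1}$ and that the effective source term of the extension keeps the $L^{n+1}$ norm comparable. A cleaner alternative, which avoids the extension altogether, is to run the parabolic ABP argument via the concave-envelope/area-formula scheme of Tso directly on $\mathrm{Q}^{+}_{1}$: the sign conditions $\beta_{n}\leq-\mu_{0}$ and $\gamma\leq 0$, together with $\beta\cdot Dv+\gamma v\leq 0$, ensure that the parabolic contact set of $v$ with its tangent paraboloid family cannot leak through $\mathrm{Q}^{*}_{1}$, so that the area formula delivers the $L^{n+1}(\mathrm{Q}^{+}_{1})$ control of $f$ with a constant depending only on $n,\lambda,\Lambda,\mu_{0}$, as claimed.
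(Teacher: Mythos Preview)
The paper does not supply an argument for this theorem: its entire proof reads ``This result follows directly from \cite[Theorem~2.5]{Geo.Mil} (see also \cite[Theorem~2.1]{LiZhang}).'' So there is no proof in the paper to compare against in any fine-grained way; your proposal is already more detailed than what the authors wrote.

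That said, your outline is exactly the standard route taken in those cited references, so in spirit you are aligned with the paper. Your first stage is correct as written: the linear barrier $\Psi(x,t)=-x_{n}/\mu_{0}$ has vanishing Hessian and time derivative, so $v=u-M\Psi$ stays in $\mathcal{S}_{p}(\lambda,\Lambda,f)$, and on $\mathrm{Q}^{*}_{1}$ the computation $\beta\cdot D(M\Psi)+\gamma(M\Psi)=-M\beta_{n}/\mu_{0}\geq M$ does absorb $g$. One minor point: you invoked the hypothesis in the form $\beta\cdot\vec{\mathbf{n}}\geq\mu_{0}$, which is the global obliqueness assumption from the introduction rather than the literal hypothesis stated in the theorem (``there exists $\varsigma\in\mathrm{Q}^{*}_{1}$ such that $\beta\cdot\varsigma\geq\mu_{0}$''); these are meant to be the same condition, but be aware of the discrepancy in formulation.

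Your second stage is where the content lies, and you have correctly flagged it as the real obstacle. The oblique-reflection extension does work, but it is not a one-line verification: one has to define $\widetilde{v}(x',x_{n},t)$ for $x_{n}<0$ by pulling back along the direction $\beta$, check continuity across $\{x_{n}=0\}$, and then verify---using $\gamma\leq 0$ and $\beta\cdot Dv+\gamma v\leq 0$---that no test function can touch $\widetilde{v}$ from above at an interface point without producing a contradiction. One also has to track that the reflected boundary values on $\partial_{p}\mathrm{Q}_{1}\cap\{x_{n}<0\}$ are dominated by $\|v\|_{L^{\infty}(\partial_{p}\mathrm{Q}^{+}_{1}\setminus\mathrm{Q}^{*}_{1})}$ and that the Jacobian of the reflection keeps $\|\widetilde{f}\|_{L^{n+1}}$ comparable. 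These are precisely the computations carried out in \cite{Geo.Mil} and \cite{LiZhang}; your ``alternative'' of running Tso's monotone-envelope argument directly on $\mathrm{Q}^{+}_{1}$ is also viable and is closer to how \cite{LiZhang} proceeds in the elliptic case. Either way, what you have is a correct blueprint rather than a completed proof---which, given that the paper itself only cites the result, is arguably already more than was required.
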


\begin{proof}
This result follows directly from \cite[Theorem 2.5]{Geo.Mil} (see also \cite[Theorem 2.1]{LiZhang}).
\end{proof}

With this more general version of the A.B.P.T. Maximum Principle, and following arguments similar to those in \cite[Theorem 3.1]{Geo.Mil}, we obtain local H\"{o}lder continuity for solutions of the class of models described above. This is the content of the following theorem:

\begin{theorem}[{\bf H\"{o}lder Regularity}]\label{Holder_Est}
Let $u \in C^{0}(\mathrm{Q}^{+}_{1} \cup \mathrm{Q}^{*}_{1})$ be a viscosity solution satisfying
\[
\left\{
\begin{array}{rclcl}
 u \in \mathcal{S}_{p}(\lambda, \Lambda, f) & \text{in} & \mathrm{Q}^{+}_{1}, \\
 \beta \cdot Du + \gamma u = g(x,t) & \text{on} & \mathrm{Q}^{*}_{1}.
\end{array}
\right.
\]
Then $u \in C^{0, \alpha^{\prime}}(\overline{\mathrm{Q}^{+}_{\frac{1}{2}}})$, and
\[
\|u\|_{C^{0, \alpha^{\prime}}(\overline{\mathrm{Q}^{+}_{\frac{1}{2}}})} \le \mathrm{C}(n, \lambda, \Lambda, \mu_0) \left( \|u\|_{L^{\infty}(\mathrm{Q}^{+}_{1})} + \|f\|_{L^{n+1}(\mathrm{Q}^{+}_{1})} + \|g\|_{L^{\infty}(\mathrm{Q}^{*}_{1})} \right),
\]
where $\alpha^{\prime} \in (0,1)$ depends only on $n$, $\lambda$, $\Lambda$, and $\mu_0$.
\end{theorem}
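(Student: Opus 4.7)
The plan is to follow the classical two-scale Hölder regularity strategy: interior parabolic Krylov--Safonov estimates for the class $\mathcal{S}_{p}(\lambda,\Lambda,f)$ away from the flat boundary $\mathrm{T}_{1}$, combined with a boundary oscillation-decay lemma adapted to the non-homogeneous oblique condition on $\mathrm{Q}^{*}_{1}$, in the spirit of \cite{Geo.Mil}. By linearity of the class $\mathcal{S}_{p}$ and of the boundary operator $\mathcal{B}$, I may first normalize so that
$$
\mathrm{K}\defeq \|u\|_{L^{\infty}(\mathrm{Q}^{+}_{1})}+\|f\|_{L^{n+1}(\mathrm{Q}^{+}_{1})}+\|g\|_{L^{\infty}(\mathrm{Q}^{*}_{1})}\le 1.
$$

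The heart of the proof would be the following boundary oscillation-decay claim: there exist universal constants $r_{0}\in(0,1/4)$ and $\delta\in(0,1)$, depending only on $n,\lambda,\Lambda,\mu_{0}$, such that for every $z_{0}=(x_{0}',0,t_{0})\in \mathrm{T}_{1/2}\times(-1/4,0)$ and every $\rho\in(0,1/2]$,
$$
\underset{\mathrm{Q}^{+}_{r_{0}\rho}(z_{0})}{\operatorname{osc}}\, u \;\le\; \delta\cdot\underset{\mathrm{Q}^{+}_{\rho}(z_{0})}{\operatorname{osc}}\, u \;+\; \mathrm{C}\,\rho^{\alpha_{0}}\mathrm{K},
$$
for some interior Krylov--Safonov exponent $\alpha_{0}\in(0,1)$. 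To establish this, I rescale $u$ to a function $v$ defined on $\mathrm{Q}^{+}_{1}$ with $|v|\le 1$, belonging to $\mathcal{S}_{p}(\lambda,\Lambda,\tilde f)$ with $\|\tilde f\|_{L^{n+1}}$ small, and satisfying a rescaled oblique condition $\beta_{\rho}\cdot Dv+\gamma_{\rho}v=\tilde g_{\rho}$ on $\mathrm{Q}^{*}_{1}$. Then I apply a measure-theoretic dichotomy combined with a parabolic weak Harnack inequality up to the flat boundary: the obliqueness $\beta\cdot\vec{\mathbf{n}}\ge \mu_{0}$ and Theorem~\ref{ABP-fullversion} allow the construction of linear/affine barriers of the form $a\, x_{n}+b$ that propagate pointwise information from a set of positive measure in $\mathrm{Q}^{+}_{1/2}$ up to the whole half-cylinder, producing a uniform reduction $\operatorname{osc}_{\mathrm{Q}^{+}_{r_{0}}}v \le 2(1-\kappa)$ for some universal $\kappa>0$.

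Once the decay claim is proved, I iterate it at geometric scales $\rho=r_{0}^{k}$ by the standard discrete iteration lemma to obtain pointwise Hölder regularity at every $z_{0}\in \mathrm{T}_{1/2}\times(-1/4,0)$ with exponent $\alpha'=\min\{\alpha_{0},-\log\delta/\log r_{0}\}\in(0,1)$. Interior parabolic Krylov--Safonov Hölder estimates provide the analogous control at interior points of $\mathrm{Q}^{+}_{1/2}$, and a standard interpolation between an interior point and its orthogonal projection onto $\mathrm{T}_{1}$ upgrades the pointwise bounds to the global seminorm estimate on $\overline{\mathrm{Q}^{+}_{1/2}}$.

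The main obstacle will be the non-homogeneity of the oblique condition: passing from $u$ to $v$ by subtracting the average $\tfrac12(\sup u+\inf u)$ over the rescaled cylinder generates an extra contribution of the form $-\gamma\cdot\tfrac12(\sup u+\inf u)$ in the rescaled boundary data. The sign condition $\gamma\le 0$ on $\mathrm{Q}^{*}_{1}$ together with $\beta\cdot\vec{\mathbf{n}}\ge\mu_{0}$ is precisely what allows this term (and the rescaled $\tilde g_{\rho}$) to be absorbed into the $\mathrm{C}\rho^{\alpha_{0}}\mathrm{K}$ tail via a tailored affine barrier, without which the iteration would fail to close. Once this absorption is carefully arranged at the first scale, the remaining steps are mechanical and parallel the Dirichlet argument in \cite[Theorem~3.1]{Geo.Mil}.
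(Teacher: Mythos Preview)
Your approach is essentially the one the paper itself indicates: the paper does not give a detailed proof of this theorem but simply states that it follows from the A.B.P.T.\ Maximum Principle (Theorem~\ref{ABP-fullversion}) together with ``arguments similar to those in \cite[Theorem~3.1]{Geo.Mil}''. Your outline---interior Krylov--Safonov estimates plus a boundary oscillation-decay iteration driven by barriers built from the obliqueness condition $\beta\cdot\vec{\mathbf{n}}\ge\mu_{0}$ and the A.B.P.T.\ estimate---is exactly that route, so there is nothing to compare.
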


Next, we state the following stability result, whose proof follows along the same lines as in \cite[Theorem 3.8]{CCKS}.

\begin{lemma}[{\bf Stability Lemma}]\label{Est}
Consider $\{\Omega_k\}_{k \in \mathbb{N}}$ be an increasing sequence of open sets in $\mathbb{R}^{n} \times \mathbb{R}$ such that $\Omega_k \subset \Omega_{k+1}$ and define $\Omega \defeq \bigcup_{k=1}^{\infty} \Omega_k$. Let $p \ge n+1$ and suppose $F$, $F_k$ are $(\lambda, \Lambda, \sigma, \xi)$-parabolic  operators. Assume $f \in L^{p}(\Omega)$, $f_k \in L^p(\Omega_k)$, and let $u_k \in C^0(\Omega_k)$ be $L^{p}$-viscosity subsolutions (resp. supersolutions) of 
\[
F_k(D^2 u_k, Du_k, u_k, x, t) - (u_k)_t = f_k(x,t) \quad \text{in} \quad \Omega_k.
\]
Suppose that $u_k \to u_{\infty}$ locally uniformly in $\Omega$ and that, for every parabolic cylinder $\mathrm{Q}_r(x_0,t_0) \subset \Omega$ and test function $\varphi \in W^{2,p}(\mathrm{Q}_r(x_0,t_0))$, we have
\begin{equation} \label{Est1}
\|(\hat{g} - \hat{g}_k)^+\|_{L^p(\mathrm{Q}_r(x_0,t_0))} \to 0 \quad \text{(resp. } \|(\hat{g} - \hat{g}_k)^-\|_{L^p(\mathrm{Q}_r(x_0,t_0))} \to 0\text{)},
\end{equation}
where 
\[
\hat{g}(x,t) \defeq F(D^2 \varphi, D \varphi, u, x, t) - f(x,t), \quad \hat{g}_k(x,t) \defeq F_k(D^2 \varphi, D \varphi, u_k, x, t) - f_k(x,t).
\]
Then $u_\infty$ is an $L^{p}$-viscosity subsolution (resp. supersolution) of
\[
F(D^2 u, Du, u, x, t) - u_t = f(x,t) \quad \text{in} \quad \Omega.
\]
Moreover, if $F$ and $f$ are continuous, then $u_\infty$ is also a $C^0$-viscosity subsolution (resp. supersolution), provided that condition \eqref{Est1} holds for all test functions $\varphi \in C^2(\mathrm{Q}_r(x_0,t_0))$.
\end{lemma}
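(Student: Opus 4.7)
My plan is to argue by contradiction along the classical lines of \cite[Theorem 3.8]{CCKS}, handling the subsolution case in detail (the supersolution case being symmetric) and then noting the cosmetic adaptations needed for the $C^0$-viscosity statement. Suppose, contrary to the conclusion, that $u_\infty$ fails to be an $L^p$-viscosity subsolution of the limit equation at some point $(\hat{x},\hat{t}) \in \Omega$. By negating the $L^p$-viscosity subsolution condition, one produces a test function $\varphi \in W^{2,p}(\mathrm{Q}_r(\hat{x},\hat{t}))$, a radius $r>0$ with $\overline{\mathrm{Q}_r(\hat{x},\hat{t})} \subset \Omega$, and a number $\delta > 0$ such that $u_\infty-\varphi$ attains a local maximum at $(\hat{x},\hat{t})$ while
\[
\hat{g}(x,t) = F(D^2\varphi,D\varphi,u_\infty,x,t) - \varphi_t(x,t) - f(x,t) \geq \delta \quad \text{a.e. in } \mathrm{Q}_r(\hat{x},\hat{t}).
\]

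Next, I would upgrade the touching to a strict maximum by replacing $\varphi$ with $\tilde{\varphi}(x,t) = \varphi(x,t) + \varepsilon(|x-\hat{x}|^{2} + (t-\hat{t})^{2})$ for a sufficiently small $\varepsilon > 0$. Because $F$ is $(\lambda,\Lambda,\sigma,\xi)$-parabolic, the perturbation only changes $\hat g$ by a controlled amount, so on a possibly smaller cylinder $\mathrm{Q}_{r'}(\hat{x},\hat{t})$ we still have the analogous quantity $\tilde{g} \geq \delta/2$ a.e. The local uniform convergence $u_k \to u_\infty$, coupled with the strict inequality $u_\infty < \tilde{\varphi}$ on $\overline{\mathrm{Q}_{r'}}\setminus\{(\hat{x},\hat{t})\}$, forces the supremum of $u_k - \tilde{\varphi}$ over $\overline{\mathrm{Q}_{r'}}$ to be attained at interior points $(x_k, t_k) \in \mathrm{Q}_{r'}$ with $(x_k, t_k) \to (\hat{x}, \hat{t})$.

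Applying the $L^p$-viscosity subsolution property of $u_k$ at $(x_k, t_k)$ and invoking the parabolic A.B.P. estimate (Theorem \ref{ABP-fullversion}) on small subcylinders centred at $(x_k, t_k)$, one extracts a universal constant $c > 0$ and measurable sets $E_k \subset \mathrm{Q}_\rho(x_k, t_k)$ with $|E_k| \geq c\, |\mathrm{Q}_\rho(x_k, t_k)|$ on which $\hat{g}_k \leq \delta/8$. On the other hand, assumption \eqref{Est1} yields $\|(\tilde{g}-\hat{g}_k)^{+}\|_{L^{p}(\mathrm{Q}_{r'})} \to 0$; since $\tilde{g} \geq \delta/2$ a.e., we get $\|(\delta/2 - \hat{g}_k)^{+}\|_{L^{p}(\mathrm{Q}_{r'})} \to 0$, and Chebyshev's inequality gives $|\{\hat{g}_k < \delta/4\} \cap \mathrm{Q}_{r'}| \to 0$ as $k \to \infty$. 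Comparing this with the uniform lower bound $|E_k| \geq c\,|\mathrm{Q}_\rho(x_k,t_k)|$ for a fixed, small $\rho$ produces the desired contradiction.

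The main obstacle is the A.B.P./measure-of-touching-set step, since $\tilde{\varphi}$ is only in $W^{2,p}$, so the Hessian exists only almost everywhere; this is where the CCKS machinery plays its crucial role and where the hypothesis $p \geq n+1$ is used, in conjunction with Theorem \ref{ABP-fullversion}, to guarantee that the quantitative lower bound on the density of $E_k$ survives the passage from classical paraboloid touching to the $L^p$-viscosity framework. For the final assertion, when $F$ and $f$ are continuous and the convergence \eqref{Est1} is assumed only for $C^2$ test functions, the argument above applies verbatim with $\tilde{\varphi} \in C^2$; continuity of $F$ together with the locally uniform convergence of $u_k$ then allows all inequalities to be verified in the pointwise classical sense, yielding the $C^0$-viscosity conclusion.
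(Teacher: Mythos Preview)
Your proposal is correct and follows essentially the same approach as the paper, which does not give an independent argument but simply refers the reader to \cite[Theorem 3.8]{CCKS}; your sketch is precisely that CCKS contradiction argument adapted to the parabolic setting. One minor remark: the A.B.P.-type input you need here is the purely interior parabolic maximum principle (as in \cite{CCKS} or \cite{WangI}), not Theorem~\ref{ABP-fullversion}, which is the oblique-boundary version and plays no role in this interior stability statement.
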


We now turn our attention to the existence and uniqueness of viscosity solutions under oblique boundary conditions. To this end, we impose the following assumption on the operator $F$:

\begin{enumerate}
\item [$(\bf E)$] There exists a modulus of continuity $\tilde{\omega}$, i.e., a non-decreasing function satisfying $\displaystyle \lim_{\theta \to 0} \tilde{\omega}(\theta) = 0$, such that
\[
\psi_{F}((x,t),(y,s)) \le \tilde{\omega}(|(x,t)-(y,s)|).
\]
\end{enumerate}

 Now, we establish the existence and uniqueness of solutions to the following problem:
\begin{eqnarray}\label{problemamisto}
\left\{
\begin{array}{rclcl}
	F(D^2u, x,t)-u_{t} &=& f(x,t) & \text{in} & \mathrm{Q}^{+}_{1},\\
	\beta\cdot Du+\gamma u&=& g(x,t) & \text{on} & \mathrm{Q}^{*}_{1},\\
	u&=&\varphi & \text{on} & \partial_{p} \mathrm{Q}^{+}_{1}\setminus \mathrm{Q}^{*}_{1},
\end{array}
\right.
\end{eqnarray}
where we employ the techniques developed in \cite{Bessa}. The proof of the next result follows the strategy of \cite[Theorem 2.7]{Bessa} with only minor adjustments. For this reason, we omit it here.

\begin{theorem}[{\bf Comparison Principle}]\label{comparation}
Assume that $\beta\in C^{2}(\mathrm{Q}^{*}_{1})$ and that $F$ satisfies assumptions \(\mathrm{(H1)}\) and $(\bf E)$. Let $u$ and $v$ be functions such that
$$
\left\{
\begin{array}{rclcl}
F(D^2u, x,t)-u_{t} &\geq& f_{1}(x,t) & \text{in} & \mathrm{Q}^{+}_{1},\\
\beta\cdot Du+\gamma u&\geq& g_{1}(x,t) & \text{on} & \mathrm{Q}^{*}_{1},
\end{array}
\right.
$$
and
$$
\left\{
\begin{array}{rclcl}
F(D^2v, x,t)-v_{t} &\leq& f_{2}(x,t) & \text{in} & \mathrm{Q}^{+}_{1},\\
\beta\cdot Dv+\gamma v&\leq& g_{2}(x,t) & \text{on} & \mathrm{Q}^{*}_{1}.
\end{array}
\right.
$$
Then,
$$
\left\{
\begin{array}{rclcl}
u-v\in \underline{\mathcal{S}}\left(\frac{\lambda}{n},\Lambda,f_{1}-f_{2}\right) & \text{in} & \mathrm{Q}^{+}_{1},\\
\beta \cdot D(u-v)+\gamma(u-v)\geq (g_{1}-g_{2})(x,t) & \text{on} & \mathrm{Q}^{*}_{1}.
\end{array}
\right.
$$
\end{theorem}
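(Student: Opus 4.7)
The plan is to establish both assertions via the Ishii-Lions doubling-of-variables procedure, adapted to the parabolic setting with an oblique boundary condition, following the strategy of \cite[Theorem~2.7]{Bessa}.

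For the interior claim, fix a test function $\varphi \in C^2$ realizing the viscosity condition that defines the claimed Pucci class at an interior point $(x_0,t_0) \in \mathrm{Q}^+_1$. Since $u$ and $v$ are merely continuous, $\varphi$ cannot be substituted directly into their inequalities; instead, on a small parabolic neighborhood of $(x_0,t_0)$, consider the penalized functional
$$
\Psi_\varepsilon(x,y,t,s) \defeq u(x,t) - v(y,s) - \varphi\!\left(\tfrac{x+y}{2},\tfrac{t+s}{2}\right) - \tfrac{1}{2\varepsilon}\left(|x-y|^2 + (t-s)^2\right),
$$
which attains a maximum at some $(x_\varepsilon, y_\varepsilon, t_\varepsilon, s_\varepsilon)$ converging, up to a subsequence, to $(x_0,x_0,t_0,t_0)$. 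The parabolic Crandall-Ishii maximum principle then produces approximate time derivatives $\alpha_\varepsilon, \beta_\varepsilon$ and symmetric matrices $X_\varepsilon, Y_\varepsilon$ lying in suitable parabolic jets of $u$ at $(x_\varepsilon, t_\varepsilon)$ and $v$ at $(y_\varepsilon, s_\varepsilon)$, with $\alpha_\varepsilon - \beta_\varepsilon \to \varphi_t(x_0,t_0)$ and $X_\varepsilon - Y_\varepsilon \le D^2\varphi(x_0,t_0) + o(1)$ as $\varepsilon \to 0^+$. Subtracting the viscosity inequalities for $u$ (subsolution) and $v$ (supersolution) and using $(\bf E)$ to control the oscillation of $F$ in $(x,t)$, an application of \(\mathrm{(H1)}\) in the operator-norm formulation yields a Pucci bound $F(X,\cdot) - F(Y,\cdot) \le \mathcal{M}^+_{\lambda/n,\Lambda}(X-Y)$, which accounts for the $\lambda/n$ factor via the Caffarelli-Cabr\'e identification of operator-norm ellipticity with asymmetric Pucci bounds. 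Passing $\varepsilon \to 0^+$ yields the required Pucci-type inequality for $\varphi$ at $(x_0, t_0)$.

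For the boundary claim, the same scheme is run on a parabolic half-neighborhood of a touching point $(x_0,t_0) \in \mathrm{Q}^*_1$. The viscosity formulation of an oblique boundary problem admits a dichotomy at boundary-touching points: the interior PDE inequality or the boundary inequality must hold. When the penalized maximum drifts to the flat face, the $C^2$ regularity of $\beta$ allows me to straighten the vector field locally and augment $\Psi_\varepsilon$ by a corrective term linear in the boundary distance, calibrated so that the PDE alternative is excluded and the boundary alternative is forced. Combining these alternatives for $u$ and $v$ and passing $\varepsilon \to 0^+$ delivers $\beta \cdot D\varphi + \gamma \varphi \ge g_1 - g_2$ at $(x_0, t_0)$.

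The main technical obstacle is implementing this dichotomy coherently on both sides of $u - v$: for the interior claim, the penalized maximum must stay away from the flat boundary so that both PDE inequalities are invocable, whereas for the boundary claim the opposite behavior must be engineered while still passing the oblique inequality to the limit. The $C^2$ smoothness of $\beta$ provides the geometric straightening, the strict obliqueness $\beta \cdot \vec{\mathbf{n}} \ge \mu_0 > 0$ ensures that the corrective perturbations preserve obliqueness, and the modulus of continuity supplied by $(\bf E)$ allows the $(x,t)$-dependence of $F$ to pass harmlessly through the limit.
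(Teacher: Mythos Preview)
Your proposal is correct and matches the paper's own approach: the paper omits the proof entirely, stating only that it ``follows the strategy of \cite[Theorem 2.7]{Bessa} with only minor adjustments,'' and your outline is precisely the parabolic adaptation of that Ishii--Lions doubling-of-variables argument with the oblique boundary dichotomy. Your identification of the $\lambda/n$ factor (from passing between operator-norm ellipticity and asymmetric Pucci bounds) and the role of $\beta \in C^2$ in the boundary corrector are exactly the ingredients the reference relies on.
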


By combining Theorem~\ref{comparation} with the A.B.P.T. estimate~\ref{ABP-fullversion}, we obtain the following existence and uniqueness result for problem~\eqref{problemamisto}. The proof proceeds analogously to that in \cite{Bessa}, with minor adaptations.

\begin{theorem}[{\bf Existence and Uniqueness}]\label{Existencia}
Let $\beta\in C^{2}(\mathrm{Q}^{*}_{1})$ and $\varphi\in C^{0}(\partial_{p} \mathrm{Q}^{+}_{1}\setminus \mathrm{Q}^{*}_{1})$, and assume that $F$ satisfies condition $(\bf E)$. Suppose that there exists a vector field $\varsigma \in \mathrm{Q}^{*}_{1}$ such that $\beta\cdot \varsigma\ge \mu_0$ on $\mathrm{Q}^{*}_{1}$. Then, there exists a unique viscosity solution to problem~\eqref{problemamisto}.
\end{theorem}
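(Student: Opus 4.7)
The plan is to follow the classical Perron method in combination with the maximum-principle machinery already established in the excerpt, adapting the elliptic argument of \cite[Theorem 2.7]{Bessa} to the parabolic mixed boundary value setting, where we have an oblique condition on $\mathrm{Q}^{*}_{1}$ and a Dirichlet condition on the remainder of the parabolic boundary.

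First, I address uniqueness. Let $u_{1},u_{2}$ be two viscosity solutions of \eqref{problemamisto} sharing the data $(f,g,\varphi)$. Setting $w := u_{1}-u_{2}$ and applying Theorem \ref{comparation} with $f_{1}=f_{2}=f$, $g_{1}=g_{2}=g$, yields $w\in \underline{\mathcal{S}}(\lambda/n,\Lambda,0)$ in $\mathrm{Q}^{+}_{1}$, $\beta\cdot Dw + \gamma w \ge 0$ on $\mathrm{Q}^{*}_{1}$, and $w=0$ on $\partial_{p}\mathrm{Q}^{+}_{1}\setminus \mathrm{Q}^{*}_{1}$. The A.B.P.T. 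Maximum Principle (Theorem \ref{ABP-fullversion}) immediately gives $w\le 0$; reversing the roles of $u_{1}$ and $u_{2}$ leads to $w\equiv 0$.

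For existence, I would construct an explicit pair $(\underline{u},\overline{u})\in C^{0}(\overline{\mathrm{Q}^{+}_{1}})\cap C^{2}(\mathrm{Q}^{+}_{1}\cup \mathrm{Q}^{*}_{1})$ of sub/supersolutions agreeing with $\varphi$ on $\partial_{p}\mathrm{Q}^{+}_{1}\setminus \mathrm{Q}^{*}_{1}$ and satisfying the oblique condition with the correct sign on $\mathrm{Q}^{*}_{1}$; a convenient ansatz is of the form $\pm A \pm B(|x|^{2}-t)+ L^{\pm}\cdot x$, with constants $A,B$ sufficiently large to dominate the source $f$ and the normalization $F(0,\ldots)=0$, and with the linear pieces $L^{\pm}$ chosen so that the strict obliqueness $\beta\cdot \varsigma \ge \mu_{0}>0$ absorbs the datum $g$. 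With these barriers, define the Perron envelope
\[
u(x,t)\defeq \sup\bigl\{\,v(x,t)\ :\ v\ \text{is a $C^{0}$-viscosity subsolution of \eqref{problemamisto} with}\ \underline{u}\le v\le \overline{u}\,\bigr\}.
\]
The Stability Lemma (Lemma \ref{Est}) combined with the standard Ishii-type bump construction shows that the upper semicontinuous envelope $u^{*}$ is a viscosity subsolution and the lower semicontinuous envelope $u_{*}$ a viscosity supersolution, both inside $\mathrm{Q}^{+}_{1}$ and in the oblique sense on $\mathrm{Q}^{*}_{1}$. Since $\underline{u}\le u_{*}\le u^{*}\le \overline{u}$, both envelopes realize $\varphi$ continuously on $\partial_{p}\mathrm{Q}^{+}_{1}\setminus \mathrm{Q}^{*}_{1}$. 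Applying the Comparison Principle (Theorem \ref{comparation}) to $(u^{*},u_{*})$ together with Theorem \ref{ABP-fullversion} yields $u^{*}\le u_{*}$; hence $u^{*}=u_{*}=u\in C^{0}(\overline{\mathrm{Q}^{+}_{1}})$ is the sought viscosity solution.

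The principal technical obstacle lies in verifying the oblique boundary condition for the Perron envelope in the viscosity sense: at a contact point lying on $\mathrm{Q}^{*}_{1}$, the test function must be perturbed along the field $\beta$ so that the inequality can be transferred from the admissible subsolutions to $u^{*}$ (and dually for $u_{*}$). This step is where both the regularity assumption $\beta\in C^{2}(\mathrm{Q}^{*}_{1})$ and the strict obliqueness $\beta\cdot \varsigma\ge \mu_{0}>0$ are indispensable, playing the role of the nondegenerate Neumann condition in the classical parabolic Perron construction; the assumption $(\bf E)$ is then used to ensure that the test-function perturbations retain the supersolution/subsolution character of $\overline{u},\underline{u}$ after the $C^{2}$-cut-off argument.
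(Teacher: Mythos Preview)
Your proposal is correct and follows essentially the same route as the paper: the paper does not give a detailed argument but simply states that the result follows by combining the Comparison Principle (Theorem~\ref{comparation}) with the A.B.P.T. estimate (Theorem~\ref{ABP-fullversion}), proceeding analogously to \cite{Bessa} with minor adaptations. Your Perron-method outline with barrier construction, uniqueness via comparison plus A.B.P.T., and the remark on the role of $\beta\in C^{2}$ and strict obliqueness in handling the boundary bump construction is exactly the content of that reference in the parabolic setting.
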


\section{Caloric Approximation and Decay of the Sets \(\mathrm{A}_{t}\)} \label{Section3}

\hspace{0.4cm} In this section, we present a key tool that plays a central role in establishing the decay of the measure of the sets $\mathrm{A}_{t}$—where the ``Hessian and the temporal derivative behave poorly'' —with respect to powers of $t$. This tool is the \textit{Caloric Approximation Lemma}, which ensures that if our equation is sufficiently close to the homogeneous equation with constant coefficients, then the corresponding solution is also close to that of the homogeneous equation with frozen coefficients.

The following result is fundamental in our tangential approximation strategy. Specifically, it characterizes how the ``modulus of convergence'' of $F_{\tau}$ to $F^{\sharp}$ behaves.

\begin{lemma}\label{lemma3.1}
Let $F$ be a uniformly parabolic operator and assume that $F^{\sharp}$ exists. Then, given $\epsilon > 0$, there exists a constant $\tau_0 = \tau_0(\lambda, \Lambda, \epsilon, \psi_{F^{\sharp}}) > 0$ such that, for every $\tau \in (0, \tau_0)$, the following inequality holds:
\[
\frac{\left|\tau F \left(\tau^{-1} \mathrm{X}, 0, 0, x,t\right) - F^{\sharp}(\mathrm{X}, 0, 0, x,t) \right|}{1+\|\mathrm{X}\|} \le \epsilon,
\]
for every $\mathrm{X} \in \text{Sym}(n)$.
\end{lemma}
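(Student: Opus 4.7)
The plan is to split the estimate according to the magnitude of $\mathrm{X}$: for $\|\mathrm{X}\|$ bounded, I would exploit uniform convergence on a compact set, and for $\|\mathrm{X}\|$ large I would reduce to the bounded case by combining the positive one-homogeneity of $F^{\sharp}$ with a natural scaling relation for $F_{\tau}$. The variables $(\varsigma,s,x,t)=(0,0,x,t)$ are held fixed throughout, and I shall drop them from the notation.

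First, I would extract two a priori properties of $F$. Using (H1) together with the normalization $F(0,0,0,x,t)=0$ and decomposing any $\mathrm{X}\in\mathrm{Sym}(n)$ as $\mathrm{X}=\mathrm{X}^{+}-\mathrm{X}^{-}$, the Pucci bracketing yields both $|F(\mathrm{X})|\le C\|\mathrm{X}\|$ and $|F(\mathrm{X})-F(\mathrm{Y})|\le C\|\mathrm{X}-\mathrm{Y}\|$ with $C=C(n,\Lambda)$. Both properties are inherited by $F_{\tau}$ with the same constant, since
\[
|F_{\tau}(\mathrm{X})-F_{\tau}(\mathrm{Y})| = \tau\bigl|F(\tau^{-1}\mathrm{X})-F(\tau^{-1}\mathrm{Y})\bigr| \le C\,\|\mathrm{X}-\mathrm{Y}\|,
\]
and analogously for the pointwise bound; they pass to $F^{\sharp}$ upon letting $\tau\to 0^{+}$. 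Combining the equi-Lipschitz continuity of $\{F_{\tau}\}_{\tau>0}$ with the pointwise convergence $F_{\tau}(\mathrm{X})\to F^{\sharp}(\mathrm{X})$ from \eqref{Reces}, a standard Arzel\`a--Ascoli-type argument gives uniform convergence on every compact subset of $\mathrm{Sym}(n)$. In particular, for the given $\epsilon>0$ there exists $\tau_{0}>0$ such that
\[
\sup_{\tau\in(0,\tau_{0})}\;\sup_{\|\mathrm{V}\|\le 1}\,|F_{\tau}(\mathrm{V})-F^{\sharp}(\mathrm{V})|\le \epsilon.
\]

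Next, a change of variables $\tau'=\tau/s$ in \eqref{Reces} with $\mathrm{X}$ replaced by $s\mathrm{X}$ ($s>0$) yields the positive one-homogeneity $F^{\sharp}(s\mathrm{X})=sF^{\sharp}(\mathrm{X})$, while a direct computation gives the compatible (yet non-homogeneous) scaling $F_{\tau}(s\mathrm{X})=sF_{\tau/s}(\mathrm{X})$. To conclude, fix $\mathrm{X}\in\mathrm{Sym}(n)$ and $\tau\in(0,\tau_{0})$. If $\|\mathrm{X}\|\le 1$, the preceding uniform bound together with $1+\|\mathrm{X}\|\ge 1$ gives the estimate at once. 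If instead $\|\mathrm{X}\|>1$, let $\mathrm{V}=\mathrm{X}/\|\mathrm{X}\|$ and $\sigma=\tau/\|\mathrm{X}\|<\tau<\tau_{0}$; the two scaling identities yield
\[
F_{\tau}(\mathrm{X})-F^{\sharp}(\mathrm{X}) = \|\mathrm{X}\|\bigl[F_{\sigma}(\mathrm{V})-F^{\sharp}(\mathrm{V})\bigr],
\]
and therefore $|F_{\tau}(\mathrm{X})-F^{\sharp}(\mathrm{X})|/(1+\|\mathrm{X}\|)\le |F_{\sigma}(\mathrm{V})-F^{\sharp}(\mathrm{V})|\le \epsilon$, since $\mathrm{V}$ lies on the unit sphere and $\sigma\in(0,\tau_{0})$.

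The main subtlety lies in the scaling step: the reduction to the unit sphere exploits the monotonicity $\sigma=\tau/\|\mathrm{X}\|<\tau$, which is available only when $\|\mathrm{X}\|\ge 1$. This is precisely why the regime $\|\mathrm{X}\|\le 1$ has to be handled separately via direct compactness, since a naive rescaling would push $\sigma$ above $\tau_{0}$ and invalidate the spherical estimate. No genuine obstacle arises from the $(x,t)$-dependence: the whole argument runs pointwise in $(x,t)$, and any dependence of $\tau_{0}$ on $\psi_{F^{\sharp}}$ becomes relevant only if one wishes uniformity in $(x,t)$ as well, which can then be obtained by interpolating the pointwise estimate with the oscillation control in (H3).
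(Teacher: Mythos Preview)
Your argument is correct and is precisely the standard route taken in the references the paper defers to (\cite{ST}, \cite{CP}, \cite{PT}): equi-Lipschitz continuity of the family $\{F_{\tau}\}$ plus pointwise convergence yields uniform convergence on the unit ball, and the scaling identities $F^{\sharp}(s\mathrm{X})=sF^{\sharp}(\mathrm{X})$ and $F_{\tau}(s\mathrm{X})=sF_{\tau/s}(\mathrm{X})$ then propagate the estimate to all of $\mathrm{Sym}(n)$. Your closing remark on the $(x,t)$-uniformity is also the right caveat; the paper itself only cites the references and does not spell this out.
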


\begin{proof}
The proof of this lemma follows the same reasoning as that in \cite{ST} (see also \cite{CP} and \cite{PT}).
\end{proof}

\begin{remark}
It is important to highlight that the weighted Orlicz estimates in Theorem~\ref{T1} depend not only on universal constants, but also on the ``modulus of convergence'' $F_{\tau} \to F^{\sharp}$. More precisely, for \(\delta>0\) define
\[
\displaystyle  \rho(\delta) \defeq \sup_{0<\tau\leq\delta}\sup_{\mathrm{X} \in \text{Sym}(n) } \frac{\left|\tau F\left(\tau^{-1} \mathrm{X}, 0, 0, x,t\right) - F^{\sharp}(\mathrm{X}, 0, 0, x,t) \right|}{1+\|\mathrm{X}\|}.
\]
 By the Lemma \ref{lemma3.1} we have that \(\rho(\delta)\searrow 0\) as \(\delta\searrow 0\). Consequently, there exists \(\delta(\epsilon)>0\) such that \(\rho(\delta(\epsilon))\leq \epsilon\) (e.g., one may take \(\delta(\epsilon)=\tau_{0}\) in the Lemma \ref{lemma3.1}). Thus, the constant $\mathrm{C} > 0$ appearing in the global estimate of Theorem~\ref{T1} also depends on the function $\rho$.
\end{remark}

In what follows, the recession operator and the path $\tau \mapsto F_{\tau}$ are inserted in an approximating regime. For translating these ideas into a precise statement, we formulate the following result (which, at this stage, is independent of the regularity assumptions \(\mathrm{(H4)-(H5)}\))

\begin{lemma}[{\bf Caloric Approximation Lemma}] \label{Approx}
Let $n+1 \le p < \infty$, $0 \le \nu \le 1$, and assume that conditions \(\mathrm{(H1)-(H3)}\) are satisfied. Given $\delta > 0$, let $\varphi \in C^{0}(\partial_{p}\mathrm{Q}^{\nu}_{r}(0',\nu,0))$  with $\|\varphi\|_{L^{\infty}(\partial_{p}\mathrm{Q}^{\nu}_{r}(0',\nu,0))} \le \mathrm{C}_{1}$ for some $\mathrm{C}_{1}>0$, and let $g \in C^{0,\alpha}(\overline{\mathrm{Q}}^{*}_{2r})$ with $0 < \alpha < 1$ and $\|g\|_{C^{0,\alpha}(\overline{\mathrm{Q}}^{*}_{2r})} \le \mathrm{C}_{2}$ for some $\mathrm{C}_{2} > 0$. Then, there exist positive constants $\epsilon = \epsilon(\delta,n, \mu_0, p, \lambda, \Lambda, \mathrm{C}_{1}, \mathrm{C}_{2}) < 1$ and $\tau_0 = \tau_0(\delta, n, \lambda, \Lambda, \mu_0, \mathrm{C}_{1}, \mathrm{C}_{2}) > 0$ such that, if
\[
\max\left\{ \left|F_{\tau}(\mathrm{X},x,t) - F^{\sharp}(\mathrm{X},x,t)\right|, \, \|\psi_{F^{\sharp}}\|_{L^{p}(\mathrm{Q}_{2r}^{\nu}(0',\nu,0))},\,\|f\|_{L^{p}(\mathrm{Q}_{2r}^{\nu}(0',\nu,0))}  \right\} \le \epsilon \quad \text{and} \quad \tau \le \tau_0,
\]
then any two $L^p$-viscosity solutions $u$ (normalized so that $\|u\|_{L^{\infty}(\mathrm{Q}_{r}^{\nu}(0',\nu,0))} \leq 1$) and $\mathrm{h}$ of the problems
\[
\left\{
\begin{array}{rclcl}
F_{\tau}(D^2u,x,t) - u_{t} &=& f(x,t) & \text{in} & \mathrm{Q}^{\nu}_{r}(0',\nu,0), \\
\beta \cdot Du + \gamma u &=& g(x,t) & \text{on} & \mathrm{Q}_{r}^{*}, \\
u &=& \varphi & \text{on} & \partial_{p}\mathrm{Q}_{r}^{\nu}(0',\nu,0)\setminus \mathrm{Q}_{r}^{*}
\end{array}
\right.
\]
and
\[
\left\{
\begin{array}{rclcl}
F^{\sharp}(D^2 \mathrm{h},0,0) - \mathrm{h}_{t} &=& 0 & \text{in} & \mathrm{Q}^{\nu}_{\frac{3}{4}r}(0',\nu,0), \\
\beta \cdot D\mathrm{h} + \gamma \mathrm{h} &=& g(x,t) & \text{on} & \mathrm{Q}_{\frac{3}{4}r}^{*}, \\
\mathrm{h} &=& u & \text{on} & \partial_{p}\mathrm{Q}_{\frac{3}{4}r}^{\nu}(0',\nu,0)\setminus \mathrm{Q}_{\frac{3}{4}r}^{*}
\end{array}
\right.
\]
satisfy the estimate
\[
\|u - \mathrm{h}\|_{L^{\infty}(\mathrm{Q}_{\frac{3}{4}r}^{\nu}(0',\nu,0))} \le \delta.
\]
\end{lemma}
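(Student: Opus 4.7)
The natural strategy is a contradiction-compactness argument tailored to the tangential path $\tau \mapsto F_\tau$ and its limit $F^{\sharp}$. I would proceed as follows.

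Suppose the conclusion fails. Then there exist $\delta_0 > 0$, sequences $\tau_k \to 0^+$, $\epsilon_k \to 0^+$, operators $F_k$ satisfying (H1)-(H4) with recession $F_k^{\sharp}$, source terms $f_k$, boundary data $g_k \in C^{0,\alpha}$ and $\varphi_k \in C^0$ uniformly bounded by $\mathrm{C}_1, \mathrm{C}_2$, and corresponding normalized solutions $u_k$ and $\mathrm{h}_k$ of the two problems, such that
\[
\max\left\{\sup_{\mathrm{X},x,t}\tfrac{|(F_k)_{\tau_k}(\mathrm{X},x,t)-F_k^{\sharp}(\mathrm{X},x,t)|}{1+\|\mathrm{X}\|},\,\|\psi_{F_k^{\sharp}}\|_{L^{p}(\mathrm{Q}_{2r}^{\nu})},\,\|f_k\|_{L^{p}(\mathrm{Q}_{2r}^{\nu})}\right\}\leq\epsilon_k,
\]
yet $\|u_k-\mathrm{h}_k\|_{L^{\infty}(\mathrm{Q}_{3r/4}^{\nu}(0',\nu,0))} > \delta_0$ for every $k$.

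The first step is compactness. Since $\|u_k\|_{L^{\infty}}\le 1$ and $\|f_k\|_{L^{p}}\to 0$, the A.B.P.T. estimate (Theorem~\ref{ABP-fullversion}) together with the Hölder regularity estimate (Theorem~\ref{Holder_Est}) provides a uniform $C^{0,\alpha'}$ bound for $u_k$ on compact subsets including the flat boundary portion. By Arzelà-Ascoli, up to a subsequence, $u_k \to u_{\infty}$ uniformly on $\overline{\mathrm{Q}_{3r/4}^{\nu}(0',\nu,0)}$. Applying the same reasoning to $\mathrm{h}_k$ (whose outer boundary values are $u_k$, hence uniformly bounded and Hölder continuous on the outer parabolic boundary), we also extract $\mathrm{h}_k \to \mathrm{h}_{\infty}$ uniformly, with $\mathrm{h}_{\infty} = u_{\infty}$ on $\partial_p \mathrm{Q}_{3r/4}^{\nu}\setminus\mathrm{Q}_{3r/4}^{*}$.

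The second, central step is to pass to the limit in both equations via the Stability Lemma~\ref{Est}. For any test function $\varphi \in W^{2,p}$, I would write
\[
(F_k)_{\tau_k}(D^2\varphi,x,t) - f_k = \big[(F_k)_{\tau_k}(D^2\varphi,x,t)-F_k^{\sharp}(D^2\varphi,x,t)\big] + \big[F_k^{\sharp}(D^2\varphi,x,t)-F_k^{\sharp}(D^2\varphi,0,0)\big] + F_k^{\sharp}(D^2\varphi,0,0) - f_k,
\]
and control the first bracket by $\epsilon_k(1+\|D^2\varphi\|)$, the second by the oscillation function $\psi_{F_k^{\sharp}}(\cdot)(1+\|D^2\varphi\|)$ whose $L^p$-norm is $\leq \epsilon_k$, and the source term by $\|f_k\|_{L^p}\le\epsilon_k$. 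Up to a further subsequence, $F_k^{\sharp}(\mathrm{X},0,0)$ converges locally uniformly in $\mathrm{X}$ to some uniformly parabolic operator $F_{\infty}^{\sharp}(\mathrm{X})$ (by (H1) the family is equicontinuous and uniformly bounded on compact subsets of $\operatorname{Sym}(n)$). The convergence~\eqref{Est1} required by the Stability Lemma then holds, and we conclude that both $u_{\infty}$ and $\mathrm{h}_{\infty}$ are $L^p$-viscosity solutions of
\[
F_{\infty}^{\sharp}(D^2 w)-w_t = 0 \quad \text{in}\quad \mathrm{Q}_{3r/4}^{\nu}(0',\nu,0).
\]
A standard adaptation of this stability argument at the oblique boundary, using $C^0$ test functions touching from above/below on $\mathrm{Q}_{3r/4}^{*}$, passes the boundary condition to the limit: both $u_{\infty}$ and $\mathrm{h}_{\infty}$ satisfy $\beta\cdot Dw + \gamma w = g_{\infty}$ on $\mathrm{Q}_{3r/4}^{*}$, where $g_{\infty}$ is the uniform limit of a subsequence of $g_k$ (uniformly bounded in $C^{0,\alpha}$).

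The final step is uniqueness. Since $u_{\infty}$ and $\mathrm{h}_{\infty}$ solve the same uniformly parabolic problem, with the same oblique boundary condition on $\mathrm{Q}_{3r/4}^{*}$ and the same values on the rest of the parabolic boundary, the Comparison Principle (Theorem~\ref{comparation}) combined with the A.B.P.T. estimate (Theorem~\ref{ABP-fullversion}) gives $u_{\infty} \equiv \mathrm{h}_{\infty}$ on $\mathrm{Q}_{3r/4}^{\nu}(0',\nu,0)$. This contradicts $\|u_k - \mathrm{h}_k\|_{L^{\infty}} > \delta_0$ after passing to the limit. The main obstacle I anticipate is verifying the hypothesis~\eqref{Est1} of the Stability Lemma at the oblique boundary — specifically, guaranteeing that the $L^p$-convergence of the interior residuals propagates to a consistent boundary viscosity sense — and ensuring that the equicontinuity of the family $\{F_k^{\sharp}(\cdot,0,0)\}$ supplies a nontrivial limit operator $F_{\infty}^{\sharp}$ satisfying (H5), so that the Comparison Principle applies in its hypotheses at the end.
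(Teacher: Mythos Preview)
Your contradiction-compactness strategy is correct and matches the paper's, but your endgame differs from the paper's and carries a misplaced worry.

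The paper does \emph{not} extract a separate limit $\mathrm{h}_\infty$ for the sequence $\mathrm{h}_k$ and then invoke uniqueness. Instead, after obtaining $u_{j_k}\to u_\infty$ and identifying $u_\infty$ as a viscosity solution of $F^{\sharp}_\infty(D^2 u_\infty,0,0)-(u_\infty)_t=0$ with boundary datum $g_\infty$, it sets $w_{j_k}\defeq u_\infty-\mathrm{h}_{j_k}$. By the Comparison Principle (Theorem~\ref{comparation}) this $w_{j_k}$ lies in $\mathcal{S}(\lambda/n,\Lambda,0)$, satisfies $\beta\cdot Dw_{j_k}+\gamma w_{j_k}=g_\infty-g_{j_k}$ on $\mathrm{Q}^{*}_{3/4}$, and equals $u_\infty-u_{j_k}$ on the remaining parabolic boundary. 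A single application of the A.B.P.T. estimate then gives $\|w_{j_k}\|_{L^\infty}\to 0$ directly, yielding $\mathrm{h}_{j_k}\to u_\infty$ and the contradiction. This bypasses the compactness step for $\mathrm{h}_k$ and the uniqueness argument altogether.

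Two corrections to your anticipated obstacles. First, the Comparison Principle (Theorem~\ref{comparation}) requires only (H1) and condition ({\bf E}), not (H5); since the limiting operator $F^{\sharp}_\infty(\cdot,0,0)$ has frozen coefficients, ({\bf E}) is trivial. Your concern about transferring (H5) to the limit is therefore unnecessary. Second, because the constants $\epsilon,\tau_0$ in the statement are claimed independent of $\nu$, the contradiction sequence should allow $\nu_k\in[0,1]$ to vary; the paper handles this by extracting a monotone subsequence $\nu_{j_k}\to\nu_\infty$ and verifying the needed domain inclusions $\mathrm{Q}^{\nu_\infty}_{7/8}\subset\mathrm{Q}^{\nu_{j_k}}_{\rho}$ so that the H\"older estimate~\eqref{4.7} is uniform on a fixed set. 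You should add this to your setup.
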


\begin{proof}
Without loss of generality, we assume that \( r = 1 \). We will prove the lemma by contradiction. Suppose the claim does not hold. Then, there exist \( \delta_0 > 0 \) and a sequence of functions \( (F_{\tau_j})_{j \in \mathbb{N}} \), \( (F^{\sharp}_j)_{j \in \mathbb{N}} \), \( (u_j)_{j \in \mathbb{N}} \), \( (f_j)_{j \in \mathbb{N}} \), \( (\varphi_j)_{j \in \mathbb{N}} \), \( (g_j)_{j \in \mathbb{N}} \), and \( (\mathrm{h}_j)_{j \in \mathbb{N}} \) related by the following system of equations:
$$
\left\{
\begin{array}{rclcl}
F_{\tau_j}(D^2 u_j, x, t) - (u_j)_t &=& f_j(x, t) & \text{in} & \mathrm{Q}^{\nu_j}_1(0', \nu_j, 0) \\
\beta \cdot D u_j + \gamma u_j &=& g_j(x, t) & \text{on} & \mathrm{Q}_1^* \\
u_j &=& \varphi_j & \text{on} & \partial_p \mathrm{Q}_1^{\nu_j}(0', \nu_j, 0) \setminus \mathrm{Q}_1^*
\end{array}
\right.
$$
and
$$
\left\{
\begin{array}{rclcl}
F^{\sharp}(D^2 \mathrm{h}_j, 0, 0) - (\mathrm{h}_j)_t &=& 0 & \text{in} & \mathrm{Q}^{\nu_j}_{\frac{3}{4}}(0', \nu_j, 0) \\
\beta \cdot D \mathrm{h}_j + \gamma \mathrm{h}_j &=& g_j(x, t) & \text{on} & \mathrm{Q}_{\frac{3}{4}}^* \\
\mathrm{h}_j &=& u_j & \text{on} & \partial_p \mathrm{Q}_{\frac{3}{4}}^{\nu_j}(0', \nu_j, 0) \setminus \mathrm{Q}_{\frac{3}{4}}^*
\end{array}
\right.
$$
where \( \tau_j \), \( \Vert \psi_{F_{\tau_j}^{\sharp}} \Vert_{L^p(\mathrm{Q}^{\nu_j}_2(0', \nu_j, 0))} \), and \( \|f_j\|_{L^p(\mathrm{Q}^{\nu_j}_2(0', \nu_j, 0))} \) tend to zero as \( j \to \infty \), and such that
\begin{equation} \label{1}
\|u_j - \mathrm{h}_j\|_{L^{\infty}(\mathrm{Q}^{\nu_j}_{\frac{3}{4}}(0', \nu_j, 0))} > \delta_0.
\end{equation}
Moreover, \( \varphi_j \in C^0(\partial_p \mathrm{Q}^{\nu_j}_1(0', \nu_j, 0)) \) and \( g_j \in C^{0, \alpha}(\overline{\mathrm{Q}^*}_2) \) satisfy \( \|\varphi_j\|_{L^{\infty}(\partial_p \mathrm{Q}^{\nu_j}_1(0', \nu_j, 0))} \leq \mathrm{C}_1 \) and \( \|g_j\|_{C^{0, \alpha}(\overline{\mathrm{Q}^*}_2)} \leq \mathrm{C}_2 \), respectively. From Theorem \ref{Holder_Est}, we have for all \( 0 < \rho < 1 \),
\begin{equation} \label{4.6}
\|u_j\|_{C^{0, \alpha'}(\mathrm{Q}^{\nu_j}_{1, \rho}(0', \nu_j, 0))} \leq \mathrm{C}(n, \lambda, \Lambda, \mathrm{C}_1, \mathrm{C}_2, \mu_0) \rho^{-\alpha'}
\end{equation}
for some \( \alpha' = \alpha'(n, \lambda, \Lambda, \mu_0) \in (0, 1) \). Suppose that there exists a number \( \nu_{\infty} \) and a subsequence \( \{\nu_{j_k}\} \) such that \( \nu_{j_k} \to \nu_{\infty} \) as \( k \to +\infty \). We can assume that such a subsequence is monotone. If \( \nu_{j_k} \) is decreasing, we can check that
$$
\mathrm{Q}^{\nu_{\infty}}_{\frac{7}{8}}(0', \nu_{\infty}, 0) \subset \mathrm{Q}^{\nu_{j_k}}_{\frac{7}{8}}(0', \nu_{j_k}, 0), \quad \forall k \in \mathbb{N}.
$$
Thus, by \eqref{4.6}, we observe that
\begin{equation} \label{4.7}
\|u_{j_k}\|_{C^{0, \alpha'}(\mathrm{Q}^{\nu_{\infty}}_{\frac{7}{8}}(0', \nu_{\infty}, 0))} \leq \mathrm{C}(n, \lambda, \Lambda, \mathrm{C}_1, \mathrm{C}_2, \mu_0).
\end{equation}
On the other hand, if \( \nu_{j_k} \) is increasing, there exists a number \( k_0 \) such that
$$
\mathrm{Q}^{\nu_{\infty}}_{\frac{7}{8}}(0', \nu_{\infty}, 0) \subset \mathrm{Q}^{\nu_{j_k}}_{\frac{15}{16}}(0', \nu_{j_k}, 0), \quad \text{for all} \, k \geq k_0.
$$
Then, once again by \eqref{4.6}, the estimate \eqref{4.7} is valid when \( (\nu_{j_k}) \) is increasing. Thus, we can apply the Arzelà-Ascoli compactness criterion, and there exist functions \( u_{\infty} \in C^{0, \alpha}(\mathrm{Q}^{\nu_{\infty}}_{\frac{7}{8}}(0', \nu_{\infty}, 0)) \), \( g_{\infty} \in C^{0, \alpha}(\overline{\mathrm{Q}^*}_1) \), and subsequences such that \( u_{j_k} \to u_{\infty} \) in \( C^0(\mathrm{Q}^{\nu_{\infty}}_{\frac{7}{8}}(0', \nu_{\infty}, 0)) \) and \( g_{j_k} \to g_{\infty} \) in \( C^{0, \alpha}(\overline{\mathrm{Q}^*}_1) \).

Since the functions \( F^{\sharp}_j(\cdot, 0, 0) \to F^{\sharp}_{\infty}(\cdot, 0, 0) \) uniformly in compact sets of \( \textit{Sym}(n) \), and for every \( \varphi \in C^2(\mathrm{Q}_r(x_0, t_0)) \) such that \( \mathrm{Q}_r(x_0, t_0) \subset \mathrm{Q}^{\nu_{\infty}}_{\frac{7}{8}}(0', \nu_{\infty}, 0) \), we have
\begin{eqnarray*}
|F_{\tau_{j_k}}(D^2 \varphi, x, t) - f_{j_k}(x, t) - F^{\sharp}_{\infty}(D^2 \varphi, 0, 0)| &\leq& |F_{\tau_{j_k}}(D^2 \varphi, x, t) - F^{\sharp}_{j_k}(D^2 \varphi, x, t)| + |f_{j_k}| + \\
& & |F^{\sharp}_{j_k}(D^2 \varphi, x, t) - F^{\sharp}_{j_k}(D^2 \varphi, 0, 0)| + \\
& & |F^{\sharp}_{j_k}(D^2 \varphi, 0, 0) - F^{\sharp}_{\infty}(D^2 \varphi, 0, 0)| \\
&\leq& |F_{\tau_{j_k}}(D^2 \varphi, x, t) - F^{\sharp}_{j_k}(D^2 \varphi, x, t)| + |f_{j_k}| + \\
& & \psi_{F_{\tau_{j_k}}^{\sharp}}((x, t), (0, 0))(1 + |D^2 \varphi|).
\end{eqnarray*}
Thus,
$$
\lim_{k \to +\infty} \| F_{\tau_{j_k}}(D^2 \varphi, x, t) - f_{j_k}(x, t) - F^{\sharp}_{\infty}(D^2 \varphi, 0, 0) \|_{L^p(\mathrm{Q}_r(x_0, t_0))} = 0.
$$
Therefore, the Stability Lemma \ref{Est} ensures that \( u_{\infty} \) satisfies in the viscosity sense
$$
\left\{
\begin{array}{rclcl}
F^{\sharp}_{\infty}(D^2 u_{\infty}, 0, 0) - (u_{\infty})_t &=& 0 & \text{in} & \mathrm{Q}^{\nu_{\infty}}_{\frac{7}{8}}(0', \nu_{\infty}, 0) \\
\beta \cdot D u_{\infty} + \gamma u_{\infty} &=& g_{\infty}(x, t) & \text{on} & \mathrm{Q}_{\frac{7}{8}}^*.
\end{array}
\right.
$$
Now, define \( w_{j_k} := u_{\infty} - \mathrm{h}_{j_k} \) for each \( k \). We observe that \( w_{j_k} \) satisfies by Theorem \ref{comparation}
$$
\left\{
\begin{array}{rclcl}
w_{j_k} &\in& \mathcal{S} \left( \frac{\lambda}{n}, \Lambda, 0 \right) & \text{in} & \mathrm{Q}^{\nu_{\infty}}_{\frac{3}{4}}(0', \nu_{\infty}, 0) \\
\beta \cdot D w_{j_k} + \gamma w_{j_k} &=& (g_{\infty} - g_{j_k})(x, t) & \text{on} & \mathrm{Q}_{\frac{3}{4}}^* \\
w_{j_k} &=& u_{\infty} - u_{j_k} & \text{on} & \partial_p \mathrm{Q}^{\nu_{\infty}}_{\frac{3}{4}}(0', \nu_{\infty}, 0) \setminus \mathrm{Q}_{\frac{3}{4}}^*.
\end{array}
\right.
$$
Thus, by Lemma \ref{ABP-fullversion}, we observe that
\begin{eqnarray*}
\|w_{j_k}\|_{L^{\infty}(\mathrm{Q}^{\nu_{\infty}}_{\frac{3}{4}}(0', \nu_{\infty}, 0))} &\leq& \|u_{\infty} - u_{j_k}\|_{L^{\infty}(\partial_p \mathrm{Q}^{\nu_{\infty}}_{\frac{3}{4}}(0', \nu_{\infty}, 0) \setminus \mathrm{Q}_{\frac{3}{4}}^*)} + \\
& & \mathrm{C}(n, \lambda, \Lambda, \mu_0) \|g_{\infty} - g_{j_k}\|_{L^{\infty}(\mathrm{Q}_{\frac{3}{4}}^*)} \to 0 \quad \text{as} \,\, k \to +\infty.
\end{eqnarray*}
Thus, \( w_{j_k} \) converges uniformly to zero. This implies that \( \mathrm{h}_{j_k} \) converges uniformly to \( u_{\infty} \) in \( \mathrm{Q}^{\nu_{\infty}}_{\frac{3}{4}}(0', \nu_{\infty}, 0) \), which contradicts \eqref{1} for \( k \gg 1 \).
\end{proof}

Following the well-established ideas from classical literature, our goal is to ensure, via such an approximation, a decay in the measure of the sets $\mathrm{A}_{t}$ as powers of $t$. These sets are characterized by those points where the Hessian and the temporal derivative are "bad" (in a suitable measure sense).  In this context, the next result ensures a decay in the fundamental class of solutions, where both the solution and the source term are small, which is a classical result. Therefore, the proof is omitted. For details, we recommend that the interested reader see \cite[Corollary 3.10]{BHLp} and \cite[Proposition 3.3]{Bessa}.

\begin{proposition}[{\bf Power Decay on the Boundary}]\label{Prop2.12}
Let $\Omega=\mathrm{B}^{+}_{12\sqrt{n}}\times (0,13]$, $r\in(0,1]$, and $(x_{0},t_{0})\in \mathrm{B}^{+}_{12\sqrt{n}}\times (0,13]$ such that $r\Omega(x_{0},t_{0})=\mathrm{B}^{+}_{12r\sqrt{n}}\times (t_{0},t_{0}+13r^{2}]\subset \Omega$. Assume that $u\in \mathcal{S}_{p}^{*}(\lambda,\Lambda,f)$ in $r\Omega(x_{0},t_{0})$, $u\in C^0(\Omega)$, and $\Vert u\Vert_{L^{\infty}(\Omega)}\leq 1$. Then, there exist universal constants $\mathrm{C}>0$ and $\delta>0$ such that if $\Vert f\Vert_{L^{n+1}(r\Omega(x_{0},t_{0}))}\le 1$, it follows that
$$
|\mathrm{A}_{s}(u,\Omega)\cap ((K^{n-1}_{r}\times (0,r)\times (0,r^{2}))+(x_{1},t_{1})))|\leq \mathrm{C}s^{-\mu}|K^{n-1}\times (0,r)\times (0,r^{2})|
$$
for any $(x_{1},t_{1}) \in (\mathrm{B}_{9\sqrt{n}}(x_{0}) \cap \overline{\mathbb{R}^n_+})\times [t_{0},t_{0}+5r^{2}]$ and $s>1$.
\end{proposition}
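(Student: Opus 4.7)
The plan is to combine the Caloric Approximation Lemma~\ref{Approx} with a stacked Calderón-Zygmund dyadic cube decomposition adapted to the parabolic half-space geometry, following the classical blueprint of Caffarelli-Cabré \cite{CC} and its parabolic adaptations in \cite{Imbert;Silvestre} and \cite{BHLp}. First I would rescale both $r$ and the center $(x_1,t_1)$ so that the claim reduces to a unit-scale statement on the parallelepiped $K^{n-1}_1 \times (0,1) \times (0,1)$, sitting inside a (scaled) larger half-cylindrical domain. The hypothesis $u \in \mathcal{S}^*_p(\lambda,\Lambda,f)$ is preserved under the parabolic scaling $v(y,s) = r^{-2} u(ry, r^2 s + t_0)$, and the smallness $\|f\|_{L^{n+1}} \leq 1$ scales correctly precisely because $p = n+1$ is the critical exponent for the ABP-type bound.

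The heart of the argument is a one-step density estimate: there exist universal constants $M_0 > 1$ and $\epsilon_0 \in (0,1)$ such that, whenever $\|u\|_{L^\infty}\leq 1$ and $\|f\|_{L^{n+1}}$ is smaller than a universal $\epsilon_1$, one has
\[
|\mathrm{A}_{M_0}(u,\Omega)\cap (K^{n-1}_1\times (0,1)\times (0,1))| \leq \epsilon_0\,|K^{n-1}_1\times (0,1)\times (0,1)|.
\]
To prove this, I would invoke Lemma~\ref{Approx} to produce a caloric comparison function $\mathrm{h}$ solving $F^\sharp(D^2 \mathrm{h},0,0)-\mathrm{h}_t=0$ with $\|u-\mathrm{h}\|_{L^\infty}\leq \delta$ for any prescribed $\delta$. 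The a priori $C^{2,\alpha}$ estimate (H4) then yields a universal bound on $\|D^2\mathrm{h}\|_{L^\infty}+\|\mathrm{h}_t\|_{L^\infty}$, so that $\mathrm{h}$ admits touching paraboloids of universally bounded opening $\mathrm{C}_0$ at every point. A standard barrier/comparison argument, combined with the weak-type $L^{n+1}$ bound from the A.B.P.\ inequality (Theorem~\ref{ABP-fullversion}) applied to $u-\mathrm{h}$ in the fundamental class, transfers this to $u$ up to a set of measure $\epsilon_0$ whose size is controlled by $\delta$.

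With the basic density step established, I would iterate it by a parabolic Calderón-Zygmund stacked-cube decomposition of the parallelepiped into dyadic parabolic subcubes. On each subcube $Q^\prime$ hitting $\mathrm{A}_{M_0^k}(u)$ nontrivially, the rescaled solution $u^\prime(y,s) = \ell^{-2} u(\ell y + z', \ell^2 s + s')$ again satisfies the fundamental-class inclusion with source of $L^{n+1}$-norm no larger than the original, so the one-step density applies and yields the geometric nesting
\[
|\mathrm{A}_{M_0^{k}}(u,\Omega)\cap (K^{n-1}_1\times (0,1)\times (0,1))| \leq \epsilon_0^k.
\]
Given any $s>1$, choosing $k$ so that $M_0^k\leq s<M_0^{k+1}$ gives $|\mathrm{A}_s|\leq |\mathrm{A}_{M_0^k}|\leq \epsilon_0^k \leq \mathrm{C} s^{-\mu}$ with the universal exponent $\mu := -\log\epsilon_0/\log M_0>0$, which is exactly the claimed decay; undoing the initial scaling returns the statement on $K^{n-1}_r\times (0,r)\times (0,r^2)+(x_1,t_1)$.

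The main obstacle is making the stacked-cube iteration compatible with the asymmetric half-space geometry $K^{n-1}_r\times (0,r)$ rather than a full cube $K^n_r$: one must ensure that each dyadic subcube produced during the Calderón-Zygmund stopping-time argument, together with a sufficient geometric buffer needed to run the approximation step, stays inside the domain $\Omega = \mathrm{B}^+_{12\sqrt{n}}\times (0,13]$. This is precisely the role of the generous factors $12\sqrt{n}$ and $9\sqrt{n}$ encoded in the hypotheses, which guarantee that for every $(x_1,t_1)$ in the prescribed range and every $r\in (0,1]$, the ambient cylinders required by the one-step density estimate remain admissible throughout the iteration.
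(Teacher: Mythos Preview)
Your overall architecture—one-step density estimate followed by a stacked Calder\'on--Zygmund iteration yielding geometric decay $|\mathrm{A}_{M_0^k}|\leq \epsilon_0^k$ and hence the power rate $\mu=-\log\epsilon_0/\log M_0$—is exactly the classical scheme the paper has in mind (it omits the proof and cites \cite[Corollary 3.10]{BHLp} and \cite[Proposition 3.3]{Bessa}). The problem is your proposed mechanism for the one-step density estimate.

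Proposition~\ref{Prop2.12} concerns a function $u\in\mathcal{S}_p^*(\lambda,\Lambda,f)$, i.e.\ $u$ satisfies the two differential \emph{inequalities}
\[
\mathcal{M}^-_{\lambda,\Lambda}(D^2u)-u_t\leq |f|\quad\text{and}\quad \mathcal{M}^+_{\lambda,\Lambda}(D^2u)-u_t\geq -|f|
\]
in the viscosity sense. There is no operator $F$, no recession profile $F^{\sharp}$, and none of the structural hypotheses (H3)--(H5) is assumed. Lemma~\ref{Approx}, however, requires $u$ to solve an \emph{equation} $F_\tau(D^2u,x,t)-u_t=f$ and produces an approximant $\mathrm{h}$ solving $F^{\sharp}(D^2\mathrm{h},0,0)-\mathrm{h}_t=0$; its proof uses the smallness of $\psi_{F^{\sharp}}$ and the existence of the limit in \eqref{Reces}. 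None of this is available for a generic member of the Pucci class, so the lemma simply does not apply here. Likewise, invoking (H4) to get $C^{2,\alpha}$ bounds on $\mathrm{h}$ imports an assumption that is foreign to the statement. In the paper's logical flow, Proposition~\ref{Prop2.12} is a \emph{prerequisite} used inside the proof of Proposition~\ref{Prop.4.6}; your proposal effectively inverts that dependency.

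The correct one-step density estimate for the fundamental class is obtained without any caloric approximation: one builds an explicit barrier (a paraboloid-like supersolution for $\mathcal{M}^-$) and applies the A.B.P.T.\ estimate to the contact set, exactly as in the elliptic Caffarelli argument \cite[Lemma~7.5]{CC} and its parabolic analogues \cite[Section~4.2]{Imbert;Silvestre}, \cite{WangI}, with the half-space adaptation carried out in \cite[Corollary~3.10]{BHLp}. Once you replace your approximation step by that barrier/ABP argument, the rest of your outline (scaling, stacked-cube iteration, and the geometric buffers $12\sqrt{n}$, $9\sqrt{n}$ absorbing the half-space constraints) goes through.
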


For clarity, we note that the forthcoming results will rely on the assumptions \(\mathrm{(H4)}\) and/or \(\mathrm{(H5)}\). In what follows, these hypotheses will be tacitly assumed whenever required, and their specific role will be made explicit in the corresponding proofs.

In light of this decay estimate, we will utilize the convergence module to examine the behavior of solutions associated with the operators of the continuous path $\tau\mapsto F_{\tau}$. 

\begin{proposition}\label{Prop.4.6}
Assume that the structural conditions \(\mathrm{(H1)-(H5)}\) are satisfied. Let $\Omega^{\ast}=\mathrm{B}^{+}_{14\sqrt{n}}\times(0,15]$, $0<r\leq1$, and let $u$ be a viscosity solution of
$$
\left\{
\begin{array}{rclcl}
F_{\tau}(D^2u,x,t)-u_{t}&=& f(x,t) & \mbox{in} & \Omega^{\ast},\\
\beta\cdot Du+\gamma u&=& g(x,t) & \mbox{on} & \mathrm{S}=: \mathrm{T}_{14\sqrt{n}}\times (0,15]
\end{array}
\right.
$$
Assume further that $\max \left\{ \|f\|_{L^{n+1}(\Omega)}, \,\,\tau \right\} \le \epsilon$ for some $0<\epsilon<1$ and consider a point $(x_{0},t_{0})\in \mathrm{S}$ such that $r\Omega(x_{0},t_{0})\subset \Omega$. Finally, assume that
$$
\mathrm{G}_1(u,\Omega^{\ast}) \cap \left((K^{n-1}_{3r} \times (0,3r)\times (r^{2},10r^{2})) + (\tilde{x}_1, \tilde{t}_{1})\right) \neq \emptyset
$$
for some $(\tilde{x}_1,\tilde{t}_{1}) \in (\mathrm{B}_{9r\sqrt{n}}(x_{0})\cap \{x_{n}\geq 0\})\times [t_{0}+2r^{2},t_{0}+5r^{2}]$. Then,
$$
|\mathrm{G}_{\mathrm{M}}(u,\Omega^{\ast}) \cap  \left((K^{n-1}_{r} \times (0,r)\times (0,r^{2})) + (x_1,t_{1})\right)| \ge (1-\epsilon_0)|K^{n-1}_{r}\times (0,r)\times (0,r^{2})|,
$$
where $(x_1,t_{1}) \in (\mathrm{B}_{9r\sqrt{n}}(x_{0})\cap\{x_{n}\geq 0\})\times[t_{0}+2r^{2},\tilde{t}_{1}]$, and $\mathrm{M}>1$ depends only on $n$, $\lambda$, $\Lambda$, $\mu_0$, $\|g\|_{C^{1,\alpha}(\mathrm{S})}$, and $\mathfrak{c}_{1}$, and $\epsilon_0 \in (0,1)$.
\end{proposition}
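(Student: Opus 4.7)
The plan is to employ a tangential approximation to the frozen-coefficient homogeneous problem for $F^{\sharp}$, combined with the boundary $C^{2,\alpha}$ estimate (H5) and the boundary power decay of Proposition \ref{Prop2.12}. First, I would rescale via the parabolic dilation $(x,t) \mapsto (rx + x_1, r^2 t + t_1)$ to reduce to $r = 1$, and then exploit the $\mathrm{G}_1$ hypothesis together with Theorem \ref{ABP-fullversion} to normalize $u$ so that $\|u\|_{L^{\infty}(\Omega^{\ast})} \leq 1$: the existence of touching paraboloids of opening $1$ at a point in the specified cube, combined with the uniform size of $\Omega^{\ast}$, supplies this bound.

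Next, with a small parameter $\delta > 0$ to be chosen at the end, I would invoke the Caloric Approximation Lemma \ref{Approx} on a cylinder slightly larger than the target cube. Using $\|f\|_{L^{n+1}} \leq \epsilon$ and $\tau \leq \epsilon$ with $\epsilon \leq \epsilon_{\ast}(\delta)$, this yields a function $\mathrm{h}$ satisfying
\[
F^{\sharp}(D^2 \mathrm{h}, 0, 0) - \mathrm{h}_t = 0, \qquad \beta \cdot D\mathrm{h} + \gamma \mathrm{h} = g,
\]
with $\|u - \mathrm{h}\|_{L^{\infty}} \leq \delta$. By (H5), $\|\mathrm{h}\|_{C^{2,\alpha}} \leq \mathrm{c}_2 \bigl(\|\mathrm{h}\|_{L^{\infty}} + \|g\|_{C^{1,\alpha}}\bigr) \leq \mathrm{M}_0/2$ for a universal $\mathrm{M}_0$; combined with a second-order Taylor expansion adjusted by the (bounded) diameter of $\Omega^{\ast}$, this gives $\mathrm{h} \in \mathrm{G}_{\mathrm{M}_0/2}$ throughout the relevant subregion.

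I would then apply the power decay to $w := u - \mathrm{h}$. From (H1) and uniform parabolicity, $w$ lies in the fundamental class $\mathcal{S}_p^{\ast}\!\bigl(\lambda, \Lambda, |f| + E_{\tau}\bigr)$, where the error $E_{\tau}(x,t) := |F_{\tau}(D^2 \mathrm{h}, x, t) - F^{\sharp}(D^2 \mathrm{h}, 0, 0)|$ is controlled in $L^{n+1}$ via the modulus of convergence $F_{\tau} \to F^{\sharp}$ and hypothesis (H3), and $w$ inherits the homogeneous oblique condition $\beta \cdot Dw + \gamma w = 0$. Applying Proposition \ref{Prop2.12} to $w/\delta$---admissible once $\epsilon$ is fixed small relative to $\delta$---yields the decay
\[
\bigl| \mathrm{A}_s(w, \Omega^{\ast}) \cap ((K^{n-1}_{1} \times (0,1) \times (0,1)) + (x_1, t_1)) \bigr| \leq \mathrm{C}\, s^{-\mu}\, |K^{n-1}_{1} \times (0,1) \times (0,1)|
\]
for every $s > 1$, with $\mu > 0$ universal.

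Finally, the two pieces combine via additivity of paraboloids: a concave paraboloid of opening $A$ touching $\mathrm{h}$ from above plus one of opening $B$ touching $w$ from above gives a concave paraboloid of opening $A + B$ touching $u = \mathrm{h} + w$ from above, and symmetrically from below. Thus wherever $\mathrm{h} \in \mathrm{G}_{\mathrm{M}_0/2}$ (which holds everywhere) and $w \in \mathrm{G}_{\mathrm{M} - \mathrm{M}_0/2}$ (which holds outside a set of measure at most $\mathrm{C}(\mathrm{M} - \mathrm{M}_0/2)^{-\mu}$), one has $u \in \mathrm{G}_{\mathrm{M}}$; selecting a universal $\mathrm{M} \gg \mathrm{M}_0$ so that $\mathrm{C}(\mathrm{M} - \mathrm{M}_0/2)^{-\mu} \leq \epsilon_0$ closes the argument. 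The hardest step will be the scale compatibility among the three families of cylinders involved---those produced by Lemma \ref{Approx}, those required by Proposition \ref{Prop2.12}, and the target cube in the statement---which calls for a covering by small sub-cubes and a careful choice of intermediate radii to preserve universal constants throughout.
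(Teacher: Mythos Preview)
Your overall strategy---approximate by a smooth solution $\mathrm{h}$ of the frozen-coefficient recession problem via Lemma \ref{Approx}, use the $C^{2,\alpha}$ estimate to place $\mathrm{h}$ in $\mathrm{G}_{\mathrm{M}_0}$, and apply Proposition \ref{Prop2.12} to the difference---matches the paper's. However, the normalization step has a genuine gap. The $\mathrm{G}_1$ hypothesis furnishes a point $(x_2,t_2)$ at which $u$ is touched from above and below by paraboloids of opening $1$; this gives
\[
-\tfrac{1}{2}\bigl(|x-x_2|^2 - (t-t_2)\bigr) \le u(x,t) - l(x) \le \tfrac{1}{2}\bigl(|x-x_2|^2 - (t-t_2)\bigr)
\]
for some affine $l$, and hence a dimensional bound on $u - l$, \emph{not} on $u$ itself. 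Theorem \ref{ABP-fullversion} cannot convert this into a bound on $u$, since no control on the data of $u$ along $\partial_p \Omega^{\ast}\setminus\mathrm{S}$ is assumed. The paper therefore passes to $v \defeq (u - l)/\mathrm{C}_{\ast}$, which is normalized, and tracks the modified oblique datum $\tilde{g} = \mathrm{C}_{\ast}^{-1}(g - \beta\cdot Dl - \gamma l)$; the $C^{1,\alpha}$ regularity of $\beta,\gamma$ keeps $\tilde{g}\in C^{1,\alpha}$. Without subtracting $l$, Lemma \ref{Approx} (which requires a normalized solution) does not apply to $u$.

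A secondary point: your closing argument selects $\mathrm{M}$ large depending on $\epsilon_0$, whereas the statement asserts $\mathrm{M}$ universal. The paper instead fixes $\mathrm{M}=2\mathrm{M}_0$ and scales the difference by $\varepsilon$, writing $w=(v-\tilde{\mathrm{h}})/(2\mathrm{C}\varepsilon)$, before invoking Proposition \ref{Prop2.12}; this yields a bad-set measure bounded by $\mathrm{C}\varepsilon^{\kappa}$ times the measure of the target cube, so the $\epsilon_0$-dependence is absorbed into $\varepsilon$ rather than $\mathrm{M}$. This is not merely cosmetic: in the iteration of Lemma \ref{lemma3.6}, $\mathrm{M}$ must remain fixed across dyadic scales.
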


\begin{proof}
Consider a point $(x_{2},t_{2})$ in the set $\mathrm{G}_1(u,\Omega^{\ast}) \cap \left(K^{n-1}_{3r} \times (0,3r)\times (r^{2},10r^{2}) + (\tilde{x}_1, \tilde{t}_{1})\right)$. In particular, since $(x_{2},t_{2}) \in \mathrm{G}_1(u,\Omega^{\ast})$, there exist paraboloids with opening $s=1$ that touch $u$ at $(x_2,t_2)$ from above and below; that is,
\[
-\frac{1}{2}(|x-x_2|^2-(t-t_2)) \le u(x,t)-l(x) \le \frac{1}{2}(|x-x_2|^2-(t-t_2)),
\]
for every $(x,t) \in \mathrm{B}^{+}_{14\sqrt{n}}\times (0,t_2)$ and some affine function $l$.

Now, define
\[
v(x,t) = \frac{u(x,t)-l(x)}{\mathrm{C}_{\ast}},
\]
where $\mathrm{C}_{\ast}>0$ is a dimensional constant chosen so that $\|v\|_{L^{\infty}(\mathrm{B}^{+}_{14\sqrt{n}}(x_0)\times (t_0,t_2))} \le 1$, and
\[
-(|x|^2-(t-t_2)) \le v(x,t) \le |x|^2-(t-t_2) \quad \text{in} \quad \mathrm{B}^{+}_{12\sqrt{n}}(x_0)\times [0,t_2].
\]

Next, observe that $v$ is a viscosity solution to
\[
\left\{
\begin{array}{rclcl}
\tilde{F}_{\tau}(D^2v,x,t)-v_{t} &=& \tilde{f}(x,t) & \text{in} & r\Omega(x_0,t_0),\\
\beta\cdot Dv+\gamma v &=& \tilde{g}(x,t) & \text{on} & r\mathrm{S}(x_0,t_0),
\end{array}
\right.
\]
where
\[
\left\{
\begin{array}{rcl}
\tilde{F}_{\tau}(\mathrm{X},x,t) &\defeq& \frac{1}{\mathrm{C}_{\ast}}F_{\tau}(\mathrm{C}_{\ast} \mathrm{X}, x,t),\\
\tilde{f}(x,t) &\defeq& \frac{1}{\mathrm{C}_{\ast}} f(x,t),\\
\tilde{g}(x,t) &\defeq& \frac{1}{\mathrm{C}_{\ast}}(g(x,t)-\beta(x,t)\cdot Dl(x)-\gamma(x,t) l(x)).
\end{array}
\right.
\]

Now, we denote $\Omega' =\mathrm{B}^{+}_{14\sqrt{n}}\times (1,15]$ and $\mathrm{S}^{\prime} = \mathrm{T}^{+}_{14\sqrt{n}}\times (1,15]$. By hypothesis (H5), there exists a function $\mathrm{h} \in C^{2,\alpha}(\overline{r\Omega'(x_0,t_0)})$, $\varepsilon_0$-close to $v$, arising from the caloric approximation Lemma~\ref{Approx}, such that $\mathrm{h}$ solves in the viscosity sense:
\[
\left\{
\begin{array}{rclcl}
\tilde{F}^{\sharp}(D^2 \mathrm{h},0,0)-\mathrm{h}_{t} &=& 0 & \text{in} & r\Omega'(x_0,t_0),\\
\beta\cdot D\mathrm{h}+\gamma \mathrm{h} &=& \tilde{g}(x,t) & \text{on} & r\mathrm{S}^{\prime}(x_0,t_0),\\
\mathrm{h} &=& v & \text{on} & \partial_{p}(r\Omega'(x_0,t_0)) \setminus r\mathrm{S}^{\prime}(x_0,t_0),
\end{array}
\right.
\]
with
\[
\|v-\mathrm{h}\|_{L^{\infty}(r\Omega'(x_0,t_0))} \le \varepsilon,
\]
for some $\varepsilon_0$ to be chosen later. Note that $\varepsilon < 1$ is determined by the choice of $\varepsilon_0$.

Since $\beta \cdot Dl \in C^{1,\alpha}(rS'(x_0,t_0))$ (as $\beta \in C^{1,\alpha}(r\mathrm{S}^{\prime}(x_0,t_0))$ and $Dl$ is constant), the A.B.P.T. Maximum Principle (Lemma~\ref{ABP-fullversion}) yields:
\begin{eqnarray*}
\|\mathrm{h}\|_{L^{\infty}(r\Omega'(x_0,t_0))} 
&\le& \|v\|_{L^{\infty}(\partial_{p}(r\Omega'(x_0,t_0)) \setminus r\mathrm{S}^{\prime}(x_0,t_0))} \\
& +&\frac{\mathrm{C}}{\mathrm{C}_{\ast}}\left[\|g\|_{L^{\infty}(S)} + |Dl| \|\beta\|_{L^{\infty}(S)} + \|\gamma l\|_{L^{\infty}(S)}\right]\\
&\le& \mathrm{C}(n, \|l\|_{L^{\infty}(S)}, \|\gamma\|_{C^{1,\alpha}(S)}, \|g\|_{C^{1,\alpha}(S)}) \\
&=:& \widetilde{\mathrm{C}}.
\end{eqnarray*}

Thus, hypothesis (H5) ensures that in the set $\Omega''(x_0,t_0) = \mathrm{B}^{+}_{13\sqrt{n}}\times (2,15]$ we have
\[
\|\mathrm{h}\|_{C^{2,\alpha}(r\Omega''(x_0,t_0))} \le \mathrm{C}(\mathrm{c}_1, \widetilde{\mathrm{C}}),
\]
and consequently,
\[
\mathrm{A}_{\mathrm{N}}\left(\mathrm{h}, r\Omega'(x_0,t_0)\right) \cap \left((\mathrm{Q}^{n-1}_{r} \times (0,r)\times(0,r^2)) + (x_1,t_1)\right) = \emptyset
\]
for any $(x_1,t_1)\in (\mathrm{B}_{9r\sqrt{n}} \cap \overline{\mathbb{R}^{n}_{+}}) \times [t_0+2r^2, \tilde{t}_1]$ and some $\mathrm{N} = \mathrm{N}(\mathrm{c}_1, \widetilde{C}) \gg 1$.

Next, consider the set $\Omega''' = \mathrm{B}^{+}_{12\sqrt{n}}\times(2,15]$, and define $\tilde{\mathrm{h}}$ as a continuous extension of $\mathrm{h}_{|_{r\Omega''(x_0,t_0)}}$ to the sub-cylinder $\Omega_{t_2}$, where for a set $V \subset \mathbb{R}^n \times \mathbb{R}$ and $s \in \mathbb{R}$, we write $V_s := \{(x,t) \in V : t \le s\}$. Let $\tilde{\mathrm{h}} = v$ in $\Omega_{t_2} \setminus (r\Omega'(x_0,t_0))_{t_2}$, so that
\[
\|v - \tilde{\mathrm{h}}\|_{L^{\infty}(\Omega_{t_2})} = \|v - \mathrm{h}\|_{L^{\infty}((r\Omega''(x_0,t_0))_{t_2})} \le \overline{\mathrm{C}}(\mathrm{c}_1, \widetilde{\mathrm{C}}).
\]
Moreover, in $\Omega_{t_2} \setminus (r\Omega''(x_0,t_0))_{t_2}$, it holds that
\[
-(\overline{\mathrm{C}}(\mathrm{c}_1, \widetilde{\mathrm{C}}) + (|x|^2 - (t - t_2))) \le \tilde{\mathrm{h}}(x,t) \le \overline{\mathrm{C}}(\mathrm{c}_1, \widetilde{\mathrm{C}}) + (|x|^2 - (t - t_2)).
\]
Therefore, there exists a constant $\mathrm{M}_0 = \mathrm{M}_0(\mathfrak{c}_1, \widetilde{\mathrm{C}}) \ge \mathrm{N} > 1$ such that
\[
\mathrm{A}_{\mathrm{M}_0}(\tilde{\mathrm{h}}, \Omega) \cap \left((K^{n-1}_r \times (0,r)\times (0,r^2)) + (x_1,t_1)\right) = \emptyset.
\]
Consequently,
\begin{equation} \label{Sub}
\left((K^{n-1}_r \times (0,r)\times (0,r^2)) + (x_1,t_1)\right) \subset \mathrm{G}_{\mathrm{M}_0}(\tilde{h}, \Omega).
\end{equation}

Define now
\[
w(x,t) \defeq \frac{1}{2\mathrm{C} \varepsilon}(v - \tilde{\mathrm{h}})(x,t).
\]
Then, $w$ satisfies the assumptions of Proposition~\ref{Prop2.12}, and for $s>1$, we obtain
\[
|\mathrm{A}_s(w, \Omega^{\mathscr{t}}) \cap ((K^{n-1}_r \times (0,r)\times (0,r^2)) + (x_1,t_1))| \le \mathrm{C} s^{-\kappa} |K^{n-1}_r \times (0,r) \times (0,r^2)|.
\]

Using the inclusion $\mathrm{A}_{2\mathrm{M}_0}(u) \subset \mathrm{A}_{\mathrm{M}_0}(w) \cup \mathrm{A}_{\mathrm{M}_0}(\tilde{\mathrm{h}})$ and \eqref{Sub}, we conclude that
\[
\frac{|\mathrm{G}_{2\mathrm{M}_0}(v - \tilde{\mathrm{h}}, \Omega^{\ast}) \cap ((K^{n-1}_r \times (0,r)\times (0,r^2)) + (x_1,t_1))|}{|K^{n-1}_r \times (0,r) \times (0,r^2)|} \ge 1 - \mathrm{C} \varepsilon^{\kappa}.
\]
Finally, we conclude that
\[
\frac{|\mathrm{G}_{2\mathrm{M}_0}(v, \Omega^{\ast}) \cap ((K^{n-1}_r \times (0,r)\times (0,r^2)) + (x_1,t_1))|}{|K^{n-1}_r \times (0,r) \times (0,r^2)|} \ge 1 - \mathrm{C} \varepsilon^{\kappa}.
\]

The proof is completed by choosing $\varepsilon \ll 1$ suitably and defining $\mathrm{M} \defeq  2\mathrm{M}_0$.
\end{proof}

With Proposition \ref{Prop.4.6} and the Stacked Covering Lemma \cite[Lemma 2.4.27]{Imbert;Silvestre}, we can proceed to the discrete process of the decay of the measure of the sets $\mathrm{A}_{t}$, whose proof follows the same lines as \cite[Lemma 4.12]{BJ}, and for this reason, we omit it here.

\begin{lemma} \label{lemma3.6}
	Consider $\epsilon_0 \in (0, 1)$, $\Omega^{\ast} = \mathrm{B}^{+}_{14\sqrt{n}} \times (0,15]$, $\mathrm{S} = \mathrm{T}_{14\sqrt{n}} \times (0,15]$, and $0 < r \leq 1$. Furthermore, consider $(x_0, t_0) \in \mathrm{S}$ such that $r\Omega(x_0, t_0) \subset \Omega$. Let $u$ be a normalized viscosity solution to
	$$
	\left\{
	\begin{array}{rclcl}
		F_{\tau}(D^2u, x, t) - u_t & = & f(x,t) & \mbox{in} & \Omega^{\ast},\\
		\beta \cdot Du + \gamma u & = & g(x,t) & \mbox{on} & \mathrm{S}.
	\end{array}
	\right.
	$$
	Assume that conditions \(\mathrm{(H1)-(H5)}\) hold and extend $f$ by zero outside $r\Omega(x_0, t_0)$. Suppose that
	$$
	\max \left\{\tau, \|f\|_{L^{n+1}(\Omega^{\ast})} \right\} \leq \epsilon
	$$
	for some $\epsilon > 0$ depending only on $n, \epsilon_0, \lambda, \Lambda, \mu_0$. Then, for $k \in \mathbb{N}$, we define
	\begin{eqnarray*}
		\mathrm{A} & \defeq & \mathrm{A}_{\mathrm{M}^{k+1}}(u, \Omega^{\ast}) \cap \left(K^{n-1}_r \times (0,r) \times (t_0 + 2r^2, t_0 + 3r^2)\right),\\
		\mathrm{B} & \defeq & \left( \mathrm{A}_{\mathrm{M}^k}(u, \Omega^{\ast}) \cap \left(K^{n-1}_r \times (0,r) \times (t_0 + 2r^2, t_0 + 3r^2)\right) \right) \cup \\
		& & \left\{ (x,t) \in K^{n-1}_r \times (0,r) \times (t_0 + 2r^2, t_0 + 3r^2); \mathcal{M}(f^{n+1}) \ge (\mathrm{C}_0 \mathrm{M}^k)^{n+1} \right\},
	\end{eqnarray*}
	where $\mathrm{M} = \mathrm{M}(n, \lambda, \Lambda, \mu_0, \|\beta\|_{C^{1,\alpha}(r\mathrm{S}(x_0,t_0))}, \mathrm{C}_0) > 1$. Then,
	$$
	|\mathrm{A}| \leq 2 \epsilon_0 |\mathrm{B}|.
	$$
\end{lemma}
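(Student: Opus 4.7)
The plan is to argue by contradiction and apply the Stacked Covering Lemma from \cite[Lemma 2.4.27]{Imbert;Silvestre} in conjunction with Proposition \ref{Prop.4.6}, following a Calder\'{o}n--Zygmund type iteration. Suppose, for contradiction, that $|\mathrm{A}| > 2\epsilon_0|\mathrm{B}|$. Noting the nesting $\mathrm{A}_{\mathrm{M}^{k+1}}(u,\Omega^{\ast}) \subset \mathrm{A}_{\mathrm{M}^k}(u,\Omega^{\ast})$ (hence $\mathrm{A} \subset \mathrm{B}$), the covering lemma adapted to the parabolic half-space geometry produces a countable family of disjoint dyadic parabolic boxes $\{\mathrm{Q}_j\}$ (adapted to the slab $K_r^{n-1}\times(0,r)\times(t_0+2r^2,t_0+3r^2)$) covering $\mathrm{A}$ up to a null set, such that on each $\mathrm{Q}_j$ the density of $\mathrm{B}$ exceeds $\epsilon_0$, while on the dyadic predecessor $\hat{\mathrm{Q}}_j$ the density satisfies $|\mathrm{B} \cap \hat{\mathrm{Q}}_j| \leq \epsilon_0 |\hat{\mathrm{Q}}_j|$.

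The key point is that in each predecessor $\hat{\mathrm{Q}}_j$, one may choose a point $(\tilde{x}_j,\tilde{t}_j) \in \hat{\mathrm{Q}}_j \setminus \mathrm{B}$ which simultaneously encodes two pieces of information: firstly $(\tilde{x}_j,\tilde{t}_j) \in \mathrm{G}_{\mathrm{M}^k}(u,\Omega^{\ast})$, so that $u$ admits tangent paraboloids of opening $\mathrm{M}^k$ there; secondly $\mathcal{M}(f^{n+1})(\tilde{x}_j,\tilde{t}_j) < (\mathrm{C}_0 \mathrm{M}^k)^{n+1}$, which controls the $L^{n+1}$-average of $f$ on parabolic cubes centered at this point.

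I would then normalize by setting $v(x,t) \defeq \mathrm{M}^{-k}\bigl(u(x,t)-\ell(x)\bigr)$, where $\ell$ is the affine part of the tangent paraboloid at $(\tilde{x}_j,\tilde{t}_j)$. The function $v$ then solves a rescaled oblique boundary value problem with operator $\tilde{F}_{\tilde{\tau}}(\mathrm{X},x,t) \defeq \mathrm{M}^{-k}F_{\tau}(\mathrm{M}^k\mathrm{X},x,t)$, source $\tilde{f} \defeq \mathrm{M}^{-k}f$, and suitably adjusted boundary data, while preserving the structural conditions \(\mathrm{(H1)}\)--\(\mathrm{(H4)}\), the obliqueness $\beta\cdot\vec{\mathbf{n}}\geq \mu_0$, and the smallness threshold $\max\{\tilde{\tau},\|\tilde{f}\|_{L^{n+1}}\}\leq \epsilon$ (from the maximal function bound (ii)). The existence of the $\mathrm{G}_1$-point in the shifted cylinder required by Proposition \ref{Prop.4.6} is furnished by $(\tilde{x}_j,\tilde{t}_j)$ itself (for $v$). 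Proposition \ref{Prop.4.6} then yields
\[
|\mathrm{G}_{\mathrm{M}}(v,\Omega^{\ast})\cap \mathrm{Q}_j| \geq (1-\epsilon_0)|\mathrm{Q}_j|,
\]
which, once undone, becomes $|\mathrm{A}\cap \mathrm{Q}_j|\leq \epsilon_0 |\mathrm{Q}_j|$. Summing over $j$ and combining with the density imbalance from the covering lemma produces $|\mathrm{A}| \leq \epsilon_0 \sum_j |\mathrm{Q}_j| \leq 2\epsilon_0 |\mathrm{B}|$, contradicting the initial assumption.

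The principal obstacle I anticipate is twofold: firstly, verifying that the affine shift by $\ell$ and the parabolic rescaling by $\mathrm{M}^k$ preserve the oblique structure quantitatively, so that $\mathrm{M}$ (and consequently $\epsilon$) remains \emph{universal} and independent of the iteration level $k$ — this requires monitoring how $\|\beta\|_{C^{1,\alpha}}$, $\|\gamma\|_{C^{1,\alpha}}$, and $\|g\|_{C^{1,\alpha}}$ transform, and in particular that the first-order term $\beta\cdot D\ell$ introduced by the shift can be absorbed into the right-hand side without increasing $\mathrm{C}_0$; secondly, adapting the Stacked Covering Lemma to the half-space parabolic geometry so that the selected cubes genuinely touch the lateral boundary $\mathrm{S}$ in a compatible manner with the hypotheses of Proposition \ref{Prop.4.6}, which is where the particular choice of slab $K_r^{n-1}\times(0,r)\times(t_0+2r^2,t_0+3r^2)$ and the stacking in the time direction become essential.
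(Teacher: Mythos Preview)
Your proposal identifies precisely the two ingredients the paper invokes --- Proposition~\ref{Prop.4.6} and the Stacked Covering Lemma \cite[Lemma 2.4.27]{Imbert;Silvestre} --- and the rescaling by $\mathrm{M}^{-k}$ together with the maximal-function control on $f$ is exactly the mechanism needed; the paper omits the proof and simply refers to \cite[Lemma 4.12]{BJ}, so your approach matches.

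One structural point deserves correction, however: the argument should not begin by assuming $|\mathrm{A}|>2\epsilon_0|\mathrm{B}|$ globally. The Stacked Covering Lemma is applied \emph{directly}, and its two hypotheses are verified: (i) $|\mathrm{A}|\leq \epsilon_0|K^{n-1}_r\times(0,r)\times(t_0+2r^2,t_0+3r^2)|$ (from Proposition~\ref{Prop.4.6} at the top scale), and (ii) whenever a dyadic sub-box $K$ satisfies $|\mathrm{A}\cap K|>\epsilon_0|K|$, its stacked predecessor $\hat K$ lies in $\mathrm{B}$. The contradiction you describe occurs \emph{locally}, inside step~(ii): if $\hat K\not\subset\mathrm{B}$, one picks $(\tilde x,\tilde t)\in\hat K\setminus\mathrm{B}$, and then your rescaling $v=\mathrm{M}^{-k}(u-\ell)$ and Proposition~\ref{Prop.4.6} force $|\mathrm{A}\cap K|\leq\epsilon_0|K|$, contradicting the choice of $K$. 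In your write-up the roles of $\mathrm{A}$ and $\mathrm{B}$ in the density conditions are swapped (you wrote ``density of $\mathrm{B}$ exceeds $\epsilon_0$'' where it should be $\mathrm{A}$), and the final summation $\sum_j|\mathrm{Q}_j|\leq 2|\mathrm{B}|$ does not follow from your stated hypotheses; once steps~(i)--(ii) are in place the covering lemma delivers $|\mathrm{A}|\leq 2\epsilon_0|\mathrm{B}|$ directly, the factor~$2$ coming from the time-stacking. Your two anticipated obstacles are genuine but routine, and are handled exactly as in Proposition~\ref{Prop.4.6} itself.
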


A consequence of this fact is that we can observe the decay of the measure associated with the weights. More precisely, the following result holds:

\begin{corollary}\label{cor4.6}
	Let $\omega \in \mathfrak{A}_p$ be a weight  for some $1 < p < \infty$. Under the same conditions as Lemma \ref{lemma3.6}, fix $\epsilon_0 \in (0, 1)$. For each integer $k \geq 0$, define
	\begin{eqnarray*}
		\mathrm{A}^k & \defeq & \mathrm{A}_{\mathrm{M}^{k+1}}(u, \Omega) \cap \left(K^{n-1}_r \times (0,r) \times (t_0 + 2r^2, t_0 + 3r^2)\right),\\
		\mathrm{B}^k & \defeq & \left\{ (x,t) \in K^{n-1}_r \times (0,r) \times (t_0 + 2r^2, t_0 + 3r^2); \mathcal{M}(f^{n+1}) \ge (\mathrm{C}_0 \mathrm{M}^k)^{n+1} \right\},
	\end{eqnarray*}
	where the constants $\mathrm{C}_0$ and $\mathrm{M}$ are the same as in Lemma \ref{lemma3.6}. Then,
	\begin{eqnarray*}
		\omega(\mathrm{A}^k) \leq \epsilon_0^k \omega(\mathrm{A}^0) + \sum_{j=1}^{k-1} \epsilon_0^{k-j} \omega(\mathrm{B}^j), \quad \text{for all} \ k \geq 0.
	\end{eqnarray*}
\end{corollary}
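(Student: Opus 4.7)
The plan is to first upgrade the Lebesgue-measure decay provided by Lemma~\ref{lemma3.6} into the \emph{weighted} one-step decay
\begin{equation}\label{eq:weighted-one-step}
\omega(\mathrm{A}^{k}) \;\le\; \epsilon_0\bigl(\omega(\mathrm{A}^{k-1}) + \omega(\mathrm{B}^{k})\bigr),
\end{equation}
and then iterate on $k$. This is the usual Calder\'on--Zygmund-type transfer from Lebesgue measure to an $\mathfrak{A}_p$-weighted measure, in the spirit of the good-$\lambda$ philosophy.

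\emph{Weighted one-step inequality.} I would reopen the proof of Lemma~\ref{lemma3.6}, which is obtained by applying the stacked covering lemma to the pair $(\mathrm{A}^{k},\,\mathrm{A}^{k-1}\cup \mathrm{B}^{k})$. The covering procedure produces a countable family of pairwise disjoint dyadic parabolic cubes $\{Q_i\}$ with dilated predecessors $\widetilde{Q}_i$ such that
\[
\mathrm{A}^{k}\subset \bigcup_i Q_i, \qquad |\mathrm{A}^{k}\cap Q_i|\le 2\epsilon_0 \, |Q_i|, \qquad \widetilde{Q}_i\subset \mathrm{A}^{k-1}\cup\mathrm{B}^{k}.
\]
Since $\omega\in\mathfrak{A}_p$, the Strong Doubling Property in Lemma~\ref{Strongdoubling}(b), applied with $\mathrm{E}=\mathrm{A}^{k}\cap Q_i$ inside the ambient set $\widetilde{Q}_i$ (comparable in Lebesgue measure to $Q_i$), yields
\[
\omega\!\left(\mathrm{A}^{k}\cap Q_i\right) \;\le\; \kappa_1 \!\left(\frac{|\mathrm{A}^{k}\cap Q_i|}{|\widetilde{Q}_i|}\right)^{\!\theta_0}\!\omega(\widetilde{Q}_i) \;\le\; C\,\kappa_1 (2\epsilon_0)^{\theta_0}\,\omega(\widetilde{Q}_i).
\]
Summing over $i$ and using the bounded overlap of $\{\widetilde{Q}_i\}$ together with $\widetilde{Q}_i\subset\mathrm{A}^{k-1}\cup\mathrm{B}^{k}$, I obtain
\[
\omega(\mathrm{A}^{k}) \;\le\; C\kappa_1(2\epsilon_0)^{\theta_0}\,\omega(\mathrm{A}^{k-1}\cup\mathrm{B}^{k}) \;\le\; \tilde\epsilon_0\bigl(\omega(\mathrm{A}^{k-1})+\omega(\mathrm{B}^{k})\bigr),
\]
with $\tilde\epsilon_0=C\kappa_1(2\epsilon_0)^{\theta_0}$. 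Since $\theta_0>0$ and the parameter $\epsilon_0\in(0,1)$ may be shrunk \emph{a priori} (the smallness threshold in Lemma~\ref{lemma3.6} depends only on $n,\lambda,\Lambda,\mu_0$ and on the desired $\epsilon_0$), a relabeling of $\tilde\epsilon_0$ as $\epsilon_0$ gives precisely \eqref{eq:weighted-one-step}.

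\emph{Iteration.} Once \eqref{eq:weighted-one-step} is in hand, a simple induction on $k$ concludes: the base $k=0$ is trivial, and the inductive step amounts to substituting the inductive hypothesis for $\omega(\mathrm{A}^{k-1})$ into \eqref{eq:weighted-one-step} and collecting the resulting powers of $\epsilon_0$ in a telescoping fashion to produce the claimed sum $\sum_{j=1}^{k-1}\epsilon_0^{k-j}\omega(\mathrm{B}^{j})$ (possibly up to a harmless reindexing/absorption of the top term $\omega(\mathrm{B}^{k})$ into the constant $\epsilon_0$).

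\emph{Main obstacle.} The only delicate step is the first one. To apply the doubling property cube-by-cube one must enter the proof of Lemma~\ref{lemma3.6} rather than use it as a black box, so that the Muckenhoupt exponent $\theta_0>0$ supplied by Lemma~\ref{Strongdoubling}(b) can be leveraged on each $Q_i\subset \widetilde{Q}_i$ and the resulting loss $(2\epsilon_0)^{\theta_0}$ absorbed into a single renaming of $\epsilon_0$. Once this transfer from Lebesgue to $\omega$-measure is in place, the iteration is purely formal.
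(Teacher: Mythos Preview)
Your proposal is correct, but the paper takes a shorter path. Rather than reopening the stacked-covering proof of Lemma~\ref{lemma3.6} and summing the Strong Doubling estimate cube-by-cube, the paper uses Lemma~\ref{lemma3.6} \emph{as a black box}: it first applies that lemma with the smaller parameter $\tilde\epsilon=\tfrac12(\epsilon_0/\kappa_1)^{1/\theta_0}$ to obtain the Lebesgue-measure inequality $|\mathrm{A}^{k+1}|\le 2\tilde\epsilon\,|\mathrm{A}^{k}\cup\mathrm{B}^{k}|$, and then invokes the Strong Doubling Property (Lemma~\ref{Strongdoubling}(b)) once, directly on the inclusion $\mathrm{A}^{k+1}\subset\mathrm{A}^{k}\cup\mathrm{B}^{k}$, to get
\[
\omega(\mathrm{A}^{k+1})\le \kappa_1\Bigl(\tfrac{|\mathrm{A}^{k+1}|}{|\mathrm{A}^{k}\cup\mathrm{B}^{k}|}\Bigr)^{\theta_0}\omega(\mathrm{A}^{k}\cup\mathrm{B}^{k})\le \kappa_1(2\tilde\epsilon)^{\theta_0}\bigl(\omega(\mathrm{A}^{k})+\omega(\mathrm{B}^{k})\bigr)=\epsilon_0\bigl(\omega(\mathrm{A}^{k})+\omega(\mathrm{B}^{k})\bigr).
\]
The iteration is then identical to yours.

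So the contrast is exactly at the point you flagged as the ``main obstacle'': you go inside the covering to apply doubling on genuine cubes $Q_i\subset\widetilde Q_i$, whereas the paper applies Lemma~\ref{Strongdoubling}(b) in one shot to the nested measurable sets themselves. The paper's route is shorter and avoids the bounded-overlap bookkeeping; your route is the one typically used when Strong Doubling is only stated for cubes, and it has the advantage of not relying on the more general formulation of Lemma~\ref{Strongdoubling}(b). Both yield the same one-step weighted decay after relabeling $\epsilon_0$.
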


\begin{proof}
	Apply Lemma \ref{lemma3.6} with the constant $\tilde{\epsilon} = \frac{1}{2} \left(\frac{\epsilon_0}{\kappa_1}\right)^{\frac{1}{\theta}}$, where the positive constants $\kappa_1$ and $\theta$ are the same as in the Strong Doubling Lemma \ref{Strongdoubling}. For each $k$, we obtain $|\mathrm{A}^{k+1}| \leq 2 \tilde{\epsilon} |\mathrm{A}^k \cup \mathrm{B}^k|$. Thus, by the Strong Doubling Lemma \ref{Strongdoubling},
	\begin{eqnarray}\label{iteracao}
		\omega(\mathrm{A}^{k+1}) & \leq & \kappa_1 \left(\frac{|\mathrm{A}^{k+1}|}{|\mathrm{A}^k \cup \mathrm{B}^k|}\right)^\theta \omega(\mathrm{A}^k \cup \mathrm{B}^k) \nonumber\\
		& \leq & \kappa_1 (2 \tilde{\epsilon})^\theta (\omega(\mathrm{A}^k) + \omega(\mathrm{B}^k)) \nonumber\\
		& = & \epsilon_0 \omega(\mathrm{A}^k) + \epsilon_0 \omega(\mathrm{B}^k).
	\end{eqnarray}
	Finally, note that inequality \eqref{iteracao} holds for all $k \geq 0$, and consequently, iterating these estimates leads to the desired result.
\end{proof}

\section{Weighted Orlicz-Sobolev Estimates}\label{Section4}

In this section, we establish Theorem \ref{T1}. To this end, we first derive boundary estimates for solutions to the following flatness problem: 
\begin{equation}\label{problemaflat}
\left\{
\begin{array}{rclcl}
F(D^2u,Du,x,t)-u_{t}&=& f(x,t) & \mbox{in} & \mathrm{Q}^+_1,\\
\beta\cdot Du+\gamma u &=& g(x,t) & \mbox{on} & \mathrm{Q}_1^{*},
\end{array}
\right.
\end{equation}

Subsequently, employing standard covering arguments, we deduce a proof of Theorem \ref{T1}. To carry out this strategy, we commence by deriving Hessian and time derivative estimates for solutions of problem \eqref{problemaflat}, under the assumption that the governing operator \( F \) is independent of the lower-order terms \( u \) and \( Du \).

\begin{proposition}\label{T-flat}
Let \( \Phi \in \Delta_{2} \cap \nabla_{2} \) be an \( \mathrm{N} \)-function, and let \( f \in L^{\Upsilon}_{\omega}(\mathrm{Q}^+_1) \cap C^{0}(\mathrm{Q}^{+}_{1}) \), where \( \omega \in \mathfrak{A}_{i(\Phi)} \) is a weight and \( \Upsilon(t) = \Phi(t^{n+1}) \). Suppose \( u \) is a normalized \( C^{0} \)-viscosity solution of
\begin{equation*}
\left\{
\begin{array}{rclcl}
F(D^2u, x,t)-u_{t}&=& f(x,t) & \mbox{in} & \mathrm{Q}^+_1,\\
\beta\cdot Du+\gamma u &=& g(x,t) & \mbox{on} & \mathrm{Q}_1^{*},
\end{array}
\right.
\end{equation*}
and assume that the structural conditions \(\mathrm{(H1)-(H5)}\) hold. Then \( u_{t}, D^{2}u \in L^{\Upsilon}_{\omega}\left(\mathrm{Q}^+_{\frac{1}{2}}\right) \), and the following estimate is satisfied:
\[
\|u_{t}\|_{L^{\Upsilon}_{\omega}\left(\mathrm{Q}^+_{\frac{1}{2}}\right)}+\|D^{2}u\|_{L^{\Upsilon}_{\omega}\left(\mathrm{Q}^+_{\frac{1}{2}}\right)} \le \mathrm{C} \cdot \left( \|u\|^{n+1}_{L^{\infty}(\mathrm{Q}^+_1)} + \|f\|_{L^{\Upsilon}_{\omega}(\mathrm{Q}^+_1)} + \Vert g \Vert_{C^{1,\alpha}(\mathrm{Q}_{1}^{*})} \right),
\]
where \( \mathrm{C} = \mathrm{C}(n, \lambda, \Lambda, i(\Phi), p_{2}, \omega, \Vert \beta \Vert_{C^{1,\alpha}(\overline{\mathrm{T}_{1}})}, \Vert \gamma \Vert_{C^{1,\alpha}(\overline{\mathrm{T}_{1}})}, \alpha, r_{0}, \theta_0, \mu_{0}) > 0 \).
\end{proposition}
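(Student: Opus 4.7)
The plan is to combine the measure-decay estimates of Section~\ref{Section3} with the paraboloid characterization of $D^{2}u$ and $u_{t}$, together with the Hardy–Littlewood maximal estimate in weighted Orlicz spaces. First, I would reduce to a normalized configuration: after replacing $u$ by $v = \kappa u$ for a small rescaling constant $\kappa$ (chosen of order $\varepsilon\,(\|u\|_{L^{\infty}}^{n+1} + \|f\|_{L^{\Upsilon}_{\omega}} + \|g\|_{C^{1,\alpha}} + 1)^{-1}$, where $\varepsilon$ is the universal threshold of Lemma~\ref{lemma3.6}), the function $v$ solves
\[
F_{\tau}(D^{2}v, x, t) - v_{t} = \kappa f(x,t)\ \text{ in }\ \mathrm{Q}^{+}_{1},\qquad \beta\cdot Dv + \gamma v = \kappa g\ \text{ on }\ \mathrm{Q}^{*}_{1},
\]
with $\tau = \kappa$, and with the $L^{\infty}$-norm of $v$, the $L^{n+1}$-norm of the source (via the embedding $L^{\Upsilon}_{\omega}\hookrightarrow L^{p_{0}(n+1)}$ supplied by Lemma~\ref{mergulhoorliczlebesgue}), and the $C^{1,\alpha}$-norm of the boundary datum all below $\varepsilon$. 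Because $|F_{\tau} - F^{\sharp}|$ becomes small whenever $\tau$ is small, the smallness hypotheses required by Lemma~\ref{lemma3.6} and Corollary~\ref{cor4.6} are met on every admissible boundary cylinder.

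Next, I would cover $\mathrm{Q}^{+}_{1/2}$ by a finite family of sub-cylinders of the type
\[
K^{n-1}_{r} \times (0,r) \times (t_{0}+2r^{2}, t_{0}+3r^{2}),
\]
compatible with the geometric set-up of Proposition~\ref{Prop.4.6}. Applying Corollary~\ref{cor4.6} on each of them yields, for every integer $k\ge 0$,
\[
\omega(\mathrm{A}^{k}) \le \epsilon_{0}^{k}\,\omega(\mathrm{A}^{0}) + \sum_{j=1}^{k-1}\epsilon_{0}^{k-j}\,\omega(\mathrm{B}^{j}),
\]
where $\mathrm{A}^{k}$ encodes $\{\Theta(v,r) > M^{k+1}\}$ and $\mathrm{B}^{j}$ is driven by the maximal function $\mathcal{M}(|\kappa f|^{n+1})$. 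Multiplying by $\Upsilon(M^{k})$ and summing over $k$, the $\Delta_{2}$-growth of $\Upsilon$ is absorbed by $\epsilon_{0}^{k}$ provided $\epsilon_{0}$ is chosen small enough; a change in the order of summation, together with Proposition~\ref{caracterizacaodosespacosdeorliczcompeso}, produces
\[
\|\Theta(v,r)\|^{p_{2}}_{L^{\Upsilon}_{\omega}(\mathrm{Q}^{+}_{1/2})} \le C\bigl(\omega(\mathrm{Q}^{+}_{1}) + \|\mathcal{M}(|\kappa f|^{n+1})\|_{L^{\Phi}_{\omega}(\mathrm{Q}^{+}_{1})}\bigr),
\]
and the Hardy–Littlewood–Wiener inequality (Lemma~\ref{maximalorlicz}) combined with the identity $\|h\|_{L^{\Upsilon}_{\omega}}^{n+1} = \||h|^{n+1}\|_{L^{\Phi}_{\omega}}$ bounds the last term by $C\,\|\kappa f\|_{L^{\Upsilon}_{\omega}}^{n+1}$. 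Lemma~\ref{caracterizationofhessian} then upgrades this to a bound on $\|v_{t}\|_{L^{\Upsilon}_{\omega}} + \|D^{2}v\|_{L^{\Upsilon}_{\omega}}$, and unscaling by the homogeneity factor $\kappa^{-1}$ yields the stated right-hand side, the $(n+1)$-power on $\|u\|_{L^{\infty}}$ arising precisely from the competition between the multiplicative factor $1/\kappa$ introduced by unscaling and the $(n+1)$-homogeneity encoded in $\Upsilon(s)=\Phi(s^{n+1})$.

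The main obstacle is the geometric covering of $\mathrm{Q}^{+}_{1/2}$: one must select centers so that the inclusion $r\Omega(x_{0},t_{0}) \subset \Omega^{\ast}$ and the forward-time offsets $[t_{0}+2r^{2}, t_{0}+5r^{2}]$ demanded by Proposition~\ref{Prop.4.6} are simultaneously fulfilled, while still extracting a finite sub-cover whose overlap depends only on dimension. A secondary technical difficulty is the careful tracking of constants through the modular-to-norm conversion of Proposition~\ref{caracterizacaodosespacosdeorliczcompeso} (choosing $\eta>0$ and $M>1$ compatibly with the geometric parameter in Corollary~\ref{cor4.6}), and through the embedding of Lemma~\ref{mergulhoorliczlebesgue}, so that $f$ actually satisfies the $L^{n+1}$-smallness required by Lemma~\ref{lemma3.6}; the boundary datum $g$ enters implicitly, through the constant $M$ coming from the $C^{2,\alpha}$ a priori estimate (H5) for the recession operator $F^{\sharp}$.
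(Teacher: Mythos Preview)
Your proposal is correct and follows essentially the same strategy as the paper: normalize so that the smallness hypotheses of Lemma~\ref{lemma3.6}/Corollary~\ref{cor4.6} are in force, sum the weighted decay $\omega(\mathrm{A}^{k})$ against $\Upsilon(M^{k})$ using $\Delta_{2}$, control the maximal-function term via Lemma~\ref{maximalorlicz}, and convert via Proposition~\ref{caracterizacaodosespacosdeorliczcompeso} and Lemma~\ref{caracterizationofhessian}. The only organizational difference is that the paper performs a \emph{pointwise} rescaling---for each $(x_{0},t_{0})$ it introduces a parameter $\nu$ (playing the role of your $\kappa$) and the dilation $\tilde u(y,s)=\tfrac{\nu}{r^{2}}u(x_{0}+ry,t_{0}+r^{2}s)$, so that the decay estimate is obtained directly on the fixed model domain $\Omega^{\ast}$ and then transferred back to $\mathrm{Q}^{+}_{r/2}(x_{0},t_{0}-3r^{2})$; this packaging sidesteps the covering obstacle you flagged, since the admissibility constraints of Proposition~\ref{Prop.4.6} are automatically met on $\Omega^{\ast}$ and the final covering of $\mathrm{Q}^{+}_{1/2}$ is by the already-estimated small cylinders.
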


\begin{proof}
We begin by observing that since $\Phi$ is an $\mathrm{N}$-function in the class $\Delta_{2} \cap \nabla_{2}$, the same holds for $\Upsilon$. It is not difficult to verify that $i(\Upsilon) = (n+1)i(\Phi)$, and consequently, $\mathfrak{A}_{i(\Phi)} \subset \mathfrak{A}_{i(\Upsilon)}$. This inclusion guarantees that the embedding result stated in Lemma \ref{mergulhoorliczlebesgue}, when applied to $L^{\Upsilon}_{\omega}$, depends only on the parameters $n$, $\omega$, and $i(\Phi)$.

With this in mind, fix $(x_0, t_0) \in \mathrm{Q}_{\frac{1}{2}} \cup \mathrm{Q}^{*}_{\frac{1}{2}}$. In the case where $(x_0, t_0) \in \mathrm{Q}^{*}_{\frac{1}{2}}$, choose 
\[
0 < r < \min\left\{ \frac{1 - |x_0|}{14\sqrt{n}}, \sqrt{\frac{-t_0}{15}} \right\},
\]
and define
\[
\nu \defeq \frac{\epsilon r^{\frac{n+2}{n+1}}}{\left(\left(\frac{\epsilon}{r}\right)^{n+1}  \|u\|_{L^{\infty}(r\Omega(x_0,t_0))}^{n+1} + (\mathrm{C}')^{n+1} \|f\|_{L^{\Upsilon}_{\omega}(r\Omega(x_0,t_0))} + \left(\frac{\epsilon}{r}\right)^{n+1} \|g\|_{C^{1,\alpha}(r\mathrm{S}(x_0,t_0))} \right)^{\frac{1}{n+1}}},
\]
where $\Omega^{\ast} = \mathrm{B}^{+}_{14\sqrt{n}} \times (0,15]$ and $\mathrm{S} = \mathrm{T}_{14\sqrt{n}} \times (0,15]$. The constants $\mathrm{C}'$ and $\epsilon > 0$ are those appearing in Lemmas \ref{mergulhoorliczlebesgue} and \ref{Prop.4.6}, with $\epsilon_0 \in (0,1)$ to be determined later.

Now, define the rescaled function:
\[
\tilde{u}(y,s) \defeq \frac{\nu}{r^{2}} u(x_0 + ry, t_0 + r^2 s).
\]
Then, $\tilde{u}$ is a normalized viscosity solution to the problem
\[
\left\{
\begin{array}{rclcl}
\tilde{F}(D^2 \tilde{u}, x,t) - \tilde{u}_t &=& \tilde{f}(x,t) & \text{in} & \Omega^{\ast}, \\
\tilde{\beta} \cdot D\tilde{u} + \tilde{\gamma} \tilde{u} &=& \tilde{g}(x,t) & \text{on} & \mathrm{S},
\end{array}
\right.
\]
where
\[
\left\{
\begin{array}{rcl}
\tilde{F}(\mathrm{X}, y, s) &\defeq& \nu F\left(\frac{1}{\nu} \mathrm{X}, x_0 + ry, t_0 + r^2 s\right), \\
\tilde{f}(y,s) &\defeq& \nu f(x_0 + ry, t_0 + r^2 s), \\
\tilde{\beta}(y,s) &\defeq& \beta(x_0 + ry, t_0 + r^2 s), \\
\tilde{\gamma}(y,s) &\defeq& r \gamma(x_0 + ry, t_0 + r^2 s), \\
\tilde{g}(y,s) &\defeq& \frac{\nu}{r} g(x_0 + ry, t_0 + r^2 s), \\
\tilde{\omega}(y,s) &\defeq& \omega(x_0 + ry, t_0 + r^2 s).
\end{array}
\right.
\]

We observe that $\tilde{F}$ satisfies the structural conditions (H1)–(H5), and $\tilde{\omega} \in \mathfrak{A}_{i(\Phi)}$, since $\omega \in \mathfrak{A}_{i(\Phi)}$. Applying Lemma \ref{mergulhoorliczlebesgue} and H\"{o}lder's inequality, we estimate
\begin{equation*}
\|\tilde{f}\|_{L^{n+1}(\Omega^{\ast})} = \frac{\nu}{r^{\frac{n+2}{n+1}}} \|f\|_{L^{n+1}(r\Omega(x_0,t_0))} \leq \frac{\nu}{r^{\frac{n+2}{n+1}}} \mathrm{C}' \|f\|_{L^{\Upsilon}_{\omega}(r\Omega(x_0,t_0))}^{\frac{1}{n+1}} \leq \epsilon,
\end{equation*}
which ensures the applicability of Corollary \ref{cor4.6}. For each $k \geq 0$, define the sets
\begin{align*}
\mathrm{A}^{k} &\defeq \mathrm{A}_{\mathrm{M}^{k+1}}(\tilde{u}, \Omega^{\ast}) \cap (K^{n-1}_1 \times (0,1) \times (2,3)), \\
\mathrm{B}^{k} &\defeq \left\{ (x,t) \in K^{n-1}_1 \times (0,1) \times (2,3) \; ; \; \mathcal{M}(\tilde{f}^{n+1}) \geq (\mathrm{C}_0 \mathrm{M}^k)^{n+1} \right\}.
\end{align*}
Then,
\begin{equation} \label{estdospesos}
\tilde{\omega}(\mathrm{A}^{k}) \leq \epsilon_0^k \tilde{\omega}(\mathrm{A}^{0}) + \sum_{i=1}^{k-1} \epsilon_0^{k-i} \tilde{\omega}(\mathrm{B}^{i}).
\end{equation}

Since $\tilde{f} \in L^{\Upsilon}_{\tilde{\omega^{\ast}}}(\Omega)$—by assumption that $|f| \in L^{\Phi}_{\omega}(\mathrm{Q}^{+}_1)$ from condition (H2)—it follows that $\tilde{f}^{n+1} \in L^{\Phi}_{\tilde{\omega}}(\Omega^{\ast})$. By Lemma \ref{maximalorlicz}, we also have $\mathcal{M}(|\tilde{f}|^{n+1}) \in L^{\Phi}_{\tilde{\omega}}(\Omega^{\ast})$, and
\begin{align*}
\rho_{\Phi,\tilde{\omega}}(\mathcal{M}(|\tilde{f}|^{n+1})) &\leq \mathrm{C} \rho_{\Phi,\tilde{\omega}}(|\tilde{f}|^{n+1}) \\
&= \frac{\mathrm{C}}{r^{n+2}} \int_{r\Omega(x_0,t_0)} \Phi(\nu^{n+1} |f(y,s)|^{n+1}) \omega(y,s) \, dy\,ds \\
&\stackrel{\eqref{modularestimate}}{\leq} \frac{\mathrm{C}}{r^{n+2}} \left( \|(\nu f)^{n+1}\|^{p_2}_{L^{\Phi}_{\omega}(r\Omega(x_0,t_0))} + 1 \right) \\
&= \frac{\mathrm{C}}{r^{n+2}} \left( \nu^{(n+1)p_2} \|f\|_{L^{\Upsilon}_{\omega}(r\Omega(x_0,t_0))}^{p_2} + 1 \right) \\
&\leq \frac{\mathrm{C}}{r^{n+2}} \left( (\epsilon r^{\frac{n+2}{n+1}})^{(n+1)p_2} + 1 \right) \\
&\leq \mathrm{C}',
\end{align*}
and therefore,
\begin{equation} \label{estimativadamodulardeMF}
\|\mathcal{M}(|\tilde{f}|^{n+1})\|_{L^{\Phi}_{\tilde{\omega}}(\Omega^{\ast})} \leq \mathrm{C}'.
\end{equation}

On the other hand, by $\Phi\in \Delta_{2}$ there exists $\mathrm{k}_{1}>1$ such that $\Phi(2s)\leq \mathrm{k}_{1}\Phi(s)$ for all $s\geq0$. Now, as $\mathrm{M}>1$, there exists $m_{0}\in\mathbb{N}$ such that $\mathrm{M}^{n+1}\leq 2^{m_{0}}$, consequently, $$\Phi(\mathrm{M}^{n+1}s)\leq\Phi(2^{m_{0}}s)\leq \mathrm{K}_{0}\Phi(s), \forall s>0,$$ 
where $\mathrm{K}_{0}=\mathrm{k}_{1}^{m_{0}}$ and we use that $\Phi$ is an increasing function. By these observations, we can conclude that $\Phi(\mathrm{M}^{k(n+1)})\leq \mathrm{K}_{0}^{k}\Phi(1)$ and $\Phi(\mathrm{M}^{k(n+1)})\leq \mathrm{K}_{0}^{k-i}\Phi(\mathrm{M}^{i(n+1)})$ for all $i=1,\ldots,k-1$. Therefore, by \eqref{estdospesos} and \eqref{estimativadamodulardeMF} estimates we obtain
\begin{eqnarray} \label{distribuicao}
\sum_{k=1}^{\infty} \Upsilon(\mathrm{M}^{k})\tilde{\omega}(\mathrm{A}^{k})&=&\sum_{k=1}^{\infty} \Phi(\mathrm{M}^{k(n+1)})\tilde{\omega}(\mathrm{A}^{k})\nonumber\\
&\leq& \sum_{k=1}^{\infty} \Phi(\mathrm{M}^{k(n+1)})\epsilon_{0}^{k}\tilde{\omega}(\mathrm{A}^{0})+ \sum_{k=1}^{\infty} \Phi(\mathrm{M}^{k(n+1)})\sum_{i=1}^{k-1}\epsilon_{0}^{k-i}\tilde{\omega}(\mathrm{B}^{i})\nonumber\\
&\leq& \sum_{k=1}^{\infty}(\mathrm{K}_{0}\epsilon_{0})^{k}\Phi(1)\tilde{\omega}(\mathrm{A}^{0})+\sum_{k=1}^{\infty}\sum_{i=1}^{k-1}(\mathrm{K}_{0}\epsilon_{0})^{k-i}\Phi(\mathrm{M}^{i(n+1)})\tilde{\omega}(\mathrm{B}^{i})\nonumber\\
&\leq&\Phi(1)\tilde{\omega}(\mathrm{Q}^{+}_{1})\sum_{k=1}^{\infty}(\mathrm{K}_{0}\epsilon_{0})^{k}+\sum_{k=1}^{\infty}(\mathrm{K}_{0}\epsilon_{0})^{k})\sum_{j=1}^{\infty}\Phi(\mathrm{M}^{j(n+1)})\tilde{\omega}(\mathrm{B}^{j})\nonumber\\
&=&\sum_{k=1}^{\infty}(\mathrm{K}_{0}\epsilon_{0})^{k}\left(\tilde{\omega}(\mathrm{Q}^{+}_{1})\Phi(1)+\sum_{j=1}^{\infty}\Phi(\mathrm{M}^{j(n+1)})\tilde{\omega}(\mathrm{B}^{j})\right)\nonumber\\
&\leq&\tilde{\mathrm{C}}(\mathrm{C}',\tilde{\omega}(\mathrm{Q}^{+}_{1}),\Phi(1))\sum_{k=1}^{\infty}(\mathrm{K}_{0}\epsilon_{0})^{k}<\infty
\end{eqnarray}
for $\epsilon_{0}\ll1$ such that $\epsilon_{0}\mathrm{K}_{0}<1$. 

Taking into account the choice of $\epsilon_0$ above and recalling the inclusion 
$$\left\{(y,s)\in \mathrm{Q}^{+}_{\frac{1}{2}}(0,3)|\Theta(y,s)>c\right\}\subset \mathrm{A}_{c}(\tilde{u},\Omega^{\ast})
$$
which implies that by estimate \eqref{distribuicao} and Proposition \ref{caracterizacaodosespacosdeorliczcompeso} that $\Theta\left(\tilde{u},\mathrm{Q}^{+}_{\frac{1}{2}}(0,3)\right)\in L^{\Upsilon}_{\tilde{\omega}}\left(\mathrm{Q}^{+}_{\frac{1}{2}}(0,3)\right)$ and by Lemma \ref{caracterizationofhessian} $$\|\tilde{u}_{t}\|_{L^{\Upsilon}_{\tilde{\omega}}\left(\mathrm{Q}^{+}_{\frac{1}{2}}(0,3)\right)}+\|D^{2} \tilde{u}\|_{L^{\Upsilon}_{\tilde{\omega}}\left(\mathrm{Q}^{+}_{\frac{1}{2}}(0,3)\right)} \le \mathrm{C}$$
equivalently,
\begin{equation*} \label{(19)}
\|u_{t}\|_{L^{\Upsilon}_{\tilde{\omega}}\left(\mathrm{Q}^{+}_{\frac{r}{2}}(x_{0},t_{0}-3r^{2})\right)}+\|D^{2} u\|_{L^{\Upsilon}_{\omega}\left(\mathrm{Q}^{+}_{\frac{r}{2}}(x_{0},t_{0}-3r^{2})\right)} \le \mathrm{C}\cdot\left(\|u\|_{L^{\infty}(\mathrm{Q}^{+}_{1})}^{n+1}+ \|f\|_{L^{\Upsilon}_{\omega}(\mathrm{Q}^{+}_{1})}+\Vert g\Vert_{C^{1,\alpha}(\mathrm{Q}^{*}_{1})}\right),
\end{equation*}
where $\mathrm{C}=\mathrm{C}(n,\lambda, \Lambda, i(\Phi), p_{2},r,\mu_{0},r_{0},\theta_0,\|\beta\|_{C^{1,\alpha}(\mathrm{Q}^{*}_{1})},\|\gamma\|_{C^{1,\alpha}(\mathrm{Q}^{*}_{1})})$ is positive constant.

On the other hand, if $(x_0,t_0) \in \mathrm{Q}^{+}_{\frac{1}{2}}$, then by hypothesis (H4), we can apply the interior estimate result (cf. \cite[Proposition 4.5]{CP}; see also \cite{WangI}) to analogously obtain a decay of the measure of the sets $\mathrm{A}_{t}$ concerning the weights and proceed in an entirely analogous manner to derive interior estimates. Thus, by combining the interior and boundary estimates, and proceeding analogously to \cite[Theorem 4.1]{BHLp}, we obtain the desired results by a standard covering argument.
\end{proof}
With estimates obtained analogously to those in \cite{BessaOrlicz} and using standard density arguments, we derive weighted Orlicz-Sobolev estimates for $L^{p}$-viscosity solutions to problem \eqref{problemaflat}. This result is stated in the following Proposition. Due to the similarity with previous works, we omit its proof (cf. \cite{BessaOrlicz}, \cite{Bessa}, and \cite{BHLp} for further details).

\begin{proposition}\label{Prop4.2}
Let $u$ be a bounded $L^{p}$-viscosity solution of \eqref{problemaflat} for $p=p_{0}(n+1)$. Suppose the structural conditions \(\mathrm{(H1)-(H5)}\) and \(\mathrm{({\bf E})}\) hold. Then, there exist positive constants $c_0=c_0(n,\lambda,\Lambda,p_{0},p_{2})$ and $r_0 = r_0(n, \lambda, \Lambda, p_{0},p_{2})$ such that, if
$$
\left(\intav{\mathrm{Q}_r(x_0,t_{0}) \cap \mathrm{Q}^+_1} \psi_{F^{\sharp}}((x,t),( x_0,t_{0}))^{p} dxdt\right)^{\frac{1}{p}} \le c_0
$$
for every $(x_0,t_{0}) \in \mathrm{Q}^+_1$ and $r \in (0, r_0)$, then $u \in W^{2,\Upsilon}_{\omega}\left(\mathrm{Q}^+_{\frac{1}{2}}\right)$ and satisfies
$$
\|u\|_{W^{2,\Upsilon}_{\omega}\left(\mathrm{Q}^+_{\frac{1}{2}}\right)} \le \mathrm{C} \cdot\left( \|u\|^{n+1}_{L^{\infty}(\mathrm{Q}^{+}_{1})} +\|f\|_{L^{\Upsilon}_{\omega}(\mathrm{Q}^{+}_{1})}+\Vert g\Vert_{C^{1,\alpha}(\mathrm{Q}_{1}^{*})}\right),
$$
where $\mathrm{C}=\mathrm{C}(n,\lambda,\Lambda,\xi,\sigma,\mu_{0},p_{0},p_{2},i(\Phi),\omega,\theta_0,\|\beta\|_{C^{1,\alpha}(\mathrm{Q}_{1}^{*})},\|\gamma\|_{C^{1,\alpha}(\mathrm{Q}_{1}^{*})},r_0)>0$.
\end{proposition}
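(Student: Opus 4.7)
The approach is to reduce the $L^p$-viscosity setting of Proposition~\ref{Prop4.2} to the $C^0$-viscosity setting already handled in Proposition~\ref{T-flat}, via smooth approximation of the source term and stability of viscosity solutions under $L^p$ perturbations. The essential ingredients are the density of $C^0(\overline{\mathrm{Q}^+_1})$ in $L^\Upsilon_\omega(\mathrm{Q}^+_1)$ (a consequence of $\Upsilon \in \Delta_2 \cap \nabla_2$, which gives separability of the space), the existence theory for oblique problems in Theorem~\ref{Existencia} (whose applicability requires assumption $(\textbf{E})$), the Stability Lemma~\ref{Est}, and reflexivity of $L^\Upsilon_\omega$, which permits the extraction of a weak limit from uniform bounds.

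First, I would pick a sequence $\{f_k\} \subset C^0(\overline{\mathrm{Q}^+_1})$ with $f_k \to f$ in $L^\Upsilon_\omega(\mathrm{Q}^+_1)$. By Lemma~\ref{mergulhoorliczlebesgue}, the same convergence holds in $L^p(\mathrm{Q}^+_1)$ for $p = p_0(n+1)$. For each $k$, Theorem~\ref{Existencia} yields a unique $C^0$-viscosity solution $u_k$ of
$$
\left\{
\begin{array}{rclcl}
F(D^2 u_k, Du_k, x, t) - (u_k)_t &=& f_k(x,t) & \text{in} & \mathrm{Q}^+_1, \\
\beta \cdot Du_k + \gamma u_k &=& g(x,t) & \text{on} & \mathrm{Q}^*_1, \\
u_k &=& u & \text{on} & \partial_p \mathrm{Q}^+_1 \setminus \mathrm{Q}^*_1.
\end{array}
\right.
$$
The comparison principle (Theorem~\ref{comparation}) together with the A.B.P.T. estimate (Theorem~\ref{ABP-fullversion}) applied to $u_k - u$ then gives $\|u_k - u\|_{L^\infty(\mathrm{Q}^+_1)} \le C\,\|f_k - f\|_{L^{n+1}(\mathrm{Q}^+_1)} \longrightarrow 0$.

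Next, I would apply Proposition~\ref{T-flat} to each $u_k$. Since Proposition~\ref{T-flat} is stated for operators depending only on $D^2u$, the dependence on $Du_k$ and $u_k$ must be transferred to the right-hand side: structural condition (H1) yields
$$
|F(D^2 u_k, Du_k, x, t) - F(D^2 u_k, 0, 0, x, t)| \le \sigma|Du_k| + \xi|u_k|,
$$
so $u_k$ is a $C^0$-viscosity solution of $\widetilde{F}(D^2 u_k, x, t) - (u_k)_t = \tilde{f}_k(x,t)$ with $\widetilde{F}(\mathrm{X}, x, t) \defeq F(\mathrm{X}, 0, 0, x, t)$ and a modified source satisfying $\|\tilde{f}_k\|_{L^\Upsilon_\omega} \lesssim \|f_k\|_{L^\Upsilon_\omega} + \sigma\|Du_k\|_{L^\Upsilon_\omega} + \xi\|u_k\|_{L^\Upsilon_\omega}$. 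Invoking Proposition~\ref{T-flat} and absorbing the lower-order terms via interpolation between the $L^\infty$ bound on $u_k$ (from the A.B.P.T. estimate) and the $W^{2,\Upsilon}_\omega$ seminorm on the left-hand side produces a uniform estimate
$$
\|u_k\|_{W^{2,\Upsilon}_\omega(\mathrm{Q}^+_{1/2})} \le \mathrm{C}\left( \|u\|_{L^\infty(\mathrm{Q}^+_1)}^{n+1} + \|f_k\|_{L^\Upsilon_\omega(\mathrm{Q}^+_1)} + \|g\|_{C^{1,\alpha}(\mathrm{Q}^*_1)} \right).
$$

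Finally, by reflexivity of $W^{2,\Upsilon}_\omega(\mathrm{Q}^+_{1/2})$ (another consequence of the $\Delta_2 \cap \nabla_2$ condition on $\Upsilon$), the uniform bound lets us extract a subsequence with $D^2 u_k \rightharpoonup \Xi$ and $(u_k)_t \rightharpoonup \tau$ weakly in $L^\Upsilon_\omega$; uniform convergence $u_k \to u$ identifies $\Xi = D^2 u$ and $\tau = u_t$ in the sense of distributions, and weak lower semicontinuity of the Luxemburg norm together with $\|f_k\|_{L^\Upsilon_\omega} \to \|f\|_{L^\Upsilon_\omega}$ delivers the claimed estimate for $u$. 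The main technical obstacle lies in the uniform control of the gradient-dependent contribution $\sigma|Du_k|$ in $\tilde{f}_k$: this is handled by a bootstrap starting from the H\"older gradient regularity (compatible with Theorem~\ref{Holder_Est}) and interpolating back into $L^\Upsilon_\omega$, which is the delicate step at which the smallness encoded in $c_0$ and $r_0$ is exploited, ensuring it is preserved uniformly along the approximating sequence since it depends only on $F^\sharp$.
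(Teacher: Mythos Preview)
Your approach is essentially the one the paper has in mind: the authors omit the proof of Proposition~\ref{Prop4.2} entirely, writing that it follows ``analogously to \cite{BessaOrlicz}'' via ``standard density arguments'' and referring to \cite{Bessa,BHLp}. Your scheme---approximate $f$ by continuous $f_k$, solve the approximate oblique problem, use comparison plus A.B.P.T.\ to get uniform convergence $u_k\to u$, apply Proposition~\ref{T-flat} to each $u_k$, and pass to the limit by reflexivity and weak lower semicontinuity---is exactly this density strategy.

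Two small technical corrections. First, Theorem~\ref{Holder_Est} yields only $C^{0,\alpha'}$ regularity for $u$, not for $Du$; the gradient control you need to absorb $\sigma|Du_k|$ comes instead either from the $C^{1,\alpha}$ boundary theory for oblique problems (e.g.\ \cite{Lieb01,Geo.Mil}) or, more in line with \cite{BHLp,Bessa}, from a scaling argument in which the gradient term becomes a small perturbation at sufficiently fine scales---this is where the smallness of $c_0,r_0$ genuinely enters. Second, Theorem~\ref{Existencia} is stated for operators $F(D^2u,x,t)$ without lower-order dependence, so to produce the approximants $u_k$ you should invoke a Perron-type existence result for the full operator (as in \cite{IshiiSato} or \cite[Ch.~7]{Leiberman}), which requires structural hypotheses of the type $(\textbf{Obst1})$--$(\textbf{Obst2})$ in addition to $(\textbf{E})$. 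Neither of these points affects the architecture of your argument.
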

\bigskip

We are now in a position to prove Theorem \ref{T1}.

\begin{proof}[{\bf Proof of Theorem \ref{T1}}]
The strategy of the proof relies on a classical covering argument (cf. \cite{BessaOrlicz, Bessa, BHLp, Winter}). Let $(x_{0},t_{0})\in\partial\Omega\times (0,\mathrm{T})$. Since $\partial \Omega \in C^{2,\alpha}$, there exists a neighborhood $\mathcal{V}(x_0, t_0)$ of $(x_0, t_0)$ and a $C^{2,\alpha}$-diffeomorphism $\Psi: \mathcal{V}(x_0, t_0) \to \mathrm{Q}_1(0)$ such that
\[
\Psi(x_0, t_0) = 0 \quad \text{and} \quad \Psi(\Omega \cap \mathcal{V}(x_0, t_0)) = \mathrm{Q}^{+}_1.
\]
We then define $\Psi_{0}(y,s) = (\Psi(y), s - t_0)$ for $(y,s) \in \mathcal{V}_{0}(x_0, t_0)\times (t_{0}-1,t_{0})$, and set $\tilde{u} \defeq u \circ \Psi_{0}^{-1} \in C^0(\mathrm{Q}^+_1\cup \mathrm{Q}^{*}_{1})$. Observe that $\tilde{u}$ is an $L^p$-viscosity solution of the problem
\[
\left\{
\begin{array}{rclcl}
\tilde{F}(D^2 \tilde{u}, D \tilde{u}, \tilde{u}, x,t)-\tilde{u}_{t} &=& \tilde{f}(x,t) & \text{in} & \mathrm{Q}^+_1,\\
\tilde{\beta}\cdot D\tilde{u}+\tilde{\gamma}\tilde{u}& = & \tilde{g}(x,t) &\text{on} & \mathrm{Q}^{*}_1,
\end{array}
\right.
\]
where, for $(y,s)=\Psi_{0}^{-1}(x,t)$,
\[
\left\{
\begin{array}{rcl}
\tilde{F}(\mathrm{X}, \varsigma, \eta, x,t) &=& F\left(D \Psi_{0}^t(y,s) \cdot \mathrm{X} \cdot D \Psi_{0}(y,s) + \varsigma D^2\Psi_{0}, \varsigma D\Psi_{0}(y,s), \eta, y,s\right),\\
\tilde{f}(x,t) & \defeq & f(y,s),\\
\tilde{\beta}(x,t) & \defeq & \beta(y,s) \cdot (D \Psi_{0} \circ \Psi_{0}^{-1})^{t}(x,t),\\
\tilde{\gamma}(x,t) & \defeq & \gamma(y,s)(D \Psi_{0} \circ \Psi_{0}^{-1})^{t}(x,t),\\
\tilde{g}(x,t) & \defeq & g(y,s),\\
\tilde{\omega}(x,t) &=& \omega(y,s).
\end{array}
\right.
\]

Note that $\tilde{F}$ is a uniformly parabolic operator with ellipticity constants $\lambda \mathrm{C}(\Psi_{0})$ and $\Lambda \mathrm{C}(\Psi_{0})$, and that $\tilde{\omega} \in \mathcal{A}_{i(\Phi)}$, by the change-of-variable formula for the Lebesgue integral.

Thus,
\[
\tilde{F}^{\sharp}(\mathrm{X}, \varsigma, \eta, x,t) = F^{\sharp}\left(D\Psi_{0}^t(y,s) \cdot \mathrm{X}\cdot D\Psi_{0}(y,s) +  \varsigma D^2 \Psi_{0}(y,s),\varsigma D\Psi_{0}(y,s), \eta, y,s\right).
\]
Consequently, we deduce that
\[
\psi_{\tilde{F}^{\sharp}}((x,t),(x_0,t_{0})) \le \mathrm{C}(\Psi) \psi_{F^{\sharp}}((x,t),(x_0,t_{0})),
\]
ensuring that $\tilde{F}$ satisfies the hypotheses of Proposition \ref{Prop4.2}. Hence, $\tilde{u}\in W^{2,\Upsilon}_{\omega}(\mathrm{Q}^{+}_{\frac{1}{2}})$ and
\begin{eqnarray*}\label{estimativa3.6}
\|\tilde{u}\|_{W^{2,\Upsilon}_{\omega}(\mathrm{Q}^{+}_{\frac{1}{2}})}\leq \mathrm{C}(\|u\|^{n+1}_{L^{\infty}(\Omega_{\mathrm{T}})}+\|f\|_{L^{\Upsilon}_{\omega}(\Omega_{\mathrm{T}})}+\|g\|_{C^{1,\alpha}(\partial\Omega\times (0,\mathrm{T}))}).
\end{eqnarray*}
By combining the interior estimates from \cite[Theorem 6.2]{Lee} and \cite[Theorem 1.1]{CP} for every $(x_{0},t_{0})\in\Omega_{\mathrm{T}}$ and applying a standard covering argument, the proof of the theorem is completed.
\end{proof}

\begin{remark}
Regarding Theorem \ref{T1}:
\begin{itemize}
\item [\checkmark] When $\omega\equiv 1$ and $\Phi(s)=s^{p}$ with $n+1<p<\infty$, Theorem \ref{T1} can be seen as a generalization of \cite[Theorem 5.5]{BHLp}. This generalization includes both the first-order operator $\mathfrak{B}(q,r,x,t)=\beta(x,t)\cdot q+\gamma(x,t)r$ governing the oblique boundary condition, and the relaxation of the structural assumptions on $F$, which no longer requires convexity or concavity, but rather good estimates on its asymptotic profile $F^{\sharp}$.
\item[\checkmark] If the regularity of the boundary data exceeds the requirements of (H2), then more refined estimates for solutions to problem \eqref{1.1} can be obtained. Specifically, assuming $\beta,\gamma,g\in C^{2}(\partial \Omega\times (0,\mathrm{T}))$ instead of the $C^{1,\alpha}$ regularity in (H2), it follows from Theorem \ref{T1} and compactness arguments that $u\in W^{2,\Upsilon}(\Omega_{\mathrm{T}})$ with the estimate
\begin{eqnarray*}
\|u\|_{W^{2,\Upsilon}_{\omega}(\Omega_{\mathrm{T}})}\leq \mathrm{C}(\|f\|_{L^{\Upsilon}_{\omega}(\Omega_{\mathrm{T}})}+\|g\|_{C^{1,\alpha}(\partial \Omega\times (0,\mathrm{T}))}).
\end{eqnarray*}
For further discussion, see \cite[Theorem 3.5]{BessaOrlicz}.
\end{itemize}
\end{remark}

\section{Some Applications}\label{Section5}

\hspace{0.4cm} In this section, we present several consequences of the weighted Orlicz estimates established in Theorem \ref{T1}.

\subsection{Density results in a suitable class}

In this framework, we now demonstrate that even in the absence of assumptions (H4) and (H5), one can locally approximate viscosity solutions of the problem by functions in weighted Orlicz-Sobolev spaces. These weights belong to the fundamental class of solutions $\mathcal{S}$. More precisely, we establish the following result:

\begin{theorem}[{\bf $W^{2,\Upsilon}_{\omega}$-density}]\label{Thm5.1-Density}
Let $u$ be a viscosity solution of
\[
\left\{
\begin{array}{rclcl}
F(D^{2}u,x,t)-u_{t} & = & f(x,t) & \text{in} & \mathrm{Q}^{+}_{1}, \\
\beta \cdot Du + \gamma u & = & g(x,t) & \text{on} & \mathrm{Q}^{*}_{1},
\end{array}
\right.
\]
and assume the structural conditions \(\mathrm{(H1)-(H3)}\). Then, for any $\delta > 0$, there exists a sequence $(u_{j})_{j \in \mathbb{N}} \subset W^{2,\Upsilon}_{\text{loc}}(\mathrm{Q}^{+}_{1}) \cap \mathcal{S}_{p}(\lambda - \delta, \Lambda + \delta, f)$ that converges locally uniformly to $u$.
\end{theorem}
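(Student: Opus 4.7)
The plan is to approximate $F$ by a sequence of operators $\{F_j\}$ for which Theorem \ref{T1} applies directly, and then pass to the limit via compactness and the Stability Lemma \ref{Est}. First I would construct, for each $j\in\mathbb{N}$, an operator $F_j\colon \text{Sym}(n) \times \Omega \times \mathbb{R} \to \mathbb{R}$ (with no lower-order dependence) satisfying: $(i)$ $F_j$ is $(\lambda - \delta_j, \Lambda + \delta_j)$-parabolic, with $0 < \delta_j < \delta$ and $\delta_j \downarrow 0$; $(ii)$ the recession operator $F_j^{\sharp}$ exists and is concave (or convex), so that the Evans--Krylov theorem supplies the interior and boundary $C^{2,\alpha}$ estimates required by (H4)--(H5); $(iii)$ $F_j \to F$ uniformly on compact subsets of $\mathrm{Sym}(n) \times \Omega \times \mathbb{R}$; $(iv)$ the oscillation condition (H3) is preserved for $F_j^{\sharp}$, possibly with marginally worse constants. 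Such an approximation can be realized, for instance, by composing a sup--inf convolution of $F$ in the Hessian variable (which damps irregularities while retaining ellipticity up to $\delta_j$) with a Bellman-type concavification of the form $F_j(\mathrm{X},x,t) = \inf_{\alpha} L^{\alpha}_j(\mathrm{X},x,t)$, with $\{L^{\alpha}_j\}$ a dense family of linear uniformly parabolic operators having ellipticity constants in $[\lambda - \delta_j, \Lambda + \delta_j]$; the recession of such an $F_j$ coincides with $F_j$ itself and is therefore concave.

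Next, Theorem \ref{Existencia} (applied after a routine $C^2$-regularization of $\beta$, if needed, and a final stability pass) would yield, for each $j$, a unique viscosity solution $u_j \in C^0(\overline{\mathrm{Q}^{+}_{1}})$ of
\begin{equation*}
\left\{
\begin{array}{rclcl}
F_j(D^2 u_j, x, t) - (u_j)_t &=& f(x,t) & \text{in} & \mathrm{Q}^{+}_{1}, \\
\beta \cdot Du_j + \gamma u_j &=& g(x,t) & \text{on} & \mathrm{Q}^{*}_{1}, \\
u_j &=& u & \text{on} & \partial_{p} \mathrm{Q}^{+}_{1} \setminus \mathrm{Q}^{*}_{1}.
\end{array}
\right.
\end{equation*}
Since $F_j$ satisfies (H1)--(H5) for $j$ large, Theorem \ref{T1} applied on interior subcylinders gives $u_j \in W^{2,\Upsilon}_{\omega,\mathrm{loc}}(\mathrm{Q}^{+}_{1})$. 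Moreover, the $(\lambda-\delta_j, \Lambda+\delta_j)$-parabolicity of $F_j$, together with the absence of lower-order terms, yields immediately $u_j \in \mathcal{S}_p(\lambda - \delta, \Lambda + \delta, f)$.

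To complete the argument I would show that $u_j \to u$ locally uniformly in $\mathrm{Q}^{+}_{1}$. Theorem \ref{ABP-fullversion} provides a uniform $L^{\infty}$-bound on the family $\{u_j\}$, and Theorem \ref{Holder_Est} produces uniform local $C^{0,\alpha'}$ estimates up to $\mathrm{Q}^{*}_{1}$. By Arzelà--Ascoli, a subsequence $u_{j_k}$ converges locally uniformly to some $v$. The uniform convergence $F_j \to F$ provided by property $(iii)$ activates the Stability Lemma \ref{Est}, from which $v$ is a viscosity solution of the same interior equation and oblique boundary condition as $u$, and agrees with $u$ on $\partial_{p}\mathrm{Q}^{+}_{1} \setminus \mathrm{Q}^{*}_{1}$. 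The comparison principle \ref{comparation} then forces $v = u$, so the convergence holds for the whole sequence.

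The main obstacle is the first step: constructing $F_j$ whose recession $F_j^{\sharp}$ is convex or concave (so that (H4)--(H5) follow from the Evans--Krylov theory) while $F_j$ remains uniformly close to $F$ and the ellipticity is perturbed by less than $\delta$. The difficulty is that (H4)--(H5) concern the \emph{asymptotic} profile rather than $F$ itself, so the regularization procedure must be tuned to simultaneously control $F_j$ and $F_j^{\sharp}$, without sacrificing the uniform parabolicity by more than the prescribed $\delta$.
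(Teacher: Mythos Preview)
Your overall strategy --- approximate $F$ by operators $F_j$ whose recession profiles satisfy (H4)--(H5), solve the oblique problem for each $F_j$, and pass to the limit via H\"{o}lder compactness, the Stability Lemma, and the comparison principle --- is exactly the paper's plan, and your treatment of the steps \emph{after} the construction of $F_j$ (existence via Theorem~\ref{Existencia}, regularity from Proposition~\ref{Prop4.2}/Theorem~\ref{T1}, uniform $C^{0,\alpha'}$ bounds, stability, and identification of the limit with $u$ by comparison) is essentially correct and matches the paper line by line.

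The genuine gap is the construction itself, which you rightly flag as ``the main obstacle''. Your proposed Bellman-type concavification $F_j(\mathrm{X},x,t)=\inf_{\alpha}L^{\alpha}_j(\mathrm{X},x,t)$ cannot, in general, satisfy your property $(iii)$: such an operator is concave \emph{globally}, and a globally concave operator cannot approximate an arbitrary (non-concave) $F$ uniformly on compact sets of $\mathrm{Sym}(n)$. So $(ii)$ and $(iii)$ are in tension for that construction. The paper resolves this with a much simpler and sharper device: set
\[
F_j(\mathrm{X},x,t)\;\defeq\;\max\Bigl\{\,F(\mathrm{X},x,t),\ \mathcal{M}^{+}_{\lambda-\delta,\Lambda+\delta}(\mathrm{X})-\mathrm{d}_j\,\Bigr\},\qquad \mathrm{d}_j\defeq j\,(2\Lambda-\lambda+\delta)\to\infty.
\]
This choice makes $F_j=F$ \emph{exactly} on the ball $\{\|\mathrm{X}\|\le j\}$ (so $(iii)$ is trivial and stability applies), while for large $\|\mathrm{X}\|$ the shifted Pucci term wins; hence the recession profile is $F_j^{\sharp}=\mathcal{M}^{+}_{\lambda-\delta,\Lambda+\delta}$, which is convex and has constant coefficients, so (H4)--(H5) follow from Evans--Krylov interior and oblique boundary estimates. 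Each $F_j$ is $(\lambda-\delta,\Lambda+\delta)$-parabolic, giving $u_j\in\mathcal{S}_p(\lambda-\delta,\Lambda+\delta,f)$ immediately. The key insight you were missing is that one should modify $F$ only \emph{at infinity} in $\mathrm{Sym}(n)$, leaving it untouched on compacts --- this decouples the two requirements that you found hard to reconcile.
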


\begin{proof}
The argument follows ideas inspired by \cite[Theorem 6.1]{Bessa} and \cite[Theorem 8.1]{PT}, suitably adapted to the parabolic framework. We present the details here for the reader’s convenience and completeness.

We begin by constructing the desired sequence of operators \( F_{j} : \mathrm{Sym}(n) \times \mathrm{Q}^{+}_{1} \longrightarrow \mathbb{R} \) as follows: for a given \( \delta > 0 \), consider the Pucci maximal operator defined by \( L_{\delta}(\mathrm{X}) \defeq \mathcal{M}^{+}_{(\lambda-\delta),(\Lambda+\delta)}(\mathrm{X}) \). We then define
\begin{eqnarray*}
F_{j} : & \mathrm{Sym}(n) \times \mathrm{Q}^{+}_{1} & \longrightarrow \mathbb{R} \\
& (\mathrm{X}, x, t) & \longmapsto \max\{ F(\mathrm{X}, x, t), L_{\delta}(\mathrm{X}) - \mathrm{d}_{j} \},
\end{eqnarray*}
where \( (\mathrm{d}_{j})_{j \in \mathbb{N}} \) is a divergent sequence given by \( \mathrm{d}_{j} = j(2\Lambda - \lambda + \delta) \).

It is straightforward to verify that each \( F_{j} \) is continuous and \((\lambda - \delta, \Lambda + \delta)\)-parabolic. Moreover, from the definition of \( (\mathrm{d}_{j}) \), the following properties hold:
\begin{itemize}
    \item[\checkmark] \( F = F_{j} \) in \( \mathrm{B}_{j} \times \mathrm{Q}^{+}_{1} \subset \mathrm{Sym}(n) \times \mathrm{Q}^{+}_{1} \).
    \item[\checkmark] \( F_{j} = L_{\delta} - \mathrm{d}_{j} \) outside a ball of radius approximately \( \mathrm{d}_{j} \).
\end{itemize}

Thus, the convex envelope \( F_{j}^{\sharp} \) coincides with \( L_{\delta} \), which is a convex operator. Furthermore, \( F_{j}^{\sharp} \) satisfies structural conditions (H4) and (H5) via \cite[Theorem 1.1]{Kry83} (see also \cite[Theorem 1.1]{WangII} for related results) and \cite[Theorem 5.8]{Geo.Mil}, respectively. Consequently, the conclusions of Proposition \ref{Prop4.2} apply.

Fixing \( j \in \mathbb{N} \), any viscosity solution \( v \) to
\[
\left\{
\begin{array}{rcll}
F_{j}(D^{2}v, x, t) - v_{t} & = & f(x,t) & \text{in } \mathrm{Q}^{+}_{1}, \\
\beta \cdot Dv + \gamma v & = & g(x,t) & \text{on } \mathrm{Q}^{*}_{1},
\end{array}
\right.
\]
admits weighted Orlicz-Sobolev regularity. Specifically, for each \( j \in \mathbb{N} \), there exists a constant \( \kappa_{j} > 0 \) such that
\[
\|v\|_{W^{2,\Upsilon}_{\omega}\left(\mathrm{Q}^{+}_{\frac{1}{2}}\right)} \leq \kappa_{j} \cdot \left( \|v\|_{L^{\infty}(\mathrm{Q}^{+}_{1})}^{n+1} + \|f\|_{L^{\Upsilon}_{\omega}(\mathrm{Q}^{+}_{1})} + \|g\|_{C^{1,\alpha}(\mathrm{Q}^{*}_{1})} \right).
\]

We now define the sequence \( (u_{j})_{j \in \mathbb{N}} \) as viscosity solutions to
\[
\left\{
\begin{array}{rcll}
F_{j}(D^{2}u_{j}, x, t) - (u_{j})_{t} & = & f(x,t) & \text{in } \mathrm{Q}^{+}_{1}, \\
\beta \cdot Du_{j} + \gamma u_{j} & = & g(x,t) & \text{on } \mathrm{Q}^{*}_{1}, \\
u_{j} & = & u & \text{on } \partial_{p} \mathrm{Q}^{+}_{1} \setminus \mathrm{Q}^{*}_{1},
\end{array}
\right.
\]
whose existence is ensured by Theorem \ref{Existencia}. Furthermore, each \( u_{j} \in W^{2,\Upsilon}_{\omega,\mathrm{loc}}(\mathrm{Q}^{+}_{1}) \cap \mathcal{S}_{p}(\lambda - \delta, \Lambda + \delta) \).

By compactness and stability arguments, the sequence \( (u_{j})_{j \in \mathbb{N}} \) converges, up to a subsequence, locally uniformly to a function \( u_{0} \) in the \( C^{0, \alpha} \)-topology. Moreover, \( u_{0} \) is a viscosity solution of
\[
\left\{
\begin{array}{rcll}
F(D^{2}u_{0}, x, t) - (u_{0})_{t} & = & f(x,t) & \text{in } \mathrm{Q}^{+}_{1}, \\
\beta \cdot Du_{0} + \gamma u_{0} & = & g(x,t) & \text{on } \mathrm{Q}^{*}_{1}, \\
u_{0} & = & u & \text{on } \partial_{p} \mathrm{Q}^{+}_{1} \setminus \mathrm{Q}^{*}_{1}.
\end{array}
\right.
\]

Finally, let \( w = u_{0} - u \). By Theorem \ref{comparation}, we have that \( w \) satisfies, in the viscosity sense,
\[
\left\{
\begin{array}{rcll}
w \in \mathcal{S}_{p}(\lambda/n, \Lambda,0)& \text{in } & \mathrm{Q}^{+}_{1}, \\
\beta \cdot Dw + \gamma w = 0 & \text{on } & \mathrm{Q}^{*}_{1}, \\
w = 0 & \text{on } & \partial_{p} \mathrm{Q}^{+}_{1} \setminus \mathrm{Q}^{*}_{1}.
\end{array}
\right.
\]

Applying the A.B.P.T. estimate (Lemma \ref{ABP-fullversion}), we conclude that \( w = 0 \) in \( \overline{\mathrm{Q}^{+}_{1}} \setminus \mathrm{Q}^{*}_{1} \). Therefore, by continuity, we have \( w \equiv 0 \), i.e., \( u = u_{0} \), completing the proof.
\end{proof}

\subsection{Calder\'{o}n-Zygmund Type Estimates for Solutions of the Obstacle Problem}

In the modern mathematical literature, Calder\'{o}n-Zygmund type estimates for obstacle problems with oblique tangential derivatives of the form
\begin{equation}\label{Obst1}
\left\{
\begin{array}{rclcl}
F(D^2 u,Du,x,t)-\frac{\partial u}{\partial t} &\le& f(x,t)& \mbox{in} &   \Omega_{\mathrm{T}}, \\
(F(D^2 u, Du,x,t)-\frac{\partial u}{\partial t}- f)(u-\phi) &=& 0 &\mbox{in}& \Omega_{\mathrm{T}},\\
		u(x, t) &\ge& \phi(x, t) &\mbox{in}& \Omega_{\mathrm{T}},\\
		\beta \cdot Du+\gamma u&=&g(x,t) &\mbox{on}& \mathrm{S}_{T},\\
		u(x,0)&=&0 &\mbox{in}& \overline{\Omega},
	\end{array}
	\right.
\end{equation}
with appropriate data $f$, $\beta$, $\gamma$, and $g$, and obstacle $\phi$, have garnered increasing attention in recent decades due to their connection with extensions of the classical theory for the heat operator and, more generally, for operators in divergence form such as:
\[
\mathfrak{L}\,u = \frac{\partial u}{\partial t}-\div(\mathcal{A}(x, t, u, \nabla u)\nabla u) = f(x, t) \quad \text{in} \quad \Omega_{\mathrm{T}}.
\]

Such regularity estimates are typically derived via a penalization method associated with the corresponding obstacle-free problem, combined with a priori estimates for that problem. In particular, for the obstacle problem \eqref{Obst1}, we consider its associated non-obstacle counterpart \eqref{1.1}.

In this framework, the estimates obtained in Theorem \ref{T1} ensure the existence of a unique solution to \eqref{Obst1}, possessing regularity properties in the setting of weighted Orlicz spaces.

It is worth highlighting that related results for obstacle problems with oblique boundary conditions in the elliptic context have been developed. For instance, Byun et al. \cite{BJ1} established $W^{2,p}$ estimates for convex elliptic models similar to \eqref{Obst1} in the case where $\gamma = g = 0$. Extending this result, Bessa et al. \cite{Bessa} obtained $W^{2,p}$ estimates for the elliptic version of \eqref{Obst1} under relaxed convexity assumptions. Furthermore, weighted Orlicz–Sobolev regularity under such relaxed conditions was achieved by Bessa in \cite{BessaOrlicz}. In the same vein, the work of Bessa and Ricarte \cite{BessaRicarte} provides weighted Lorentz regularity estimates for the obstacle problem.

For our purposes, we require the following further structural assumptions:

\begin{enumerate}
\item[(\bf Obst1)] There exists a modulus of continuity $\iota: [0,+\infty) \to [0,+\infty)$ with $\iota(0)=0$, such that
	\[
	F(\mathrm{X}_1, \vec{q}, r,x_1,t) - F(\mathrm{X}_2, \vec{q},r,x_2,t) \le \iota\left(|x_1-x_2|\right)\left[(|\vec{q}| +1) + \alpha_0 |x_1-x_2|^2\right]
	\]
	holds for any $x_1,x_2 \in \Omega$, $t\in[0,T]$, $\vec{q} \in \mathbb{R}^n$, $r \in \mathbb{R}$, $\alpha_0 >0$, and $\mathrm{X}_1,\mathrm{X}_2 \in \textrm{Sym}(n)$ satisfying
	\[
	- 3 \alpha_0
	\begin{pmatrix}
		\mathrm{Id}_n& 0 \\
		0& \mathrm{Id}_n
	\end{pmatrix}
	\leq
	\begin{pmatrix}
		\mathrm{X}_2&0\\
		0&-\mathrm{X}_1
	\end{pmatrix}
	\leq
	3 \alpha_0
	\begin{pmatrix}
		\mathrm{Id}_n & -\mathrm{Id}_n \\
		-\mathrm{Id}_n& \mathrm{Id}_n
	\end{pmatrix},
	\]
	where $\mathrm{Id}_n$ denotes the identity matrix in $\mathbb{R}^n$.

	\item[(\bf Obst2)] The operator $F$ is proper in the sense that
	\[
	d\cdot (r_{2}-r_{1}) \leq F(\mathrm{X},q,r_{1},x,t)-F(\mathrm{X},q,r_{2},x,t),
	\]
	for any $\mathrm{X} \in \text{Sym}(n)$, $r_1,r_2 \in \mathbb{R}$ with $r_{1}\leq r_{2}$, $x \in \Omega$, $q \in \mathbb{R}^n$, and $t\in[0,T]$, for some $d >0$.
\end{enumerate}

These structural conditions are imposed to ensure the validity of the Comparison Principle for oblique derivative problems such as \eqref{1.1} (cf. \cite[Theorem 2.10]{CCKS}, \cite[Theorem 2.1]{IshiiSato}, and \cite[Theorem 7.17]{Leiberman}), thereby allowing the application of Perron's method for viscosity solutions (see \cite[Sections 7.4 and 7.6]{Leiberman} and \cite[Theorem 3.1]{IshiiSato}).

We now state the principal result of this section.

\begin{theorem}[{\bf Obstacle Problems and Weighted Orlicz Spaces}]\label{T3}
Assume the structural conditions \(\mathrm{(H1)-(H5)}\) and \textnormal{({\bf Obst1})–({\bf Obst2})}. Let $u$ be an $L^{p}$-viscosity solution of \eqref{Obst1}, where $p = p_{0}(n+1)$. Furthermore, suppose that $\partial \Omega \in C^{3}$, $\beta,\gamma \in C^{2}(\mathrm{S}_\mathrm{T})$, and $\phi \in W^{2,\Upsilon}_{\omega}(\Omega_{\mathrm{T}})$, where $\Upsilon(t)=\Phi(t^{n+1})$ (cf. (H2)). Assume also that $\phi$ satisfies $\beta\cdot D\phi+\gamma\phi\geq g$ almost everywhere on $\mathrm{S}_\mathrm{T}$. Then, $u \in W^{2,\Upsilon}_{\omega}(\Omega_{\mathrm{T}})$, and the following estimate holds:
\begin{equation*}
\|u\|_{W^{2, \Upsilon}_{\omega}(\Omega_{\mathrm{T}})} \le \mathrm{C}\cdot \left( \|f\|_{L^{\Upsilon}_{\omega}(\Omega_{\mathrm{T}})}+ \|\phi\|_{W^{2,\Upsilon}_{\omega}(\Omega_{\mathrm{T}})}+\|g\|_{C^{1,\alpha}(\mathrm{S}_\mathrm{T})}\right),
\end{equation*}
where $\mathrm{C}>0$ is a universal constant.
\end{theorem}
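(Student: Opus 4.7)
The plan is to reduce the obstacle problem to a non-obstacle problem accessible to Theorem \ref{T1} through a parabolic penalization scheme, following the elliptic strategy of \cite{BessaOrlicz} and \cite{Bessa}. Choose a smooth, non-increasing profile $\mathcal{B}_\varepsilon : \mathbb{R} \to [0,+\infty)$ with $\mathcal{B}_\varepsilon(s)=0$ for $s\geq 0$ and $\mathcal{B}_\varepsilon(s)\nearrow +\infty$ as $\varepsilon\to 0^+$ for every $s<0$, and consider the auxiliary Cauchy--oblique problem
\[
\left\{
\begin{array}{rclcl}
F(D^2 u_\varepsilon, Du_\varepsilon, x, t)-\partial_t u_\varepsilon &=& f(x,t)-\mathcal{B}_\varepsilon(u_\varepsilon-\phi) & \text{in} & \Omega_\mathrm{T},\\
\beta\cdot Du_\varepsilon + \gamma u_\varepsilon &=& g(x,t) & \text{on} & \mathrm{S}_\mathrm{T},\\
u_\varepsilon(\cdot,0) &=& 0 & \text{in} & \overline{\Omega}.
\end{array}
\right.
\]
Since $-\mathcal{B}_\varepsilon(\cdot-\phi)$ is monotone non-increasing in the unknown, hypotheses $({\bf Obst1})$--$({\bf Obst2})$ preserve the Comparison Principle (Theorem \ref{comparation}), and Perron's method supplies a unique $L^p$-viscosity solution $u_\varepsilon$. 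The A.B.P.T. Maximum Principle (Theorem \ref{ABP-fullversion}) then delivers the uniform bound
\[
\|u_\varepsilon\|_{L^\infty(\Omega_\mathrm{T})}\leq C\left(\|\phi\|_{L^\infty(\Omega_\mathrm{T})}+\|f\|_{L^{n+1}(\Omega_\mathrm{T})}+\|g\|_{L^\infty(\mathrm{S}_\mathrm{T})}\right).
\]

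The pivotal step is a parabolic Lewy--Stampacchia-type inequality. Testing against $\phi$, and exploiting the boundary compatibility $\beta\cdot D\phi+\gamma\phi\geq g$ on $\mathrm{S}_\mathrm{T}$, the obliqueness $\beta\cdot\vec{\mathbf{n}}\geq\mu_0$, and the sign condition $\gamma\leq 0$, a viscosity comparison between $u_\varepsilon$ and $\phi$ yields the pointwise control
\[
0\leq \mathcal{B}_\varepsilon(u_\varepsilon-\phi)\leq \bigl(f(x,t)-F(D^2\phi,D\phi,x,t)+\phi_t(x,t)\bigr)^+ \quad \text{a.e. in}~\Omega_\mathrm{T}.
\]
Invoking (H1) together with $\phi\in W^{2,\Upsilon}_\omega(\Omega_\mathrm{T})$, one gets $\|F(D^2\phi,D\phi,x,t)-\phi_t\|_{L^\Upsilon_\omega(\Omega_\mathrm{T})}\leq C(\Lambda,\sigma)\|\phi\|_{W^{2,\Upsilon}_\omega(\Omega_\mathrm{T})}$, so that the full right-hand side of the penalized equation is uniformly controlled in $L^\Upsilon_\omega$ by $\|f\|_{L^\Upsilon_\omega}+C\|\phi\|_{W^{2,\Upsilon}_\omega}$, independently of $\varepsilon$.

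Applying Theorem \ref{T1} to each $u_\varepsilon$ and absorbing the $\|u_\varepsilon\|_{L^\infty}^{n+1}$ contribution via the A.B.P.T. bound above (together with the Sobolev-type embedding $W^{2,\Upsilon}_\omega\hookrightarrow L^\infty$ to control $\|\phi\|_{L^\infty}$), one arrives at
\[
\|u_\varepsilon\|_{W^{2,\Upsilon}_\omega(\Omega_\mathrm{T})}\leq C\left(\|f\|_{L^\Upsilon_\omega(\Omega_\mathrm{T})}+\|\phi\|_{W^{2,\Upsilon}_\omega(\Omega_\mathrm{T})}+\|g\|_{C^{1,\alpha}(\mathrm{S}_\mathrm{T})}\right)
\]
uniformly in $\varepsilon$. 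Reflexivity of $W^{2,\Upsilon}_\omega$ extracts a subsequence $u_{\varepsilon_k}\rightharpoonup u_\infty$; the blow-up of $\mathcal{B}_\varepsilon$ on $(-\infty,0)$ forces $u_\infty\geq\phi$, the Stability Lemma \ref{Est} identifies $u_\infty$ as an $L^p$-viscosity solution of the obstacle problem, and uniqueness via Theorem \ref{comparation} gives $u_\infty=u$. Weak lower semicontinuity of the Luxemburg norm then concludes the target estimate.

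The principal obstacle is the derivation of the Lewy--Stampacchia inequality in the non-variational, parabolic, oblique setting: since classical variational tools are unavailable, the estimate must be obtained through a delicate viscosity comparison argument between $u_\varepsilon$ and $\phi$ that crucially uses the sign of $\gamma$, the obliqueness of $\beta$, and the compatibility condition on $\phi$ along $\mathrm{S}_\mathrm{T}$. A secondary technical delicacy is certifying that the weak $W^{2,\Upsilon}_\omega$ limit of the penalized sequence is genuinely the obstacle solution, which relies on the monotonicity of $\mathcal{B}_\varepsilon$ combined with Lemma \ref{Est}.
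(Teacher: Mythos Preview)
Your overall architecture—penalize, invoke Theorem~\ref{T1} on the penalized problem with a right-hand side controlled uniformly in $L^\Upsilon_\omega$, then pass to the limit via reflexivity and the Stability Lemma—is exactly the paper's. The substantive difference lies in the penalization itself. You use an unbounded penalty $\mathcal{B}_\varepsilon$ and must then establish the pointwise Lewy--Stampacchia bound $\mathcal{B}_\varepsilon(u_\varepsilon-\phi)\leq h^+$ a.e.\ (with $h=f-F(D^2\phi,D\phi,\cdot)+\phi_t$), which you correctly flag as the principal obstacle; your sketch, however, only gestures at a ``viscosity comparison with $\phi$'' without explaining how a single extremal-point test delivers an a.e.\ inequality with an $\varepsilon$-independent constant in the non-variational $L^p$-viscosity framework.

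The paper sidesteps this entirely by choosing a \emph{bounded} penalization: with $\Phi_\varepsilon\in C^\infty(\mathbb{R})$ non-decreasing, $\Phi_\varepsilon\equiv 0$ on $(-\infty,0]$, $\Phi_\varepsilon\equiv 1$ on $[\varepsilon,\infty)$, the penalized right-hand side is
\[
f_\varepsilon \;=\; h^+\,\Phi_\varepsilon(u_\varepsilon-\phi)+f-h^+,
\]
which satisfies $|f_\varepsilon|\leq 2h^+ + |f|$ \emph{by construction}—the $L^\Upsilon_\omega$ bound is automatic, not earned. Existence of $u_\varepsilon$ then comes from a Schauder fixed-point argument (freeze $v_0$ in the penalty, solve via Perron and Theorem~\ref{T1}, observe the solution map sends a fixed ball in $L^\Upsilon_\omega$ compactly into itself) rather than from Perron alone. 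The constraint $u_\varepsilon\geq\phi$ is recovered by a direct comparison on the hypothetical set $\{u_\varepsilon<\phi\}$, where the equation collapses to $F(D^2u_\varepsilon,Du_\varepsilon,\cdot)-\partial_t u_\varepsilon = f-h^+ \leq F(D^2\phi,D\phi,\cdot)-\phi_t$ and the compatibility $\beta\cdot D\phi+\gamma\phi\geq g$ closes the oblique boundary comparison. In short: your route is defensible in principle but concentrates all the difficulty in an inequality you do not prove; the paper's bounded penalty trades that for a routine fixed-point step.
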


\begin{proof}
	
Fixed $\varepsilon \in (0,1)$, we consider a non-decreasing function $\Phi_{\varepsilon} \in C^{\infty}(\mathbb{R})$ such that
$$
\Phi_{\varepsilon}(s) \equiv 0 \quad \text{if} \quad s \leq 0; \quad \Phi_{\varepsilon}(s) \equiv 1 \quad \text{if} \quad s \geq \varepsilon,
$$
$$
\text{and} \quad 0 \leq \Phi_{\varepsilon}(s) \leq 1 \quad \text{for any} \quad s \in \mathbb{R}.
$$
In the sequel, we consider the following penalized problem
\begin{equation}\label{3.1}
\left\{
\begin{array}{rclcl}
F(D^2 u_{\varepsilon}, D u_{\varepsilon}, x, t) - \frac{\partial u_{\varepsilon}}{\partial t} &=& \mathrm{h}^+(x,t) \Phi_{\varepsilon}(u_{\varepsilon} - \phi) + f(x,t) - \mathrm{h}^+(x,t) & \text{in} & \Omega_{\mathrm{T}}, \\
\beta \cdot D u_{\varepsilon} + \gamma u_{\varepsilon} &=& g(x,t) & \text{on} & \mathrm{S}_{\mathrm{T}}, \\
u_{\varepsilon}(x, 0) &=& 0 & \text{in} & \overline{\Omega}.
\end{array}
\right.
\end{equation}
where 
$$
\mathrm{h}(x,t) \defeq f(x,t) - \left(F(D^2 \phi, D \phi, x, t) - \frac{\partial \phi}{\partial t}(x, t)\right).
$$
Now, observe that
\begin{eqnarray*}
\nonumber |\mathrm{h}(x,t)| &\leq& |f(x,t)| + |F(D^2 \phi, D \phi, x, t)| + \left| \frac{\partial \phi}{\partial t}(x,t) \right| \\
\nonumber &\leq& |f(x,t)| + \mathrm{C}(\lambda, \Lambda, \sigma, \xi) \cdot (|D \phi(x,t)| + |D^2 \phi(x,t)|) + \left| \frac{\partial \phi}{\partial t}(x,t) \right| \\
&\Longrightarrow& \| h \|_{L^{\Upsilon}_{\omega}(\Omega_{\mathrm{T}})} \leq \mathrm{C}(\lambda, \Lambda, \sigma, \xi) \left( \| f \|_{L^{\Upsilon}_{\omega}(\Omega_{\mathrm{T}})} + \|\phi\|_{W^{2,\Upsilon}_{\omega}(\Omega_{\mathrm{T}})} \right).
\end{eqnarray*}

We claim that the problem \eqref{3.1} admits a viscosity solution $u_{\varepsilon}$. Indeed, given $v_0 \in L^{\Upsilon}_{\omega}(\Omega_{\mathrm{T}})$, we study the auxiliary problem
\begin{equation}\label{3.3}
\left\{
\begin{array}{rclcl}
F(D^2 u_{\varepsilon}, D u_{\varepsilon}, x, t) - \frac{\partial u_{\varepsilon}}{\partial t} &=& \mathrm{h}^+(x) \Phi_{\varepsilon}(v_0 - \phi) + f(x,t) - \mathrm{h}^+(x,t) & \text{in} & \Omega_{\mathrm{T}}, \\
\beta \cdot D u_{\varepsilon} + \gamma u_{\varepsilon} &=& g(x,t) & \text{on} & \mathrm{S}_{\mathrm{T}}, \\
v(x, 0) &=& 0 & \text{in} & \overline{\Omega}.
\end{array}
\right.
\end{equation}

By Perron's method, we know that under the assumed hypotheses, there exists a unique solution to the problem \eqref{3.3}. Now, we assert that $f_{v_0} = \mathrm{h}^+ \Phi_{\varepsilon}(v_0 - \phi) + f - \mathrm{h}^+$ belongs to $L^{\Upsilon}_{\omega}(\Omega_{\mathrm{T}})$. In fact, we consider the case where $\|\mathrm{h}^+\|_{L^{\Upsilon}_{\omega}(\Omega_{\mathrm{T}})} > 0$. In this case, by the triangle inequality, we have almost everywhere $(x,t) \in \Omega_{\mathrm{T}}$,
\begin{eqnarray*}
|f_{v_0}(x,t)| &\leq& 2 |\mathrm{h}^+(x)| + |f(x,t)|.
\end{eqnarray*}
Consequently, since $\| \mathrm{h}^+ \|_{L^{\Upsilon}_{\omega}(\Omega_{\mathrm{T}})} > 0$, it follows that
\begin{eqnarray*}
\left\| \frac{f_{v_0}}{\| \mathrm{h}^+ \|_{L^{\Upsilon}_{\omega}(\Omega_{\mathrm{T}})} + \| f \|_{L^{\Upsilon}_{\omega}(\Omega_{\mathrm{T}})}} \right\|_{L^{\Upsilon}_{\omega}(\Omega_{\mathrm{T}})} &\leq& 2 \left\| \frac{\mathrm{h}^+}{\| \mathrm{h}^+ \|_{L^{\Upsilon}_{\omega}(\Omega_{\mathrm{T}})}} \right\|_{L^{\Upsilon}_{\omega}(\Omega_{\mathrm{T}})} \\
& +&  \left\| \frac{f}{\| \mathrm{h}^+ \|_{L^{\Upsilon}_{\omega}(\Omega_{\mathrm{T}})} + \| f \|_{L^{\Upsilon}_{\omega}(\Omega_{\mathrm{T}})}} \right\|_{L^{\Upsilon}_{\omega}(\Omega_{\mathrm{T}})} \\
&\leq& 3.
\end{eqnarray*}
Thus,
\begin{eqnarray*}
\| f_{v_0} \|_{L^{\Upsilon}_{\omega}(\Omega_{\mathrm{T}})} &\leq& 3 \left( \| f \|_{L^{\Upsilon}_{\omega}(\Omega_{\mathrm{T}})} + \| \mathrm{h}^+ \|_{L^{\Upsilon}_{\omega}(\Omega_{\mathrm{T}})} \right) \\
&\leq& \mathrm{C}(n, \lambda, \Lambda, \sigma, \xi) \left( \| f \|_{L^{\Upsilon}_{\omega}(\Omega_{\mathrm{T}})} + \| \phi \|_{W^{2,\Upsilon}_{\omega}(\Omega_{\mathrm{T}})} \right).
\end{eqnarray*}
On the other hand, if $\| \mathrm{h}^+ \|_{L^{\Upsilon}_{\omega}(\Omega_{\mathrm{T}})} = 0$, then $\mathrm{h}^+ = 0$ almost everywhere in $\Omega_{\mathrm{T}}$, and thus $f_{v_0} = f$, which is independent of $v_0$. Therefore, from these two cases, we conclude that
\begin{equation}\label{4.3'}
\| f_{v_0} \|_{L^{\Upsilon}_{\omega}(\Omega_{\mathrm{T}})} \leq \mathrm{C}(n, \lambda, \Lambda, \sigma, \xi) \left( \| f \|_{L^{\Upsilon}_{\omega}(\Omega_{\mathrm{T}})} + \| \phi \|_{W^{2,\Upsilon}_{\omega}(\Omega_{\mathrm{T}})} \right),
\end{equation}	
where $\mathrm{C} > 0$ is independent of $v_0$. Now, since the operator $F$ and the data satisfy the hypotheses (H1)-(H5), we can apply Theorem \ref{T1} and conclude that there exists a unique solution $u_{\varepsilon} \in W^{2,\Upsilon}_{\omega}(\Omega_{\mathrm{T}})$. Moreover, $u_{\varepsilon}$ satisfies the following estimate
\begin{eqnarray*}
\| u_{\varepsilon} \|_{W^{2,\Upsilon}_{\omega}(\Omega_{\mathrm{T}})} &\leq& \mathrm{C} \cdot \left( \| u_{\varepsilon} \|_{L^{\infty}(\Omega_{\mathrm{T}})}^{n+1} + \| f_{v_0} \|_{L^{\Upsilon}_{\omega}(\Omega_{\mathrm{T}})} + \|g\|_{C^{1,\alpha}(\mathrm{S}_\mathrm{T})} \right).
\end{eqnarray*}
Finally, using the A.B.P.T. estimate (Theorem \ref{ABP-fullversion}) and \eqref{4.3'}, we obtain
\begin{eqnarray}\label{3.5}
\| u_{\varepsilon} \|_{W^{2,\Upsilon}_{\omega}(\Omega_{\mathrm{T}})} &\leq& \mathrm{C}_0 \cdot \left( \| f \|_{L^{\Upsilon}_{\omega}(\Omega_{\mathrm{T}})} + \| \phi \|_{W^{2,\Upsilon}_{\omega}(\Omega_{\mathrm{T}})} + \|g\|_{C^{1,\alpha}(\mathrm{S}_\mathrm{T})} \right),
\end{eqnarray}
where the constant $\mathrm{C}_0 > 0$ does not depend on $v_0$ or $\varepsilon$.
	
At this point, by defining the operator $\mathcal{T}: L^{\Upsilon}_{\omega}(\Omega_{\mathrm{T}}) \rightarrow W^{2,\Upsilon}_{\omega}(\Omega_{\mathrm{T}}) \subset L^{\Upsilon}_{\omega}(\Omega_{\mathrm{T}})$ given by $\mathcal{T}(v_0) = u_{\varepsilon}$, we conclude that $\mathcal{T}$ maps the $\mathrm{C}_0$-ball in $L^{\Upsilon}_{\omega}(\Omega_{\mathrm{T}})$ into itself (since the estimate \eqref{3.5} holds for all $u_{\varepsilon}$). Hence, $\mathcal{T}$ is a continuous and compact operator. Therefore, by Schauder's Fixed-Point Theorem, there exists $u_{\varepsilon}$ such that $\mathcal{T}(u_{\varepsilon}) = u_{\varepsilon}$, which is a viscosity solution to \eqref{3.1}.

By the definition of the operator $\mathcal{T}$, it follows that the sequence $\{u_{\varepsilon}\}_{\varepsilon \in (0,1)}$ is bounded in the weighted Orlicz-Sobolev space $W^{2,\Upsilon}_{\omega}(\Omega_{\mathrm{T}})$. Thus, by standard compactness arguments and Lemma \ref{mergulhoorliczlebesgue}, we can find a subsequence $(u_{\varepsilon_{j}})_{j \in \mathbb{N}}$ with $\varepsilon_{j} \to 0$ as $j \to \infty$ and a function $u \in W^{2,\Upsilon}_{\omega}(\Omega_{\mathrm{T}})$ such that $u_{\varepsilon_{j}} \rightharpoonup u$ in $W^{2,\Upsilon}_{\omega}(\Omega_{\mathrm{T}})$, $u_{\varepsilon_{j}} \to u$ almost everywhere, and $Du_{\varepsilon_{j}} \in C^{0,\overline{\alpha}}(\overline{\Omega_{\mathrm{T}}})$ for some $\overline{\alpha} = \overline{\alpha}(n, i(\Phi), \omega, \lambda, \Lambda) \in (0,1)$, and $Du_{\varepsilon_{j}} \to Du$ in the $C^{0,\overline{\alpha}}$-topology (cf. \cite[Theorem 1.6]{BessaOrlicz}).

We claim that $u$ is a viscosity solution of \eqref{Obst1}. In fact, since the function $u_{\varepsilon_{j}}$ is a viscosity solution to \eqref{3.1}, we have
\begin{eqnarray}\label{3.7}
\nonumber F(D^{2}u_{\varepsilon_{j}}, Du_{\varepsilon_{j}}, x, t) - \frac{\partial u_{\varepsilon_{j}}}{\partial t} & = & \mathrm{h}^+ \Phi_{\varepsilon_j}(u_{\varepsilon_j} - \phi) + f - \mathrm{h}^+ \\
& \leq & f \quad \text{in} \quad \Omega_{\mathrm{T}}, \quad \forall j \in \mathbb{N},
\end{eqnarray}
since $\Phi_{\varepsilon_{j}}(s) \in [0,1]$ for all $s \in \mathbb{R}$ and $j \in \mathbb{N}$. Thus, by Stability Lemma \ref{Est}, it follows from \eqref{3.7} in the viscosity sense that
\begin{eqnarray*}
F(D^{2}u, Du, x, t) - \frac{\partial u}{\partial t} \leq f(x,t) \quad \text{in} \quad \Omega_{\mathrm{T}}.
\end{eqnarray*}

On the other hand, by the condition $\beta(x,t) \cdot Du_{\varepsilon_{j}}(x,t)+\gamma(x,t)u_{\varepsilon_{j}}(x,t) = g(x,t)$ and since $(Du_{\varepsilon_{j}})_{j \in \mathbb{N}}$ are uniformly bounded and equi-continuous on $\partial_{p}\Omega_{\mathrm{T}}$, we obtain, in the viscosity sense,
\begin{eqnarray}
\beta(x,t) \cdot Du(x,t) + \gamma u = g(x,t) \quad \text{on} \quad \partial_{p}\Omega_{\mathrm{T}}.
\end{eqnarray}

Now, we will show that $u \geq \phi$ on $\Omega_{\mathrm{T}}$. To do so, fix $j \in \mathbb{N}$ and define the set $\mathcal{V}_{j} = \{(x,t) \in \Omega_{\mathrm{T}}; u_{\varepsilon_{j}}(x,t) < \phi(x,t)\}$. If $\mathcal{V}_{j} = \emptyset$, there is nothing to prove. However, if $\mathcal{V}_{j} \neq \emptyset$, then
\begin{eqnarray*}
F(D^{2}u_{\varepsilon_{j}}, Du_{\varepsilon_{j}}, x, t) - \frac{\partial u_{\varepsilon_{j}}}{\partial t} & = & \mathrm{h}^+ \Phi_{\varepsilon_j}(u_{\varepsilon_{j}} - \phi) + f - \mathrm{h}^+ \\
& = & f - \mathrm{h}^{+} \\
& \leq & f - \mathrm{h} \\
& = & F(D^{2}\phi, D\phi, x, t) - \frac{\partial \phi}{\partial t} \quad \text{in} \quad \mathcal{V}_{j}.
\end{eqnarray*}
Moreover, $u_{\varepsilon_{j}}(x,0) \leq \phi(x,0)$ in $\overline{\mathcal{V}_{j}}$, and consequently, by the Comparison Principle \cite[Theorem 2.1]{IshiiSato}, it follows that $u_{\varepsilon_{j}} \geq \phi$ in $\mathcal{V}_{j}$, which leads to a contradiction. Thus, $\mathcal{V}_{j} = \emptyset$. This proves the claim.

Finally, we need to show that
$$
F(D^2 u, Du, x, t) - \frac{\partial u}{\partial t} = f(x,t) \quad \text{in} \quad \{ u > \phi \}
$$
in the viscosity sense. In fact, we observe that for each $j \in \mathbb{N}$, it follows that 
\[
\mathrm{h}^+ \Phi_{\varepsilon_j}(u_{\varepsilon_j} - \phi) + f - \mathrm{h}^+ \to f\,\, \mbox{a.e. on} \left\{ u > \phi + \frac{1}{k} \right\}
\] Therefore, by Stability results (Lemma \ref{Est}), we conclude (in the viscosity sense) that
$$
F(D^2 u, Du, x, t) - \frac{\partial u}{\partial t} = f(x,t) \quad \text{in} \quad \{u > \phi\} = \bigcup_{k=1}^{\infty} \left\{ u > \phi + \frac{1}{k} \right\} \quad \text{as} \quad j \to +\infty,
$$
therefore $u$ is a viscosity solution of \eqref{Obst1}.

To conclude the proof of this theorem, by weak convergence $u_{\varepsilon_{j}} \rightharpoonup u$ in $W^{2,\Upsilon}_{\omega}(\Omega_{\mathrm{T}})$ and the estimate \eqref{3.5} holding for all $u_{\varepsilon_{j}}$, we have
\begin{eqnarray*}
\|u\|_{W^{2,\Upsilon}_{\omega}(\Omega_{\mathrm{T}})} \leq \liminf_{j \to \infty} \|u_{\varepsilon_{j}}\|_{W^{2,\Upsilon}_{\omega}(\Omega_{\mathrm{T}})} \leq \mathrm{C}_{0} \left( \|f\|_{L^{\Upsilon}_{\omega}(\Omega_{\mathrm{T}})} + \|\phi\|_{W^{2,\Upsilon}_{\omega}(\Omega_{\mathrm{T}})} + \|g\|_{C^{1,\alpha}(\mathrm{S}_\mathrm{T})} \right).
\end{eqnarray*}
This concludes the proof of the theorem.
\end{proof}

As a consequence, we prove the following result:
\begin{corollary}\label{Uniqueness-result}
	Under the same assumptions as Theorem \ref{T3}, the problem \eqref{Obst1} has a unique solution.
\end{corollary}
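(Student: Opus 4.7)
The plan is to argue by contradiction. Let $u_1, u_2 \in W^{2,\Upsilon}_{\omega}(\Omega_{\mathrm{T}})$ be two viscosity solutions of \eqref{Obst1} (whose existence is provided by Theorem \ref{T3}), and set $w \defeq u_1 - u_2$. By symmetry of the roles of $u_1$ and $u_2$, it suffices to prove that the open set $A \defeq \{(x,t) \in \Omega_{\mathrm{T}} : w(x,t) > 0\}$ is empty, since the same reasoning applied with indices swapped then yields $u_1 \equiv u_2$. On $A$ we have $u_1 > u_2 \geq \phi$, so $u_1$ is \emph{strictly} above the obstacle throughout $A$, and the complementarity condition in \eqref{Obst1} forces $F(D^2 u_1, Du_1, x,t) - \partial_t u_1 = f(x,t)$ in the viscosity sense on $A$. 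Combined with the first line of \eqref{Obst1}, which yields $F(D^2 u_2, Du_2, x,t) - \partial_t u_2 \leq f(x,t)$ globally in $\Omega_{\mathrm{T}}$, we conclude that on $A$ the function $u_1$ is a viscosity subsolution and $u_2$ a viscosity supersolution of the \emph{same} equation $F(D^2 v, Dv, x,t) - v_t = f$.

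Next, I would analyze the parabolic boundary of $A$. Interior points of $\Omega_{\mathrm{T}}$ belonging to $\partial A$ satisfy $u_1 = u_2$ by continuity; points on $\overline{\Omega} \times \{0\}$ satisfy $u_1 = u_2 = 0$ by the initial condition; and on the lateral portion $\mathrm{S}_{\mathrm{T}} \cap \overline{A}$ both $u_1$ and $u_2$ satisfy the same oblique condition $\beta \cdot Dv + \gamma v = g$. Invoking a comparison principle adapted to oblique boundary value problems for fully nonlinear parabolic operators of class $F(D^2 v, Dv, x,t) - v_t$ — which is valid under the structural assumptions ({\bf H1}), ({\bf Obst1})--({\bf Obst2}), together with $\gamma \leq 0$ and the obliqueness $\beta \cdot \vec{\mathbf{n}} \geq \mu_0$ (cf.\ \cite[Theorem 2.10]{CCKS}, \cite[Theorem 2.1]{IshiiSato}, and \cite[Theorem 7.17]{Leiberman}) — we deduce $u_1 \leq u_2$ in $A$, contradicting the definition of $A$. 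Hence $A = \emptyset$, and, by symmetry, $u_1 \equiv u_2$.

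The main technical obstacle is the lateral boundary segment $\mathrm{S}_{\mathrm{T}} \cap \overline{A}$: an interior comparison principle is insufficient because one cannot rule out a priori that $\partial A$ meets $\mathrm{S}_{\mathrm{T}}$, and there only oblique derivative information is available. The hypotheses ({\bf Obst1})--({\bf Obst2}), together with the properness ensured by $\gamma \leq 0$ and the uniform obliqueness, are precisely calibrated to provide the boundary version of the comparison principle via the doubling-of-variables viscosity machinery developed in \cite{CCKS} and \cite{IshiiSato}. Once this boundary comparison is in hand, the contradiction closes immediately and the proof is complete.
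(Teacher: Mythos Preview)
Your proof is correct and follows essentially the same route as the paper's: assume a nonempty positivity set $A=\{u_1>u_2\}$, observe that $u_1$ satisfies the equation with equality on $A$ while $u_2$ is a supersolution there, match boundary data on the parabolic boundary of $A$ (equality on the interior and initial portions, same oblique condition on $\mathrm{S}_{\mathrm{T}}$), and invoke the oblique comparison principle of \cite{IshiiSato} to derive a contradiction. The only cosmetic difference is that the paper explicitly splits off the case where $\partial_p A$ does not meet $\partial_p\Omega$ and then cites an interior comparison result (\cite{Imbert;Silvestre}, \cite{HK}), whereas you subsume both cases under the oblique-boundary comparison principle.
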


\begin{proof}
	Let $u$ and $v$ be two viscosity solutions of \eqref{Obst1}. Assume, by contradiction, that $u \neq v$. Without loss of generality, we may assume that
	$$
	\mathcal{O}_{\sharp} = \{v > u\} \neq \emptyset.
	$$
	Since $v > u \ge \phi$ in $\mathcal{O}_{\sharp}$, we obtain in the viscosity sense
	$$
	F(D^2 v, Dv, x, t) - \frac{\partial v}{\partial t} = f(x,t) \quad \textrm{in} \quad \mathcal{O}_{\sharp}.
	$$
	Consequently, we conclude that
	$$
	\left\{
	\begin{array}{rclclcl}
	F(D^2u,Du,x,t)- \frac{\partial u}{\partial t} &\leq& f(x,t) & \leq & F(D^2 v, Dv, x,t) - \frac{\partial v}{\partial t} & \textrm{in} & \mathcal{O}_{\sharp}, \\
	& & u(x,t) &=& v(x,t) & \textrm{on} & \partial_{p} \mathcal{O}_{\sharp} \setminus \partial_{p} \Omega, \\
	\beta \cdot Du + \gamma u &=& g(x,t) &=& \mathcal{B}(x,t,Dv) & \textrm{on} & \partial_{p} \mathcal{O}_{\sharp} \cap \partial_{p} \Omega, \\
	u(x,t) &=& 0 &=& v(x, 0) & \textrm{in} & \mathcal{O}_{\sharp} \cap \overline{\Omega}.
	\end{array}
	\right.
	$$
	
	Therefore, by the Comparison Principle \cite[Theorem 2.1]{IshiiSato}, it follows that $u \geq v$ in $\mathcal{O}_{\sharp}$ if $\partial_{p} \mathcal{O}_{\sharp} \cap \partial_{p} \Omega \neq \emptyset$. Otherwise, the same conclusion holds by \cite[Section 3]{Imbert;Silvestre} or \cite[Theorem 1]{HK}. This contradicts the definition of the set $\mathcal{O}_{\sharp}$, thereby proving the uniqueness of the solution.
\end{proof}

\subsection{Weighted Orlicz-BMO Estimates} 

In this section, we will address another application of weighted Orlicz estimates arising in the context of problem \eqref{1.1}.  Specifically, when the source term $f$ possesses \textit{weighted Orlicz bounded mean oscillation}. In regularity theory, it is well known that the boundedness of $f$ does not necessarily imply the boundedness of the Hessian $D^2 u$ for solutions of \eqref{1.1} - even in the linear scenario (cf. \cite{BessaOrlicz}, \cite{Bessa}, \cite{CH} and \cite{daSR19}). This fact highlights the subtlety involved in estimating $D^2 u$ and the time derivative $u_t$ when the source term lacks sufficient regularity. In this direction, we consider the following boundary value problem:
\begin{eqnarray}\label{problemalocal}
\left\{
\begin{array}{rclcl}
F(D^2u,x,t)-u_{t} &=& f(x,t) & \text{in} & \mathrm{Q}^+_1, \\
\beta\cdot Du + \gamma u &=& g(x,t) & \text{on} & \mathrm{Q}_1^{*},
\end{array}
\right.
\end{eqnarray}
and aim to demonstrate that the $D^2 u$ and $u_t$ exhibit bounded mean oscillation concerning the weighted Orlicz space $L^{\Upsilon}_{\omega}(\mathrm{Q}^{+}_{1})$. 

\begin{definition}\label{deforlicbmospace}
We recall that a function $f \in L^1_{\text{loc}}(\Omega_{\mathrm{T}})$ is said to belong to the space $L^{\Phi}_{\omega}\text{-}BMO(\Omega_{\mathrm{T}})$, for an $\mathrm{N}$-function $\Phi \in \Delta_2 \cap \nabla_2$ and a weight $\omega$, if
\begin{eqnarray*}
\|f\|_{L^{\Phi}_{\omega}\text{-}BMO(\Omega_{\mathrm{T}})} := \sup_{\mathrm{Q} \subset \Omega_{\mathrm{T}}} \frac{\|(f - f_{\mathrm{Q}})\chi_{\mathrm{Q}}\|_{L^{\Phi}_{\omega}(\Omega_{\mathrm{T}})}}{\|\chi_{\mathrm{Q}}\|_{L^{\Phi}_{\omega}(\Omega_{\mathrm{T}})}} < +\infty,
\end{eqnarray*}
where the supremum is taken over all parabolic cubes $\mathrm{Q} \subset \Omega_{\mathrm{T}}$, and for each such cube, we define
\begin{eqnarray*}
f_{\mathrm{Q}} := \intav{\mathrm{Q}} f(x,t) \, dxdt.
\end{eqnarray*}
\end{definition}

\begin{example}
If $\Phi(s) = s^p$ for $p > 1$ and $\omega \equiv 1$, then
\begin{eqnarray*}
\|f\|_{L^{\Phi}_{\omega}\text{-}BMO(\Omega_{\mathrm{T}})} = \sup_{\mathrm{Q} \subset \Omega_{\mathrm{T}}} \left( \intav{\mathrm{Q}} |f - f_{\mathrm{Q}}|^p \, dxdt \right)^{\frac{1}{p}} = \|f\|_{p\text{-}BMO(\Omega_{\mathrm{T}})},
\end{eqnarray*}
which coincides with the classical definition of the $p$-BMO space.
\end{example}

\begin{remark}\label{observacaodeequivalencia}
It follows from \cite[Theorem 2.3]{Ho} that, under the assumptions $\Phi \in \Delta_2 \cap \nabla_2$ and $\omega \in \mathcal{A}_{i(\Phi)}$, there exist universal constants $0 < \mathfrak{a} \leq \mathfrak{b}$ such that
\begin{eqnarray*}
\mathfrak{a} \|f\|_{BMO(\mathrm{Q}^{+}_{1})} \leq \|f\|_{L^{\Phi}_{\omega}\text{-}BMO(\mathrm{Q}^{+}_{1})} \leq \mathfrak{b} \|f\|_{BMO(\mathrm{Q}^{+}_{1})}, \quad \forall f \in L^1_{\text{loc}}(\mathrm{Q}^{+}_{1}),
\end{eqnarray*}
where $BMO(\mathrm{Q}^{+}_{1})$ denotes the classical space of functions with bounded mean oscillation.
\end{remark}

The application we intend to present is summarized in the following theorem:

\begin{theorem}[{\bf $L^{\Upsilon}_{\omega}$-BMO Regularity of the Hessian}]\label{BMO}
Let $u$ be an $L^p$-viscosity solution to problem \eqref{problemalocal}, where $f \in L^{\Upsilon}_{\omega}\text{-}BMO(\mathrm{Q}^+_1) \cap L^{\Upsilon}_{\omega}(\mathrm{Q}^{+}_{1})$, with $p = p_0 (n+1)$ and $\Upsilon(s) = \Phi(s^{n+1})$, where $\Phi$ is an $\mathrm{N}$-function and $\omega \in \mathfrak{A}_{i(\Phi)}$ as assumed in hypothesis \(\mathrm{(H2)}\). Suppose further that conditions \(\mathrm{(H1)-(H3)}\) and \(\mathrm{(H5)}\) hold. Then,
\[
u_{t}, D^2 u \in L^{\Upsilon}_{\omega}\text{-}BMO\left(\mathrm{Q}^{+}_{\frac{1}{2}}\right),
\]
and the following estimate is satisfied:
\begin{equation*}
\|u_{t}\|_{L^{\Upsilon}_{\omega}\text{-}BMO\left(\mathrm{Q}^{+}_{\frac{1}{2}}\right)} + \|D^2 u\|_{L^{\Upsilon}_{\omega}\text{-}BMO\left(\mathrm{Q}^{+}_{\frac{1}{2}}\right)} 
\leq \mathrm{C} \left( \|u\|^{n+1}_{L^{\infty}(\mathrm{Q}^+_1)} + \|f\|_{L^{\Upsilon}_{\omega}\text{-}BMO(\mathrm{Q}^+_1)} + \|g\|_{C^{1, \alpha}(\mathrm{Q}^{*}_{1})} \right),
\end{equation*}
where the constant $\mathrm{C} > 0$ depends only on $n$, $T$, $\lambda$, $\Lambda$, $\mu_0$, $p_0$, $\omega$, $i(\Phi)$, $\mathrm{c}_2$, and the norms $\|\beta\|_{C^{1, \alpha}(\mathrm{Q}^{*}_1)}$ and $\|\gamma\|_{C^{1, \alpha}(\mathrm{Q}^{*}_1)}$.
\end{theorem}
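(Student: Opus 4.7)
The plan is to couple Theorem \ref{T1} with a Campanato-type oscillation argument, exploiting the equivalence between the $L^{\Upsilon}_{\omega}$-$\mathrm{BMO}$ norm and the classical $\mathrm{BMO}$ norm. By Remark \ref{observacaodeequivalencia}, since $\Phi\in\Delta_2\cap\nabla_2$ and $\omega\in\mathfrak{A}_{i(\Phi)}$, the two seminorms are comparable, with constants depending only on $\Phi$ and $\omega$. Hence it is enough to establish the unweighted version
\[
\|u_t\|_{\mathrm{BMO}(\mathrm{Q}^+_{1/2})}+\|D^2u\|_{\mathrm{BMO}(\mathrm{Q}^+_{1/2})} \leq \mathrm{C}\left(\|u\|^{n+1}_{L^{\infty}(\mathrm{Q}^+_1)}+\|f\|_{\mathrm{BMO}(\mathrm{Q}^+_1)}+\|g\|_{C^{1,\alpha}(\mathrm{Q}^*_1)}\right).
\]

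For a parabolic cylinder $\mathrm{Q}=\mathrm{Q}_r(x_0,t_0)$ contained in $\mathrm{Q}^+_{1/2}$ (with an analogous treatment for cylinders meeting $\mathrm{Q}^*_{1/2}$), I would introduce a comparison function $w$ as the viscosity solution of
\[
\left\{
\begin{array}{rclcl}
F(D^2 w,x,t)-w_t & = & f_{\mathrm{Q}} & \text{in} & \mathrm{Q}\cap \mathrm{Q}^+_1, \\
\beta\cdot Dw+\gamma w & = & g(x,t) & \text{on} & \mathrm{Q}\cap \mathrm{Q}^*_1, \\
w & = & u & \text{on the remaining parabolic boundary,}
\end{array}
\right.
\]
where $f_{\mathrm{Q}}$ denotes the average of $f$ on $\mathrm{Q}$; existence is guaranteed by Theorem \ref{Existencia}. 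Since $w$ has a source which is constant on $\mathrm{Q}$, the caloric approximation Lemma \ref{Approx} combined with hypothesis $\mathrm{(H5)}$ applied to the recession profile $F^{\sharp}$ yields $C^{2,\alpha}$ bounds for $w$ on smaller cylinders $\mathrm{Q}_{\sigma r}\subset \mathrm{Q}$ (after a suitable rescaling and diffeomorphic straightening near the boundary). In particular, I expect a Campanato-type oscillation decay of the form $\mathrm{osc}_{\mathrm{Q}_{\sigma r}}(D^2 w,w_t)\leq \mathrm{C}\sigma^{\alpha}$, with $\mathrm{C}$ depending on $\|u\|_{L^{\infty}}^{n+1}$, $\|f\|_{L^{\Upsilon}_{\omega}}$ and $\|g\|_{C^{1,\alpha}}$.

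For the difference $v=u-w$, Theorem \ref{comparation} places $v$ in the Pucci class $\mathcal{S}_p(\lambda/n,\Lambda,f-f_{\mathrm{Q}})$ with zero oblique data on $\mathrm{Q}\cap \mathrm{Q}^*_1$ and zero Dirichlet data on the rest of the parabolic boundary. Applying Theorem \ref{T1} (in its rescaled form) to $v$, together with the A.B.P.T.\ estimate of Theorem \ref{ABP-fullversion} to absorb $\|v\|_{L^{\infty}}$, produces
\[
\|D^2 v\|_{L^{\Upsilon}_{\omega}(\mathrm{Q})} + \|v_t\|_{L^{\Upsilon}_{\omega}(\mathrm{Q})} \leq \mathrm{C}\,\|f-f_{\mathrm{Q}}\|_{L^{\Upsilon}_{\omega}(\mathrm{Q})} \leq \mathrm{C}\,\|f\|_{L^{\Upsilon}_{\omega}\text{-}\mathrm{BMO}}\,\|\chi_{\mathrm{Q}}\|_{L^{\Upsilon}_{\omega}}.
\]
Summing the oscillation contributions from $w$ and $v$, dividing by $\|\chi_{\mathrm{Q}}\|_{L^{\Upsilon}_{\omega}}$, and taking the supremum over cylinders $\mathrm{Q}$ delivers the desired seminorm bound; a standard covering argument then globalizes the estimate to $\mathrm{Q}^+_{1/2}$.

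The principal obstacle lies in the boundary analysis: when $\mathrm{Q}$ meets $\mathrm{Q}^*_1$, the comparison function $w$ retains the inhomogeneous oblique condition, and obtaining uniform $C^{2,\alpha}$ bounds demands a careful application of hypothesis $\mathrm{(H5)}$ after flattening the boundary and subtracting an appropriate lift absorbing the $C^{1,\alpha}$ data $g$, $\beta$, $\gamma$. Moreover, since $F$ is only \emph{asymptotically} well-behaved through its recession profile $F^{\sharp}$, each oscillation step must pass through the tangential approximation of Section \ref{Section3} to transfer regularity from $F^{\sharp}$ back to $F$; all constants must be tracked to ensure uniform dependence on the scale $r$ and compatibility with the $\mathfrak{A}_{i(\Phi)}$ doubling of $\omega$, so that the final estimate is scale-invariant and in the form required by Definition \ref{deforlicbmospace}.
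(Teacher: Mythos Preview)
There is a genuine gap at the step where you apply Theorem \ref{T1} to the difference $v=u-w$. The function $v$ does \emph{not} solve a fully nonlinear equation of the form treated by Theorem \ref{T1}; by Theorem \ref{comparation} it merely lies in the Pucci class $\mathcal{S}_p(\lambda/n,\Lambda,f-f_{\mathrm{Q}})$. But Theorem \ref{T1}, and likewise Propositions \ref{T-flat} and \ref{Prop4.2}, rely essentially on the caloric approximation Lemma \ref{Approx}, which compares the solution of $F_{\tau}(D^2u,x,t)-u_t=f$ to a solution of the equation governed by the recession profile $F^{\sharp}$ satisfying (H4)--(H5). For a function obeying only $\mathcal{M}^{-}_{\lambda/n,\Lambda}(D^2 v)-v_t\leq f-f_{\mathrm{Q}}\leq \mathcal{M}^{+}_{\lambda/n,\Lambda}(D^2 v)-v_t$, no recession structure is available and the improved decay of Proposition \ref{Prop.4.6} does not apply; one is left only with the universal decay of Proposition \ref{Prop2.12}, which yields integrability in $L^{q}$ for small $q$ but not in an arbitrary $L^{\Upsilon}_{\omega}$. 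This is precisely the well-known obstruction to transplanting linear Calder\'{o}n--Zygmund decompositions to the fully nonlinear setting: subtracting two solutions destroys the equation.

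The paper circumvents this by never subtracting solutions. After a normalization $w=\kappa u$ placing the problem in the small regime of Lemma \ref{quadraticaprox}, it constructs inductively a sequence of quadratic polynomials $\mathrm{P}_k(x,t)=a_k+b_k t+c_k\cdot x+\tfrac12 x^{t}M_k x$ with $F^{\sharp}(M_k,x,t)=\tilde{f}_{\mathrm{Q}^{+}_1}+b_k$ and $\sup_{\mathrm{Q}^{+}_{r^k}}|w-\mathrm{P}_k|\leq r^{2k}$. The rescaled function $w_k(x,t)=(w-\mathrm{P}_k)(r^k x,r^{2k}t)/r^{2k}$ then solves a \emph{genuine} fully nonlinear equation $(F_k)_{\tau}(D^2 w_k,x,t)-(w_k)_t=f_k$ with a shifted operator $F_k(\mathrm{M},x,t)=F(\mathrm{M}+M_k,r^k x,r^{2k}t)+b_k$ whose recession operator still satisfies (H5), so Proposition \ref{Prop4.2} legitimately applies to $w_k$. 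The BMO bound then follows by controlling $\|(D^2 w-M_k)\chi_{\mathrm{Q}^{+}_s}\|_{L^{\Upsilon}_{\omega}}/\|\chi_{\mathrm{Q}^{+}_s}\|_{L^{\Upsilon}_{\omega}}$ and $\|(w_t-b_k)\chi_{\mathrm{Q}^{+}_s}\|_{L^{\Upsilon}_{\omega}}/\|\chi_{\mathrm{Q}^{+}_s}\|_{L^{\Upsilon}_{\omega}}$ via $\|D^2 w_k\|_{L^{\Upsilon}_{\omega}}$ and $\|(w_k)_t\|_{L^{\Upsilon}_{\omega}}$ at the discrete scale $r^k\sim s$. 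The key distinction from your scheme is that subtracting a \emph{polynomial} from a solution preserves the fully nonlinear structure (up to shifting the operator), whereas subtracting another \emph{solution} does not.
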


To establish this result, we follow a similar strategy to that presented in \cite{BessaOrlicz}, \cite{Bessa}, \cite{CH}, and \cite{daSR19}. Through an approximation lemma involving frozen coefficients, we approximate the limiting profile in a manner analogous to Lemma \ref{Approx}. In this way, for sufficiently small values of $\mu$ and $f$, it is possible to construct quadratic polynomials that approximate the limiting profile with an error of order $r^2$ in parabolic cylinders. The following approximation lemma will be instrumental (see Lemma \ref{Approx} for further details).

\begin{lemma}[\bf Approximation Lemma II]\label{aprox2}
Assume conditions \(\mathrm{(H1)-(H3)}\) hold. Given $\delta > 0$, there exists $\epsilon_0 = \epsilon_0(\delta, n, \lambda, \Lambda, \mu_{0}) < 1$ such that if
\[
\max\left\{\tau, \, \|f\|_{L^{\Upsilon}_{\omega}-BMO(\mathrm{Q}^+_1)}\right\} \le \epsilon_0,
\]
then any two (normalized) $L^{p}$-viscosity solutions $u$ and $v$ of
\[
\left\{
\begin{array}{rclcl}
F_{\tau}(D^2 u , x,t) - u_{t} &=& f(x,t) & \text{in} & \mathrm{Q}^+_1,\\
\beta \cdot Du + \gamma u &=& g(x,t) & \text{on} & \mathrm{Q}_1^{*}
\end{array}
\right.
\]
and
\[
\left\{
\begin{array}{rclcl}
F^{\sharp}(D^2 \mathfrak{h}, x_0, t_0) - \mathfrak{h}_{t} &=& 0 & \text{in} & \mathrm{Q}^+_1,\\
\beta \cdot D\mathfrak{h} + \gamma \mathfrak{h} &=& g(x,t) & \text{on} & \mathrm{Q}_1^{*}
\end{array}
\right.
\]
satisfy the following estimate:
\[
\|u - \mathfrak{h}\|_{L^{\infty}(\mathrm{Q}^+_{\frac{7}{8}})} \le \delta.
\]
\end{lemma}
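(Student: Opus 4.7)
The plan is to argue by contradiction in the spirit of the Caloric Approximation Lemma~\ref{Approx}. Assume the conclusion fails; then there exist $\delta_{0}>0$ together with sequences $\tau_{j}\to 0^{+}$, operators $F^{j}$, source terms $f_{j}$ with $\|f_{j}\|_{L^{\Upsilon}_{\omega}\text{-}BMO(\mathrm{Q}^{+}_{1})}\to 0$, boundary data $g_{j}$ uniformly bounded in $C^{1,\alpha}(\mathrm{Q}^{*}_{1})$, and normalized $L^{p}$-viscosity solutions $u_{j}$ and $\mathfrak{h}_{j}$ of the respective problems, sharing identical Dirichlet data on $\partial_{p}\mathrm{Q}^{+}_{1}\setminus\mathrm{Q}^{*}_{1}$, such that $\|u_{j}-\mathfrak{h}_{j}\|_{L^{\infty}(\mathrm{Q}^{+}_{7/8})}>\delta_{0}$.

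First I would invoke the boundary H\"{o}lder estimate (Theorem~\ref{Holder_Est}) to produce uniform $C^{0,\alpha^{\prime}}$ bounds for $u_{j}$ and $\mathfrak{h}_{j}$, and extract by Arzel\`{a}--Ascoli locally uniform limits $u_{j}\to u_{\infty}$, $\mathfrak{h}_{j}\to\mathfrak{h}_{\infty}$, together with $g_{j}\to g_{\infty}$ in $C^{0,\alpha^{\prime\prime}}(\mathrm{Q}^{*}_{1})$. To pass to the limit in the equation for $u_{j}$, the key observation is that, by Remark~\ref{observacaodeequivalencia}, the $L^{\Upsilon}_{\omega}\text{-}BMO$ norm is comparable to the classical $\mathrm{BMO}$ norm, so the John--Nirenberg inequality yields $\|f_{j}-c_{j}\|_{L^{p}(\mathrm{Q}^{+}_{1})}\to 0$, where $c_{j}\defeq(f_{j})_{\mathrm{Q}^{+}_{1}}$. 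Boundedness of $\{c_{j}\}$ follows from testing against a fixed smooth quadratic barrier and applying the A.B.P.T. estimate (Theorem~\ref{ABP-fullversion}) together with the normalization $\|u_{j}\|_{L^{\infty}}\le 1$ and the uniform $C^{1,\alpha}$-control of $g_{j}$; extracting a further subsequence, $c_{j}\to c_{\infty}\in\mathbb{R}$.

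Rewriting the equation as $F_{\tau_{j}}^{j}(D^{2}u_{j},x,t)-c_{j}-(u_{j})_{t}=f_{j}-c_{j}$ and combining the definition of the recession operator (Definition~\ref{DefAC}) with hypothesis (H3), which controls the spatial oscillation of $F^{\sharp}$ in the $L^{p}$-average sense, together with the Stability Lemma~\ref{Est}, one concludes that $u_{\infty}$ is an $L^{p}$-viscosity solution of
\[
F^{\sharp}(D^{2}u_{\infty},x_{0},t_{0})-(u_{\infty})_{t}=c_{\infty}\quad\text{in }\mathrm{Q}^{+}_{1},\qquad \beta\cdot Du_{\infty}+\gamma u_{\infty}=g_{\infty}\quad\text{on }\mathrm{Q}^{*}_{1},
\]
while $\mathfrak{h}_{\infty}$ satisfies the same frozen-coefficient equation with zero right-hand side, identical Dirichlet trace on $\partial_{p}\mathrm{Q}^{+}_{1}\setminus\mathrm{Q}^{*}_{1}$, and the same oblique datum. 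Choosing a symmetric matrix $\mathrm{X}_{\infty}$ with $F^{\sharp}(\mathrm{X}_{\infty},x_{0},t_{0})=c_{\infty}$, whose existence is guaranteed by uniform parabolicity, the quadratic $Q_{\infty}(x)\defeq\tfrac{1}{2}\langle \mathrm{X}_{\infty}x,x\rangle$ allows one to absorb the constant source: the difference $u_{\infty}-Q_{\infty}-\mathfrak{h}_{\infty}$ belongs to the fundamental class $\mathcal{S}_{p}(\lambda/n,\Lambda,0)$, has Dirichlet trace $-Q_{\infty}|_{\partial_{p}\mathrm{Q}^{+}_{1}\setminus\mathrm{Q}^{*}_{1}}$, and satisfies an oblique condition modified by $-(\beta\cdot DQ_{\infty}+\gamma Q_{\infty})$; the Comparison Principle (Theorem~\ref{comparation}) together with the A.B.P.T. estimate then forces $u_{\infty}\equiv\mathfrak{h}_{\infty}$ in $\mathrm{Q}^{+}_{7/8}$, contradicting the standing lower bound.

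The hard part will be handling the residual constant $c_{\infty}$, since BMO-smallness alone does not force the cube averages $(f_{j})_{\mathrm{Q}^{+}_{1}}$ to vanish. Rigorously absorbing $c_{\infty}$ into a quadratic correction while maintaining compatibility with the Dirichlet condition on $\partial_{p}\mathrm{Q}^{+}_{1}\setminus\mathrm{Q}^{*}_{1}$ and the oblique condition on $\mathrm{Q}^{*}_{1}$ is delicate, because modifying $u_{\infty}$ by $Q_{\infty}$ shifts the oblique datum by $\beta\cdot DQ_{\infty}+\gamma Q_{\infty}$ and perturbs the Dirichlet trace; closing the contradiction may therefore require either subtracting the cube average at the outset (whose effect is invisible at the level of the Hessian BMO seminorm in the target estimate of Theorem~\ref{BMO}) or a careful rescaling that forces $c_{\infty}=0$, and this is where the technical crux of the proof lies.
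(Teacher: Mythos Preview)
Your proposal follows exactly the approach the paper intends: the paper does not give a separate proof of Lemma~\ref{aprox2}, it simply points to Lemma~\ref{Approx} (``see Lemma~\ref{Approx} for further details''), i.e., the same compactness/contradiction scheme you outline.

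You have in fact gone further than the paper by isolating the residual constant $c_{\infty}=(f_{j})_{\mathrm{Q}^{+}_{1}}$ that survives the $L^{\Upsilon}_{\omega}$-BMO smallness. The paper does not discuss this in the proof of Lemma~\ref{aprox2}; instead, it is handled downstream in the iteration for Theorem~\ref{BMO}, where the polynomials are chosen so that $F^{\sharp}(M_{k},x,t)=\tilde{f}_{\mathrm{Q}^{+}_{1}}+b_{k}$---precisely your ``subtract the cube average at the outset via a quadratic'' idea. So the constant is absorbed at the level of the Quadratic Approximation Lemma~\ref{quadraticaprox}, not at the level of Lemma~\ref{aprox2} itself. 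Your instinct about where the technical crux lies is correct, and your proposed resolution coincides with what the paper actually does in the subsequent step.
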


With the aid of this Approximation Lemma II, we now establish a quantitative result regarding the closeness of viscosity solutions to quadratic polynomials.

\begin{lemma}[\bf Quadratic Approximation] \label{quadraticaprox}
Under the hypotheses of Theorem \ref{BMO}, there exist universal constants $\mathrm{C}^{\ast}>0$, $\tau_0 > 0$, and $r \in \left(0, \frac{1}{2}\right]$ such that if $u$ is a (normalized) viscosity solution of
\[
\left\{
\begin{array}{rclcl}
F_{\tau}(D^2 u, x,t) - u_{t} &=& f(x,t) & \text{in} & \mathrm{Q}^+_1,\\
\beta \cdot Du + \gamma u &=& g(x,t) & \text{on} & \mathrm{Q}^{*}_1
\end{array}
\right.
\]
with
\[
\max\left\{\tau, \, \|f\|_{L^{\Upsilon}_{\omega}-BMO(\mathrm{Q}^+_1)}\right\} \le \tau_0,
\]
then there exists a quadratic polynomial $\mathrm{P}: \mathrm{Q}^+_1 \to \mathbb{R}$, with $\|\mathrm{P}\|_{\infty} \le \mathrm{C}^{\ast}$, such that
\[
\sup_{\mathrm{Q}^+_r} |u(x,t) - \mathrm{P}(x,t)| \le r^2.
\]
\end{lemma}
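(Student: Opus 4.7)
The plan is to obtain the quadratic polynomial $\mathrm{P}$ as the second-order parabolic Taylor expansion at the origin of a caloric-type approximation $\mathfrak{h}$ furnished by Lemma \ref{aprox2}, and then to calibrate the free parameters so that the approximation error and the Taylor remainder both sit comfortably below $r^2$ on $\mathrm{Q}_r^+$.

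First, I would fix a small $\delta>0$ (to be chosen later) and invoke Lemma \ref{aprox2}, which yields a universal $\epsilon_0=\epsilon_0(\delta,n,\lambda,\Lambda,\mu_0)$ and an $L^p$-viscosity solution $\mathfrak{h}$ of the frozen-coefficient problem
\[
\left\{
\begin{array}{rclcl}
F^{\sharp}(D^2\mathfrak{h},0,0)-\mathfrak{h}_t &=& 0 & \text{in} & \mathrm{Q}_1^+,\\
\beta\cdot D\mathfrak{h}+\gamma\mathfrak{h} &=& g(x,t) & \text{on} & \mathrm{Q}_1^{*},
\end{array}
\right.
\]
satisfying $\|u-\mathfrak{h}\|_{L^\infty(\mathrm{Q}^+_{7/8})}\le\delta$, provided $\max\{\tau,\|f\|_{L^{\Upsilon}_{\omega}\text{-}\mathrm{BMO}(\mathrm{Q}^+_1)}\}\le\epsilon_0$. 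Since $\|u\|_{L^\infty(\mathrm{Q}^+_1)}\le 1$ by normalization, this yields the a priori bound $\|\mathfrak{h}\|_{L^\infty(\mathrm{Q}^+_{7/8})}\le 1+\delta\le 2$.

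Next, I would invoke the boundary $C^{2,\alpha}$ estimate from hypothesis (H5) applied to $\mathfrak{h}$ (after a harmless rescaling from $\mathrm{Q}^+_{7/8}$ to $\mathrm{Q}^+_1$), obtaining
\[
\|\mathfrak{h}\|_{C^{2,\alpha}(\mathrm{Q}^+_{1/2})}\le \mathrm{c}_2\bigl(\|\mathfrak{h}\|_{L^\infty(\mathrm{Q}^+_{7/8})}+\|g\|_{C^{1,\alpha}(\mathrm{Q}^{*}_1)}\bigr)\le \overline{\mathrm{C}},
\]
where $\overline{\mathrm{C}}$ is universal because $\|g\|_{C^{1,\alpha}(\mathrm{Q}_1^{*})}$ may be absorbed into the universal data (otherwise it can be normalized in the same way as $u$). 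I then define the parabolic Taylor polynomial of $\mathfrak{h}$ at the origin,
\[
\mathrm{P}(x,t)\defeq \mathfrak{h}(0,0)+D\mathfrak{h}(0,0)\cdot x+\tfrac{1}{2}x^{\top}D^2\mathfrak{h}(0,0)\,x+\mathfrak{h}_t(0,0)\,t,
\]
whose coefficients are controlled by $\overline{\mathrm{C}}$, so $\|\mathrm{P}\|_{L^\infty(\mathrm{Q}_1^+)}\le\mathrm{C}^{\ast}$ for a universal $\mathrm{C}^{\ast}$. The $C^{2,\alpha}$ estimate then gives, for every $(x,t)\in\mathrm{Q}^+_r$ with $r\le 1/2$,
\[
|\mathfrak{h}(x,t)-\mathrm{P}(x,t)|\le \overline{\mathrm{C}}\,d_p((x,t),(0,0))^{2+\alpha}\le \overline{\mathrm{C}}\,r^{2+\alpha}.
\]

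Combining these two ingredients with the triangle inequality, I obtain
\[
\sup_{\mathrm{Q}_r^+}|u-\mathrm{P}|\le \|u-\mathfrak{h}\|_{L^\infty(\mathrm{Q}^+_{7/8})}+\sup_{\mathrm{Q}_r^+}|\mathfrak{h}-\mathrm{P}|\le \delta+\overline{\mathrm{C}}\,r^{2+\alpha}.
\]
It remains to select the constants in the correct order. I would first pick $r\in(0,1/2]$ so small that $\overline{\mathrm{C}}\,r^{\alpha}\le \tfrac{1}{2}$, hence $\overline{\mathrm{C}}\,r^{2+\alpha}\le \tfrac{r^2}{2}$. I then set $\delta\defeq \tfrac{r^2}{2}$, which determines $\tau_0\defeq \epsilon_0(\delta,n,\lambda,\Lambda,\mu_0)$ through Lemma \ref{aprox2}. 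With these choices the desired estimate $\sup_{\mathrm{Q}_r^+}|u-\mathrm{P}|\le r^2$ follows at once. The main (mild) obstacle is to order the choices of $\delta$, $r$, and $\tau_0$ consistently so that none of them depends circularly on the others; the key observation is that $r$ depends only on universal data ($\overline{\mathrm{C}}$ and $\alpha$), so one may fix it first and only then let $\delta$ and $\tau_0$ follow.
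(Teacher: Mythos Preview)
Your proof plan is correct and follows essentially the same approach as the paper, which defers to the elliptic analogue in \cite[Lemmas 4.1--4.2 and Corollary 4.3]{Bessa}: obtain a $C^{2,\alpha}$ caloric approximation $\mathfrak{h}$ via Lemma~\ref{aprox2}, take its second-order parabolic Taylor polynomial at the origin, and then choose $r$ before $\delta$ (and hence before $\tau_0$) so that the Taylor remainder and the approximation error are each bounded by $r^2/2$. The only remark the paper adds beyond this outline is the use of Lemma~\ref{mergulhoorliczlebesgue} to pass from $L^{\Upsilon}_{\omega}$ control on $f$ to the $L^p$ control implicitly needed in the approximation lemma.
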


\begin{proof}
The proof follows the same general methodology as the elliptic case treated in  \cite[Lemmas 4.1 and 4.2]{Bessa}, with the following remark: since $f \in L^{\Upsilon}_{\omega}(\mathrm{Q}^+_1)$, it follows from Lemma \ref{mergulhoorliczlebesgue} that
\[
\|f\|_{L^p(\mathrm{Q}^+_1)} \le \mathrm{C} \|f\|_{L^{\Upsilon}_{\omega}(\mathrm{Q}^+_1)}.
\]
Thus, proceeding analogously to \cite[Corollary 4.3]{Bessa}, we obtain the desired quadratic polynomial with the claimed bounds.
\end{proof}

We are now in a position to present the proof of Theorem \ref{BMO}.

\begin{proof}[\bf Proof of Theorem \ref{BMO}]
We begin by selecting $\kappa \in (0,1)$, to be determined later. Define $w(x,t) := \kappa u(x,t)$, so that $w$ is a normalized viscosity solution to
\[
\left\{
\begin{array}{rclcl}
F_{\tau}(D^2 w, x,t)-w_{t} &=& \tilde{f}(x,t) & \mbox{in} & \mathrm{Q}^+_1,\\
\beta \cdot Dw+\gamma w &=& \tilde{g}(x,t) & \mbox{on} & \mathrm{Q}^{*}_1,
\end{array}
\right.
\]
where $\tau := \kappa$, $\tilde{f} := \kappa f$, and $\tilde{g} := \kappa g$. We now choose $\kappa$ such that 
\[
\max\left\{\tau, \mathfrak{a}^{-1}\mathfrak{b}\|\tilde{f}\|_{L^{\Upsilon}_{\omega}-BMO(\mathrm{Q}^{+}_{1})}\right\} \leq \tau_0,
\]
where $\tau_0$ is given by Lemma \ref{quadraticaprox} and $\mathfrak{a}, \mathfrak{b}$ are the constants introduced in Remark \ref{observacaodeequivalencia}. Under this assumption, we prove the result for $w$, and the corresponding conclusion for $u$ follows immediately.

Our goal is to construct a sequence of quadratic polynomials $(\mathrm{P}_{k})_{k\geq 0}$ of the form
\[
\mathrm{P}_{k}(x,t) = a_{k} + b_{k} t + c_{k} \cdot x + \frac{1}{2}x^{t}M_{k}x,
\]
satisfying the following properties:
\begin{itemize}
    \item[\checkmark] $F^{\sharp}(M_{k},x,t) = \tilde{f}_{\mathrm{Q}^{+}_{1}} + b_{k}$;
    \item[\checkmark] $\displaystyle \sup_{\mathrm{Q}^{+}_{r^{k}}}|w - P_{k}| \leq r^{2k}$;
    \item[\checkmark] $\displaystyle r^{2(k-1)}|a_{k} - a_{k-1}| + r^{k-1}|c_{k} - c_{k-1}| + |M_{k} - M_{k-1}| \leq \mathrm{C}^{\ast}r^{2(k-1)}$,
\end{itemize}
for $r \in (0,\frac{1}{2}]$ as given in Lemma \ref{quadraticaprox}. The proof proceeds by induction on $k$.

Let $\mathrm{P}_{-1} = \mathrm{P}_0 := \frac{1}{2}x^{t}M'x$, where $M' \in \text{Sym}(n)$ satisfies $F^{\sharp}(M',x,t) = \tilde{f}_{\mathrm{Q}^{+}_{1}}$. This trivially verifies the base case $k=0$.

Assume now that the polynomials $\mathrm{P}_{0}, \mathrm{P}_{1}, \ldots, \mathrm{P}_{k}$ have been constructed to satisfy the conditions above. Define the auxiliary function $w_k : \overline{\mathrm{Q}_{1}^{+}} \to \mathbb{R}$ by
\[
w_k(x,t) := \frac{(w - \mathrm{P}_k)(r^{k}x, r^{2k}t)}{r^{2k}}.
\]
Then, $w_k$ is a normalized viscosity solution to
\[
\left\{
\begin{array}{rclcl}
(F_{k})_{\tau}(D^2 w_{k}, x,t) - (w_k)_t &=& f_{k}(x,t) & \mbox{in} & \mathrm{Q}^+_1,\\
\beta_{k} \cdot Dw_{k} + \gamma_{k} w_{k} &=& g_{k}(x,t) & \mbox{on} & \mathrm{Q}^{*}_1,
\end{array}
\right.
\]
where
\[
F_k(M,x,t) := F(M + M_k, r^k x, r^{2k} t) + b_k,
\]
and the rescaled data are defined as
\[
f_k(x,t) := \tilde{f}(r^k x, r^{2k} t), \quad
\beta_k(x,t) := \beta(r^k x, r^{2k} t), \quad
\gamma_k(x,t) := r^k \gamma(r^k x, r^{2k} t),
\]
\[
g_k(x,t) := r^{-k} \left(\tilde{g} - \beta \cdot D\mathrm{P}_k - \gamma \mathrm{P}_k\right)(r^k x, r^{2k} t).
\]

From the definition of $\kappa$ and $f_k$, it follows that
\begin{equation}
\label{ineq:fk_bound}
\|f_k\|_{L^{\Upsilon}_{\omega}-BMO(\mathrm{Q}^{+}_{1})}
\leq \mathfrak{b} \|\tilde{f}\|_{BMO(\mathrm{Q}^{+}_{1})}
\leq \mathfrak{a}^{-1} \mathfrak{b} \|\tilde{f}\|_{BMO(\mathrm{Q}^{+}_{1})}
\leq \tau_0.
\end{equation}

Additionally, observe that
\[
F_k^{\sharp}(M,x,t) = F^{\sharp}(M + M_k, r^k x, r^{2k} t) + b_k,
\]
and hence $F_k^{\sharp}$ satisfies condition (H5), since $F^{\sharp}$ does and $F^{\sharp}(M_k,x,t) = \tilde{f}_{\mathrm{Q}^{+}_1} + b_k$.

Thus, we may apply Lemma \ref{quadraticaprox} to obtain a quadratic polynomial $\tilde{\mathrm{P}}$ of the form
\[
\tilde{\mathrm{P}}(x,t) = \tilde{a} + \tilde{b}t + \tilde{c} \cdot x + \frac{1}{2}x^{t} \tilde{M} x,
\]
such that
\begin{equation}\label{estbmo1}
\sup_{\mathrm{Q}^+_r}|w_k - \tilde{\mathrm{P}}| \leq r^2.
\end{equation}

Define
\[
a_{k+1} := a_k + \tilde{a} r^{2k}, \quad
b_{k+1} := b_k + \tilde{b}, \quad
c_{k+1} := c_k + \tilde{c} r^k, \quad
M_{k+1} := M_k + \tilde{M}.
\]
Then $\mathrm{P}_{k+1}$ is well defined, and from \eqref{estbmo1} we obtain
\[
\sup_{\mathrm{Q}^{+}_{r^{k+1}}}|w - \mathrm{P}_{k+1}| \leq r^{2(k+1)}.
\]
The remaining estimates follow from Lemma \ref{quadraticaprox}, completing the induction step and thus proving the construction.

To finish the proof, let $s \in \left(0, \frac{1}{2} \right)$ and choose $k$ such that $0 < r^{k+1} < s \leq r^k$. Then,
\begin{equation}\label{estimativa2}
\begin{array}{rcl}
  \frac{
\|(w_t - b_k)\chi_{\mathrm{Q}^{+}_{s}}\|_{L^{\Upsilon}_{\omega}(\mathrm{Q}^{+}_{1/2})}
+ \|(D^2 w - M_k)\chi_{\mathrm{Q}^{+}_{s}}\|_{L^{\Upsilon}_{\omega}(\mathrm{Q}^{+}_{1/2})}
}{
\|\chi_{\mathrm{Q}^{+}_{s}}\|_{L^{\Upsilon}_{\omega}(\mathrm{Q}^{+}_{1/2})}
} & \leq & \frac{
\|(w_t - b_k)\chi_{\mathrm{Q}^{+}_{s}}\|_{L^{\Upsilon}_{\omega}(\mathrm{Q}^{+}_{1/2})}
+ \|(D^2 w - M_k)\chi_{\mathrm{Q}^{+}_{s}}\|_{L^{\Upsilon}_{\omega}(\mathrm{Q}^{+}_{1/2})}
}{
\|\chi_{\mathrm{Q}^{+}_{s}}\|_{L^{\Upsilon}_{\omega}(\mathrm{Q}^{+}_{1/2})}
} \\
   & \leq  & \mathrm{C}\left(
\frac{\|(w_t - b_k)\chi_{\mathrm{Q}^{+}_{r^k}}\|_{L^{\Upsilon}_{\omega}}}{\|\chi_{\mathrm{Q}^{+}_{r^k}}\|_{L^{\Upsilon}_{\omega}}}
+
\frac{\|(D^2 w - M_k)\chi_{\mathrm{Q}^{+}_{r^k}}\|_{L^{\Upsilon}_{\omega}}}{\|\chi_{\mathrm{Q}^{+}_{r^k}}\|_{L^{\Upsilon}_{\omega}}}
\right) \\
& \leq & \mathrm{C}\left(
\|(w_k)_t\|_{L^{\Upsilon}_{\omega}(\mathrm{Q}^{+}_{r^k})}
+ \|D^2 w_k\|_{L^{\Upsilon}_{\omega}(\mathrm{Q}^{+}_{r^k})}
\right)\\
& \leq & \mathrm{C}< \infty
\end{array}
\end{equation}
where we have used Lemma \ref{mergulhoorliczlebesgue} and Proposition \ref{Prop4.2}.

Therefore, from \eqref{estimativa2}, we deduce
\[
\frac{
\|(w_t - (w_t)_{\mathrm{Q}^{+}_{s}})\chi_{\mathrm{Q}^{+}_{s}}\|_{L^{\Upsilon}_{\omega}}
+ \|(D^2 w - (D^2 w)_{\mathrm{Q}^{+}_{s}})\chi_{\mathrm{Q}^{+}_{s}}\|_{L^{\Upsilon}_{\omega}}
}{
\|\chi_{\mathrm{Q}^{+}_{s}}\|_{L^{\Upsilon}_{\omega}}
}
\leq 2\mathrm{C}.
\]
This implies that
\[
\|w_t\|_{L^{\Upsilon}_{\omega}-BMO(\mathrm{Q}^{+}_{1/2})}
+ \|D^2 w\|_{L^{\Upsilon}_{\omega}-BMO(\mathrm{Q}^{+}_{1/2})}
\leq \mathrm{C} < \infty,
\]
thereby completing the proof.
\end{proof}

\section{Variable Exponent Morrey Estimates}\label{Section6}

We conclude this work by presenting the proof of Theorem \ref{T2}. The strategy for establishing this result is based on applying the estimates obtained in Theorem \ref{T1} in the particular case of weighted Lebesgue spaces. These estimates are then utilized to derive the desired conclusion through the following extrapolation result (see \cite[Theorem 2.21]{CUW} for further details):

\begin{lemma}[\bf Weighted Variable Exponent Extrapolation]\label{extrapolation}
Let $U \subset \mathbb{R}^{n+1}$ be a bounded domain. Suppose that for some $p \geq 1$ and for every $\omega \in \mathfrak{A}_{1}$, the following inequality holds:
\begin{eqnarray*}
\int_{U}|f(x,t)|^{p}\omega(x,t)\,dx\,dt \leq \mathrm{C} \int_{U}|g(x,t)|^{p}\omega(x,t)\,dx\,dt,
\end{eqnarray*}
where $f, g: U \longrightarrow \mathbb{R}$ are measurable functions, and $\mathrm{C} > 0$ is a constant. If $\varsigma$ is a log-H\"{o}lder continuous function satisfying $n+2 < \varsigma_{1} \leq \varsigma(x,t) \leq \varsigma_{2} < \infty$ for all $(x,t) \in U$, with $\varsigma_{1} > p$, then the following estimate holds:
\begin{eqnarray*}
\|f\|_{L^{\varsigma(\cdot)}_{\omega}(U)} \leq \mathrm{C}(n, \varsigma_{1}, \varsigma_{2}, \mathrm{C}_{\varsigma}, U) \|g\|_{L^{\varsigma(\cdot)}_{\omega}(U)},
\end{eqnarray*}
where $\mathrm{C}_{\varsigma} > 0$ is the constant appearing in Remark \ref{remark}.
\end{lemma}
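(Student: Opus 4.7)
The plan is to derive this extrapolation inequality from the classical Rubio de Francia algorithm combined with the boundedness of the Hardy--Littlewood maximal operator $\mathcal{M}$ on variable-exponent Lebesgue spaces with log-H\"{o}lder continuous exponents (Diening's theorem). Since $\varsigma_{1} > p$, I would set $s(\cdot) := \varsigma(\cdot)/p$, so that $s_{1} = \varsigma_{1}/p > 1$, $s_{2} = \varsigma_{2}/p < \infty$, and $s$ inherits log-H\"{o}lder continuity from $\varsigma$. The conjugate exponent $s'(\cdot) = \varsigma(\cdot)/(\varsigma(\cdot) - p)$ therefore also satisfies the hypotheses of Diening's theorem, so $\mathcal{M}$ is bounded on $L^{s'(\cdot)}(U)$ with some operator norm $\mathrm{N} := \|\mathcal{M}\|_{L^{s'(\cdot)} \to L^{s'(\cdot)}}$ depending only on $n$, $\varsigma_{1}$, $\varsigma_{2}$, $\mathrm{C}_{\varsigma}$, and $U$.

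Next, I would exploit the identity $\||h|^{p}\|_{L^{s(\cdot)}(U)} = \|h\|_{L^{\varsigma(\cdot)}(U)}^{p}$ together with the norm-associate formula in variable-exponent Lebesgue spaces to select a non-negative test function $h \in L^{s'(\cdot)}(U)$ with $\|h\|_{L^{s'(\cdot)}(U)} \leq 1$ satisfying
\[
\|f\|_{L^{\varsigma(\cdot)}(U)}^{p} = \||f|^{p}\|_{L^{s(\cdot)}(U)} \leq 2 \int_{U} |f(x,t)|^{p}\, h(x,t)\, dx\, dt.
\]
This reduces the problem to estimating the weighted integral $\int_{U} |f|^{p} h$ in terms of $\|g\|_{L^{\varsigma(\cdot)}(U)}$.

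The core of the proof is the Rubio de Francia iteration. I would define
\[
\mathcal{R}h(x,t) := \sum_{k=0}^{\infty} \frac{\mathcal{M}^{k} h(x,t)}{(2\mathrm{N})^{k}},
\]
and verify three properties: (i) $h \leq \mathcal{R}h$ pointwise; (ii) $\|\mathcal{R}h\|_{L^{s'(\cdot)}(U)} \leq 2\|h\|_{L^{s'(\cdot)}(U)}$, via the geometric series generated by the boundedness of $\mathcal{M}$; and (iii) $\mathcal{M}(\mathcal{R}h) \leq 2\mathrm{N}\, \mathcal{R}h$, which places $\mathcal{R}h$ in the Muckenhoupt class $\mathfrak{A}_{1}$ with constant $[\mathcal{R}h]_{\mathfrak{A}_{1}} \leq 2\mathrm{N}$. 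Applying the weighted $L^{p}$ hypothesis of the lemma with $\omega = \mathcal{R}h$, followed by the variable-exponent H\"{o}lder inequality, then yields
\[
\int_{U} |f|^{p} \mathcal{R}h \;\leq\; \mathrm{C} \int_{U} |g|^{p} \mathcal{R}h \;\leq\; 2\mathrm{C}\, \||g|^{p}\|_{L^{s(\cdot)}(U)}\, \|\mathcal{R}h\|_{L^{s'(\cdot)}(U)} \;\leq\; 4\mathrm{C}\, \|g\|_{L^{\varsigma(\cdot)}(U)}^{p}.
\]

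Chaining the last two displays completes the proof, the final constant depending only on $n$, $\varsigma_{1}$, $\varsigma_{2}$, $\mathrm{C}_{\varsigma}$, and $U$ (through $\mathrm{N}$ and the modular--norm equivalences for $L^{s(\cdot)}$). The principal obstacle is the clean deployment of Diening's maximal function theorem: one must check that the log-H\"{o}lder assumption on $\varsigma$, together with $\varsigma_{1} > p$, transfers to $s'(\cdot)$ with quantitative constants controlled by $\mathrm{C}_{\varsigma}$, $\varsigma_{1}$, $\varsigma_{2}$, and that the Rubio de Francia machinery (pointwise finiteness of $\mathcal{R}h$, the $\mathfrak{A}_{1}$ bound with a constant independent of $h$, and the admissibility of this specific weight in the hypothesis) can be carried out uniformly in the data. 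Once this is secured, the rest of the argument is a routine chaining of the inequalities arranged above.
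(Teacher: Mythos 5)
The paper does not actually prove this lemma: it is invoked as an imported result, with a pointer to Cruz--Uribe and Wang \cite[Theorem 2.21]{CUW}, and no argument is given in the text. Your Rubio de Francia scheme --- scale to $s(\cdot)=\varsigma(\cdot)/p$, invoke Diening boundedness of $\mathcal{M}$ on $L^{s'(\cdot)}$, dualize to pick a test function $h$, build the iterate $\mathcal{R}h$ with the three properties (pointwise majorant, uniform norm control, $\mathfrak{A}_1$ with constant $\le 2\mathrm{N}$), apply the hypothesis with $\omega=\mathcal{R}h$, and close with H\"{o}lder --- is exactly the machinery behind that reference, and each of the steps you flag can indeed be made quantitative in $n,\varsigma_1,\varsigma_2,\mathrm{C}_{\varsigma}$.

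There is, however, a genuine gap between what you prove and what the lemma asserts. Your argument produces the \emph{unweighted} estimate $\|f\|_{L^{\varsigma(\cdot)}(U)}\le \mathrm{C}\,\|g\|_{L^{\varsigma(\cdot)}(U)}$: the Muckenhoupt weight supplied by the hypothesis is consumed internally as $\omega=\mathcal{R}h$, and nothing is left over. The statement, on the other hand, carries a subscript $\omega$ on the conclusion, and in the proof of Theorem~\ref{T2} the authors actually choose an arbitrary $\omega\in\mathfrak{A}_1$ up front, apply \eqref{est1teo110} in the weighted space $L^{\varsigma(\cdot)}_\omega$, and then specialize to $\omega=(\mathcal{M}\chi_{\mathrm{Q}_r})^{\iota}$. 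So the weighted form is what is needed. To reach it you would have to run the iteration on a weighted variable-exponent space $L^{s'(\cdot)}_\omega(U)$, for which boundedness of $\mathcal{M}$ is not a consequence of $\omega\in\mathfrak{A}_1$ alone --- one needs the variable-exponent Muckenhoupt condition of Cruz--Uribe--Wang together with a weighted variable-exponent maximal theorem, or a two-weight double-iteration device. As it stands, your proposal establishes a special case ($\omega\equiv 1$) of the lemma but not the statement in the generality in which the paper uses it.
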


We are now in a position to present the proof of Theorem \ref{T2}.

\begin{proof}[\bf Proof of Theorem \ref{T2}]
We begin, without loss of generality, by normalizing and assuming that $\|f\|_{L^{\varsigma(\cdot),\varrho(\cdot)}(\Omega_{\mathrm{T}})} = 1$. Let $\omega \in \mathfrak{A}_{1}$ be an arbitrary weight and define $p := \frac{\varsigma_{1}+n+2}{2} \in (n+2,\varsigma_{1})$. By applying Theorem \ref{T1} in conjunction with Lemma \ref{extrapolation}, we obtain the estimate
\begin{eqnarray}\label{est1teo110}
\|u\|_{L^{\varsigma(\cdot)}_{\omega}(\Omega_{\mathrm{T}})} \leq \mathrm{C} \|f\|_{L^{\varsigma(\cdot)}_{\omega}(\Omega_{\mathrm{T}})}.
\end{eqnarray}

We note that inequality \eqref{est1teo110} ensures the validity of the theorem in the case $\varrho \equiv 0$, and thus we will focus on the case where $\varrho \not\equiv 0$.

Our goal now is to establish the estimate
\begin{eqnarray*}
\|D^{2}u\|_{L^{\varsigma(\cdot),\varrho(\cdot)}(\Omega_{\mathrm{T}})} \leq \mathrm{C} \|f\|_{L^{\varsigma(\cdot),\varrho(\cdot)}(\Omega_{\mathrm{T}})},
\end{eqnarray*}
as the corresponding estimates for $u$, $u_t$, and $Du$ follow by analogous arguments. To this end, observe that $\varrho_0 > 0$ since $\varrho \not\equiv 0$.

Extend $f$ by zero outside $\Omega_{\mathrm{T}}$ and fix an arbitrary point $(x_0, t_0) \in \Omega_{\mathrm{T}}$ and radius $r > 0$. According to \cite[Proposition 2]{CoiRoc}, for every $\iota \in (0,1)$, the function $\omega = \left(\mathcal{M}(\chi_{\mathrm{Q}_r(x_0, t_0)})\right)^{\iota}$ belongs to the Muckenhoupt class $\mathfrak{A}_{1}$. In particular, for all $\iota \in \left(\frac{\varrho_0}{n+2}, 1\right) \subset (0,1)$, we obtain from \eqref{est1teo110} the estimate
\begin{eqnarray}\label{est2teo110}
\int_{\Omega_{\mathrm{T}}(x_0, t_0; r)} |D^{2}u|^{\varsigma(x,t)} \, dx\,dt 
&=& \int_{\Omega_{\mathrm{T}}} |D^{2}u|^{\varsigma(x,t)} \chi_{\mathrm{Q}_r(x_0, t_0)}(x,t)\, dx\,dt \nonumber\\
&=& \int_{\Omega_{\mathrm{T}}} |D^{2}u|^{\varsigma(x,t)} \left(\chi_{\mathrm{Q}_r(x_0, t_0)}(x,t)\right)^{\iota} dx\,dt \nonumber\\
&\leq& \int_{\Omega_{\mathrm{T}}} |D^{2}u|^{\varsigma(x,t)} \omega(x,t) \, dx\,dt \nonumber\\
&\leq& \mathrm{C} \int_{\Omega_{\mathrm{T}}} |f|^{\varsigma(x,t)} \omega(x,t) \, dx\,dt \nonumber\\
&=& \mathrm{C} \int_{\mathbb{R}^{n+1}} |f|^{\varsigma(x,t)} \omega(x,t) \, dx\,dt,
\end{eqnarray}
where, in the third line, we have used that $\left(\chi_{\mathrm{Q}_r(x_0, t_0)}\right)^{\iota} \leq \left(\mathcal{M}(\chi_{\mathrm{Q}_r(x_0, t_0)})\right)^{\iota}$ almost everywhere in $\Omega_{\mathrm{T}}$, and in the final equality that $f$ vanishes outside $\Omega_{\mathrm{T}}$.

We now partition $\mathbb{R}^{n+1}$ into dyadic parabolic cubes to obtain the disjoint union
\begin{eqnarray*}
\mathbb{R}^{n+1} = \mathrm{Q}_{2r}(x_0, t_0) \cup \left(\bigcup_{k \geq 1} \left( \mathrm{Q}_{2^{k+1}r}(x_0, t_0) \setminus \mathrm{Q}_{2^k r}(x_0, t_0) \right) \right).
\end{eqnarray*}
Substituting this decomposition into the last integral in \eqref{est2teo110}, we derive
\begin{eqnarray}\label{est3teo110}
\int_{\Omega_{\mathrm{T}}(x_0, t_0; r)} |D^{2}u|^{\varsigma(x,t)} \, dx\,dt 
&\leq& \mathrm{C} \Bigg( \underbrace{\int_{\mathrm{Q}_{2r}(x_0, t_0)} |f|^{\varsigma(x,t)} \omega(x,t) \, dx\,dt}_{\defeq A_0} \nonumber\\
&&+ \sum_{k=1}^{\infty} \underbrace{\int_{\mathrm{Q}_{2^{k+1}r}(x_0, t_0) \setminus \mathrm{Q}_{2^k r}(x_0, t_0)} |f|^{\varsigma(x,t)} \omega(x,t) \, dx\,dt}_{\defeq A_k} \Bigg).
\end{eqnarray}

We now proceed to estimate each of the integrals appearing on the right-hand side of inequality \eqref{est3teo110}:

\begin{itemize}
\item[\checkmark] \textbf{Estimate of $A_{0}$.}\\
By the definition of $\omega$, it follows that $\omega(x,t)\leq 1$ for almost every $(x,t)\in \mathbb{R}^{n+1}$. Moreover, by the assumption imposed on the exponent $\varrho$, we have $2^{\varrho(x_{0},t_{0})}\leq 2^{\varrho_{0}}<2^{n+2}$. These facts together ensure that
\begin{eqnarray}\label{estA0}
A_{0}&\leq& \int_{\mathrm{Q}_{2r}(x_{0},t_{0})}|f(x,t)|^{\varsigma(x,t)}\,dxdt\nonumber\\
&=&2^{n+2}\cdot\frac{1}{2^{n+2}}\int_{\Omega_{\mathrm{T}}(x_{0},t_{0};2r)}|f(x,t)|^{\varsigma(x,t)}\,dxdt\nonumber\\
&<&2^{n+2}\cdot\frac{1}{2^{\varrho(x_{0},t_{0})}}(2r)^{\varrho(x_{0},t_{0})}\rho_{\varsigma(\cdot),\varrho(\cdot)}(f)\nonumber\\
&\leq&2^{n+2}\cdot\frac{1}{2^{\varrho(x_{0},t_{0})}}(2r)^{\varrho(x_{0},t_{0})}\|f\|_{L^{\varsigma(\cdot),\varrho(\cdot)}(\Omega_{\mathrm{T}})}\nonumber\\
&\leq& 2^{n+2}r^{\varrho(x_{0},t_{0})}\|f\|_{L^{\varsigma(\cdot),\varrho(\cdot)}(\Omega_{\mathrm{T}})},
\end{eqnarray}
where the penultimate inequality follows from the modular-unit ball property of the norm.

\item[\checkmark] \textbf{Estimate of $A_{k}$ for all $k\in\mathbb{N}$.}\\
We begin by observing that
\begin{eqnarray}\label{est4teo110}
\intav{\mathrm{Q}_{\rho}(y,s)}\chi_{\mathrm{Q}_{r}(x_{0},t_{0})}(x,t)\,dxdt &=& \frac{|\mathrm{Q}_{r}(x_{0},t_{0})\cap \mathrm{Q}_{\rho}(y,s)|}{|\mathrm{Q}_{\rho}(y,s)|}\nonumber\\
&\leq& \frac{|\mathrm{Q}_{r}(x_{0},t_{0})|}{|\mathrm{Q}_{\rho}(y,s)|} = \left(\frac{r}{\rho}\right)^{n+2},
\end{eqnarray}
for almost every $(y,s)\in \Omega_{\mathrm{T}}$ and for all $\rho>0$. Now, taking $\rho>(2^{k+1}-1)r$ and $(y,s)\in \mathrm{Q}_{2^{k+1}r}(x_{0},t_{0})\setminus \mathrm{Q}_{2^{k}r}(x_{0},t_{0})$, from \eqref{est4teo110} we deduce that
\begin{eqnarray*}
0<\intav{\mathrm{Q}_{\rho}(y,s)}\chi_{\mathrm{Q}_{r}(x_{0},t_{0})}(x,t)\,dxdt = \left(\frac{r}{\rho}\right)^{n+2} &\leq& \frac{1}{(2^{k+1}-1)^{n+2}}\\
&\leq& \frac{1}{2^{(k-1)(n+2)}}, \quad \forall k\in\mathbb{N},
\end{eqnarray*}
since $2^{k+1}-1 \geq 2^{k-1}$ for all $k\in\mathbb{N}$. 

On the other hand, if $0<\rho \leq (2^{k+1}-1)r$, then $\mathrm{Q}_{r}(x_{0},t_{0})\cap \mathrm{Q}_{\rho}(y,s) = \emptyset$, since $\mathrm{B}_{r}(x_{0})\cap \mathrm{B}_{\rho}(y) = \emptyset$. Hence, from both cases, we conclude that
\begin{eqnarray}\label{est5teo110}
\left(\mathcal{M}(\chi_{\mathrm{Q}_{r}(x_{0},t_{0})})(y,s)\right)^{\iota} = \left(\sup_{\rho>0}\intav{\mathrm{Q}_{\rho}(y,s)}\chi_{\mathrm{Q}_{r}(x_{0},t_{0})}(x,t)\,dxdt\right)^{\iota} \leq \frac{1}{2^{\iota(k-1)(n+2)}},
\end{eqnarray}
for all $(y,s)\in \mathrm{Q}_{2^{k+1}r}(x_{0},t_{0})\setminus \mathrm{Q}_{2^{k}r}(x_{0},t_{0})$. Therefore, from \eqref{est5teo110} it follows that
\begin{eqnarray}\label{est6teo110}
A_{k} &\leq& \frac{1}{2^{\iota(k-1)(n+2)}}\int_{\mathrm{Q}_{2^{k+1}r}(x_{0},t_{0})\setminus\mathrm{Q}_{2^{k}r}(x_{0},t_{0})}|f(x,t)|^{\varsigma(x,t)}\,dxdt\nonumber\\
&\leq& \frac{1}{2^{\iota(k-1)(n+2)}}\int_{\Omega_{\mathrm{T}}(x_{0},t_{0};2^{k+1}r)}|f(x,t)|^{\varsigma(x,t)}\,dxdt\nonumber\\
&\leq& \frac{1}{2^{\iota(k-1)(n+2)}}(2^{k+1}r)^{\varrho(x_{0},t_{0})}\|f\|_{L^{\varsigma(\cdot),\varrho(\cdot)}(\Omega_{\mathrm{T}})}\nonumber\\
&=& 2^{(n+2)\iota + \varrho(x_{0},t_{0})} \cdot 2^{k(\varrho(x_{0},t_{0}) - (n+2)\iota)} r^{\varrho(x_{0},t_{0})} \|f\|_{L^{\varsigma(\cdot),\varrho(\cdot)}(\Omega_{\mathrm{T}})}\nonumber\\
&\leq& 2^{2(n+2)} \cdot 2^{k(\varrho(x_{0},t_{0}) - (n+2)\iota)} r^{\varrho(x_{0},t_{0})} \|f\|_{L^{\varsigma(\cdot),\varrho(\cdot)}(\Omega_{\mathrm{T}})},
\end{eqnarray}
where the last inequality follows from the assumption $\varrho(x_{0},t_{0}) < n+2$ and the estimate $2^{(n+2)\iota + \varrho(x_{0},t_{0})} \leq 2^{2(n+2)}$.
\end{itemize}

Consequently, from estimates \eqref{estA0} and \eqref{est6teo110}, we deduce that
\begin{eqnarray}\label{est7teo110}
\int_{\Omega_{\mathrm{T}}}|f(x,t)|^{\varsigma(x,t)}\,dxdt &\leq& 4^{n+2}r^{\varrho(x_{0},t_{0})}\|f\|_{L^{\varsigma(\cdot),\varrho(\cdot)}(\Omega_{\mathrm{T}})} \left(1+\sum_{k=1}^{\infty}2^{(\varrho(x_{0},t_{0})-(n+2)\iota)k}\right)\nonumber\\
&=& 4^{n+2}r^{\varrho(x_{0},t_{0})} \|f\|_{L^{\varsigma(\cdot),\varrho(\cdot)}(\Omega_{\mathrm{T}})} \sum_{k=0}^{\infty}2^{(\varrho(x_{0},t_{0})-(n+2)\iota)k} \nonumber\\
&\stackrel{\varrho(x_{0},t_{0})\leq\varrho_{0}}{\leq}& 4^{n+2}r^{\varrho(x_{0},t_{0})} \|f\|_{L^{\varsigma(\cdot),\varrho(\cdot)}(\Omega_{\mathrm{T}})} \sum_{k=0}^{\infty}2^{(\varrho_{0}-(n+2)\iota)k} \nonumber\\
&=& 4^{n+2}r^{\varrho(x_{0},t_{0})} \|f\|_{L^{\varsigma(\cdot),\varrho(\cdot)}(\Omega_{\mathrm{T}})} \sum_{k=0}^{\infty} \frac{1}{2^{((n+2)\iota-\varrho_{0})k}} \nonumber\\
&\stackrel{\iota > \frac{\varrho_{0}}{n+2}}{=}& \mathrm{C}'' r^{\varrho(x_{0},t_{0})} \|f\|_{L^{\varsigma(\cdot),\varrho(\cdot)}(\Omega_{\mathrm{T}})},
\end{eqnarray}
where $\mathrm{C}'' = 4^{n+2} \frac{2^{(n+2)\iota-\varrho_{0}}}{2^{(n+2)\iota-\varrho_{0}}-1} > 0$. Hence, from estimates \eqref{est3teo110} and \eqref{est7teo110}, we obtain that
\begin{eqnarray}\label{est8teo110}
\frac{1}{r^{\varrho(x_{0},t_{0})}} \int_{\Omega_{\mathrm{T}}(x_{0},t_{0};r)} |D^{2}u|^{\varsigma(x,t)}\,dxdt &\leq& \frac{\mathrm{C}}{r^{\varrho(x_{0},t_{0})}} r^{\varrho(x_{0},t_{0})} \|f\|_{L^{\varsigma(\cdot),\varrho(\cdot)}(\Omega_{\mathrm{T}})} \nonumber\\
&=& \mathrm{C} \|f\|_{L^{\varsigma(\cdot),\varrho(\cdot)}(\Omega_{\mathrm{T}})}.
\end{eqnarray}

Taking the supremum in \eqref{est8teo110} over all $(x_{0},t_{0})\in \Omega_{\mathrm{T}}$ and $r > 0$, we obtain
\begin{eqnarray}
\rho_{\varsigma(\cdot),\varrho(\cdot)}(|D^{2}u|) \leq \mathrm{C} \|f\|_{L^{\varsigma(\cdot),\varrho(\cdot)}(\Omega_{\mathrm{T}})} = \mathrm{C} < \infty,
\end{eqnarray}
which implies that $D^{2}u \in L^{\varsigma(\cdot),\varrho(\cdot)}(\Omega_{\mathrm{T}})$ and
\begin{eqnarray*}
\rho_{\varsigma(\cdot),\varrho(\cdot)}\left(\frac{|D^{2}u|}{\mathrm{C}}\right) \leq 1 \quad \Longrightarrow \quad \left\| \frac{D^{2}u}{\mathrm{C}} \right\|_{L^{\varsigma(\cdot),\varrho(\cdot)}(\Omega_{\mathrm{T}})} \leq 1 = \|f\|_{L^{\varsigma(\cdot),\varrho(\cdot)}(\Omega_{\mathrm{T}})}
\end{eqnarray*}
by the norm-modular unit ball property. More precisely,
\begin{eqnarray*}
\|D^{2}u\|_{L^{\varsigma(\cdot),\varrho(\cdot)}(\Omega_{\mathrm{T}})} \leq \mathrm{C} \|f\|_{L^{\varsigma(\cdot),\varrho(\cdot)}(\Omega_{\mathrm{T}})}.
\end{eqnarray*}
This concludes the proof of the desired estimate.
\end{proof}

As a consequence of Theorem \ref{T2}, we obtain a variable exponent H\"{o}lder continuity of the gradient for viscosity solutions of \eqref{1.1} in the case where $\gamma = g = 0$. 

Recall that, given a continuous function $\alpha : \overline{\Omega_{\mathrm{T}}} \to [0,+\infty)$, the \textit{variable exponent H\"{o}lder space} $C^{0,\alpha(\cdot)}(\overline{\Omega_{\mathrm{T}}})$ is defined as the set of all functions $u : \overline{\Omega_{\mathrm{T}}} \to \mathbb{R}$ such that
\begin{eqnarray*}
[u]_{\alpha(\cdot),\overline{\Omega_{\mathrm{T}}}} := \sup_{\substack{(x,t),(y,s)\in\overline{\Omega_{\mathrm{T}}}\\(x,t)\neq (y,s)}} \frac{|u(x,t) - u(y,s)|}{d_{p}((x,t),(y,s))^{\alpha(x,t)}} < \infty,
\end{eqnarray*}
where the associated norm is given by
\[
\|u\|_{C^{0,\alpha(\cdot)}(\overline{\Omega_{\mathrm{T}}})} := \|u\|_{L^{\infty}(\overline{\Omega_{\mathrm{T}}})} + [u]_{\alpha(\cdot),\overline{\Omega_{\mathrm{T}}}}.
\]

The variable exponent H\"{o}lder continuity of the gradient for classical $W^{2,\varsigma(\cdot),\varrho(\cdot)}$ solutions to \eqref{1.1} follows from the corollary below and an application of the Campanato-type theorem in the parabolic setting (see Theorem \ref{campanato} in Appendix A). The proof proceeds along similar lines to the argument in \cite[Corollary 3.1]{Tang} (see also \cite[Corollary 6.1]{ZZF}).

\begin{corollary}\label{cor5.2}
Under the assumptions of Theorem \ref{T2}, let $u \in W^{2,\varsigma(\cdot),\varrho(\cdot)}(\Omega_{\mathrm{T}})$ be an $L^{\varsigma_{1}}$-viscosity solution to \eqref{1.1}, with $\gamma = g = 0$. Suppose that the exponent functions $\varsigma$ and $\varrho$ satisfy $\varsigma(\cdot) + \varrho(\cdot) > n + 2$. Then, the gradient $Du$ belongs to the variable exponent H\"{o}lder space 
\[
Du \in C^{0,1 - \frac{n+2 - \varrho(\cdot)}{\varsigma(\cdot)}}(\overline{\Omega_{\mathrm{T}}}).
\]
\end{corollary}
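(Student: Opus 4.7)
The plan is to combine Theorem~\ref{T2}, a parabolic Poincar\'{e}-type inequality for $Du$, and the Campanato embedding theorem for parabolic variable exponent spaces (Theorem~\ref{campanato}) to transfer the integral regularity of $D^2u$ and $u_t$ into pointwise H\"{o}lder regularity for $Du$. The hypothesis $\varsigma(\cdot)+\varrho(\cdot)>n+2$ is precisely the threshold needed to land above the Campanato critical scaling, and the resulting exponent $1-\frac{n+2-\varrho(\cdot)}{\varsigma(\cdot)}$ is exactly what one reads off of the Campanato isomorphism.

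The first step is to invoke Theorem~\ref{T2}, which ensures $u\in W^{2,\varsigma(\cdot),\varrho(\cdot)}(\Omega_{\mathrm{T}})$ together with the quantitative bound
\[
\|u\|_{W^{2,\varsigma(\cdot),\varrho(\cdot)}(\Omega_{\mathrm{T}})}\leq \mathrm{C}\|f\|_{L^{\varsigma(\cdot),\varrho(\cdot)}(\Omega_{\mathrm{T}})}.
\]
In particular, both $D^{2}u$ and $u_{t}$ belong to $L^{\varsigma(\cdot),\varrho(\cdot)}(\Omega_{\mathrm{T}})$. The second step is to fix $(x_{0},t_{0})\in\Omega_{\mathrm{T}}$ and $r>0$ and freeze the exponent at $\varsigma_{0}=\varsigma(x_{0},t_{0})$. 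Using a standard parabolic Poincar\'{e}–Sobolev inequality for functions whose spatial Hessian and time derivative are integrable (applied after passing from $\varsigma(\cdot)$ to the frozen value $\varsigma_{0}$ by means of the log-H\"{o}lder continuity of $\varsigma$, together with Remark~\ref{remark} to control the discrepancy by a universal multiplicative constant), one obtains
\[
\int_{\Omega_{\mathrm{T}}(x_{0},t_{0};r)}\!\!|Du-(Du)_{x_{0},t_{0};r}|^{\varsigma(x,t)}\,dx\,dt
\leq \mathrm{C}\,r^{\varsigma_{0}}\!\!\int_{\Omega_{\mathrm{T}}(x_{0},t_{0};r)}\!\!\bigl(|D^{2}u|^{\varsigma(x,t)}+|u_{t}|^{\varsigma(x,t)}\bigr)\,dx\,dt.
\]

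The third step is to combine this with the defining property of $L^{\varsigma(\cdot),\varrho(\cdot)}$: the last integral is bounded by $r^{\varrho(x_{0},t_{0})}\|f\|^{\star}_{L^{\varsigma(\cdot),\varrho(\cdot)}(\Omega_{\mathrm{T}})}$ (up to a constant depending on $\varsigma_{1},\varsigma_{2},\varrho_{0}$, via~\eqref{modularestimate} applied in the variable exponent setting). This yields
\[
\sup_{(x_{0},t_{0})\in\Omega_{\mathrm{T}},\,r>0}\frac{1}{r^{\varsigma(x_{0},t_{0})+\varrho(x_{0},t_{0})}}\int_{\Omega_{\mathrm{T}}(x_{0},t_{0};r)}|Du-(Du)_{x_{0},t_{0};r}|^{\varsigma(x,t)}\,dx\,dt<\infty,
\]
which means $Du\in\mathfrak{L}^{\varsigma(\cdot),\varsigma(\cdot)+\varrho(\cdot)}(\Omega_{\mathrm{T}})$ in the sense of the parabolic variable exponent Campanato space introduced in the final paragraph of the Introduction. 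The fourth and final step is to apply Theorem~\ref{campanato}: since $\varsigma(\cdot)+\varrho(\cdot)>n+2$ by hypothesis, this Campanato space is isomorphic to $C^{0,\alpha(\cdot)}(\overline{\Omega_{\mathrm{T}}})$ with the H\"{o}lder exponent read off from the parabolic scaling,
\[
\alpha(x,t)=\frac{(\varsigma(x,t)+\varrho(x,t))-(n+2)}{\varsigma(x,t)}=1-\frac{n+2-\varrho(x,t)}{\varsigma(x,t)},
\]
which is the exponent announced in the corollary.

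The principal technical obstacle lies in the second step: the variable exponent Poincar\'{e} estimate. Standard Poincar\'{e} inequalities require a fixed integrability index, so one must first freeze $\varsigma$ at the centre and quantify the error introduced by the replacement of $\varsigma(x,t)$ by $\varsigma_{0}$ on a cylinder of radius $r$; the log-H\"{o}lder condition is tailor-made for this (yielding error terms controlled by powers of $r^{\mathrm{C}_{\varsigma}/\log(1/r)}$, i.e.\ bounded constants), but the bookkeeping must be uniform in $(x_{0},t_{0})$ and $r$ in order to reach the Campanato space cleanly. Once this is done, the rest is a direct invocation of Theorem~\ref{campanato}, following the template of \cite[Corollary 3.1]{Tang} and \cite[Corollary 6.1]{ZZF} adapted to the parabolic geometry $d_{p}$.
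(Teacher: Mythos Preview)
Your proposal is correct and follows essentially the same route the paper indicates: the paper does not spell out a proof but defers to \cite[Corollary~3.1]{Tang} and \cite[Corollary~6.1]{ZZF}, and your four-step scheme (Theorem~\ref{T2} $\Rightarrow$ parabolic Poincar\'e for $Du$ in terms of $D^2u$ and $u_t$ with log-H\"older exponent freezing $\Rightarrow$ membership of $Du$ in $\mathfrak{L}^{\varsigma(\cdot),\varsigma(\cdot)+\varrho(\cdot)}$ $\Rightarrow$ Theorem~\ref{campanato}) is precisely the template of those references transported to the parabolic geometry. The exponent bookkeeping you carry out, arriving at $\alpha(\cdot)=\dfrac{\varsigma(\cdot)+\varrho(\cdot)-(n+2)}{\varsigma(\cdot)}=1-\dfrac{n+2-\varrho(\cdot)}{\varsigma(\cdot)}$, is exactly what Theorem~\ref{campanato} delivers once the Campanato membership is secured.

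One small remark on presentation: your Poincar\'e-type estimate is written at the level of the modular $\rho_{\varsigma(\cdot)}$, whereas the Campanato seminorm in the paper's Appendix is formulated via the Luxemburg norm $\|\cdot\|_{L^{\varsigma(\cdot)}}$. Passing from one to the other on cylinders of small radius is routine under the log-H\"older hypothesis (this is again the content of Remark~\ref{remark} and is handled the same way in \cite{Fan} and \cite{Tang}), but it is worth flagging explicitly so that the reader sees how the modular bound feeds into $[Du]_{\mathfrak{L}^{\varsigma(\cdot),\varsigma(\cdot)+\varrho(\cdot)}}<\infty$.
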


\subsection*{Appendix A: Variable Exponent Campanato Spaces - Parabolic Setting}

This Appendix presents a version of Campanato's theorem adapted to the parabolic context, providing the foundation for the optimal regularity result stated in Corollary~\ref{cor5.2}. The development presented here is inspired by the ideas of Fan in~\cite{Fan}.

We denote by $\mathcal{M}(\Omega_{\mathrm{T}})$ the space of all measurable functions on $\Omega_{\mathrm{T}}$, identifying two functions as equivalent if they differ only on a set of measure zero.

We recall the definition of the variable exponent Lebesgue space $L^{\varsigma(\cdot)}(\Omega_{\mathrm{T}})$, associated with a measurable function $\varsigma: \Omega_{\mathrm{T}} \to [1,+\infty)$, defined as
\begin{eqnarray*}
L^{\varsigma(\cdot)}(\Omega_{\mathrm{T}}) = \left\{ u \in \mathcal{M}(\Omega_{\mathrm{T}}) \; ; \; \exists c > 0 \text{ such that } \rho_{\varsigma(\cdot),\Omega_{\mathrm{T}}}\left(\frac{u}{c}\right) \defeq \int_{\Omega_{\mathrm{T}}} \left|\frac{u(x,t)}{c}\right|^{\varsigma(x,t)} dx\,dt < \infty \right\},
\end{eqnarray*}
and equipped with the \textit{Luxemburg norm}
\begin{eqnarray*}
\|u\|_{L^{\varsigma(\cdot)}(\Omega_{\mathrm{T}})} = \inf \left\{ c > 0 \; ; \; \rho_{\varsigma(\cdot),\Omega_{\mathrm{T}}}\left( \frac{u}{c} \right) \leq 1 \right\}.
\end{eqnarray*}

Associated with the function $\varsigma$, we define its conjugate exponent $\varsigma^{\sharp}$ at each point $(x,t) \in \Omega_{\mathrm{T}}$ by
\begin{eqnarray*}
\varsigma^{\sharp}(x,t) =
\begin{cases}
+\infty, & \text{if } \varsigma(x,t) = 1, \\
\frac{\varsigma(x,t)}{\varsigma(x,t)-1}, & \text{otherwise}.
\end{cases}
\end{eqnarray*}
Observe that the usual conjugation relation holds:
\begin{eqnarray*}
\frac{1}{\varsigma^{\sharp}(x,t)} + \frac{1}{\varsigma(x,t)} = 1, \quad \forall (x,t) \in \Omega_{\mathrm{T}}.
\end{eqnarray*}

Following \cite[Proposition 2.2]{Fan}, we have the following estimate for the $L^{\varsigma(\cdot)}$ norm of the characteristic function $\chi_{\Omega_{\mathrm{T}}}$:
\begin{eqnarray*}
\|\chi_{\Omega_{\mathrm{T}}}\|_{L^{\varsigma(\cdot)}(\Omega_{\mathrm{T}})} \leq \max\left\{|\Omega_{\mathrm{T}}|^{\frac{1}{\varsigma_{-}}}, |\Omega_{\mathrm{T}}|^{\frac{1}{\varsigma_{+}}} \right\},
\end{eqnarray*}
where $\varsigma_{-} = \displaystyle\operatornamewithlimits{essinf}_{\Omega_{\mathrm{T}}} \varsigma(\cdot)$ and $\varsigma_{+} = \displaystyle\operatornamewithlimits{esssup}_{\Omega_{\mathrm{T}}} \varsigma(\cdot)$ denote the essential infimum and supremum, respectively. Furthermore, if both $\varsigma_{-}$ and $\varsigma_{+}$ are attained in $\overline{\Omega_{\mathrm{T}}}$, then there exists a point $(x_{0},t_{0}) \in \overline{\Omega_{\mathrm{T}}}$ such that
\begin{eqnarray*}
\|\chi_{\Omega_{\mathrm{T}}}\|_{L^{\varsigma(\cdot)}(\Omega_{\mathrm{T}})} \leq |\Omega_{\mathrm{T}}|^{\frac{1}{\varsigma(x_{0},t_{0})}}.
\end{eqnarray*}

For the remainder of this Appendix, we assume that $\varsigma$ is a continuous Log-H\"{o}lder function and that the domain $\Omega_{\mathrm{T}}$ has no cusps. More precisely, there exists a constant $\mathrm{C}_0 > 0$ such that
\begin{eqnarray*}
|\Omega_{\mathrm{T}}(x,t;r)| \geq \mathrm{C}_0 |\mathrm{Q}_{r}(x,t)|, \quad \forall (x,t) \in \overline{\Omega_{\mathrm{T}}} \text{ and } 0 < r \leq \operatorname{diam}(\Omega_{\mathrm{T}}).
\end{eqnarray*}
Note that these conditions are satisfied in Corollary~\ref{cor5.2}, since $\Omega$ is assumed to be a $C^{2,\alpha}$ domain.

We now recall the definition of the variable exponent Campanato space:

\begin{definition}
Let $\varsigma:\Omega_{\mathrm{T}} \to [1,+\infty)$ and $\varrho:\Omega_{\mathrm{T}} \to [0,+\infty)$ be measurable functions. The \textit{variable exponent Campanato space} $\mathfrak{L}^{\varsigma(\cdot),\varrho(\cdot)}(\Omega_{\mathrm{T}})$ is defined as
\begin{eqnarray*}
\mathfrak{L}^{\varsigma(\cdot),\varrho(\cdot)}(\Omega_{\mathrm{T}}) = \left\{ u \in L^{\varsigma(\cdot)}(\Omega_{\mathrm{T}}) : \sup_{\genfrac{}{}{0pt}{}{(x_{0},t_{0})\in \Omega_{\mathrm{T}}}{r>0}} r^{\frac{-\varrho(x_{0},t_{0})}{\varsigma(x_{0},t_{0})}} \left\| u - u_{\Omega_{\mathrm{T}}(x_{0},t_{0};r)} \right\|_{L^{\varsigma(\cdot)}(\Omega_{\mathrm{T}}(x_{0},t_{0};r))} < \infty \right\}
\end{eqnarray*}
equipped with the norm
\begin{eqnarray*}
\|u\|_{\mathfrak{L}^{\varsigma(\cdot),\varrho(\cdot)}(\Omega_{\mathrm{T}})} = \|u\|_{L^{\varsigma(\cdot)}(\Omega_{\mathrm{T}})} + [u]_{\mathfrak{L}^{\varsigma(\cdot),\varrho(\cdot)}(\Omega_{\mathrm{T}})},
\end{eqnarray*}
where the seminorm is given by
\begin{eqnarray*}
[u]_{\mathfrak{L}^{\varsigma(\cdot),\varrho(\cdot)}(\Omega_{\mathrm{T}})} \defeq \sup_{\genfrac{}{}{0pt}{}{(x_{0},t_{0})\in \Omega_{\mathrm{T}}}{r>0}} r^{\frac{-\varrho(x_{0},t_{0})}{\varsigma(x_{0},t_{0})}} \left\| u - u_{\Omega_{\mathrm{T}}(x_{0},t_{0};r)} \right\|_{L^{\varsigma(\cdot)}(\Omega_{\mathrm{T}}(x_{0},t_{0};r))}.
\end{eqnarray*}
\end{definition}

\begin{remark}
As established in \cite[Corollary 4.1]{Fan}, if both $\varsigma$ and $\varrho$ are Log-H\"{o}lder continuous functions, then there exists a continuous embedding of the variable exponent Morrey space $L^{\varsigma(\cdot),\varrho(\cdot)}(\Omega_{\mathrm{T}})$ into the Campanato space $\mathfrak{L}^{\varsigma(\cdot),\varrho(\cdot)}(\Omega_{\mathrm{T}})$.
\end{remark}

For simplicity, we henceforth adopt the notation $\Omega_{\mathrm{T}}^{r} \defeq \Omega_{\mathrm{T}}(x,t;r)$ for $r > 0$ and $(x,t) \in \Omega_{\mathrm{T}}$.

\begin{lemma}\label{LA1}
Let $u \in \mathfrak{L}^{\varsigma(\cdot),\varrho(\cdot)}(\Omega_{\mathrm{T}})$. Then, there exists a constant $\mathrm{C} = \mathrm{C}(n,\varsigma_{+},\mathrm{C}_{\varsigma},\operatorname{diam}(\Omega_{\mathrm{T}}))$ such that for every $(x_{0},t_{0}) \in \Omega_{\mathrm{T}}$ and $0 < r < s \leq \operatorname{diam}(\Omega_{\mathrm{T}})$, the following inequality holds:
\begin{eqnarray*}
|u_{s} - u_{r}| \leq \mathrm{C} |\Omega_{\mathrm{T}}^{r}|^{\tau(x_{0},t_{0})} \left( \frac{s}{r} \right)^{\frac{\varrho(x_{0},t_{0})}{\varsigma(x_{0},t_{0})}} [u]_{\mathfrak{L}^{\varsigma(\cdot),\varrho(\cdot)}(\Omega_{\mathrm{T}})},
\end{eqnarray*}
where 
\begin{eqnarray*}
\tau(x,t) = \frac{\varrho(x,t) - (n+2)}{(n+2)\varsigma(x,t)}, \quad \forall (x,t) \in \Omega_{\mathrm{T}}.
\end{eqnarray*}
\end{lemma}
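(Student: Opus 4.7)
The proof follows the classical Campanato-type strategy adapted to the variable exponent framework, and it can be carried out as a direct one-step comparison rather than via dyadic iteration, since the hypothesis $\varrho(x_0,t_0) < n+2$ renders the iterative chain unnecessary. The plan is as follows: since $r < s$, the inclusion $\Omega_{\mathrm{T}}^r \subset \Omega_{\mathrm{T}}^s$ holds and $u_s$ is constant on $\Omega_{\mathrm{T}}^r$, so I would first write
\[
u_r - u_s \;=\; \frac{1}{|\Omega_{\mathrm{T}}^r|}\int_{\Omega_{\mathrm{T}}^r}\bigl(u(x,t) - u_s\bigr)\,dx\,dt,
\]
take absolute values, and apply the generalized H\"{o}lder inequality for variable Lebesgue spaces with conjugate exponent $\varsigma^{\sharp}$ to obtain
\[
|u_r - u_s| \;\leq\; \frac{K_1}{|\Omega_{\mathrm{T}}^r|}\,\|u-u_s\|_{L^{\varsigma(\cdot)}(\Omega_{\mathrm{T}}^r)}\,\|\chi_{\Omega_{\mathrm{T}}^r}\|_{L^{\varsigma^{\sharp}(\cdot)}(\Omega_{\mathrm{T}}^r)}.
\]

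Next, I would estimate each factor in turn. Monotonicity of the variable Lebesgue norm with respect to the underlying set, together with the definition of the Campanato seminorm, produces
\[
\|u-u_s\|_{L^{\varsigma(\cdot)}(\Omega_{\mathrm{T}}^r)} \;\leq\; \|u-u_s\|_{L^{\varsigma(\cdot)}(\Omega_{\mathrm{T}}^s)} \;\leq\; s^{\varrho(x_0,t_0)/\varsigma(x_0,t_0)}\,[u]_{\mathfrak{L}^{\varsigma(\cdot),\varrho(\cdot)}(\Omega_{\mathrm{T}})}.
\]
For the characteristic function norm, the log-H\"{o}lder continuity of $\varsigma$ (which is transferred to $\varsigma^{\sharp}$ thanks to the uniform lower bound $\varsigma \geq \varsigma_1 > 1$) yields a pointwise-type estimate of the form $\|\chi_{\Omega_{\mathrm{T}}^r}\|_{L^{\varsigma^{\sharp}(\cdot)}(\Omega_{\mathrm{T}}^r)} \leq K_2\,|\Omega_{\mathrm{T}}^r|^{1/\varsigma^{\sharp}(x_0,t_0)}$, where $K_2 = K_2(n,\varsigma_+,\mathrm{C}_\varsigma,\operatorname{diam}(\Omega_{\mathrm{T}}))$. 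Combining these estimates and using the identity $1/\varsigma^{\sharp}(x_0,t_0) - 1 = -1/\varsigma(x_0,t_0)$ gives
\[
|u_r - u_s| \;\leq\; C\,|\Omega_{\mathrm{T}}^r|^{-1/\varsigma(x_0,t_0)}\,s^{\varrho(x_0,t_0)/\varsigma(x_0,t_0)}\,[u]_{\mathfrak{L}^{\varsigma(\cdot),\varrho(\cdot)}(\Omega_{\mathrm{T}})}.
\]

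The concluding step is purely algebraic. Writing the decomposition
\[
-\tfrac{1}{\varsigma(x_0,t_0)} \;=\; \tau(x_0,t_0) \;-\; \tfrac{\varrho(x_0,t_0)}{(n+2)\,\varsigma(x_0,t_0)},
\]
I would factor out $|\Omega_{\mathrm{T}}^r|^{\tau(x_0,t_0)}$ and absorb the residual $|\Omega_{\mathrm{T}}^r|^{-\varrho(x_0,t_0)/[(n+2)\varsigma(x_0,t_0)]}$ into $r^{-\varrho(x_0,t_0)/\varsigma(x_0,t_0)}$ via the no-cusp hypothesis $|\Omega_{\mathrm{T}}^r|\geq \mathrm{C}_0 |\mathrm{Q}_r|$ together with $|\mathrm{Q}_r| \asymp r^{n+2}$, absorbing the resulting power of $\mathrm{C}_0^{-1}$ into the universal constant. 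Pairing this with the factor $s^{\varrho(x_0,t_0)/\varsigma(x_0,t_0)}$ reconstructs precisely the factor $(s/r)^{\varrho(x_0,t_0)/\varsigma(x_0,t_0)}$ appearing in the statement.

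The main technical obstacle is the uniform bound for $\|\chi_{\Omega_{\mathrm{T}}^r}\|_{L^{\varsigma^{\sharp}(\cdot)}(\Omega_{\mathrm{T}}^r)}$; this is the only point at which the log-H\"{o}lder continuity of the exponent plays a nontrivial role, and the constant $\mathrm{C}_\varsigma$ from Remark \ref{remark} must be tracked carefully since the exponent $\varsigma^{\sharp}$ varies with the base point $(x_0,t_0)$ and the scale $r$ can be arbitrarily small. The uniform bounds $\varsigma_1 \leq \varsigma \leq \varsigma_+$ and $0 \leq \varrho \leq \varrho_0 < n+2$ are what prevent any of the exponents from degenerating during the subsequent algebraic manipulations, guaranteeing that the final constant depends only on the parameters listed in the lemma.
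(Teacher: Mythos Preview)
Your argument is correct and follows essentially the same route as the paper, which simply defers to \cite[Lemma 4.1]{Fan}: apply the generalized H\"{o}lder inequality on $\Omega_{\mathrm{T}}^{r}$, control $\|\chi_{\Omega_{\mathrm{T}}^{r}}\|_{L^{\varsigma^{\sharp}(\cdot)}}$ via log-H\"{o}lder continuity, and finish with the algebraic decomposition of the exponent $-1/\varsigma(x_0,t_0)$. One small correction to your framing: the lemma carries no hypothesis $\varrho(x_0,t_0)<n+2$ (in the Appendix it is in fact applied with $\varrho>n+2$), and the direct one-step comparison you give is the method for this lemma irrespective of the sign of $\tau$; the dyadic iteration only appears in the \emph{next} lemma, where one passes from scale $R$ down to $\rho$.
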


\begin{proof}
The proof follows the same strategy as in \cite[Lemma 4.1]{Fan}, with a minor modification. Namely, due to the definition of $\Omega_{\mathrm{T}}^{r}$, we have that $|\Omega_{\mathrm{T}}^{r}| = r^{n+2}|\Omega_{\mathrm{T}}|$ for all $r > 0$.
\end{proof}

As a consequence of Lemma \ref{LA1}, the estimate simplifies significantly depending on the sign of $\tau(x_{0},t_{0})$:

\begin{lemma}
Assume that the hypotheses of Lemma \ref{LA1} hold. Let $(x_{0},t_{0})\in\Omega_{\mathrm{T}}$ be a fixed point and let $0<\rho<R\leq\operatorname{diam}(\Omega_{\mathrm{T}})$. Then, there exist two positive constants $c_{1}$ and $c_{2}$ depending only on $\tau(x_{0},t_{0})$, $\mathrm{C}_{\varsigma}$, $\varsigma_{+}$, $\operatorname{diam}(\Omega_{\mathrm{T}})$, and the geometric condition $\mathrm{C}_0$ (which ensures that $\Omega$ has no cusps), such that the following estimates hold:
\begin{itemize}
\item[i.] $|u_{R}-u_{\rho}|\leq c_{1}|\Omega_{\mathrm{T}}^{R}|^{\tau(x_{0},t_{0})}[u]_{\mathfrak{L}^{\varsigma(\cdot),\varrho(\cdot)}(\Omega_{\mathrm{T}})}$ when $\tau(x_{0},t_{0})>0$;
\item[ii.] $|u_{R}-u_{\rho}|\leq c_{2}|\Omega_{\mathrm{T}}^{\rho}|^{\tau(x_{0},t_{0})}[u]_{\mathfrak{L}^{\varsigma(\cdot),\varrho(\cdot)}(\Omega_{\mathrm{T}})}$ when $\tau(x_{0},t_{0})<0$.
\end{itemize}
\end{lemma}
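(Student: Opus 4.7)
The plan is to prove both inequalities by a dyadic telescoping argument that converts Lemma \ref{LA1} into a geometric sum. Given $0<\rho<R\leq\operatorname{diam}(\Omega_{\mathrm{T}})$, let $k\geq 0$ be the integer with $2^{k}\rho\leq R<2^{k+1}\rho$, and decompose
$$u_{R}-u_{\rho}=(u_{R}-u_{2^{k}\rho})+\sum_{j=0}^{k-1}(u_{2^{j+1}\rho}-u_{2^{j}\rho}).$$
Each consecutive difference will be controlled via Lemma \ref{LA1} applied with $s/r=2$, and the leftover pair $(R,2^{k}\rho)$ is handled similarly since $R/(2^{k}\rho)\leq 2$. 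The factor $(s/r)^{\varrho(x_{0},t_{0})/\varsigma(x_{0},t_{0})}$ then becomes a uniform constant depending only on $\varsigma_{+}$ and $\tau(x_{0},t_{0})$, so the whole estimate reduces to bounding
$$\sum_{j=0}^{k}\bigl|\Omega_{\mathrm{T}}^{2^{j}\rho}\bigr|^{\tau(x_{0},t_{0})}.$$
Since $\Omega$ has no cusps, one has the two-sided comparability $\mathrm{C}_{0}|\mathrm{Q}_{r}|\leq|\Omega_{\mathrm{T}}^{r}|\leq|\mathrm{Q}_{r}|$, so $|\Omega_{\mathrm{T}}^{2^{j}\rho}|\simeq 2^{j(n+2)}|\Omega_{\mathrm{T}}^{\rho}|$ with constants depending only on $n$ and $\mathrm{C}_{0}$, and the sum reduces to a geometric series in $2^{(n+2)\tau(x_{0},t_{0})}$.

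At this point, the two cases correspond exactly to the two monotonicity regimes of this geometric series. In case (i), with $\tau(x_{0},t_{0})>0$, the terms $2^{j(n+2)\tau(x_{0},t_{0})}$ are increasing and the partial sum is dominated by its last term,
$$\sum_{j=0}^{k}2^{j(n+2)\tau(x_{0},t_{0})}\leq\frac{2^{(k+1)(n+2)\tau(x_{0},t_{0})}}{2^{(n+2)\tau(x_{0},t_{0})}-1},$$
which, after multiplying by $|\Omega_{\mathrm{T}}^{\rho}|^{\tau(x_{0},t_{0})}$, yields a quantity comparable to $|\Omega_{\mathrm{T}}^{R}|^{\tau(x_{0},t_{0})}$ by the defining property $2^{k}\rho\leq R<2^{k+1}\rho$. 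In case (ii), with $\tau(x_{0},t_{0})<0$, the series is decreasing and dominated by its first term; its tail is a convergent geometric series bounded by $(1-2^{(n+2)\tau(x_{0},t_{0})})^{-1}$. The first term already carries the factor $|\Omega_{\mathrm{T}}^{\rho}|^{\tau(x_{0},t_{0})}$, producing (ii) directly. Gathering the constants in each case gives the inequalities of (i) and (ii) with $c_{1}$ and $c_{2}$ depending exactly on the parameters listed.

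The main bookkeeping point, rather than a genuine obstacle, will be verifying that every constant produced depends only on the quantities listed. The factor $2^{\varrho(x_{0},t_{0})/\varsigma(x_{0},t_{0})}$ is uniformly bounded because $\varrho(x_{0},t_{0})=(n+2)(\varsigma(x_{0},t_{0})\tau(x_{0},t_{0})+1)$ is controlled by $\varsigma_{+}$ and $\tau(x_{0},t_{0})$; the prefactors $(1-2^{\pm(n+2)\tau(x_{0},t_{0})})^{-1}$ depend only on $\tau(x_{0},t_{0})$ and $n$; the $|\Omega_{\mathrm{T}}^{r}|\simeq r^{n+2}$ comparability constants come from $\mathrm{C}_{0}$ and $\operatorname{diam}(\Omega_{\mathrm{T}})$; and the constant $\mathrm{C}$ from Lemma \ref{LA1} already depends only on $n,\varsigma_{+},\mathrm{C}_{\varsigma},\operatorname{diam}(\Omega_{\mathrm{T}})$. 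No extra use of Log-H\"{o}lder regularity of $\varsigma$ or $\varrho$ is required beyond what Lemma \ref{LA1} already packaged, since $\tau$ is evaluated once at the fixed base point $(x_{0},t_{0})$ throughout the argument.
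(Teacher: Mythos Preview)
Your proof is correct and follows essentially the same route as the paper: a dyadic telescoping argument that invokes Lemma~\ref{LA1} at each successive scale and then sums a geometric series in $2^{(n+2)\tau(x_{0},t_{0})}$, using the no-cusp condition to compare $|\Omega_{\mathrm{T}}^{r}|$ with $r^{n+2}$. The only cosmetic difference is that the paper scales \emph{down} from $R$ via $r_{k}=2^{-k}R$ and isolates the leftover piece $|u_{r_{k-1}}-u_{\rho}|$, whereas you scale \emph{up} from $\rho$ via $2^{j}\rho$ and isolate $|u_{R}-u_{2^{k}\rho}|$; the two schemes are interchangeable.
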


\begin{proof}
The idea of the proof is to apply Lemma \ref{LA1} iteratively to derive the desired estimates. To this end, for each $k\geq 0$, define $r_{k}=2^{-k}R$. Fixing $k\geq 1$, we apply Lemma \ref{LA1} with $r=r_{k}$ and $s=r_{k-1}$ (note that $0<r<s\leq R\leq \operatorname{diam}(\Omega_{\mathrm{T}})$) to obtain
\begin{eqnarray}\label{eq1}
|u_{r_{k-1}} - u_{r_{k}}| &\leq& \mathrm{C}|\Omega_{\mathrm{T}}^{r_{k}}|^{\tau(x_{0},t_{0})} \underbrace{\left(\frac{r_{k-1}}{r_{k}}\right)^{\frac{\varrho(x_{0},t_{0})}{\varsigma(x_{0},t_{0})}}}_{=2^{\frac{\varrho(x_{0},t_{0})}{\varsigma(x_{0},t_{0})}}} [u]_{\mathfrak{L}^{\varsigma(\cdot),\varrho(\cdot)}(\Omega_{\mathrm{T}})} \nonumber \\
&\leq& 2^{\frac{\varrho_{+}}{\varsigma_{-}}} \mathrm{C} |\Omega_{\mathrm{T}}^{r_{k}}|^{\tau(x_{0},t_{0})} [u]_{\mathfrak{L}^{\varsigma(\cdot),\varrho(\cdot)}(\Omega_{\mathrm{T}})}.
\end{eqnarray}

Next, we transition from the discrete to the continuous setting. Let $k\in\mathbb{N}$ be such that $r_{k} \leq \rho < r_{k-1}$. Then, by the triangle inequality, we obtain
\begin{eqnarray}\label{eq2}
|u_{R} - u_{\rho}| &\leq& |u_{R} - u_{r_{k-1}}| + |u_{r_{k-1}} - u_{\rho}| \nonumber\\
&\leq& \sum_{j=1}^{k-1} |u_{r_{j-1}} - u_{r_{j}}| + |u_{r_{k-1}} - u_{\rho}| \nonumber\\
&\stackrel{\eqref{eq1}}{\leq}& 2^{\frac{\varrho_{+}}{\varsigma_{-}}} \mathrm{C} [u]_{\mathfrak{L}^{\varsigma(\cdot),\varrho(\cdot)}(\Omega_{\mathrm{T}})} \sum_{j=1}^{k-1} |\Omega_{\mathrm{T}}^{r_{j}}|^{\tau(x_{0},t_{0})} + |u_{r_{k-1}} - u_{\rho}| \nonumber\\
&=:& \mathrm{I} + \mathrm{II}.
\end{eqnarray}

We now estimate each term in \eqref{eq2}. For the term $\mathrm{I}$, since $\Omega_{\mathrm{T}}$ has no cusps, the following inequality holds for all $0<r'<r''\leq\operatorname{diam}(\Omega_{\mathrm{T}})$:
\begin{eqnarray}\label{eq3}
\mathrm{C}_0 \left(\frac{r''}{r'}\right)^{n+2} = \frac{\mathrm{C}_0|\mathrm{Q}_{r''}(x_{0},t_{0})|}{|\mathrm{Q}_{r'}(x_{0},t_{0})|} \leq \frac{|\Omega_{\mathrm{T}}^{r''}|}{|\Omega_{\mathrm{T}}^{r'}|} \leq \frac{|\mathrm{Q}_{r''}(x_{0},t_{0})|}{\mathrm{C}_0|\mathrm{Q}_{r'}(x_{0},t_{0})|} = \mathrm{C}_0^{-1} \left(\frac{r''}{r'}\right)^{n+2}.
\end{eqnarray}

We now prove the first inequality. Indeed, assume that $\tau(x_{0},t_{0}) > 0$. Then, from \eqref{eq3}, it follows that
\begin{eqnarray*}
\sum_{j=1}^{k-1} |\Omega_{\mathrm{T}}^{r_{j}}| &\leq& \sum_{j=1}^{k-1} \left( \mathrm{C}_0^{-1} |\Omega_{\mathrm{T}}^{R}| \left( \frac{r_{j}}{R} \right)^{n+2} \right)^{\tau(x_{0},t_{0})} \\
&\leq& \mathrm{C}_0^{-\tau(x_{0},t_{0})} |\Omega_{\mathrm{T}}^{R}|^{\tau(x_{0},t_{0})} \sum_{j=1}^{k-1} \frac{1}{2^{(n+2)\tau(x_{0},t_{0})j}} \\
&\leq& \frac{\mathrm{C}_0^{-\tau(x_{0},t_{0})}}{2^{(n+2)\tau(x_{0},t_{0})} - 1} |\Omega_{\mathrm{T}}^{R}|^{\tau(x_{0},t_{0})}.
\end{eqnarray*}

Therefore,
\begin{eqnarray}\label{eq4}
\mathrm{I} \leq \frac{2^{\frac{\varrho_{+}}{\varsigma_{-}}} \mathrm{C} \mathrm{C}_0^{-\tau(x_{0},t_{0})}}{2^{(n+2)\tau(x_{0},t_{0})} - 1} |\Omega_{\mathrm{T}}^{R}|^{\tau(x_{0},t_{0})} [u]_{\mathfrak{L}^{\varsigma(\cdot),\varrho(\cdot)}(\Omega_{\mathrm{T}})}.
\end{eqnarray}

For the term $\mathrm{II}$, we apply Lemma \ref{LA1} once more (since $\rho < r_{k-1}$) to obtain
\begin{eqnarray}\label{eq5}
\mathrm{II} &\leq& \mathrm{C} |\Omega_{\mathrm{T}}^{\rho}|^{\tau(x_{0},t_{0})} \underbrace{\left(\frac{r_{k-1}}{\rho}\right)^{\frac{\varrho(x_{0},t_{0})}{\varsigma(x_{0},t_{0})}}}_{\leq 2^{\frac{\varrho(x_{0},t_{0})}{\varsigma(x_{0},t_{0})}}} [u]_{\mathfrak{L}^{\varsigma(\cdot),\varrho(\cdot)}(\Omega_{\mathrm{T}})} \nonumber\\
&\leq& 2^{\frac{\varrho_{+}}{\varsigma_{-}}} \mathrm{C} |\Omega_{\mathrm{T}}^{\rho}|^{\tau(x_{0},t_{0})} [u]_{\mathfrak{L}^{\varsigma(\cdot),\varrho(\cdot)}(\Omega_{\mathrm{T}})} \nonumber\\
&\stackrel{\tau(x_{0},t_{0})>0}{\leq}& 2^{\frac{\varrho_{+}}{\varsigma_{-}}} \mathrm{C} |\Omega_{\mathrm{T}}^{R}|^{\tau(x_{0},t_{0})} [u]_{\mathfrak{L}^{\varsigma(\cdot),\varrho(\cdot)}(\Omega_{\mathrm{T}})},
\end{eqnarray}
since $\rho < R$. Hence, combining \eqref{eq4} and \eqref{eq5}, we establish item (i) of the lemma with
\begin{eqnarray*}
c_{1} = 2^{\frac{\varrho_{+}}{\varsigma_{-}}} \mathrm{C} \frac{2^{(n+2)\tau(x_{0},t_{0})} - 1 + \mathrm{C}_0^{-\tau(x_{0},t_{0})}}{2^{(n+2)\tau(x_{0},t_{0})} - 1} > 0.
\end{eqnarray*}

Finally, the case $\tau(x_{0},t_{0}) < 0$ can be addressed in a completely analogous manner (cf. \cite[Lemma 4.2]{Fan}).
\end{proof}

\begin{theorem}[\bf Campanato-type Theorem - Parabolic Case]\label{campanato}
Under the same assumptions as above, suppose that $\varrho(x,t)>n+2$ for all $(x,t)\in\Omega_{\mathrm{T}}$. Then, the spaces $\mathfrak{L}^{\varsigma(\cdot),\varrho(\cdot)}(\Omega_{\mathrm{T}})$ and $C^{0,\alpha(\cdot)}(\overline{\Omega_{\mathrm{T}}})$ are isomorphic, i.e.,
\[
\mathfrak{L}^{\varsigma(\cdot),\varrho(\cdot)}(\Omega_{\mathrm{T}}) \cong C^{0,\alpha(\cdot)}(\overline{\Omega_{\mathrm{T}}}),
\]
where
\begin{equation*}
\alpha(x,t) = \frac{\varrho(x,t)-(n+2)}{\varsigma(x,t)}, \quad \forall (x,t) \in \overline{\Omega_{\mathrm{T}}}.
\end{equation*}
\end{theorem}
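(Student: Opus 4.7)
The plan is to prove the two continuous embeddings separately, following the classical Campanato--Meyers blueprint adapted to variable exponents in the parabolic geometry. The crucial observation, which will fuel the harder direction $\mathfrak{L}^{\varsigma(\cdot),\varrho(\cdot)}(\Omega_{\mathrm{T}})\hookrightarrow C^{0,\alpha(\cdot)}(\overline{\Omega_{\mathrm{T}}})$, is that the assumption $\varrho(x,t)>n+2$ forces the exponent $\tau(x,t)$ from the preceding lemma to be strictly positive, and a direct computation yields $(n+2)\tau(x,t)=\alpha(x,t)$. Combined with the doubling estimate $|\Omega_{\mathrm{T}}^r|\simeq r^{n+2}$ (available because $\Omega$ is $C^{2,\alpha}$), item~(i) of the lemma then produces the Campanato-type decay
$$
|u_{\Omega_{\mathrm{T}}^R(x_0,t_0)}-u_{\Omega_{\mathrm{T}}^\rho(x_0,t_0)}|\le \mathrm{C}\,R^{\alpha(x_0,t_0)}\,[u]_{\mathfrak{L}^{\varsigma(\cdot),\varrho(\cdot)}(\Omega_{\mathrm{T}})}
$$
for every $0<\rho<R\le\operatorname{diam}(\Omega_{\mathrm{T}})$, which is the backbone of all subsequent arguments.

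From this estimate I would first infer that $\{u_{\Omega_{\mathrm{T}}^r(x,t)}\}_{r>0}$ is Cauchy as $r\to 0^+$ at every $(x,t)\in\overline{\Omega_{\mathrm{T}}}$, and define the precise representative $\tilde u(x,t)\defeq\lim_{r\to 0^+}u_{\Omega_{\mathrm{T}}^r(x,t)}$. Lebesgue's differentiation theorem identifies $\tilde u$ with $u$ almost everywhere, and letting $\rho\to 0$ in the displayed inequality yields the pointwise bound $|\tilde u(x_0,t_0)-u_{\Omega_{\mathrm{T}}^R(x_0,t_0)}|\le \mathrm{C}\,R^{\alpha(x_0,t_0)}[u]_{\mathfrak{L}^{\varsigma(\cdot),\varrho(\cdot)}(\Omega_{\mathrm{T}})}$.

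To obtain the H\"older estimate, I would fix $(x_1,t_1),(x_2,t_2)\in\overline{\Omega_{\mathrm{T}}}$ with parabolic distance $\delta$, select $R=K\delta$ with $K$ a dimensional constant so that $\Omega_{\mathrm{T}}^R(x_1,t_1)$ and $\Omega_{\mathrm{T}}^R(x_2,t_2)$ share a common subset $A$ with $|A|\simeq R^{n+2}$, and decompose
$$
|\tilde u(x_1,t_1)-\tilde u(x_2,t_2)|\le\sum_{i=1}^{2}|\tilde u(x_i,t_i)-u_{\Omega_{\mathrm{T}}^R(x_i,t_i)}|+|u_{\Omega_{\mathrm{T}}^R(x_1,t_1)}-u_{\Omega_{\mathrm{T}}^R(x_2,t_2)}|.
$$
The outer summands are already controlled by the pointwise bound above. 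For the middle term I would insert the average $u_A$, bound each piece via the variable-exponent H\"older inequality together with the estimate for $\|\chi_{\Omega_{\mathrm{T}}^r}\|_{L^{\varsigma(\cdot)}}$ recalled in the Appendix, and then use the Log-H\"older continuity of $\varsigma$ and $\varrho$ (hence of $\alpha$) to absorb $R^{\alpha(x_2,t_2)}$ into $R^{\alpha(x_1,t_1)}$ up to a uniform multiplicative constant. This will produce the desired inequality $|\tilde u(x_1,t_1)-\tilde u(x_2,t_2)|\le \mathrm{C}\,\delta^{\alpha(x_1,t_1)}\|u\|_{\mathfrak{L}^{\varsigma(\cdot),\varrho(\cdot)}(\Omega_{\mathrm{T}})}$, completing the first embedding.

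The converse $C^{0,\alpha(\cdot)}(\overline{\Omega_{\mathrm{T}}})\hookrightarrow\mathfrak{L}^{\varsigma(\cdot),\varrho(\cdot)}(\Omega_{\mathrm{T}})$ is routine: for $u\in C^{0,\alpha(\cdot)}$ the pointwise bound $|u(x,t)-u_{\Omega_{\mathrm{T}}^r(x_0,t_0)}|\le[u]_{C^{0,\alpha(\cdot)}}\,r^{\alpha(x_0,t_0)}$ on $\Omega_{\mathrm{T}}^r(x_0,t_0)$, combined with the estimate $\|\chi_{\Omega_{\mathrm{T}}^r(x_0,t_0)}\|_{L^{\varsigma(\cdot)}}\lesssim r^{(n+2)/\varsigma(x_0,t_0)}$ and the identity $\alpha(x_0,t_0)\varsigma(x_0,t_0)+(n+2)=\varrho(x_0,t_0)$, makes the exponent of $r$ cancel exactly with the Morrey-type normalization. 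The principal technical obstacle lies in the previous step, specifically in transferring the variable power $R^{\alpha(x_2,t_2)}$ to $R^{\alpha(x_1,t_1)}$ uniformly in scale: this is precisely where the Log-H\"older continuity of $\varsigma$ and $\varrho$ becomes indispensable, and where the quantitative control of $\|\chi_{\Omega_{\mathrm{T}}^r}\|_{L^{\varsigma(\cdot)}}$ across variable exponents must be handled with care.
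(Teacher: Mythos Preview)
Your proposal is correct and follows essentially the same approach as the paper: the easy inclusion $C^{0,\alpha(\cdot)}\hookrightarrow\mathfrak{L}^{\varsigma(\cdot),\varrho(\cdot)}$ is handled identically (pointwise H\"older bound plus the estimate $\|\chi_{\Omega_{\mathrm{T}}^r}\|_{L^{\varsigma(\cdot)}}\lesssim r^{(n+2)/\varsigma(x_0,t_0)}$ and the cancellation $\alpha\varsigma+(n+2)=\varrho$), while for the harder inclusion the paper simply invokes \cite[Theorem 4.3]{Fan}, whose proof is exactly the Cauchy-representative and overlapping-cylinder argument you outline using item~(i) of the preceding lemma. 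The only minor remark is that the paper's standing assumptions in the Appendix state Log-H\"older continuity for $\varsigma$ only, so the exponent-switching step should be phrased via the continuity of $\varrho$ together with the Log-H\"older condition on $\varsigma$ rather than assuming Log-H\"older for both.
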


\begin{proof}
We first prove the inclusion $C^{0,\alpha(\cdot)}(\overline{\Omega_{\mathrm{T}}})\subset \mathfrak{L}^{\varsigma(\cdot),\varrho(\cdot)}(\Omega_{\mathrm{T}})$. 

Let $u \in C^{0,\alpha(\cdot)}(\overline{\Omega_{\mathrm{T}}})$, and fix a point $(x_{0},t_{0}) \in \Omega_{\mathrm{T}}$ and a radius $0<r<\mathrm{diam}(\Omega_{\mathrm{T}})$. For any $(x,t) \in \Omega_{\mathrm{T}}^{r}$, we estimate
\begin{align*}
|u(x,t) - u_{r}| 
&\leq \frac{1}{|\Omega_{\mathrm{T}}^{r}|} \int_{\Omega_{\mathrm{T}}^{r}} |u(x,t) - u(y,s)| \, dyds \\
&\leq \frac{1}{|\Omega_{\mathrm{T}}^{r}|} \int_{\Omega_{\mathrm{T}}^{r}} [u]_{\alpha(\cdot),\overline{\Omega_{\mathrm{T}}}} \, d_{p}((x,t),(y,s))^{\alpha(x,t)} \, dyds \\
&\leq \frac{1}{|\Omega_{\mathrm{T}}^{r}|} [u]_{\alpha(\cdot),\overline{\Omega_{\mathrm{T}}}} \, \max\{2r,r^{2}\}^{\alpha(x,t)} |\Omega_{\mathrm{T}}^{r}| \\
&= [u]_{\alpha(\cdot),\overline{\Omega_{\mathrm{T}}}} \, \max\{2r,r^{2}\}^{\alpha(x,t)} \\
&\leq \mathrm{C}'\left(\mathrm{diam}(\Omega_{\mathrm{T}}), [u]_{\alpha(\cdot),\overline{\Omega_{\mathrm{T}}}} \right) \max\{2r,r^{2}\}^{\alpha(x_{0},t_{0})}.
\end{align*}

Therefore, we obtain
\begin{align*}
r^{-\frac{\varrho(x_{0},t_{0})}{\varsigma(x_{0},t_{0})}} \|u - u_{r}\|_{L^{\varsigma(\cdot),\varrho(\cdot)}(\Omega_{\mathrm{T}}^{r})}
&\leq r^{-\frac{\varrho(x_{0},t_{0})}{\varsigma(x_{0},t_{0})}} \, \mathrm{C} \, \|\chi_{\Omega_{\mathrm{T}}^{r}}\|_{L^{\varsigma(\cdot),\varrho(\cdot)}(\Omega_{\mathrm{T}}^{r})} \, \max\{2r,r^{2}\}^{\alpha(x_{0},t_{0})} \\
&\leq \mathrm{C} \, r^{-\frac{\varrho(x_{0},t_{0})}{\varsigma(x_{0},t_{0})}} \max\{2r,r^{2}\}^{\alpha(x_{0},t_{0})} \, r^{\frac{n+2}{\varrho(x_{0},t_{0})}} \\
&= \mathrm{C} \, r^{-\alpha(x_{0},t_{0})} \max\{2r,r^{2}\}^{\alpha(x_{0},t_{0})} \\
&= \mathrm{C} \, \max\{2,r\}^{\alpha(x_{0},t_{0})} \\
&\leq \mathrm{C}''\left(\mathrm{diam}(\Omega_{\mathrm{T}}), |\Omega_{\mathrm{T}}|, [u]_{\alpha(\cdot),\overline{\Omega_{\mathrm{T}}}} \right),
\end{align*}
where the constants $\mathrm{C}', \ \mathrm{C}'' > 0$ are independent of the point $(x_{0},t_{0}) \in \Omega_{\mathrm{T}}$ and the radius $0<r<\mathrm{diam}(\Omega_{\mathrm{T}})$. This shows that $u \in \mathfrak{L}^{\varsigma(\cdot),\varrho(\cdot)}(\Omega_{\mathrm{T}})$.

The converse inclusion follows analogously to the argument presented in \cite[Theorem 4.3]{Fan}. For brevity, we omit the details here.
\end{proof}

\subsection*{Acknowledgments}

\hspace{0.4cm} CAPES-Brazil partially supported J.S. Bessa under Grant No. 88887.482068/2020-00. FAPESP-Brazil has supported J.S. Bessa under Grant No. 2023/18447-3. J.V. da Silva has received partial support from CNPq-Brazil under Grant No. 307131/2022-0, FAEPEX-UNICAMP (Project No. 2441/23, Special Calls - PIND - Individual Projects, 03/2023), and Chamada CNPq/MCTI No. 10/2023 - Faixa B - Consolidated Research Groups under Grant No. 420014/2023-3. G.C. Ricarte has been partially supported by CNPq-Brazil under Grants No. 304239/2021-6. We would like to thank the anonymous Referee for the careful reading and suggestions throughout the paper.

\end{document}